\renewcommand{\emptyset}{\varnothing}
\newcommand{\NN}{\mathbb N}
\newcommand{\RR}{\mathbb R}
\newcommand{\CC}{\mathbb C}
\newcommand{\PP}{\mathbb P}
\newcommand{\ZZ}{\mathbb Z}
\theoremstyle{definition}
\newtheorem{thm}{Theorem}[section]
\newtheorem{cor}[thm]{Corollary}
\newtheorem{lem}[thm]{Lemma}
\newtheorem{prop}[thm]{Proposition}
\newtheorem{defn}[thm]{Definition}
\newtheorem{conj}[thm]{Conjecture}
\newtheorem{eg}[thm]{Example}
\newtheorem{rem}[thm]{Remark}
\newtheorem{question}[thm]{Question}
\newtheorem{maintheorem}{Theorem}	\newtheorem{fact}[thm]{Fact}
\numberwithin{equation}{section}
\newcommand{\ar}[1]
{{\xrightarrow{#1}}}
\newcommand{\indexedforests}[1][]{ 
{\ifx&#1&%
    \operatorname{\mathsf{For}}
\else
    \operatorname{\mathsf{For}}^{#1}
\fi}
}
\newcommand{\forestpoly}[2][]{{\mathfrak{P}_{#2}^{\underline{#1}}}} 
\newcommand{\qsym}[2][]{
{\ifx&#1&%
  {\operatorname{QSym}_{#2}}
\else
  {{}^{#1}\!\operatorname{QSym}_{#2}}
\fi}
} 
\newcommand{\qseq}[2][]{
{\ifx&#1&%
  {\operatorname{QSeq}_{#2}}
\else
  {{}^{#1}\!\operatorname{QSeq}_{#2}}
\fi}
}
\newcommand{\qsymide}[2][]{
{\ifx&#1&%
  {\operatorname{QSym}_{#2}^+}
\else
  {{}^{#1}\!\operatorname{QSym}_{#2}^+}
\fi}
} 
\newcommand{\sym}[1]{\operatorname{Sym}_{#1}} 
\newcommand{\symide}[1]{\sym{#1}^+} 
\newcommand{\lcode}[1]{\operatorname{lcode}(#1)} 
\newcommand{\compatible}[2][]{
{\ifx&#1&%
  {\mathcal{C}(#2)}
\else
  {\mathcal{C}^{m}(#2)}
\fi}
} 
\newcommand{\internal}[1]{\operatorname{IN}(#1)} 
\newcommand{\suchthat}{\;|\;}
\newcommand{\nvect}{\mathsf{Codes}}
\newcommand{\poly}{\operatorname{Pol}} 
\newcommand{\schub}[1]{\mathfrak{S}_{#1}} 
\newcommand{\des}[1]{\operatorname{Des}(#1)} 
\date{}
\newcommand{\idem}{\operatorname{id}} 
\newcommand{\slide}[2][]{
{\ifx&#1&%
  {\mathfrak{F}_{#2}}
\else
  {\mathfrak{F}_{#2}^{\underline{#1}}}
\fi}
} 
\newcommand{\gz}{\operatorname{GZ}} 
\newcommand{\Perm}{\operatorname{Perm}} 
\newcommand{\ct}{\operatorname{ev_0}} 
\newcommand{\qdes}[2][]{\operatorname{LTer}_{#1}(#2)} 
\newcommand{\tope}[2][]{
{\ifx&#1&%
  {\mathsf{T}_{#2}}
\else
  {\mathsf{T}_{#2}^{\underline{#1}}}
\fi}
} 
\newcommand{\bope}[2][]{
{\ifx&#1&%
  {\mathsf{B}_{#2}}
\else
  {\mathsf{B}_{#2}^{(#1)}}
\fi}
} 
\newcommand{\rope}[1]{\mathsf{R}_{#1}} 
\newcommand{\End}{\operatorname{End}} 
\newcommand{\Trim}[1]{\operatorname{Trim}({#1})} 
\newcommand{\Th}[1][]{\mathsf{ThMon}^{\underline{#1}}} 
\newcommand{\zigzag}[2][]
{
{\ifx&#1&%
  {\mathsf{ZigZag}_{#2}}
\else
  {\mathsf{ZigZag}_{#2}^{#1}}
\fi}
}
\newcommand{\ltfor}[2][] 
{
{\ifx&#1&%
  {\mathsf{LTFor}_{#2}}
\else
  {\mathsf{LTFor}_{#2}^{#1}}
\fi}
}
\newcommand{\rtfor}[2][] 
{
{\ifx&#1&%
  {\mathsf{RTFor}_{>#2}}
\else
  {\mathsf{RTFor}_{>#2}^{#1}}
\fi}
}
\newcommand{\suppfor}[2][] 
{
{\ifx&#1&%
  {\mathsf{For}_{#2}}
\else
  {\mathsf{For}_{#2}^{#1}}
\fi}
}
\newcommand{\binfor}[1][]{
{\ifx&#1&%
    \operatorname{\mathsf{BinFor}}
\else
    \operatorname{\mathsf{BinFor}}^{#1}
\fi}
} 
\newcommand{\hqsym}[2][]{
{\ifx&#1&%
  {\operatorname{HQSym}_{#2}}
\else
  {\operatorname{HQSym}_{#2}^{#1}}
\fi}
} 
\newcommand{\fl}[1]{\mathrm{Fl}_{#1}}
\newcommand{\coinv}[1]{\operatorname{Coinv}_{#1}} 
\newcommand{\qscoinv}[2][]{
{\ifx&#1&%
  {\operatorname{QSCoinv}_{#2}}
\else
  {{}^{#1}\!\operatorname{QSCoinv}_{#2}}
\fi}
}
\newcommand{\cube}{\mathbf{C}} 
\definecolor{ao}{rgb}{0.0, 0.5, 0.0}
\newcommand{\vope}[2][]{
{\ifx&#1&%
  {\mathsf{V}_{#2}}
\else
  {\mathsf{V}_{#2}^{\underline{#1}}}
\fi}
} 
\newcommand{\ins}{\varepsilon} 
\newcommand{\rt}{\Omega}
\newcommand{\prodoperator}[1]{\Pi_{#1}}
\newcommand{\rtc}{\prodoperator{\rt}}
\newcommand{\rtseq}{\mathrm{RTSeq}}
\newcommand{\hhmp}{\mathrm{HHMP}}
\newcommand{\mnfor}{\widetilde{\indexedforests}} 
\newcommand{\mnf}[1]{\wt{F}(#1)} 
\newcommand{\nsuppfor}[1]{\widehat{\indexedforests}_{#1}} 
\newcommand{\nfor}{\widehat{\indexedforests}} 
\newcommand{\nf}[1]{\wh{F}(#1)} 
\newcommand{\wt}[1]{\widetilde{#1}} 
\newcommand{\wh}[1]{\widehat{#1}} 
\newcommand{\ul}[1]{\underline{#1}} 
\newcommand{\rletter}[1]{\mathsf{r}_{#1}}
\newcommand{\tletter}[1]{\mathsf{t}_{#1}}
\newcommand{\xletter}[1]{\mathsf{x}_{#1}}
\newcommand{\BS}{\operatorname{BS}}
\newcommand{\gr}{\operatorname{Gr}} 
\title{The geometry of quasisymmetric coinvariants}
\author{Philippe Nadeau}
\address{Universite Claude Bernard Lyon 1, CNRS, Ecole Centrale de Lyon, INSA Lyon, Université Jean Monnet, ICJ UMR5208, 69622 Villeurbanne, France}
\email{\href{mailto:nadeau@math.univ-lyon1.fr}{nadeau@math.univ-lyon1.fr}}
\author{Hunter Spink}
\address{Department of Mathematics,
University of Toronto, Toronto, ON M5S 2E4, Canada}
\email{\href{mailto:hunter.spink@utoronto.ca}{hunter.spink@utoronto.ca}}
\author{Vasu Tewari}
\address{Department of Mathematical and Computational Sciences, University of Toronto Mississauga, Mississauga, ON L5L 1C6, Canada}
\email{\href{mailto:vasu.tewari@utoronto.ca}{vasu.tewari@utoronto.ca}}
\thanks{
PN was partially supported by French ANR grant ANR-19-CE48-0011 (COMBIN\'E). HS and VT acknowledge the support of the Natural Sciences and Engineering Research Council of Canada (NSERC), respectively [RGPIN-2024-04181] and [RGPIN-2024-05433].}
\keywords{Forest polynomials, Schubert polynomials, toric Richardson varieties, quasisymmetric coinvariants, Thompson monoid}
\begin{document}

\begin{abstract}
We develop a quasisymmetric analogue of the theory of Schubert cycles, building off of our previous work on a quasisymmetric analogue of Schubert polynomials and divided differences. Our constructions result in a natural geometric interpretation for the ring of quasisymmetric coinvariants.
\end{abstract}

\maketitle

\setcounter{tocdepth}{1}
\tableofcontents

\section{Introduction}
Let $\poly_n\coloneqq \ZZ[x_1,\ldots,x_n]$, the ring of polynomials in $n$ variables. A ubiquitous family of polynomials in algebraic combinatorics is the ring of quasisymmetric polynomials $\qsym{n}\subset \poly_n$, which are variable truncations of the quasisymmetric functions pioneered by Gessel \cite{Ges84} and Stanley \cite{StThesis}. These polynomials are by definition those that satisfy a weak form of variable symmetry: for any sequence $a_1,\ldots,a_k\geq 1$ and \emph{increasing} sequence $1\le i_1<\cdots < i_k \le n$, the coefficient of $x_1^{a_1}\cdots x_k^{a_k}$ is the same as the coefficient of $x_{i_1}^{a_1}\cdots x_{i_k}^{a_k}$. Note that without the increasing stipulation we recover the familiar notion of symmetric polynomials $\sym{n}\subset \poly_n$, and so we have containments $\sym{n}\subset \qsym{n}\subset \poly_n$.

A significant gap in our understanding of quasisymmetric polynomials is the subject of a research program \cite{ABB04, BeGa23,NST_a, PeSa23,  PeSa22} which seeks to answer the following  question.

\begin{question}
\label{question:qsymanalogue}
    What is the quasisymmetric analogue of Schubert calculus?
\end{question}
The combinatorial side of Schubert calculus consists of the interplay between Schubert polynomials $\{\schub{w}\suchthat w\in S_n\}$,  divided differences $\partial_i\in \End(\poly_n)$ which recursively characterize $\schub{w}$, and the symmetric coinvariants $\coinv{n}\coloneqq \poly_n/\symide{n}$ where $\symide{n}$ is the ideal generated by homogenous positive degree symmetric polynomials. 
These correspond on the geometric side to Schubert cycles $X^w\subset \fl{n}$ in the complete flag variety, Bott--Samelson resolutions, and the cohomology ring $H^\bullet(\fl{n})$. 

The ring of quasisymmetric coinvariants is defined to be $\qscoinv{n}\coloneqq\poly_n/\qsymide{n}$, where $\qsymide{n}$ is the ideal generated by homogenous positive degree quasisymmetric polynomials. An obstruction to answering \Cref{question:qsymanalogue} is that $\qscoinv{n}$ is less well-behaved than $\coinv{n}$. For example, as shown by Aval-Bergeron-Bergeron \cite{ABB04} (see also \cite{NST_a}), the graded ranks are given by 
$$\operatorname{rank} (\operatorname{QSCoinv}_{n}^{(i)})=\frac{n-i}{n+i}\binom{n-i}{i},$$
for $i=0,\ldots, n-1$. This sequence is neither unimodal nor symmetric, so in particular implies that $\qscoinv{n}$ is not the cohomology ring of a smooth projective variety.

In a previous paper \cite{NST_a} we developed the combinatorial and algebraic sides of the quasisymmetric story, by describing \emph{quasisymmetric divided differences} $\tope{i}$ that interact with \emph{forest polynomials} $\forestpoly{F}$ and the quasisymmetric coinvariants $\qscoinv{n}$ in an analogous way to how the usual divided differences $\partial_i$ interact with Schubert polynomials $\schub{w}$ and the symmetric coinvariants $\coinv{n}$. 
One interesting feature of these quasisymmetric divided differences is that the role of the nil-Hecke relations $\partial_i^2=0$, $\partial_i\partial_j=\partial_j\partial_i$ for $|i-j|\ge 2$ and $\partial_i\partial_{i+1}\partial_i=\partial_{i+1}\partial_i\partial_{i+1}$ is played by the Thompson monoid relations $$\tope{i}\tope{j}=\tope{j}\tope{i+1}$$ for $i>j$. This implies that composites of $\tope{i}$ operators are naturally indexed by certain leaf-labelled plane binary forests. 
The relevant parts of this combinatorial theory are recalled in \Cref{sec:recallforests}.
\subsection{The geometry of quasisymmetric coinvariants}
Here we complete this story, and in particular give a natural answer to \Cref{question:qsymanalogue}, by describing a quasisymmetric analogue of the geometric theory of Schubert cycles.
To obtain the clearest geometric picture, we have to extend in \Cref{sec:sequences_and_forests} the combinatorial considerations in \cite{NST_a} on composites of $\tope{i}$ operators to cover composites of both $\tope{i}$ operators and the \emph{Bergeron--Sottile maps} $\rope{j}$, so named in \emph{ibid.} because of their relevance in the seminal paper \cite{BS98} that developed Pieri rules for Schubert calculus. 
These lead us to a new combinatorial object we call the \emph{augmented Thompson monoid} which governs the combinatorics of certain nested plane binary forests.

We will show that the distinct composite operations $\prodoperator{\rt}^n:\poly_n\to \ZZ$ of appropriately supported sequences $\rt$ of $n$ operations of the form $\tope{i}$ or $\rope{j}$ are naturally indexed by  nested plane binary forests $\wh{F}\in \nfor_n$. To such an $\wh{F}$ we show that there is an associated toric variety $X(\wh{F})\subset \fl{n}$ which we call a \textbf{quasisymmetric Schubert cycle}, with the property that the degree map on $X(\wh{F})$ is computed using this composite operation. Our first main theorem (\Cref{maintheorem:QsymSchubertcycles}) shows that the subcollection of $X(\wh{F})$ where $\wh{F}$ does not involve any nesting are dual to the forest polynomials in the same way that Schubert cycles are dual to Schubert polynomials. 

To the sequence $\rt$ itself, we construct a  toric Richardson variety $X(\rt)\subset \fl{n}$ we call a \textbf{quasisymmetric Bott-Samelson variety}. This toric variety is inductively constructed as an iterated $\PP^1$-bundle (i.e. as a ``Bott manifold'') in such a way that the degree map on $X(\rt)$ is computed as the composite operation $\prodoperator{\rt}$ via taking successive degrees of the $\PP^1$-bundle (\Cref{thm:introP1degree}). Furthermore, we show that there is a natural map $X(\rt)\to X(\wh{F})$ (in fact an isomorphism) that geometrically computes the degree map on $X(\wh{F})$ via pullback to $X(\rt)$, as Bott-Samelson varieties do for Schubert varieties.

The $X(\rt)$ 
fit together into a toric complex $$\hhmp_n\coloneqq \bigcup_{\rt\in \rtseq_n}X(\rt)\subset \fl{n}$$ we call the \textbf{$\rt$-flag variety}, whose top-dimensional pieces were first studied by Haruda--Horiguchi--Masuda--Park \cite{HHMP} and later by Lian \cite{lian2023hhmp}. Our second main theorem (\Cref{maintheorem:QFLcontains}) shows that under the restriction map $\psi^*:H^\bullet(\fl{n})\to H^\bullet(\hhmp_n)$, we have 
$$\qscoinv{n}\cong \psi^*(H^\bullet(\fl{n}))\subset H^\bullet(\hhmp_n),$$
 which gives the desired geometric interpretation of the quasisymmetric coinvariants.

A curious feature of $\hhmp_n$ is that unlike in the classical story, each $X(\wh{F})$ appears isomorphically multiple times within $\hhmp_n$ (once for each $\rt\in \Trim{\wh{F}}$). This is ultimately why the containment of $\qscoinv{n}$ in $H^\bullet(\hhmp_n)$ is not an equality.
\subsection{Applications}
We give some applications of our theory beyond answering \Cref{question:qsymanalogue}. 
One such application of our theory comes from the fact that the top dimensional $X(\rt)$ show up in the study of the permutahedral toric variety in \cite{HHMP,lian2023hhmp}.  Because we have a good understanding of the degree map on $X(\rt)$ we can deduce new results about the degree map, which is combinatorially the ``divided symmetrization'' of Postnikov \cite[\S 3]{Pos09}. 
This also lets us better understand the $q$-divided symmetrization from \cite{NT21} and explains the remarkable interactions with quasisymmetric polynomials.

Another application comes from the fact that the degree map on Richardson varieties such as our toric varieties $X(\rt)$ computes generalized Littlewood--Richardson (LR) coefficients $c^v_{u,w}$, the structure coefficients for Schubert polynomial multiplication
$$\schub{u}\schub{w}=\sum c^v_{u,w}\schub{v}.$$ 
By manufacturing operators via composites of $\tope{i}$ and $\rope{j}$ that extract interesting combinatorial invariants from Schubert polynomials, our theory implies that these quantities are themselves generalized LR coefficients, and hence have geometric significance.

As a final application, we give a geometric explanation for a well-known formula of Gessel {\cite[Theorem 3]{Ges84}} for the coefficients in the expansion of a symmetric polynomial $f\in \sym{n}$ into fundamental quasisymmetric polynomials.  It makes use of the Hall inner product with skew Schur polynomials associated to ribbon shapes, and we  relate these inner products to the degrees on some of our varieties $X(\rt)$.

\subsection*{Outline of Paper} 
We detail the main results in the paper in \Cref{sec:Results}, and only give a brief outline here. \Cref{sec:compositesTR} contains the combinatorial description of the augmented Thompson monoid, which describes the composites of $\tope{}$ and $\rope{}$ operators. This extends the combinatorics of the Thompson monoid from \cite{NST_a} which dealt only with $\tope{}$ operators. In \Cref{sec:GeomRT} we describe a geometric interpretation of the $\tope{}$ and $\rope{}$ operators in the flag variety, and introduce the subvariety $\hhmp$. In \Cref{sec:rtRichardson}, using a pair of permutations associated to a sequence of $\tope{}$ and $\rope{}$ operators, we study the toric Richardson varieties $X(\rt)$ which are the primary geometric objects we consider.  In \Cref{sec:Bott} we relate for $\rt\in \Trim{\wh{F}}$ the toric varieties $X(\rt)$ to torus-orbit closures $X(\wh{F})\subset \fl{n}$ we call quasisymmetric Schubert cycles, and show a subset of our quasisymmetric Schubert cycles are dual to the forest polynomials. In \Cref{sec:qfl} we show that the $X(\rt)$ assemble into a toric complex $\hhmp_n$ such that its  cohomology ring naturally contains $\qscoinv{n}$. 

In \Cref{sec:generalizedLR} we apply our results to generalized LR coefficients. In \Cref{sec:DS} we apply our results to general torus-orbit closures and divided symmetrization. 
Finally, in \Cref{sec:Gessel} we show how we can use the projection of our quasisymmetric Schubert cycles to Grassmannians to recover a result of Gessel on extracting the coefficients of a symmetric polynomial in the basis of fundamental quasisymmetric polynomials.

In \Cref{sec:nested_forests_extra} we collect some combinatorial proofs related to nested forests and in \Cref{sec:uvPerms} we collect some combinatorial proofs related to the permutations defining the Richardson varieties $X(\rt)$. 
Finally in \Cref{sec:Moment} we compute the moment polytopes of these varieties  and give a different perspective on the existence of the quasisymmetric Bott--Samelson resolutions.

\subsection*{Acknowledgements}
We would like to thank
Nantel Bergeron, Chris Eur, Lucas Gagnon, Kiumars Kaveh, Alexander Kupers, Daniel Litt, and Arun Ram for helpful conversations.
\section{Results}
\label{sec:Results}

We start by recalling the classical story of Schubert cycles, Schubert polynomials, and divided differences.
Let $B,B^-,T\subset GL_n$ denote the upper triangular, lower triangular, and diagonal matrices, and let $\fl{n}=GL_n/B$ denote the complete flag variety. 
We will denote the Schubert cycles $X^v=\overline{BvB}$ and the opposite Schubert cycles $X_u=\overline{B^-uB}$. For permutations $u\le v$ in the Bruhat order we denote the Richardson variety $X^v_u=X^v\cap X_u$. 
Recall the Borel presentation \cite{Bor53} of the cohomology ring of the complete flag variety as the symmetric coinvariants $H^\bullet(\fl{n})=\coinv{n}\coloneqq\ZZ[x_1,\ldots,x_n]/\symide{n}$, where $x_1,\ldots,x_n$ are the Chern roots of the tautological quotient flag, and $\symide{n}$ is the ideal generated by positive degree symmetric polynomials in $x_1,\ldots,x_n$. 
A basis of $H^\bullet(\fl{n})$ is given by the fundamental classes $[X^w]$ of the Schubert cycles.

We can compute the degree map on the Schubert cycles $X^w$ by the following geometric observation of Bernstein--Gelfand--Gelfand \cite{BGG73} and Demazure \cite{Dem74}. Fix a reduced word decomposition $w=s_{i_1}\cdots s_{i_k}$ of $w$ by adjacent transpositions $s_i=(i,i+1)\in S_n$, and denote $w_j=s_{i_1}\cdots s_{i_j}$. Then there is a commutative diagram
\begin{center}
    \begin{tikzcd}
        \BS(i_1,\ldots,i_k)\ar[r]\ar[d]&\BS(i_1,\ldots,i_{k-1})\ar[r]\ar[d]&\cdots \ar[r]& \BS()=\{\mathrm{pt}\}\ar[d]\\
        X^{w_k}\ar[r,dashed]&X^{w_{k-1}}\ar[r,dashed]&\cdots \ar[r,dashed]& X^{\idem}=\{\mathrm{pt}\}
    \end{tikzcd}
\end{center}
where the bottom row are certain rational maps which are generically $\PP^1$-bundles, and the top row is a resolution of this sequence of rational maps by an iterated $\PP^1$-bundle $\BS(i_1,\ldots,i_k)$ known as the \emph{Bott--Samelson resolution} \cite{BS58, Han73, Dem74}. 
The divided difference operators $\partial_1,\partial_2,\ldots,\partial_{n-1}\in \End(\poly_n)$ defined by \begin{align}\partial_if=\frac{f-f(x_1,\ldots,x_{i-1},x_{i+1},x_i,x_{i+2},\ldots,x_n)}{x_i-x_{i+1}}\end{align}
interact in an important way with symmetric polynomials. For example 
$$f\in \sym{n}\Longleftrightarrow\partial_1f=\cdots \partial_{n-1}f=0,$$
and they descend to $H^\bullet(\fl{n})=\coinv{n}$ since  for $g\in \sym{n}$ we have $\partial_i(gh)=g\partial_i(h)$. The Bott--Samelson resolution has the property that $\deg_{\BS(i_1,\ldots,i_j)}f=\deg_{\BS(i_1,\ldots,i_{j-1})}\partial_{i_j}f$ for $f\in H^\bullet(\fl{n})$. These considerations then imply
$$\deg_{X^w}f=\deg_{\BS(i_1,\ldots,i_k)}f=\deg_{\BS(i_1,\ldots,i_{k-1})}\partial_{i_{k}}f=\cdots = \deg_{\BS()}\partial_{i_1}\cdots \partial_{i_k}f=\ct\partial_w f,$$
where $\ct g=g(0,0,\ldots)$ is the constant term operator and $\partial_w=\partial_{i_1}\cdots \partial_{i_k}$ is the composite operator. For two choices of reduced word $w=s_{i_1}\cdots s_{i_k}=s_{i_1'}\cdots s_{i_k'}$, the fact that $\deg_{X^{w}}$ can be computed with the Bott--Samelson resolution associated to either sequence gives a geometric interpretation of the identity $\ct\partial_{i_1}\cdots \partial_{i_k}=\ct\partial_{i_1'}\cdots \partial_{i_k'}$. Algebraically, the divided difference operators satisfy the nil-Hecke relations $\partial_i^2=0$, $\partial_i\partial_{i+1}\partial_i=\partial_{i+1}\partial_i\partial_{i+1}$, and $\partial_i\partial_j=\partial_j\partial_i$ for $|i-j|\ge 2$, and the identity can also be shown using these local relations via Coxeter word combinatorics. 

 The Schubert polynomials of Lascoux--Sch\"utzenberger \cite{LS82} are a family of homogenous polynomials $\{\schub{w}:w\in S_{n}\}\subset \poly_n$ with $\schub{\idem}=1$ that satisfy
$$\partial_i\schub{w}=\begin{cases}\schub{ws_i}&i\in \des{w}\\0&\text{otherwise.}\end{cases}$$

They also satisfy $\ct \partial_v\schub{w}=\delta_{v,w}$, so they descend to the Kronecker dual basis to the cycles $\{X^w\suchthat w\in S_n\}$ in $H^\bullet(\fl{n})$ under the Poincar\'e pairing. In fact, as the fundamental classes of opposite Schubert cycles $X_w$ are also a  dual basis, we have $\schub{w}=[X_w]$ in $H^\bullet(\fl{n})$.

\subsection{The quasisymmetric operations $\rope{i}$, and $\tope{i}$}

As in \cite{NST_a}, we define the \emph{Bergeron--Sottile maps} $\rope{1},\ldots,\rope{n}\in \End(\poly_n)$ and the \emph{quasisymmetric divided difference} $\tope{1},\ldots,\tope{n-1}\in \End(\poly_n)$ by
\begin{align}
    \label{eqn:ropedef}\rope{i}f&\coloneqq f(x_1,\ldots,x_{i-1},0,x_i,x_{i+1},\ldots,x_{n-1})\\
    \label{eqn:topedef}\tope{i}f&\coloneqq\rope{i}\partial_if=\rope{i+1}\partial_if=\frac{1}{x_i}(\rope{i+1}f-\rope{i}f).
\end{align}
These maps were shown to interact in an important way with quasisymmetric polynomials -- for example, it was shown in \cite[Theorems 2.6 and 2.10]{NST_a} that for $f\in \poly_n$ we have equivalences 
$$f\in \qsym{n}\Longleftrightarrow \rope{1}f=\cdots = \rope{n}f\Longleftrightarrow \tope{1}f=\cdots=\tope{n-1}f=0.$$

Unlike the $\partial_i$ operators, the $\rope{i}$ and $\tope{i}$ operators naturally decrease the number of variables of a polynomial. For $X_n\in \{\rope{1},\tope{1},\rope{2},\tope{2},\ldots,\rope{n-1},\tope{n-1},\rope{n}\}$ we have $X_n(\poly_n)\subset \poly_{n-1},$. These descend to maps $X_n:\coinv{n}\to \coinv{n-1}$ and $X_n:\qscoinv{n}\to \qscoinv{n-1}$ since for $g\in \qsym{n}$ (or $g\in \sym{n}$) we have $X_n(gh)=(\rope{1}g)X_n(h)$ and $\rope{1}$ preserves symmetry and quasisymmetry.

As it is natural to compose the $\partial_i$ operators in an order which makes a reduced word, this variable decreasing property of $X_i$ leads to a natural class of composites of $\tope{j}$ and $\rope{j}$ to consider. We denote by $\rtseq_n$ for the set of words $\rt=\xletter{1}\xletter{2}\cdots\xletter{n}$ with letters $$\xletter{i}\in \{\rletter{1},\tletter{1},\rletter{2},\tletter{2},\ldots,\rletter{i-1},\tletter{i-1},\rletter{i}\}.$$
Note the slight redundancy that $\xletter{1}=\rletter{1}$ necessarily; this will be useful from an algebraic viewpoint. Denote $\prodoperator{\rt}^n=X_1X_2\cdots X_n:\poly_n\to \ZZ$ for the composite linear functional under $\rletter{i}\mapsto \rope{i},\tletter{i}\mapsto\tope{i}$, which we may view also as a linear functional on $\coinv{n}$, or $\qscoinv{n}$. While $\partial_w$ and $\ct\partial_w$ are different operators, we have $\prodoperator{\rt}^n=\ct\prodoperator{\rt}^n$ for $\rt\in \rtseq_n$ because the codomain is $\ZZ$.

Just as a reduced word $w=s_{i_1}\cdots s_{i_k}$ describes a way of reducing a permutation to the identity by applying adjacent transpositions, we will show that a sequence $\rt=\xletter{1}\cdots \xletter{n}\in \rtseq_n$ can be viewed as describing a way of trimming a ``plane nested binary forest'' $\wh{F}=\wh{F}(\rt)\in \nsuppfor{n}$ (\Cref{defn:planebinarynestedforest}) down to the empty forest via certain elementary transformations. We let $$\Trim{\wh{F}}\coloneqq \{\rt\in \rtseq_n \suchthat \wh{F}(\rt)=\wh{F}\}$$ denote the set of all such sequences which trim the forest $\wh{F}$ (\Cref{defn:TrimForest}), analogous to the set of reduced words of $w\in S_n$.

Using commutation relations between $\tope{i}$ and $\rope{j}$ we will show that the operators $\prodoperator{\rt}^n$ for $\rt\in \Trim{\wh{F}}$ are all equal to a common operator $\prodoperator{\wh{F}}^n$, with $\wh{F}$ representing a sequencing order for certain function compositions. This generalizes an analogous result in \cite{NST_a} describing composites $\tope{F}=\tope{i_1}\tope{i_2}\cdots \tope{i_k}$ in terms of certain plane binary forests $F\in \suppfor{n}$.
\begin{eg}
\label{eg:small_trim_example}
In our theory, we have $\rt=\rletter{1}\tletter{1}\tletter{2}\tletter{1}\rletter{2}$ and $\rt'=\rletter{1}\tletter{1}\tletter{1}\rletter{2}\tletter{4}$ both belong to $\Trim{\wh{F}}$ with $\wh{F}$ depicted in the figure below. One can check directly for $f\in \ZZ[x_1,x_2,x_3,x_4,x_5]$ that
    $$\rope{1}\tope{1}\tope{2}\tope{1}\rope{2}f=\rope{1}\tope{1}\tope{1}\rope{2}\tope{4}f,$$
    i.e. $\prodoperator{\rt}^5=\prodoperator{\rt'}^5$,
    which follows by applying the commutation relation $\tope{2}\tope{1}=\tope{1}\tope{3}$ followed by the commutation relation $\tope{3}\rope{2}=\rope{2}\tope{4}$.
\begin{center}
    \includegraphics[height=2cm]{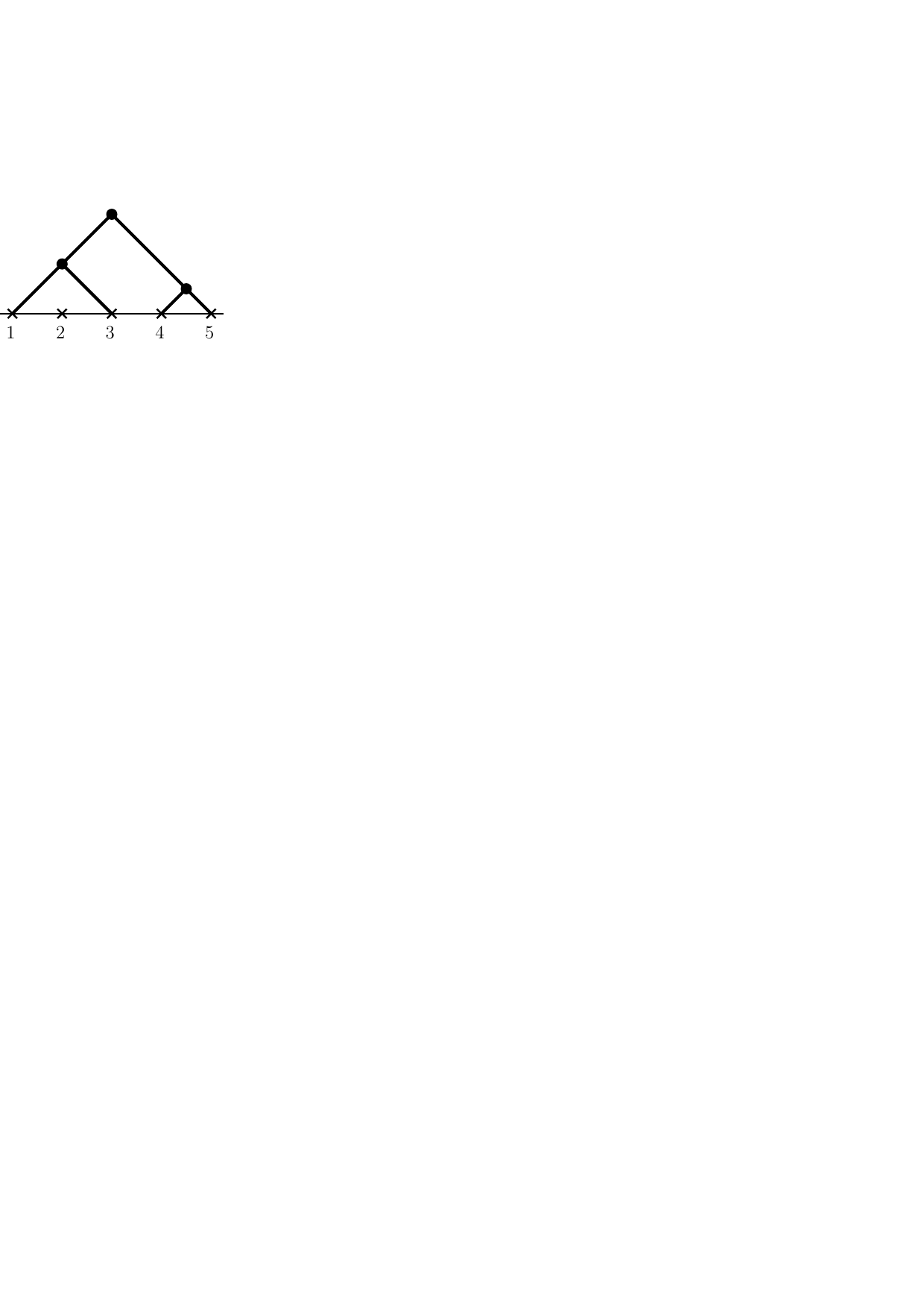}
\end{center}
\end{eg}

\subsection{A quasisymmetric Schubert cycle}

Write $|\rt|_{\tletter{}}$ for the number of $\xletter{i}$ which are equal to $\tletter{j}$ for some $j$. To each element $\rt\in \rtseq_n$ we will associate a pair of permutations $u(\rt),v(\rt)$ in $S_n$ such that $u(\rt)\le v(\rt)$ in Bruhat order and $|\rt|_{\tletter{}}=\ell(v(\rt))-\ell(u(\rt))$ (\Cref{defn:uvfromrt}). We define the $|\rt|_{\tletter{}}$-dimensional ``$\rt$-Richardson variety'' by
$$X(\rt)\coloneqq X^{v(\rt)}_{u(\rt)}\subset \fl{n}.$$
The maximal dimension $X(\rt)$ come from sequences $\rt=\rletter{1}\tletter{i_1}\cdots \tletter{i_{n-1}}\in \rtseq_n$, and give the smooth toric Richardson varieties considered in \cite{HHMP,lian2023hhmp}:
$$\{X^{uc_n}_u\suchthat u\in S_n\text{ and }u(n)=n\}$$
where $c_n=n12\cdots (n-1)$ is the backwards long cycle in $S_n$. The remaining $X(\rt')$ are the torus-orbit closures contained in one of these maximal $X(\rt)$.

The following is the quasisymmetric analogue of the BGG/Demazure geometric interpretation of divided differences.
\begin{thm}[{\Cref{thm:mainP1degree}}]
\label{thm:introP1degree}
    If $\rt=\rt'\xletter{}\in \rtseq_{n}$ and $f\in H^\bullet(\fl{n})$ then the following are true.
    \begin{enumerate}
        \item If $\xletter{}=\rletter{i}$ then $X(\rt)\cong X(\rt')$ and
        $\deg_{X(\rt)}f=\deg_{X(\rt')}\rope{i}f.$
        \item If $\xletter{}=\tletter{i}$ then there is a $\PP^1$-bundle $X(\rt)\to X(\rt')$ and 
        $\deg_{X(\rt)}f=\deg_{X(\rt')}\tope{i}f.$   \end{enumerate}
        In particular, we have
        $\deg_{X(\rt)}f=\prodoperator{\rt}^n f.$
\end{thm}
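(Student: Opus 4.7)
The plan is to prove (1) and (2) as a single inductive step, and then obtain the identity $\deg_{X(\rt)} f = \prodoperator{\rt}^n f$ by iterating. The base case is $\rt_{\mathrm{base}} = \rletter{1}\in \rtseq_1$, where $X(\rt_{\mathrm{base}}) = \fl{1}$ is a point and $\rope{1}\colon \poly_1\to \ZZ$ is evaluation at $0$, so that both sides of the degree identity are $f(0)$. Thus the main content is, given $\rt = \rt'\xletter{}\in\rtseq_n$ with $\rt'\in\rtseq_{n-1}$, to construct the map $X(\rt)\to X(\rt')$ with the claimed geometric properties and verify $\deg_{X(\rt)} f = \deg_{X(\rt')}X_n f$, where $X_n\in\{\rope{i},\tope{i}\}$ is the operator corresponding to $\xletter{}$.

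First I would extract from \Cref{sec:GeomRT} the geometric realizations of the operators: $\rope{i}$ is pullback along a closed immersion $\iota_i\colon \fl{n-1}\hookrightarrow \fl{n}$ that splices a fixed line into the $i$th subquotient of the flag (forcing $x_i=0$), while the classical divided difference $\partial_i$ is pushforward along the $\PP^1$-fibration $\pi_i\colon \fl{n}\to \fl{n}^{(i)}$ forgetting the $i$th subspace. This packages $\tope{i} = \rope{i}\partial_i$ as ``pushforward along $\pi_i$ then pullback along $\iota_i$''. Next I would analyze how the pair $(u(\rt),v(\rt))$ of \Cref{defn:uvfromrt} transforms under appending $\xletter{}$ to $\rt'$. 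For $\xletter{}=\rletter{i}$ I expect $(u(\rt),v(\rt))$ to be the image of $(u(\rt'),v(\rt'))$ under an explicit insertion $S_{n-1}\hookrightarrow S_n$ at position $i$, compatible with $\iota_i$ so that $\iota_i$ restricts to an isomorphism $X(\rt')\xrightarrow{\sim} X(\rt)$. For $\xletter{}=\tletter{i}$ I expect $v(\rt)$ to have a descent at $i$ with $v(\rt)s_i$ equal to the insertion of $v(\rt')$, while $u(\rt)$ lies in the parabolic subgroup fixing position $i$, so that $\pi_i$ restricts to an honest $\PP^1$-bundle $X(\rt)\to X^{v(\rt)s_i}_{u(\rt)}$ whose target $\iota_i$ identifies with $X(\rt')\subset \fl{n-1}$.

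With these structural facts in hand the two degree identities drop out by standard manipulations. In case (1), projection formula for the isomorphism gives $\deg_{X(\rt)}f = \deg_{X(\rt')}\iota_i^* f = \deg_{X(\rt')}\rope{i}f$. In case (2), projection formula along $\pi_i$ combined with case (1) applied to $\partial_i f$ yields $\deg_{X(\rt)}f = \deg_{X^{v(\rt)s_i}_{u(\rt)}}\partial_i f = \deg_{X(\rt')}\rope{i}\partial_i f = \deg_{X(\rt')}\tope{i}f$. Iterating peels off one letter of $\rt$ at a time and produces the factors of $\prodoperator{\rt}^n$ in exactly the right-to-left order in which they compose.

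The main obstacle will be the combinatorial verification that $(u(\rt),v(\rt))$ really does transform as described when one appends $\rletter{i}$ or $\tletter{i}$ — in particular, the descent condition $v(\rt)(i)>v(\rt)(i+1)$ and ascent condition $u(\rt)(i)<u(\rt)(i+1)$ in the $\tletter{i}$ case, which are what upgrade a generic $\PP^1$-fibration into a genuine $\PP^1$-bundle on the Richardson variety $X(\rt)$ (as opposed to a merely birational contraction). These compatibilities ought to be forced by the admissibility constraints defining $\rtseq_n$ together with the inductive characterization of $(u,v)$; the necessary technical lemmas are collected in \Cref{sec:uvPerms}, and marshalling them to deliver the above geometric structure is the essential work. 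Once the combinatorics is in place, the geometric conclusion is a direct adaptation of the classical Bott--Samelson computation of $\deg_{X^w}$ reviewed in the introduction.
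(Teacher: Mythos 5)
Your proposal matches the paper's proof essentially verbatim: the paper establishes this result as an immediate consequence of \Cref{thm:geometricrope}, \Cref{thm:geometrictope}, and \Cref{thm:TP1bundle}, which encode precisely the geometric realizations of $\rope{i}$ and $\tope{i}$ (as $\Psi_i^*$ and as $\Psi_i^* (\pi_i)_*$, respectively) and the $\PP^1$-bundle structure that you describe, combined with the recursive definition of $(u(\rt),v(\rt))$ in terms of $\ins_j$. The descent/ascent verifications you flag as ``the main obstacle'' are exactly what the paper handles in \Cref{prop:projectionNiceRichardsons} using $\ins_i(\cdot)(i)=1$, so there is no gap.
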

To each $\wh{F}\in \nfor_n$, we will show there is a torus-orbit closure $X(\widehat{F})\subset \fl{n}$ which has the property that for any $\rt\in \Trim{\wh{F}}$ we have
$$X(\wh{F})=u(\rt)^{-1}\cdot X(\rt).$$ We note that $X(\wh{F})$ need not by a toric Richardson variety.
 Let $\pi_\rt:X(\rt)\to X(\wh{F})$ be the isomorphism induced by multiplication by $u(\rt)^{-1}$. Letting $\rt=\xletter{1}\cdots \xletter{n}\in \rtseq_n$, if we define $\rt_i=\xletter{1}\cdots \xletter{i}\in \rtseq_i$ and $\wh{F}_i=\wh{F}(\rt_i)$, then there is a commutative diagram
\begin{center}
    \begin{tikzcd}        X(\rt_n)\ar[r]\ar[d,"\pi_{\rt_n}"]&X(\rt_{n-1})\ar[r]\ar[d,"\pi_{\rt_{n-1}}"]&\cdots \ar[r]& X(\rt_0)=\{\mathrm{pt}\}\ar[d,"\pi_{\rt_{0}}"]\\
        X(\wh{F}_n)\ar[r]&X(\wh{F}_{n-1})\ar[r]&\cdots \ar[r]& X(\wh{F}_0)=\{\mathrm{pt}\}
    \end{tikzcd}
\end{center}
which for $f\in H^\bullet(\fl{n})$ lets us compute
$$\deg_{X(\wh{F})}f=\deg_{X(\rt_n)}f=\deg_{X(\rt_{n-1})}X_{n}f=\cdots = \deg_{X(\rt_0)}X_1\cdots X_n f=\prodoperator{\rt}^nf.$$
For $\rt,\rt'\in \Trim{\wh{F}}$ this gives a geometric interpretation of the equality $\prodoperator{\rt}^nf=\prodoperator{\rt'}^nf$.

The analogies to the classical theory are therefore the following.
\begin{enumerate}
    \item The varieties $X(\wh{F})\subset \fl{n}$ correspond to the Schubert varieties $X^w\subset \fl{n}$.
    \item $\pi_{\rt}:X(\rt)\cong X(\wh{F})$ for $\rt\in \Trim{\wh{F}}$ corresponds to a Bott--Samelson resolutions of $X^w$.
\end{enumerate}

We note that in the classical case, the Bott--Samelson resolution rarely maps isomorphically onto the corresponding Schubert variety.
\begin{eg}
    Consider $\rt=\rletter{1}\tletter{1}\tletter{2}\tletter{1}\rletter{2}$ and $\rt'=\rletter{1}\tletter{1}\tletter{1}\rletter{2}\tletter{4}$, which both belong to $\Trim{\wh{F}}$ as in~\Cref{eg:small_trim_example}. Writing permutations in one line notation we have the toric varieties $X(\rt)=X^{51243}_{21435}$ and $X(\rt')=X^{52341}_{32415}$, and $X(\rt')=23145\cdot X(\rt)$.
\end{eg}

\subsection{Duality with forest polynomials}
 We cannot find a Kronecker dual basis to the $X(\wh{F})$ because there are linear relations between the fundamental classes, corresponding to nontrivial relations between $\prodoperator{\wh{F}}^n$ functionals arising from the identity
$$\tope{i}\rope{i+1}=\rope{i+1}\tope{i}+\rope{i}\tope{i+1}.$$
It turns out that we obtain nice duality statements if we restrict to the subset of indexed forests $\suppfor{n}\subset \nsuppfor{n}$ where nesting does not occur.

To see this, we note that \Cref{thm:introP1degree} has particular significance in terms of the work from \cite{NST_a} on the interaction between the homogenous family of forest polynomials $\{\forestpoly{F}:F\in \indexedforests\}$ and the operators $\tope{i}$. Similarly to the way that Schubert polynomials interact with the $\partial_i$ operators, the forest polynomials satisfy 
$$\tope{i}\forestpoly{F}=\begin{cases}\forestpoly{F/i}&i\in \qdes{F}\\0&\text{otherwise,}\end{cases}$$
where $\qdes{F}$ is a certain subset of the leaf labels of $F$ and $F/i$ is a certain ``trimmed'' forest (see \Cref{defn:forpolyindirect}). Just as the Schubert polynomials $\{\schub{w}\suchthat w\in S_n\}$ descend to a basis of $\coinv{n}$, the forest polynomials $\{\forestpoly{F}\suchthat F\in \suppfor{n}\}$ descend to a basis of $\qscoinv{n}$.

Furthermore, the distinct composites $\tope{G}$ of the $\tope{i}$ operators are indexed by $G\in \indexedforests$ and $\ct \tope{F}\forestpoly{G}=\delta_{F,G}$. It is this observation that allows us to  identify a subfamily of the $X(\wh{F})$ as dual to the forest polynomials.

\begin{maintheorem}
\label{maintheorem:QsymSchubertcycles}For $F\in \suppfor{n}$ an indexed forest, we have
$\deg_{X(F)}f=\ct\tope{F}f,$ and there is a duality $$\deg_{X(F)}\forestpoly{G}=\delta_{F,G}.$$ Furthermore, for any $\rt\in \rtseq_n$, the fundamental class $[X(\rt)]\in H^\bullet(\fl{n})$ is uniquely a nonnegative linear combination of the fundamental classes $\{[X(F)]\suchthat F\in \suppfor{n}\}$.
\end{maintheorem}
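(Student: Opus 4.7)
The plan is to prove the three assertions in order, with Part (1) as the foundation, Part (2) as an immediate corollary, and Part (3) as the main content.

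For the degree formula, fix any $\rt \in \Trim{F}$. Theorem~\ref{thm:introP1degree} gives $\deg_{X(\rt)} f = \prodoperator{\rt}^n f$. Since $X(F) = u(\rt)^{-1}\cdot X(\rt)$ is a $GL_n$-translate of $X(\rt)$, the two varieties are isomorphic as abstract varieties and the isomorphism preserves the restriction of $H^\bullet(\fl{n})$-classes (as $GL_n$ is connected), so $\deg_{X(F)} f = \prodoperator{\rt}^n f$. It remains to identify $\prodoperator{\rt}^n$ with $\ct\tope{F}$ as linear functionals whenever $\rt$ trims a non-nested forest $F$. The commutation relations between the $\tope{i}$ and $\rope{j}$ operators allow us to push the $\rletter{}$ letters in $\rt$ to the far left past the $\tletter{}$ letters without introducing new terms (since no nesting is present), collapsing the leading chain of $\rope{}$'s into the constant term evaluation, while the $\tletter{}$-letters compose to $\tope{F}$ in the order dictated by the standard reading of a forest.

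Part (2) is immediate: combining Part (1) with the duality $\ct\tope{F}\forestpoly{G} = \delta_{F,G}$ recalled from \cite{NST_a} gives $\deg_{X(F)}\forestpoly{G} = \delta_{F,G}$. Moreover, this produces a pairing between $\{[X(F)] : F\in \suppfor{n}\}$ and $\{\forestpoly{G} : G\in \suppfor{n}\}$ whose matrix is the identity; hence $\{[X(F)]\}$ is linearly independent in $H^\bullet(\fl{n})$, establishing uniqueness of any expansion in Part (3).

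For Part (3), first note that $[X(\rt)] = [X(\wh{F}(\rt))]$ in $H^\bullet(\fl{n})$, again because translation by $u(\rt)^{-1}\in GL_n$ preserves cohomology classes. Thus it suffices to decompose $[X(\wh{F})]$ for an arbitrary nested forest. The key tool is the operator identity
$$\tope{i}\rope{i+1} = \rope{i+1}\tope{i} + \rope{i}\tope{i+1}$$
highlighted before the statement of the theorem. Applied to a word representative of $\prodoperator{\wh{F}}^n$ at a site witnessing nesting, this identity rewrites $\prodoperator{\wh{F}}^n = \prodoperator{\wh{F}_1}^n + \prodoperator{\wh{F}_2}^n$ where $\wh{F}_1,\wh{F}_2$ have strictly smaller nesting statistic. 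Iterating until the statistic vanishes produces
$$\prodoperator{\wh{F}}^n = \sum_{F\in \suppfor{n}} c_F\,\ct\tope{F},\qquad c_F\in\ZZ_{\geq 0}.$$
By Part (1), each side represents a linear functional on $H^\bullet(\fl{n}) = \coinv{n}$; Poincaré duality on $\fl{n}$ then lifts this equality of functionals to the cohomological identity $[X(\wh{F})] = \sum c_F [X(F)]$, completing existence.

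The main obstacle is executing the induction in Part (3) cleanly: one must identify a nesting statistic on elements of $\nsuppfor{n}$ that strictly decreases under the above identity, verify that each application keeps us within $\rtseq_n$ (so the two summands correspond to valid elements of $\Trim{\wh{F}_1}$ and $\Trim{\wh{F}_2}$), and interpret the decomposition through the augmented Thompson monoid combinatorics of Section~\ref{sec:compositesTR}. Nonnegativity comes \emph{automatically} from the identity being $A = B+C$ rather than $A = B - C$; had there been any sign, the argument would break.
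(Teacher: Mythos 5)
Your proof takes essentially the same route as the paper's, and the pieces you present are correct; the one place where you acknowledge hand-waving in Part (3) is exactly the place where the paper is cleaner by working at a lower level of abstraction. For the last assertion, the paper does not introduce any ``nesting statistic'' on $\nsuppfor{n}$: it runs the induction directly on words in $\rtseq_n$, repeatedly combining the splitting identity $\tope{i}\rope{i+1}=\rope{i}\tope{i+1}+\rope{i+1}\tope{i}$ (when a consecutive pair $\tletter{i}\rletter{i+1}$ appears) with the monoid-level commutation relations $\tope{i}\rope{j}=\rope{j}\tope{i+1}$ for $i\geq j$ and $\rope{i}\tope{j}=\tope{j}\rope{i+1}$ for $i>j$ (which cover the other consecutive $\tletter{a}\rletter{b}$ cases). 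Each step moves one $\rletter{}$ one position to the left past a $\tletter{}$, so the number of pairs $p<q$ with $\xletter{p}$ a $\tletter{}$-letter and $\xletter{q}$ an $\rletter{}$-letter strictly decreases by exactly one on each summand; one also checks directly from the index constraints that the rewritten words remain in $\rtseq_n$. Termination is then immediate, and when all $\rletter{}$'s precede all $\tletter{}$'s the forest is in $\suppfor{n}$. That word-level bookkeeping closes the gap you flag; you should adopt it rather than trying to define a decreasing invariant on nested forests. Two smaller remarks: in Part (1) the argument simplifies since $\prodoperator{\rt}^n$ is by definition independent of the choice of $\rt\in\Trim{F}$, so one may simply take $\rt=\rletter{1}^{n-|F|}\tletter{i_1}\cdots\tletter{i_k}$ with $F=i_1\cdots i_k$ and read off $\ct\tope{F}$ directly (\Cref{cor:ev0TF}) — no ``pushing'' is needed. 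And your explicit observation that the duality pairing with forest polynomials yields linear independence of $\{[X(F)]\}$, hence uniqueness of the expansion, is a useful point that the paper leaves implicit.
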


Consequently, the subset $\{X(F)\suchthat F\in \suppfor{n}\}$ of the quasisymmetric Schubert cycles are to forest polynomials $\{\forestpoly{F}\suchthat F\in \suppfor{n}\}$ as the Schubert cycles $\{X^w\suchthat w\in S_n\}$ are to Schubert polynomials $\{\schub{w}\suchthat w\in S_n\}$.

\subsection{The $\rt$-flag variety}
We now describe the quasisymmetric analogue of the flag variety. 
The toric Richardson varieties $X(\rt)$ fit together into a complex of smooth toric varieties
$$\hhmp_n=\bigcup_{\rt\in \rtseq_n}X(\rt).$$
we call the $\rt$-flag variety.
As $\hhmp_n$ is composed of isomorphic copies of $X(\wh{F})$, one for each $\rt\in \Trim{\wh{F}}$, we call $\hhmp_n$ the $\rt$-flag variety.
It can also be defined recursively as flags satisfying certain simple conditions (see \Cref{thm:recursivehhmp}).
This complex of toric varieties was considered before in \cite{HHMP}. In \cite{lian2023hhmp} it was shown that $\hhmp$ arises as a toric degeneration of a general $T$-orbit closure in $\fl{n}$ (the permutahedral variety).
The moment polytopes of the top-dimensional $X(\rt)$ give a subdivision of the permutahedron $$\operatorname{Perm}_{n-1}=\operatorname{conv}\{\sigma\cdot (n,n-1,\ldots,1)\suchthat \sigma\in S_n\}$$ into $(n-1)!$ combinatorial cubes \cite{HHMP}, which we call the $\hhmp$-subdivision, and which by what we have said earlier has the property that the lower dimensional faces are indexed by the moment polytopes of lower dimensional $X(\rt')$ with $\rt'\in \rtseq_n$.
 As we will see later, the facial structure of this subdivision (and hence the poset structure of the toric complex) is identified with the unit cube subdivision of $[1,2]\times [1,3]\times \cdots \times [1,n-1]$, where to a sequence $\xletter{1}\cdots\xletter{n}\in\rtseq_n$ we associate the cuboid $Y_2\times \cdots \times Y_{n}$ where $Y_i=\{j\}$ if $\xletter{i}=\rletter{j}$ and $Y_i=[j,j+1]$ if $\xletter{i}=\tletter{j}$.

\begin{figure}[!ht]
    \centering
    \includegraphics[width=0.8\textwidth]{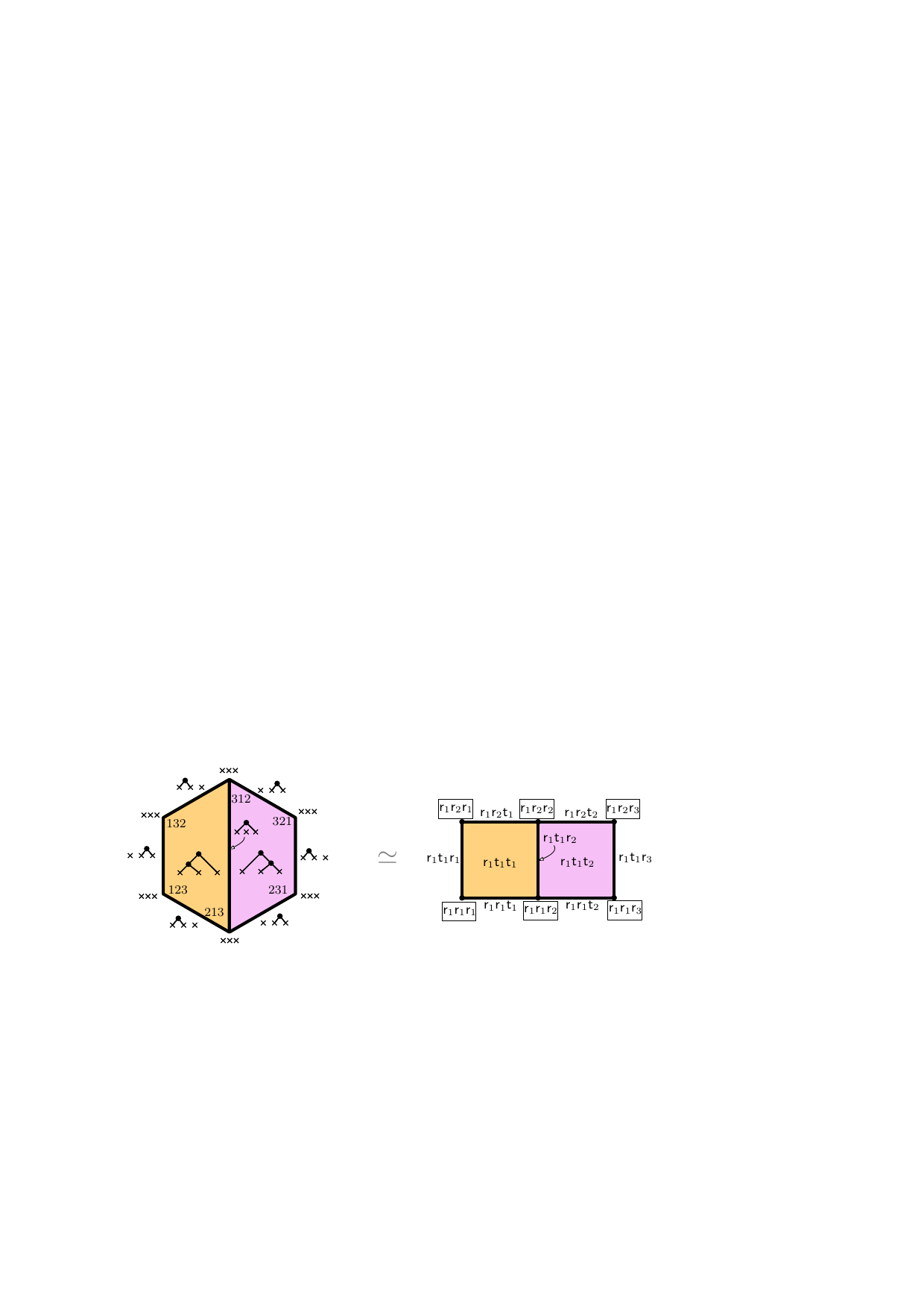}
    \caption{Unit cube subdivision for $n=3$ where we have indicated the face labelings by $\rtseq_3$ on the right, and nested forests on the left.}
    \label{fig:cube_subdivision}
\end{figure}

Not only does $\hhmp_n$ contain the quasisymmetric Schubert cycles, but our next main theorem shows its cohomology ring naturally contains $\qscoinv{n}$.

\begin{maintheorem}
\label{maintheorem:QFLcontains}
 Under the inclusion $\psi:\hhmp_n\to \fl{n}$, we have 
    $$\psi^*(H^\bullet(\fl{n}))\simeq\qscoinv{n}.$$
\end{maintheorem}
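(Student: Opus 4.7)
Via the Borel presentation $H^\bullet(\fl{n})=\coinv{n}=\poly_n/\symide{n}$ and the identification $\qscoinv{n}=\coinv{n}/(\qsymide{n}/\symide{n})$, proving the theorem reduces to showing
$$\ker\bigl(\psi^*:\coinv{n}\to H^\bullet(\hhmp_n)\bigr)=\qsymide{n}/\symide{n}.$$
I will establish both inclusions separately, leveraging \Cref{thm:introP1degree} for one direction and \Cref{maintheorem:QsymSchubertcycles} for the other.

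\textbf{The inclusion $\qsymide{n}/\symide{n}\subseteq\ker\psi^*$.} For a homogeneous $g\in\qsym{n}$ of positive degree and any $h\in\poly_n$, the identity $X_n(gh)=(\rope{1}g)X_n(h)$ (valid whenever $g$ is quasisymmetric) iterates along any $\rt=\xletter{1}\cdots\xletter{n}\in\rtseq_n$, since $\rope{1}$ preserves quasisymmetry, to yield
$$\prodoperator{\rt}^n(gh)=(\rope{1}^n g)\cdot \prodoperator{\rt}^n(h)=g(0,\ldots,0)\cdot \prodoperator{\rt}^n(h)=0.$$
By \Cref{thm:introP1degree}, $\deg_{X(\rt)}(gh)=0$ for every $\rt$ and every $h$ pulled back from $\fl{n}$. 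Since $X(\rt)$ is a smooth projective toric Bott manifold (via the iterated $\PP^1$-bundle structure), and its cohomology is generated by divisor classes pulled back from $\fl{n}$, Poincar\'e duality on $X(\rt)$ forces the restriction $g|_{X(\rt)}$ to vanish. To upgrade this to $\psi^*g=0$ in $H^\bullet(\hhmp_n)$, I would invoke a Mayer--Vietoris or equivariant localization argument on the toric complex $\hhmp_n$, whose maximal pieces glue according to the $\hhmp$-subdivision of the permutohedron into combinatorial cubes; this should furnish injectivity of the restriction $H^\bullet(\hhmp_n)\hookrightarrow\bigoplus_{\rt\text{ top}}H^\bullet(X(\rt))$.

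\textbf{The inclusion $\ker\psi^*\subseteq\qsymide{n}/\symide{n}$.} Suppose $[f]\in\coinv{n}$ satisfies $\psi^*[f]=0$. By transitivity of restriction, $f|_{X(\rt)}=0$ for every $\rt\in\rtseq_n$, so in particular $\deg_{X(\rt)}(f)=0$. For $F\in\suppfor{n}$ and any $\rt\in\Trim{F}$, the variety $X(F)=u(\rt)^{-1}\cdot X(\rt)$ is a left translate of $X(\rt)$; since left multiplication by elements of the connected group $GL_n$ acts trivially on $H^\bullet(\fl{n})$, we conclude $\deg_{X(F)}(f)=\deg_{X(\rt)}(f)=0$ for every $F\in\suppfor{n}$. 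By \Cref{maintheorem:QsymSchubertcycles}, $\{\forestpoly{F}\suchthat F\in\suppfor{n}\}$ is a basis of $\qscoinv{n}$ Kronecker-dual to $\{[X(F)]\}$, so expanding $[f]\equiv\sum_F c_F\forestpoly{F}\pmod{\qsymide{n}}$ yields $c_F=\deg_{X(F)}(f)=0$ for every $F$, hence $[f]\in\qsymide{n}/\symide{n}$.

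\textbf{Main obstacle.} The delicate step is the last move of the first inclusion: passing from pointwise vanishing of $g$ on each maximal piece $X(\rt)$ to the global vanishing of $\psi^*g$ on the toric complex $\hhmp_n$. This requires a careful understanding of $H^\bullet(\hhmp_n)$ as a toric complex — most cleanly, I expect, through equivariant localization, using that the $T$-fixed locus of $\hhmp_n$ lies in $\fl{n}^T=S_n$ together with the combinatorial structure of the cubical $\hhmp$-subdivision to control GKM edge conditions.
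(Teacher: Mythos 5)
Your overall architecture is correct and matches the paper's decomposition of the problem: reduce to identifying $\ker\psi^*$ with $\qsymide{n}/\symide{n}$, handle the containment of the kernel in the normalization $\widetilde{\hhmp}_n=\bigsqcup_{\rt\text{ top}}X(\rt)$, and then deal with the passage from $\widetilde{\hhmp}_n$ to $\hhmp_n$. Your second inclusion argument is clean and essentially the paper's: $\psi^*[f]=0$ restricts to $\deg_{X(\rt)}f=0$ for all $\rt$, translation-invariance of fundamental classes gives $\deg_{X(F)}f=0$ for all $F\in\suppfor{n}$, and the Kronecker duality with forest polynomials forces $[f]=0$ in $\qscoinv{n}$.

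However, there is a genuine gap at exactly the step you flag as the "main obstacle," and you do not actually close it. You need injectivity of the restriction $H^\bullet(\hhmp_n)\to\bigoplus_{\rt\text{ top}}H^\bullet(X(\rt))$, and you propose handling it by "a Mayer--Vietoris or equivariant localization argument" with GKM edge conditions, but you give no details, and it is not clear that GKM applies: $\hhmp_n$ is a reducible toric complex, not a smooth variety, and the standard GKM hypotheses do not automatically hold. The paper instead proves this injectivity by an entirely topological argument (Prop.~\ref{prop:spectral}): it exhibits $\hhmp_n\to\hhmp_{n-1}$ as a fiber bundle with fiber a chain of $\PP^1$'s, shows $\pi_1(\hhmp_n)=0$ by induction, runs the Serre spectral sequence (which degenerates since everything is concentrated in even degree), and compares it with the corresponding spectral sequence for the normalized bundles $\wt{Y}_i\to\wt{\hhmp}_{n-1}$. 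Without something of this sort, your first inclusion is not proved.

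A secondary imprecision: in the first inclusion you pass from $\deg_{X(\rt)}(gh)=0$ for all $h$ pulled back from $\fl{n}$ to $g|_{X(\rt)}=0$ by Poincar\'e duality; this requires that the restriction $H^\bullet(\fl{n})\to H^\bullet(X(\rt))$ is surjective, which you assert but do not verify. The paper avoids this by using the Minkowski-weight description of $H^\bullet(X(\rt))$ for smooth toric varieties (a class vanishes iff its restriction to every sub-torus-orbit closure vanishes), together with the fact that every sub-orbit closure is itself of the form $X(\rt')$ with degree map $\prodoperator{\rt'}^n$. That route is both cleaner and more directly connected to the forest combinatorics.
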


\subsection{Applications}

\subsubsection{Generalized Littlewood--Richardson coefficients}
The generalized LR coefficients $c^{v}_{u,w}$ are the structure coefficients for Schubert polynomial multiplication
$$\schub{u}\schub{w}=\sum c^v_{u,w}\schub{v}.$$
Geometrically these can be realized as $c^v_{u,w}=\deg_{X^v_u}\schub{w}$, so we can identify
$$c^{v(\rt)}_{u(\rt),w}=\prodoperator{\rt}^n \schub{w}.$$
We will give a combinatorially nonnegative interpretation to these generalized LR coefficients for all $\rt$. 
By using well-chosen sequences $\rt$ appearing in \cite{NST_a, NST_2}, we arrive at the following results.

\begin{thm}[{\Cref{thm:LR}}]
\label{maintheorem:LR}
    Let $w\in S_n$. The coefficients of
    \begin{enumerate}
        \item A monomial $x_1^{c_1}\cdots x_n^{c_n}$ in the monomial expansion of $\schub{w}$
        \item A slide polynomial coefficient in the slide polynomial expansion $\schub{w}=\sum a_{\textbf{i}}\slide{\textbf{i}}$
        \item A forest polynomial coefficient in the $m$-forest polynomial expansion $\schub{w}=\sum a_F\forestpoly[m]{F}$
    \end{enumerate}
    are all generalized LR coefficients $c^{v(\rt)}_{u(\rt),w}$ for some $\rt\in \rtseq_{N}$ with $N$ possibly larger than $n$, with an explicit combinatorially nonnegative rule for computing them.
\end{thm}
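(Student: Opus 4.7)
\medskip

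\noindent\textbf{Proof proposal.} The unifying observation is the identity
$$c^{v(\rt)}_{u(\rt), w} = \deg_{X(\rt)} \schub{w} = \prodoperator{\rt}^n \schub{w},$$
where the first equality uses the standard fact $\schub{w} = [X_w] \in H^\bullet(\fl{n})$ together with the definition $X(\rt) = X^{v(\rt)}_{u(\rt)}$, and the second equality is \Cref{thm:introP1degree}. Consequently, to prove each of (1)--(3) it suffices to construct, for each coefficient of interest, a sequence $\rt \in \rtseq_N$ whose composite functional $\prodoperator{\rt}^N$ agrees on $\schub{w}$ with the desired coefficient-extraction functional. Nonnegativity of the resulting generalized LR coefficients is automatic from geometry, and the combinatorial rule will arise from unwinding how the elementary operators $\rope{j}$ and $\tope{i}$ act on a combinatorial model for $\schub{w}$.

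Case (3) is the cleanest, and I would handle it first. The duality $\ct \tope{F} \forestpoly{G} = \delta_{F,G}$ from \Cref{maintheorem:QsymSchubertcycles} implies that the coefficient of $\forestpoly{F}$ in the forest polynomial expansion of any polynomial is $\ct \tope{F}$ applied to that polynomial. Picking any $\rt \in \Trim{F}$ with $F \in \suppfor{N}$ gives $\prodoperator{\rt}^N = \ct \tope{F}$ on polynomials, and the corresponding generalized LR coefficient is $c^{v(\rt)}_{u(\rt),w}$. For the $m$-forest polynomials $\forestpoly[m]{F}$ the analogous dual operator involves an additional block of $\rletter{}$'s at the start of $\rt$ recording the multiplicity parameter $m$, and the sequence $\rt \in \rtseq_{N+m}$ can be read off directly from the description of $\forestpoly[m]{F}$ in \cite{NST_a}.

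For (1) and (2), the plan is to build sequences $\rt$ that iteratively strip off variables. The defining identities
$$\rope{i} f = f(x_1,\ldots,x_{i-1},0,x_i,\ldots,x_{n-1}), \qquad \tope{i} f = \tfrac{1}{x_i}\bigl(\rope{i+1} f - \rope{i} f\bigr)$$
show that interleaving $\tletter{}$'s and $\rletter{}$'s produces a functional that peels off specified powers of successive variables: to extract $[x_1^{c_1}\cdots x_n^{c_n}]\schub{w}$ one uses $\tletter{1}$ a total of $c_1$ times to remove $x_1^{c_1}$, then an $\rletter{1}$ to drop the remaining $x_1$-dependence, and iterates on the subsequent variables. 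Slide polynomial coefficients are detected by a similar kind of operator sequence, since slide polynomials are characterized by where their variables occur; explicit formulas of this type are given in \cite{NST_a, NST_2}, and these directly produce sequences $\rt \in \rtseq_N$ (typically with $N$ larger than $n$, as stated) realizing the desired functionals.

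The main obstacle I foresee is twofold. First, one must confirm that the $\rt$ produced in each case does lie in $\rtseq_N$, i.e.\ that the prescribed alphabet restrictions $\xletter{i} \in \{\rletter{1},\tletter{1},\ldots,\rletter{i-1},\tletter{i-1},\rletter{i}\}$ are satisfied, which may force one to enlarge $N$ and pad the sequences appropriately. Second, the ``combinatorially nonnegative rule'' is not simply a restatement of nonnegativity of degrees: one wants a concrete counting formula. I would obtain this by applying the sequence $\rt$ to a pipe-dream (or bumpless pipe-dream) expansion of $\schub{w}$ and tracking how each $\rope{j}$ and $\tope{i}$ acts on these objects, using the augmented Thompson relations to ensure the count is independent of the choice of $\rt \in \Trim{\wh{F}(\rt)}$. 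This is where the combinatorial infrastructure developed in \Cref{sec:compositesTR} and in \cite{NST_a, NST_2} is essential.
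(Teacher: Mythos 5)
Your high-level strategy matches the paper's: rewrite the target coefficient as $\ct\prodoperator{\rt}\schub{w}$, identify this with $c^{v(\rt)}_{u(\rt),w}$ via the geometric identity $c^{v(\rt)}_{u(\rt),w}=\deg_{X(\rt)}\schub{w}$, pad with $\rletter{1}$'s to pass from $\rtseq$ to $\rtseq_N$, and invoke \cite{NST_2} for the actual extractors. However, two pieces diverge from what the paper does, and one of them is a genuine error.

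The concrete variable-stripping scheme you sketch for case (1) does not work. You propose applying $\tope{1}$ a total of $c_1$ times to strip $x_1^{c_1}$, then $\rope{1}$, and iterating. But $\tope{1}$ kills every monomial in which both $x_1$ and some later variable appear: for instance $\tope{1}(x_1x_2)=\frac{1}{x_1}\bigl(\rope{2}(x_1x_2)-\rope{1}(x_1x_2)\bigr)=\frac{1}{x_1}(0-0)=0$. Processing from the highest index downward is also insufficient on its own, since $\tope{1}\tope{2}(x_2^2)=\tope{1}(x_2)=-1\neq 0$. Designing a bona fide monomial-coefficient extractor out of $\tope{}$'s and $\rope{}$'s is delicate, which is precisely why the paper does not attempt it inline and instead cites the explicit extractor constructions of \cite{NST_2}. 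Relatedly, your treatment of $m$-forest polynomials by ``an additional block of $\rletter{}$'s at the start'' is not substantiated by the duality $\ct\tope{F}\forestpoly{G}=\delta_{F,G}$, which only covers the $m=1$ case; the general $m$ case is again deferred to \cite{NST_2}. Finally, for the ``explicit combinatorially nonnegative rule'' the paper does not rely on pipe-dream or bumpless-pipe-dream tracking, which is not developed anywhere in the text. Instead it uses \Cref{thm:pieriRiTi}, which gives manifestly Schubert-positive expansions of $\rope{i}\schub{w}$ and $\tope{i}\schub{w}$ via decreasing chains in $k$-Bruhat order (Sottile's Pieri rule and Bergeron--Sottile), and then iterates these along $\rt$; the paper even supplies a purely combinatorial proof of \Cref{prop:combinatorialropetope} so that the whole chain of equalities, not just the positivity, is combinatorial. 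Your pipe-dream proposal would be a genuinely different and substantial piece of work, whereas the Pieri-rule route is already set up in the surrounding section.
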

In \cite{LSS06} it was shown that the coefficient of a monomial arises in a different way as a generalized LR coefficient, and it would be interesting to compare these results. 
The second and third results are new, and as we will see also generalize to the $m$-forest polynomials of \cite{NST_a} and the $m$-slide polynomials of \cite{NST_2}.

\subsubsection{Divided symmetrization}
Harada--Horiguchi--Masuda--Park \cite{HHMP} and Lian \cite{lian2023hhmp} showed that for a general $x\in \fl{n}$ we have the equality \begin{align}
\label{eqn:Txissum}[\overline{T\cdot x}]=[\hhmp_n].\end{align}
On the other hand, by the work of Anderson--Tymoczko \cite{And10} we have 
$$\deg_{\overline{T\cdot x}}f(x_1,\ldots,x_n)=\sum_{\sigma\in S_n}\frac{f(x_{\sigma(1)},\ldots,x_{\sigma(n)})}{(x_{\sigma(1)}-x_{\sigma(2)})\cdots (x_{\sigma(n-1)}-x_{\sigma(n)})},$$
realizing the degree map of classes from $H^\bullet(\fl{n})$ on $\overline{T\cdot x}$ as ``divided symmetrization'' (DS henceforth), first studied by Postnikov \cite[\S 3]{Pos09}. 
We can combine these results with \Cref{maintheorem:QsymSchubertcycles} to obtain a new factorized expression for DS. 

\begin{thm}
    For $f\in \poly_n$ homogenous of degree $n-1$, we have
    $$\sum_{\sigma\in S_n}\frac{f(x_{\sigma(1)},\ldots,x_{\sigma(n)})}{(x_{\sigma(1)}-x_{\sigma(2)})\cdots (x_{\sigma(n-1)}-x_{\sigma(n)})}=\tope{1}(\tope{1}+\tope{2})\cdots (\tope{1}+\cdots +\tope{n-1})f.$$
\end{thm}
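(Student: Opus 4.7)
The plan is to realize both sides as the degree of $f$ against the permutahedral variety class in $\fl{n}$, and then expand that degree using the toric subdivision $\hhmp_n = \bigcup_{\rt \in \rtseq_n} X(\rt)$. First I would invoke the Anderson--Tymoczko formula recalled earlier in this section, which says that for a generic $x \in \fl{n}$ the left-hand side equals $\deg_{\overline{T\cdot x}} f$. Combining this with \eqref{eqn:Txissum}, which gives $[\overline{T\cdot x}] = [\hhmp_n]$, yields $\deg_{\overline{T\cdot x}} f = \deg_{\hhmp_n} f$.

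Next I would use that $\hhmp_n$ is a pure $(n-1)$-dimensional toric complex whose top-dimensional irreducible components are the $(n-1)!$ smooth toric varieties $X(\rt)$ with $|\rt|_{\tletter{}} = n-1$. Since $\xletter{1} = \rletter{1}$ is forced in every $\rt \in \rtseq_n$, these are precisely the sequences
\[ \rt = \rletter{1}\tletter{i_2}\tletter{i_3}\cdots\tletter{i_n}, \qquad 1 \le i_k \le k-1. \]
Since $f$ has degree $n-1$, only these top-dimensional pieces contribute, and additivity of fundamental classes gives
\[ \deg_{\hhmp_n} f = \sum_{\text{top-dim } \rt} \deg_{X(\rt)} f. \]
Then \Cref{thm:introP1degree} evaluates each summand as $\prodoperator{\rt}^n f = \rope{1}\tope{i_2}\tope{i_3}\cdots\tope{i_n} f$. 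Because the ranges $i_k \in \{1, \ldots, k-1\}$ are independent and the $\tope{i_k}$ sit in fixed positions of the composite, distributing the sum gives
\[ \sum_{\text{top-dim } \rt} \prodoperator{\rt}^n f = \rope{1}\tope{1}(\tope{1}+\tope{2})\cdots(\tope{1}+\cdots+\tope{n-1}) f. \]

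Finally, since $f$ has degree $n-1$ and each $\tope{j}$ lowers degree by one, the result of applying the $n-1$ quasisymmetric divided differences is a constant polynomial, and on a constant the map $\rope{1}$ acts as the identity. Dropping it produces the claimed identity. The only genuine geometric inputs are \eqref{eqn:Txissum} and \Cref{thm:introP1degree}, both already established in the excerpt; the rest is the distributive law applied to a product of operator sums, together with a final degree check. I do not foresee any serious obstacle beyond confirming that the top-dimensional cells of $\hhmp_n$ are enumerated as above, which matches the $\hhmp$-subdivision into $(n-1)!$ cubes discussed around \Cref{fig:cube_subdivision}.
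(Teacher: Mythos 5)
Your proposal is correct, and it takes a genuinely different route from the paper's own proof. The paper (Theorem~\ref{th:ds_as_trims}) proves the identity by pure manipulation in the group algebra of $S_n$: it factors $\sum_{\sigma\in S_n}\sigma = \tau_2\tau_3\cdots\tau_n$, expands $\tau_n(f/D_n)$ by telescoping numerators, reads off the operators $\tope{i,n}$, and then specializes $x_n=0$ to reduce $\tope{i,n}$ to $\tope{i}$ when $\deg f = n-1$. Your proof instead leans on the geometry already set up: the cited Anderson--Tymoczko formula identifies the left-hand side with $\deg_{\overline{T\cdot x}}f$, the HHMP/Lian equality~\eqref{eqn:Txissum} transfers this to $\deg_{\hhmp_n}f$, and then $[\hhmp_n]=\sum[X(\rt)]$ (over the $(n-1)!$ top-dimensional components $\rt=\rletter{1}\tletter{i_2}\cdots\tletter{i_n}$, $i_k\le k-1$) together with \Cref{thm:introP1degree} and the distributive law gives the factorized operator. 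The degree drop plus $\rope{1}$ acting trivially on constants finishes it, and your enumeration of the top-dimensional components is what the paper records elsewhere via the $\hhmp$-subdivision of the permutahedron.

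What each approach buys: the paper's algebraic proof simultaneously establishes the strictly stronger identity $\langle f\rangle_n=\tope{1,2}(\tope{1,3}+\tope{2,3})\cdots(\tope{1,n}+\cdots+\tope{n-1,n})f$ valid in all degrees, and it is the form that admits the $q$-deformation of \Cref{th:qds_as_trims}; the geometric identity $[\overline{T\cdot x}]=[\hhmp_n]$ does not obviously extend to either generalization. Your geometric argument, on the other hand, gives the shortest path to the degree-$n-1$ case and makes it transparent \emph{why} the operator factors as it does: each monomial in the expanded product is exactly $\deg_{X(\rt)}f$ for one maximal cell of $\hhmp_n$. In fact the paper itself gestures at a geometric reading of its computation in the remark following \Cref{th:ds_as_trims} (via Atiyah--Bott localization and projections of permutahedral varieties), so your proof makes precise an interpretation the authors only sketch. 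The only thing worth tightening in your write-up is the phrase ``Since $f$ has degree $n-1$, only these top-dimensional pieces contribute'': the decomposition $[\hhmp_n]=\sum[X(\rt)]$ over top-dimensional $\rt$ is a statement about the fundamental class of the reduced $(n-1)$-dimensional scheme $\hhmp_n$ and holds irrespective of $\deg f$; the degree condition enters only in ensuring $\deg_{X(\rt)}f$ is meaningful (nonzero) and in the final step dropping $\rope{1}$.
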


Expanding out the expression on the right fully, each term corresponds to the degree on a distinct maximal toric Richardson variety in $\hhmp_n$. 
This factorized form of DS explains many of its interesting properties that have been observed in the past, particularly the interaction with quasisymmetric polynomials, and we show that the $q$-analogue $\tope{1}(\tope{1}+q\tope{2})\cdots (\tope{1}+q\tope{2}+\cdots+q^{n-1}\tope{n-1})$ recovers the $q$-divided symmetrization $\langle f\rangle_n^q$ considered in \cite{NT21}.

One corollary of the DS identity is that for any $n$-variable Schubert polynomial $\schub{w}(x_1,\ldots,x_n)$, the DS $\langle \schub{w}\rangle_n$ is a polynomial all of whose coefficients are nonnegative. Through computer experimentation, we make the following conjecture.
\begin{conj}[{\Cref{conj:HL_positivity}}]
    The $q$-divided symmetrization $\langle \schub{w}\rangle_n^q$ of a Schubert polynomial $\schub{w}(x_1,\ldots,x_n)$ is Hall-Littlewood P-positive.
\end{conj}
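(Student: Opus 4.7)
The plan is to combine the operator factorization with the forest-polynomial basis furnished by \Cref{maintheorem:QsymSchubertcycles} and \Cref{maintheorem:LR}. First I would expand
\[\tope{1}(\tope{1}+q\tope{2})\cdots (\tope{1}+q\tope{2}+\cdots+q^{n-1}\tope{n-1})=\sum_{\rt}q^{\|\rt\|}\,\prodoperator{\rt}^n,\]
where $\rt=\rletter{1}\tletter{i_1}\cdots \tletter{i_{n-1}}$ ranges over the maximal-dimensional sequences in $\rtseq_n$ and $\|\rt\|:=\sum_j(i_j-1)$ is the natural $q$-weight. Applying this to $\schub{w}$ and invoking \Cref{maintheorem:LR}, every term $\prodoperator{\rt}^n\schub{w}=\deg_{X(\rt)}\schub{w}=c^{v(\rt)}_{u(\rt),w}$ is a nonnegative integer. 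Collecting $\rt\in\Trim{\wh{F}}$ for each nested binary forest $\wh{F}\in\nfor_n$ and using that $\prodoperator{\rt}^n$ depends only on $\wh{F}(\rt)$ (\Cref{thm:introP1degree}) yields
\[\langle \schub{w}\rangle_n^q = \sum_{\wh{F}\in\nfor_n} W_{\wh{F}}(q)\cdot \deg_{X(\wh{F})}\schub{w},\qquad W_{\wh{F}}(q):=\sum_{\rt\in\Trim{\wh{F}}}q^{\|\rt\|}.\]
Since each Richardson degree $\deg_{X(\wh{F})}\schub{w}$ is a nonnegative integer, and Hall--Littlewood $P$-positivity is preserved by nonnegative-integer linear combinations, the conjecture reduces to showing that each $W_{\wh{F}}(q)$ is HL-$P$-positive.

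A cleaner intermediate goal is to exploit \Cref{maintheorem:LR}(3): because $\schub{w}=\sum_F a_F\forestpoly[m]{F}$ with $a_F\geq 0$, and the $q$-DS is linear, it would suffice to prove the conjecture for each forest polynomial. The forest polynomial basis has the advantage that the operators $\tope{i}$ act cleanly ($\tope{i}\forestpoly{F}=\forestpoly{F/i}$ or $0$), so $\langle \forestpoly{F}\rangle_n^q$ unfolds into a weighted sum over chains of trims of $F$ with an explicit $q$-statistic--a much more tractable combinatorial object than the general case.

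The main obstacle is the identification step: matching these weighted chain-sums with Hall--Littlewood $P$-polynomial specializations. The $q$-statistic arising from the factorization does not obviously match the charge or cocharge statistic of Lascoux--Sch\"utzenberger, and $\Trim{\wh{F}}$ has intricate structure interleaving $\rletter{}$ and $\tletter{}$ operations. A promising geometric route is to use the iterated $\PP^1$-bundle tower of \Cref{thm:introP1degree} to realize $W_{\wh{F}}(q)$ as an equivariant Poincar\'e series on $X(\wh{F})$, placing positivity in the framework of Kostka--Foulkes polynomials via a Hotta--Springer or Kazhdan--Lusztig-type equivariant construction. Alternatively, one might try to realize $\langle \schub{w}\rangle_n^q$ as the graded character of a naturally graded $S_n$-module supported on a degeneration of $\hhmp_n$ (or a fiber thereof), whose module structure would manifest HL-positivity directly. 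Either route would require substantial new geometric input beyond the combinatorial framework developed in the paper, which is why the statement is phrased as a conjecture.
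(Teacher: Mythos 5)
This statement is an open conjecture in the paper; the authors give no proof, only computer verification through $S_8$ and remarks on why it is nontrivial. There is therefore no proof in the paper to compare your proposal against, and you correctly acknowledge at the end that your plan would require "substantial new geometric input." Still, there is a structural gap in the attack worth understanding, because as written it is aimed at the wrong part of the problem.

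Your starting point is the factorization
\[
\langle f\rangle_n^q = \tope{1}(\tope{1}+q\tope{2})\cdots(\tope{1}+q\tope{2}+\cdots+q^{n-2}\tope{n-1})f
\]
(the top $q$-power is $q^{n-2}$, not $q^{n-1}$ as you wrote), which \Cref{th:qds_as_trims} proves \emph{only} for $f$ homogeneous of degree $n-1$. In that degree $\langle \schub{w}\rangle_n^q$ is a scalar in $\ZZ[q]$, so Hall--Littlewood $P$-positivity collapses to the single statement $\langle \schub{w}\rangle_n^q\in\NN[q]$, which is already immediate from the factorization plus the nonnegativity of the generalized LR coefficients $c^{v(\rt)}_{u(\rt),w}$ (a $q$-weighted version of the paper's strict positivity argument in \Cref{subsec:strict_positivity_aw}). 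All of the extra machinery in your plan---grouping by nested forests, the weights $W_{\wh F}(q)$, matching with charge, the equivariant Poincar\'e series idea---is unneeded in the only regime where your factorization actually applies.

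The content of the conjecture lies entirely in the case $\deg\schub{w}>n-1$, where $\langle \schub{w}\rangle_n^q$ is an honest symmetric polynomial and the $\tope{i}$-factorization fails. The paper flags this explicitly in the remark following \Cref{th:qds_as_trims}: the natural $q$-analogue of the all-degrees $\tope{i,n}$-factorization (\Cref{th:ds_as_trims}) does not stay polynomial at intermediate stages, and it is precisely the $x_n=0$ specialization (the degree-$(n-1)$ reduction) that rescues polynomiality. So your plan cannot get started where there is something to prove; the same obstruction blocks your proposed reduction to individual forest polynomials $\forestpoly{F}$, since those can also have degree $>n-1$. The paper further notes the conjecture is open already for dominant (132-avoiding) $w$, where $\schub{w}$ is a monomial and the $q=1$ case is Postnikov via Brion's integer-point-transform identity, with no known $q$-generalization. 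A viable plan would have to confront this head-on: for instance by finding a $q$-deformation of $\rope{i,n}$ and $\tope{i,n}$ that preserves polynomiality, or by exhibiting $\langle \schub{w}\rangle_n^q$ in all degrees as a graded character whose pieces visibly organize into Hall--Littlewood $P$-polynomials. Your spectral-sequence and equivariant-positivity instincts are reasonable ingredients, but they need to be retargeted at the higher-degree case before they can bear on the conjecture.
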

\subsubsection{A formula of Gessel}
Finally, we use our theory to give a geometric explanation for a formula of Gessel {\cite[Theorem 3]{Ges84}} which, given a symmetric polynomial $f(x_1,\ldots,x_n)$, determines the coefficients of the expansion
$$f(x_1,\ldots,x_n)=\sum_{k=1}^n\sum_{i_k,\ldots,i_n\ge 1}a_{i_k,\ldots,i_n}\slide{i_k,\ldots,i_n}(x_1,\ldots,x_n)$$
where $\slide{i_k,\ldots,i_n}(x_1,\ldots,x_n)$ is the fundamental quasisymmetric polynomial whose reverse lexicographic leading term is $x_k^{i_k}\cdots x_n^{i_n}$. 
This formula is in terms of the Hall inner product with skew Schur polynomials associated to ribbon shapes. 
We geometrically interpret it as a degree map on Grassmannian Richardson varieties associated to these shapes, and show that a certain subset of our $X(\rt)$ project onto these Grassmannian Richardson varieties. 
Using our theory of degree maps on $X(\rt)$ shows that Gessel's formula follows directly follows from a more general formula
$$f(x_1,\ldots,x_n)=\sum_{k=1}^n\sum_{i_k,\ldots,i_n\ge 1}(\ct\mathsf{T}_{k}^{i_k}\cdots \mathsf{T}_{n}^{i_n}f)\slide{i_k,\ldots,i_n}(x_1,\ldots,x_n)$$
proved in \cite{NST_a} that works for $f\in \qsym{n}$ an arbitrary quasisymmetric polynomial.

\section{Composites of \texorpdfstring{$\rope{}$}{R} and  \texorpdfstring{$\tope{}$}{T} via nested forests}
\label{sec:compositesTR}


We first recall the results of~\cite{NST_a} about the combinatorics the Thompson monoid, its faithful polynomial representation given by $i\mapsto \tope{i}$, and the forest polynomials $\forestpoly{F}$.
We then generalize the combinatorics to what we call the augmented Thompson monoid and (marked) nested forests. Finally, we fix $n$ and  restrict the previous constructions to obtain operators $\prodoperator{\wh{F}}^n$ on any of the spaces $\poly_n,\coinv{n},\qscoinv{n}$ that depend on a certain forest $\wh{F}\in\nsuppfor{n}$ in the collection of \emph{nested forests supported on $\{1,\ldots,n\}$}.

To avoid disrupting the exposition and keeping our overarching goal in mind we relegate the proofs of combinatorial results in this section to Appendix~\ref{sec:nested_forests_extra}.

\subsection{Composites of $\tope{}$ via indexed forests, and forest polynomials}
\label{sec:recallforests}
In this section we recall the relationship between the $\tope{}$ operators, plane binary forests, and forest polynomials from \cite{NST_a}. Let $\poly=\ZZ[x_1,x_2,\ldots]=\bigcup_n\poly_n$. The \emph{quasisymmetric divided differences} $\tope{1},\tope{2},\ldots\in \End(\poly)$ defined as in \eqref{eqn:topedef} satisfy the elementary commutation relations
\begin{align}
    \label{eq:t_t_com}\tope{i}\tope{j}=\tope{j}\tope{i+1}\text{ for }i>j,
\end{align}
which imply that  $i\mapsto \tope{i}$ is a representation of the Thompson monoid
$$\Th=\langle 1,2,\ldots\suchthat i\cdot j=j\cdot (i+1)\text{ for }i>j\rangle.$$
It is a classical fact in the theory of Thompson groups \cite{BelkBrown05,CFPnotes96,DehTes19,Sunic07} that $\Th$ is isomorphic to a monoid $\indexedforests$ of forests of plane binary trees. Hence one can define operators $\tope{F}$ for $F\in \indexedforests$. 

\begin{defn}
\label{def:indexedforest}
    A \emph{plane binary tree} is a binary tree where each internal (i.e. non-leaf) node $v$ has a left child $v_L$ and a right child $v_R$. 
    An \emph{indexed forest} $F$ is a sequence of plane binary trees $T_1,T_2,\ldots$ where $T_i=\times$, the trivial plane binary tree, for all but finitely many $i$.
    We denote the set of indexed  forests by $\indexedforests$.
\end{defn}

The leaves of a plane binary tree are naturally ordered, so there is a canonical identification of the leaves of $F\in \indexedforests$ with $\NN$. Denoting $\internal{F}$ for the internal nodes of $F$, the totality of nodes of $F$ may thus be identified with $\internal{F}\sqcup \NN$.

 \begin{defn}
 \label{defn:monoid_For}
     We define a monoid structure on $\indexedforests$ by taking the composition $F\cdot G$ to be the plane forest obtained by identifying the $i$'th leaf of $F$ with the $i$'th root of $G$.
 \end{defn}
 For $\wedge$ the unique binary tree with one internal node, we denote $$\underline{i}\coloneqq \underbrace{\times\cdots \times}_{i-1} \wedge \times \times \cdots .$$
 \begin{thm}[{\cite{NST_a}}]
 \label{thm:isomorphism_forests_thompson}
The map $i\mapsto \underline{i}$  induces a monoid isomorphism $\Th\to \indexedforests$.
\end{thm}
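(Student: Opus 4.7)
The plan is to show that $i\mapsto \underline{i}$ extends to a well-defined monoid homomorphism $\phi\colon \Th\to \indexedforests$, and then prove $\phi$ is both surjective (via peeling off \emph{leaf wedges}) and injective (via a weakly increasing normal form).

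\emph{Well-definedness and surjectivity.} The first step is to verify that the defining relation of $\Th$ holds in $\indexedforests$: for $i>j$, a direct computation using \Cref{defn:monoid_For} will show that both $\underline{i}\cdot \underline{j}$ and $\underline{j}\cdot \underline{i+1}$ equal the forest with $T_j=T_i=\wedge$ and all other trees trivial (the key points being that $j<i$ places the wedge of $\underline{j}$ into a trivial slot of $\underline{i}$, and the two leaves of the wedge of $\underline{i}$ are both above position $j$ so they land in trivial slots of $\underline{j}$). For surjectivity the plan is induction on the number of internal nodes: any finite forest $F$ with at least one internal node contains a \emph{leaf wedge}, i.e.\ an internal node with two leaf children (take any internal node having no internal descendant). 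If this leaf wedge has leaves at positions $i,i+1$ of $F$, collapsing it to a single leaf yields $F'\in\indexedforests$ with one fewer internal node and $F=F'\cdot \underline{i}$, completing the induction.

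\emph{Normal form existence.} Call a word $i_1 \cdots i_k$ of $\Th$ \emph{reduced} if $i_1\le \cdots\le i_k$. By induction on word length, every word in $\Th$ is equivalent to a reduced one: suppose $w_2\cdots w_k$ has already been reduced to $a_2\le\cdots\le a_k$. Starting from $w_1 a_2\cdots a_k$, iteratively apply the relation $x\,a_{j+1}=a_{j+1}(x+1)$ to push the leading entry rightward (each swap incrementing it by $1$), halting when the moving entry no longer exceeds its right neighbor (or the end is reached); this terminates in at most $k-1$ swaps and produces a reduced word.

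\emph{Normal form uniqueness and injectivity.} For injectivity the plan is to recover the reduced word from its image under $\phi$ inductively. The inductive claim to prove is: if $F=\phi(\underline{i_1}\cdots \underline{i_k})$ with $(i_1,\ldots, i_k)$ reduced, then every leaf wedge of $F$ has leftmost leaf position $\le i_k$, with the wedge introduced by the final factor $\underline{i_k}$ being the unique one attaining $i_k$. Writing $F=F'\cdot \underline{i_k}$, the new wedge has leaves at positions $i_k$ and $i_k+1$ in $F$; an old leaf wedge of $F'$ at leftmost-leaf position $q\le i_{k-1}\le i_k$ survives in $F$ only when $i_k\notin\{q,q+1\}$, which forces $q\le i_k-2$. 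So every surviving old leaf wedge has leftmost leaf position $<i_k$ in $F$, whence $i_k$ and its wedge are uniquely determined by $F$; iterating on $F'$ recovers the whole reduced word, proving injectivity of $\phi$. The main obstacle is this shift analysis, but it reduces to finitely many cases and is purely combinatorial.
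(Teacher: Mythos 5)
Your proof is correct. It follows the same overall skeleton as the paper's proof of the marked/nested analogue (\Cref{thm:presentation_mnfor}, which specializes to this statement): verify the relation on the generators to get a well-defined homomorphism, peel off a leaf wedge to get surjectivity (this is exactly \Cref{lem:mnmonoidgeneration} restricted to unmarked forests), and establish a weakly increasing normal form (the paper writes it multiplicatively as $1^{a_1}2^{a_2}\cdots$).

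Where you diverge is the injectivity step. The paper shows distinct normal forms have distinct images by applying the code map $c(F)_i = \#\{\text{internal nodes whose leftmost leaf descendent is }i\}$ and computing $c\bigl(\phi(1^{a_1}2^{a_2}\cdots)\bigr)=(a_1,a_2,\ldots)$ in one shot. You instead recursively reconstruct the reduced word from $F$ by locating the leaf wedge with maximal leftmost-leaf position, proving inductively that this position equals $i_k$ and that all surviving older leaf wedges sit strictly below it (the case split on whether $i_k\in\{q,q+1\}$ is correct). Both routes work: the code map is a cleaner global invariant and transfers painlessly to the augmented setting, while your peeling argument is more elementary and self-contained but requires the careful local analysis of how $F'\mapsto F'\cdot\underline{i_k}$ moves or destroys leaf wedges. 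Your normal-form-existence argument (insertion-sort using the Thompson relation) also silently establishes that the result is weakly increasing, which you should verify is maintained at the halting step, but the check goes through: if the moving value stalls at slot $m$, the swap at step $m-1$ forced $w_1+(m-2)>a_m$, hence $w_1+(m-1)>a_m$, while the halting condition gives $w_1+(m-1)\le a_{m+1}$.
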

This result allows us to tacitly identify $i$ with $\underline{i}$ from now on. 
  Since $i\mapsto \tope{i}$ is a representation of $\Th$ on $\poly$, the composite operation $\tope{i_1}\cdots \tope{i_k}$ only depends on the forest $i_1\cdots i_k\in \indexedforests$.
\begin{defn}
      For $F\in \indexedforests$, we define $\tope{F}\coloneqq \tope{i_1}\cdots \tope{i_k}$ for any sequence $i_1,\ldots,i_k$ with $F=i_1\cdots i_k$. 
\end{defn}

An internal node of $F$ is \textit{terminal} if both of its children are leaves. The left leaves of terminal nodes form the set $\qdes{F}$. 
Equivalently, $\qdes{F}$ is the set of $i$ such that $F=G\cdot i$ for some (necessarily unique) forest $G\in \indexedforests$. For $i\in \qdes{F}$ we write $F/i\in \indexedforests$ for the unique forest with $F=(F/i)\cdot i$.

\begin{defn}[{\cite[Theorem 6.1]{NST_a}}]
\label{defn:forpolyindirect}
    The forest polynomials $\{\forestpoly{F}\suchthat F\in \indexedforests\}$  are the unique family of homogenous polynomials in $\poly$ with $\forestpoly{\emptyset}=1$ and $$\tope{i}\forestpoly{F}=\begin{cases}\forestpoly{F/i}&i\in \qdes{F}\\0&\text{otherwise.}\end{cases}$$
\end{defn}
These polynomials were introduced combinatorially by the first and third authors in \cite{NT23, NT_forest} but we shall have no need for the explicit description.
The recursive characterization in Definition~\ref{defn:forpolyindirect} then implies \cite[Corollary 6.6]{NST_a} that the forest polynomials are dual to the functionals $\ct\tope{G}$ in the sense that
$$\ct\tope{G}\,\forestpoly{F}=\delta_{G,F}.$$
As was shown in \cite[Corollary 6.7]{NST_a}, because the maps $\End(\poly)\to \ZZ$ given by $\Phi\mapsto \ct\Phi\forestpoly{F}$ separate the linear functionals $\tope{G}$, the map $\ZZ[\Th]\to \End(\poly)$ is a faithful representation (i.e. the $\tope{F}$ operators are $\ZZ$-linearly independent).

We define $\suppfor{n}$ to be the subset of $F\in \indexedforests$ where the leaves of all nontrivial trees lie in $\{1,\ldots,n\}$.

\begin{thm}
[{\cite[Proposition 6.8 and Theorem 9.7]{NST_a}}]
\label{thm:qsymvanishing}The forest polynomials are a $\ZZ$-basis for $\poly$, and every polynomial $f\in \poly$ can be uniquely written as
$$f=\sum_{F\in \indexedforests} a_F\forestpoly{F}.$$
with $a_F=\ct\tope{F}f$. Additionally, the forest polynomials $\{\forestpoly{F}\suchthat F\in \suppfor{n}\}$ lie in $\ZZ[x_1,\ldots,x_n]$ and descend to a $\ZZ$-basis of $\qscoinv{n}$. Finally, for $f\in \poly_n$ we have $f\in \qsymide{n}$ if and only if $\ct\tope{F}f=0$ for all $F\in \suppfor{n}$.
\end{thm}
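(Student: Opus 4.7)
The plan is to prove the three assertions in sequence, leveraging the duality $\ct\tope{G}\forestpoly{F} = \delta_{G,F}$ established just before the theorem.

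For assertion (a), linear independence is immediate: any finite $\ZZ$-linear relation $\sum_F c_F\forestpoly{F} = 0$ gives $c_G = \ct\tope{G}(\sum_F c_F\forestpoly{F}) = 0$. For spanning, I would induct on the degree $d$ of a homogeneous $f$. Since $f \in \poly_n$ for some $n$ and $\tope{j}$ annihilates $\poly_n$ whenever $j > n$ (because $\partial_j f = 0$ when $f$ does not involve $x_j$ or $x_{j+1}$), the sum $\Phi(f) := \sum_{|F|=d}(\ct\tope{F}f)\forestpoly{F}$ has only finitely many nonzero terms, indexed by size-$d$ forests expressible as $\underline{i_1}\cdots\underline{i_d}$ with each $i_j \le n$. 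A direct re-indexing, using the bijection $F \leftrightarrow (F/i, i)$ for $i \in \qdes{F}$ together with the recursion $\tope{i}\forestpoly{F} = \forestpoly{F/i}$, yields $\tope{i}\Phi(f) = \Phi(\tope{i}f)$. Applying the inductive hypothesis to $\tope{i}f$ of degree $d-1$, we obtain $\tope{i}(f - \Phi(f)) = 0$ for every $i$, so $g := f - \Phi(f)$ is quasisymmetric in the infinite-variable sense. The key remark is that such $g$ must be a constant: for $g \in \poly_n$, a direct computation gives $\tope{n}g = (g - g|_{x_n=0})/x_n$, so $\tope{n}g = 0$ forces $g \in \poly_{n-1}$, and iterating yields $g \in \ZZ$. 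Since $g$ is homogeneous of positive degree when $d > 0$, we conclude $g = 0$ and $f = \Phi(f)$.

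For assertion (b), the containment $\{\forestpoly{F} : F \in \suppfor{n}\} \subset \poly_n$ is confirmed using the combinatorial descriptions of forest polynomials from~\cite{NT23, NT_forest}. By the Leibniz-type identity $\tope{i}(gh) = (\rope{1}g)\tope{i}h$ for $g \in \qsym{n}$ (noting $\rope{1}g|_{x=0} = 0$ when $g$ has positive degree), each functional $\ct\tope{F}$ with $F \in \suppfor{n}$ vanishes on $\qsymide{n}$ and hence descends to $\qscoinv{n}$; duality then renders the images $\{[\forestpoly{F}]\}_{F \in \suppfor{n}}$ $\ZZ$-linearly independent. For spanning modulo $\qsymide{n}$, I would use (a) to expand $f \in \poly_n$ as $\sum_F a_F\forestpoly{F}$ and split into $\suppfor{n}$ and its complement; the complementary excess $\sum_{F \notin \suppfor{n}}a_F\forestpoly{F}$ equals $f - \sum_{F \in \suppfor{n}}a_F\forestpoly{F}$, which lies in $\poly_n$, and one then argues it lies in $\qsymide{n}$.

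For assertion (c), the forward direction is precisely the $\qsymide{n}$-vanishing of $\ct\tope{F}$ for $F \in \suppfor{n}$ already established in the setup for (b). The converse is the spanning statement of (b) read backwards: if $\ct\tope{F}f = 0$ for every $F \in \suppfor{n}$, then all basis coefficients of $[f] \in \qscoinv{n}$ vanish, forcing $f \in \qsymide{n}$. The main obstacle I anticipate is the excess-term argument in (b) — showing that for any $f \in \poly_n$ the contribution from forests $F \notin \suppfor{n}$ to the forest expansion of $f$ assembles into an element of $\qsymide{n}$. A cleaner alternative would be to establish (b) by matching the graded ranks of $\suppfor{n}$ with those of $\qscoinv{n}$ via the Aval--Bergeron--Bergeron formula stated in the introduction, combining the already-established linear independence with a dimension count to bypass the excess-term analysis entirely.
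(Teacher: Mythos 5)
This theorem is cited from {\cite[Proposition 6.8 and Theorem 9.7]{NST_a}} and is not proved in the present paper, so there is no in-paper proof to compare against; I evaluate the proposal on its own terms.

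Your argument for the first assertion is correct and complete: linear independence follows from the duality $\ct\tope{G}\forestpoly{F}=\delta_{G,F}$; finiteness of $\Phi(f)$ follows since $\tope{j}$ kills $\poly_n$ for $j>n$; the reindexing identity $\tope{i}\Phi(f)=\Phi(\tope{i}f)$ holds via $\tope{G\cdot i}=\tope{G}\tope{i}$ and the bijection $F\leftrightarrow(F/i,i)$; and iterating $\tope{n}g=(g-g|_{x_n=0})/x_n$ shows a homogeneous $g$ killed by every $\tope{i}$ is constant. Your linear-independence argument for $\{[\forestpoly{F}]\}_{F\in\suppfor{n}}$ in $\qscoinv{n}$, via the Leibniz identity $\tope{i}(gh)=(\rope{1}g)\tope{i}h$ for quasisymmetric $g$, is also sound (one should check the iterated Leibniz application keeps the $\tope{}$-index within range as the variable count drops, which does hold for a weakly increasing reduced word of $F\in\suppfor{n}$).

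The gap you flag is real, and it is not minor. Spanning of $\{[\forestpoly{F}]\}_{F\in\suppfor{n}}$ in $\qscoinv{n}$ does not follow from anything you have established: showing that $f-\sum_{F\in\suppfor{n}}(\ct\tope{F}f)\forestpoly{F}$ lies in $\qsymide{n}$ is not a formal consequence of the recursive characterization or the duality, and it is logically equivalent to the converse direction of the final assertion, so appealing to (c) would be circular. Your dimension-count alternative is a legitimate strategy, but it needs two nontrivial inputs, neither supplied: the Aval--Bergeron--Bergeron rank formula for $\qscoinv{n}$ (cited but not proved in this paper), and a verification that $|\{F\in\suppfor{n}:|F|=i\}|$ matches that rank in every degree, a ballot-number enumeration requiring its own argument. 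Until one of these routes is completed, assertion (b) and the converse of (c) remain unproved.
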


\subsection{Composites of $\rope{}$ and $\tope{}$ via marked nested forests}
\label{sec:sequences_and_forests}

We now generalize the forest combinatorics of \Cref{sec:recallforests} to include the Bergeron--Sottile maps $\rope{1},\rope{2},\ldots\in \End(\poly)$ defined as in \eqref{eqn:ropedef}.  
These satisfy the further commutation relations
\begin{align}\label{eq:r_t_com_1}
\tope{i}\rope{j}&=\rope{j}\tope{i+1}\text{ for }i\ge j\\
    \label{eq:r_t_com_2}\rope{i}\tope{j}&=\tope{j}\rope{i+1}\text{ for }i>j\\
    \label{eq:r_t_com_3}\rope{i}\rope{j}&=\rope{j}\rope{i+1}\text{ for }i\ge j.
\end{align}
Together with~\eqref{eq:t_t_com} these relations therefore define the following abstract monoid.

\begin{defn}
\label{defn:augmented_thompson}
    The \emph{augmented Thompson monoid} $\wt{\Th}$ is the quotient of the free monoid on the alphabet $\{1,2,\ldots\}\sqcup \{1_{\circ},2_{\circ},\ldots\}$ by the relations
    \begin{align}
        i\cdot j&=j\cdot (i+1)\text{ if }i>j\\
        i\cdot j_{\circ}&=j_{\circ}\cdot {(i+1)}\text{ if }i\ge j\\
        i_{\circ}\cdot j&=j\cdot {(i+1)}_{\circ}\text{ for }i>j\\
        i_{\circ}\cdot j_{\circ}&=j_{\circ}\cdot {(i+1)}_{\circ}\text{ for }i\ge j.
    \end{align}
    We define the polynomial representatiof the augmented Thompson monoid to be the representation $\poly$ given by $i\mapsto \tope{i}$ and $i_{\circ}\mapsto \rope{i}$.
\end{defn}

We define here the monoid $\mnfor$ of ``marked nested binary forests'', which extends the monoid of indexed forests defined in \Cref{sec:recallforests}. We start by defining nested binary forests, which are already an extension of indexed forests.\smallskip

A \textit{finite partition} $\pi$ of $\NN$ is a set partition of $\NN$ such that all blocks have finite size, and all but finitely many blocks are singletons; equivalently, $\{i\}$ is a block for all $i$ large enough. 
We say $\pi$ is \textit{noncrossing} if for any two blocks $B,B'\in \pi$  we never have $a<c<b<d$ with $a,b\in B$ and $c,d\in B'$. 

\begin{defn}\label{defn:planebinarynestedforest}A \emph{\textup{(}plane, binary\textup{)} nested forest} is a family of plane binary trees $\left(T_B\right)_{B\in \pi}$ where $\pi$ is a finite noncrossing partition of $\NN$, and each tree $T_B$ for $B\in \pi$ has $|B|$ leaves.
\end{defn}

We denote by $\nfor$ the set of nested forests. The subset $B$ in the definition is called the \textit{support} of $T_B$. An example of of nested forest is given in \Cref{fig:nfor_example}; the non-singleton supports are $\{2,3\}, \{4,5,9,11\}, \{6,7,8\}$ and $\{13,14,15\}$. 
If all supports of $F\in\nfor$ are intervals $\{a,a+1,\ldots,b\}$, then we retrieve the notion of indexed forests: these intervals are ordered from left to right, giving trees $T_1,T_2,\ldots$ as in \cite[Definition 3.1]{NST_a}. 

\begin{figure}[!ht]
    \centering
    \includegraphics[width=0.7\linewidth]{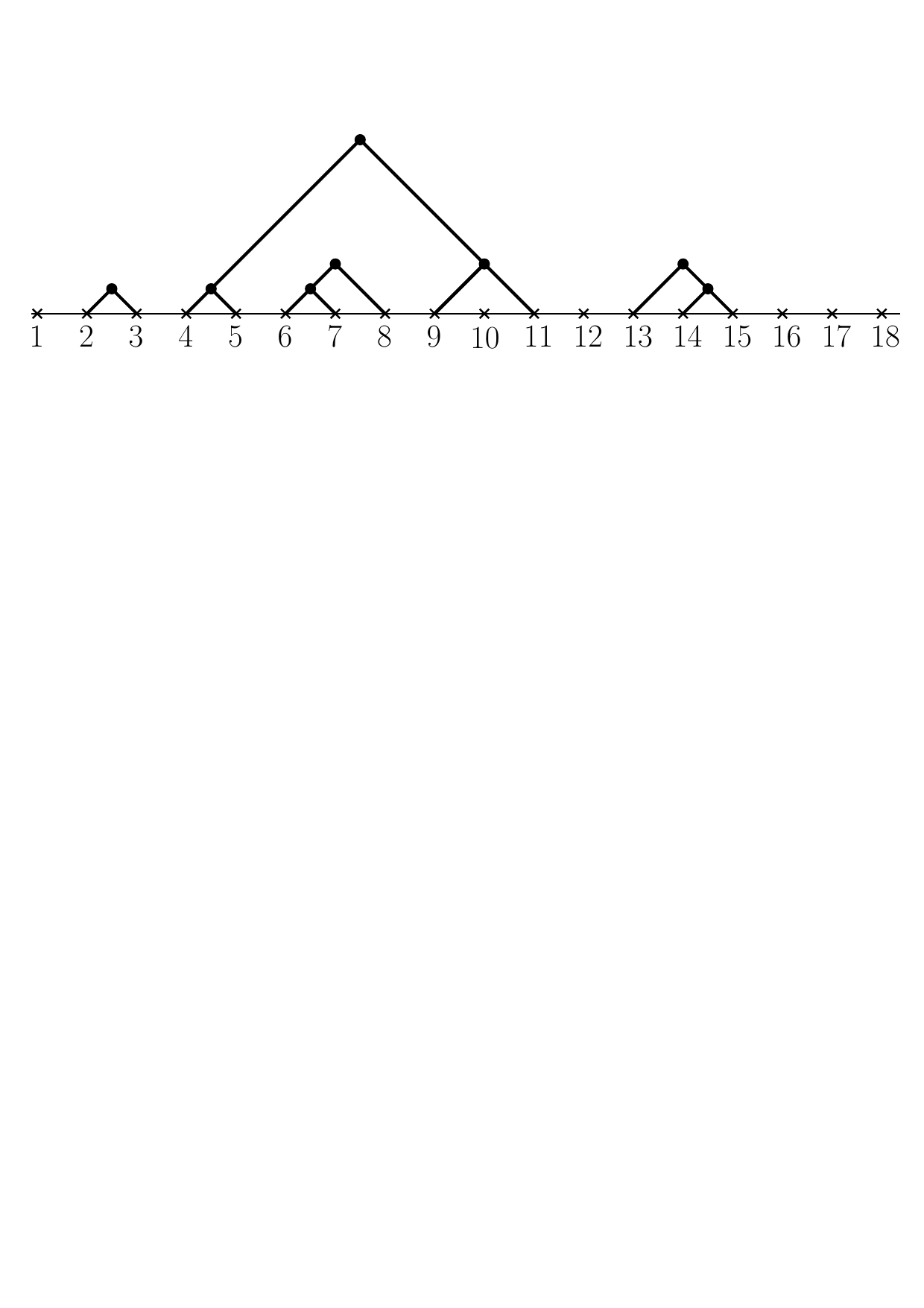}
    \caption{A nested forest.}
    \label{fig:nfor_example}
\end{figure}

This notion will mostly be sufficient for the rest of the article. 
In order to build a monoid isomorphic to the one generated by the $\tope{i}$ and $\rope{i}$ however, we need to enrich this structure with a notion of \textit{marks} on the roots of these trees.

This requires some extra notions on noncrossing partitions. Two blocks $B,B'$ in a noncrossing partition $\pi$ are either:
\begin{itemize} \item \textit{aligned}: the elements of $B$ are smaller than the elements of $B'$ (or the opposite), or \item \textit{nested}: there are two elements $a<b$ of $B$ such all elements of $B'$ are between $a$ and $b$.
\end{itemize}
The \textit{outer blocks} of $\pi$ are those that are not nested under any other block.  Outer blocks are pairwise aligned, and thus are totally ordered from left to right. We call a tree in a forest an outer (resp. nested) tree if its support is an outer (resp. nested) block.

\begin{defn} 
A \emph{marked nested forest} is a nested forest where a finite number of tree roots are marked, and these include the roots of all nested trees. 
We write $\mnfor$ for the set of all marked nested forests.
\end{defn}
\begin{figure}[!ht]
    \centering
    \includegraphics[width=0.7\linewidth]{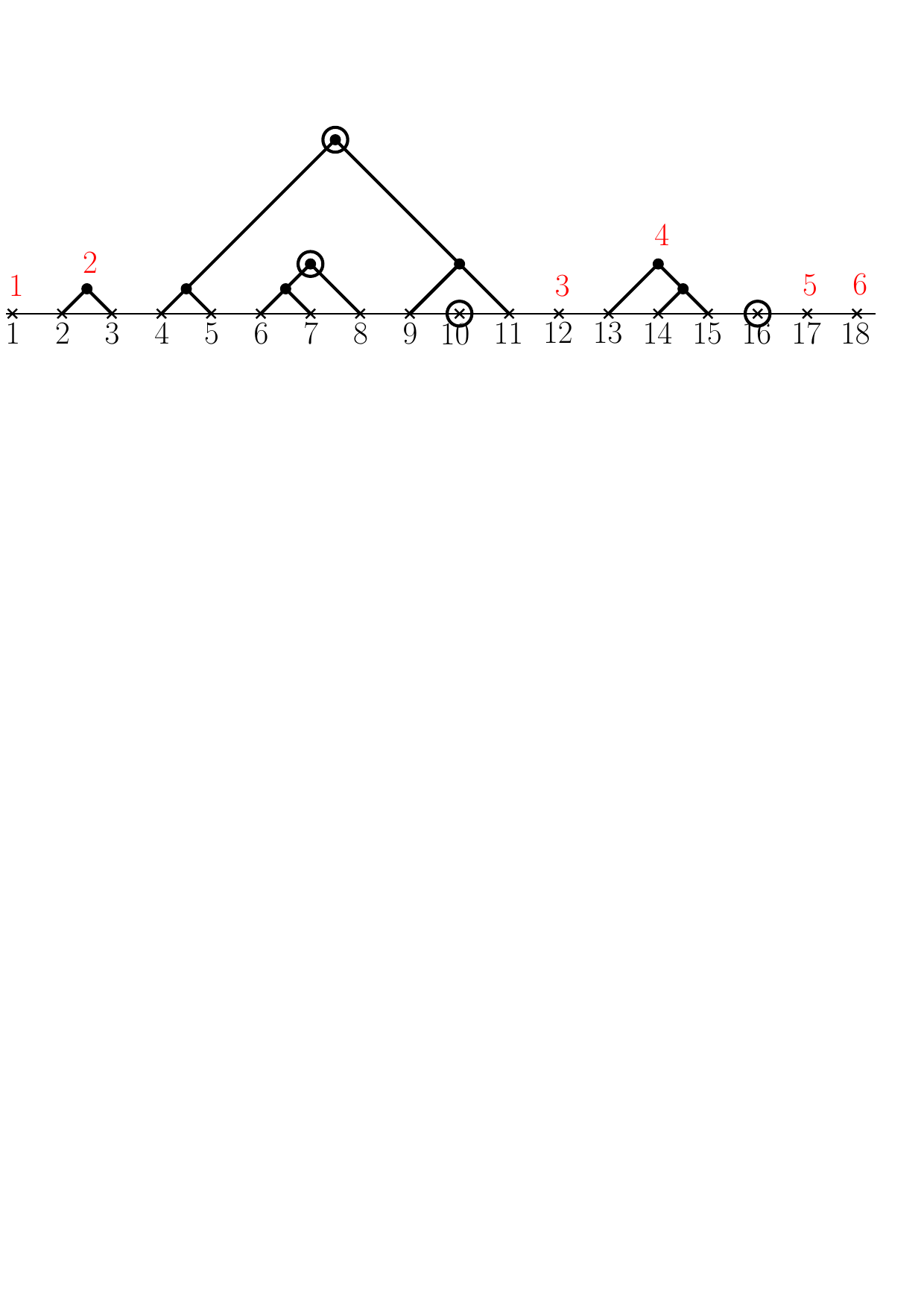}
    \caption{A marked nested forest.}
    \label{fig:mnfor_example}
\end{figure}

\Cref{fig:mnfor_example} depicts a marked nested forest with circles denoting the marking on the roots. 
Note that the roots of the trees with supports $\{6,7,8\}$ and $\{10\}$ are necessarily marked since the trees are nested, while the marks on all other roots can be picked arbitrarily.

For $\wt{F}\in \mnfor$ we write $\wh{F}\in \nfor$ for the nested forest obtained by forgetting the markings. 
As an example,  note that the nested forest in Figure~\ref{fig:nfor_example} is the $\wh{F}$ for the $\wt{F}$ in Figure~\ref{fig:mnfor_example}.

\begin{rem}[Indexed forests as marked nested forests]
\label{rem:indexed_and_marked}
   If a marked nested forest has no marks, then all blocks are outer blocks, and this entails that they are integer intervals $\{a,a+1,\ldots,b\}$. As we saw above this is naturally equivalent to the notion of indexed forests from \cite{NST_a}.
\end{rem}

Note that the \textit{unmarked} roots of (outer) trees are naturally ordered from left to right, and so we may canonically identify the unmarked roots by $\NN$ and talk about the ``$i$'th unmarked root''. We then have the following generalization of \cite[Definition 4.1]{NST_a}, recalled in \Cref{defn:monoid_For}.

\begin{defn}
Let $\wt{F},\wt{G}\in \mnfor$.  
Define a monoid structure on $\mnfor$ by letting $\wt{F}\cdot \wt{G}$ be obtained by identifying the $i$'th leaf of $\wt{F}$ with the $i$'th unmarked root of $\wt{G}$.
\end{defn}

We can describe $\mnfor$ recursively using this product. The 
``empty forest'' $\emptyset \coloneqq \times\times \times\cdots$ is made of unmarked trivial trees $\times$. For $\wedge$ the unmarked binary tree with one root and two leaves and $\otimes$ the trivial binary tree whose only node is marked, consider the elementary forests
\begin{align*}\ul{i}=&\underbrace{\times \cdots \times}_{i-1} \wedge \times \times \cdots\text{ and }
\ul{i_{\circ}}=\underbrace{\times \cdots \times}_{i-1} \otimes \times \times \cdots \end{align*}  Then, given $\wt{F}\in \mnfor,i\in \NN$, we obtain
    \begin{itemize}
        \item $\wt{F}\cdot \ul{i}$ by taking the $i$'th leaf and giving it two children.
        \item $\wt{F}\cdot \ul{i_{\circ}}$ by inserting a trivial marked tree $\otimes$ between the $(i-1)$'st and the $i$'th leaves of $\wt{F}$ (if $i=1$ then we add $\otimes$ before the first leaf of $\wt{F}$).
    \end{itemize}
\begin{thm}
\label{thm:presentation_mnfor}
    The map $i\mapsto \underline{i}, i_{\circ}\mapsto \underline{i_{\circ}}$ induces an isomorphism $\wt{\Th}\cong \mnfor$.
\end{thm}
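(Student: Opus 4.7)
The plan is to adapt the proof of the analogous statement for the Thompson monoid (\Cref{thm:isomorphism_forests_thompson}), proceeding in three steps for the map $\phi\colon\wt{\Th}\to\mnfor$ sending $i\mapsto\underline{i}$ and $i_{\circ}\mapsto\underline{i_{\circ}}$: well-definedness, surjectivity, and injectivity.

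For well-definedness I would verify that $\phi$ respects each of the four defining relations. The Thompson relation $i\cdot j=j\cdot(i+1)$ for $i>j$ is covered by \Cref{thm:isomorphism_forests_thompson}. For the three new relations involving $i_{\circ}$, the verification is a direct case analysis: unpack both sides as explicit modifications of an arbitrary marked nested forest $\wt{F}$, using that $\cdot\underline{i}$ branches the $i$-th leaf (adding one leaf) and $\cdot\underline{i_{\circ}}$ inserts a marked trivial tree $\otimes$ between the $(i{-}1)$-th and $i$-th leaves (also adding one leaf, since $\otimes$ is itself a leaf). The $+1$ index shift in each relation precisely records the renumbering of leaves when the left-hand operation is performed before the right-hand one. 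For surjectivity I would induct on the size $|\wt{F}|$, defined as the total number of internal nodes plus marked roots. Every non-empty $\wt{F}$ admits a peelable terminal feature: if any $\otimes$ appears in $\wt{F}$, removing it yields $\wt{F}=\wt{F}'\cdot\underline{i_{\circ}}$; otherwise, all marked trees are non-trivial, so the support partition has a deepest tree that is non-trivial and has no nested trees inside, whose terminal wedge has consecutive children in the forest, and contracting it yields $\wt{F}=\wt{F}'\cdot\underline{i}$.

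The main obstacle is injectivity. I would proceed by induction on length. Given two words $w,w'$ of length $n$ with $\phi(w)=\phi(w')=\wt{F}$, use surjectivity to pick a peelable generator $g$ with $\wt{F}=\wt{F}'\cdot g$. The key lemma to prove is: any word $u$ of length $n$ with $\phi(u)=\wt{F}'\cdot g$ is equivalent in $\wt{\Th}$ to a word ending in $g$. One identifies the specific letter of $u$ responsible for the terminal feature $g$ in $\phi(u)$ and uses the four relations to commute it rightward past all subsequent letters; the $+1$ index shifts are exactly what is needed to maintain consistency as leaf labels shift during the motion. Applying this lemma to both $w$ and $w'$ reduces them to length-$(n{-}1)$ words representing the common forest $\wt{F}'$, and induction closes the argument. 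The crux is the commutation lemma itself, which requires verifying case-by-case that the right-moving letter is, at each step, in the left position of the LHS of whichever of the four relations applies with respect to the letter immediately to its right---a consequence of the \emph{terminality} of the feature $g$, which constrains the relevant indices in a predictable way and eliminates any obstruction to commutation.
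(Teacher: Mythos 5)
Your proof takes a genuinely different route for injectivity than the paper's. The paper establishes a normal form --- it shows every element of $\wt{\Th}$ can be written in the form $1_\circ^{\ins_1}1^{a_1}2_\circ^{\ins_2}2^{a_2}\cdots$ by repeatedly driving the smallest letters to the front --- and then applies the augmented code map $\wt{c}$ of \eqref{eq:special_code}, which evaluates to $((\ins_1,a_1),(\ins_2,a_2),\ldots)$ on these normal forms, so distinct normal forms have distinct images in $\wt\nvect$, and injectivity follows. Your approach instead uses a ``peel the last generator'' induction, which reduces injectivity to a confluence-type rewriting lemma: any word mapping to $\wt{F}'\cdot g$ with $g$ peelable is equivalent in $\wt{\Th}$ to a word ending in $g$. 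Both strategies are standard for Thompson-type monoid presentations; the normal-form route is shorter precisely because it shifts all the index bookkeeping into a single uniform normal-form reduction, whereas your route concentrates it in the commutation lemma, whose correctness is the entire burden of the argument. (You also implicitly use that $\mnfor$ is right-cancellative --- $\wt{F}_1\cdot g=\wt{F}_2\cdot g\Rightarrow\wt{F}_1=\wt{F}_2$ --- which is true but should be noted, as should the fact that $\wt{\Th}$ is graded by word length so the induction is well-founded.)

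The commutation lemma, however, is where your proposal is incomplete, and the one reason you offer for it is imprecise in a way that matters. You appeal to ``the terminality of the feature $g$'' to conclude that the responsible letter can always be commuted rightward. But a terminal internal node need not be peelable: the word $i\cdot(i{+}1)_\circ$ produces a wedge whose root is terminal (both children are leaves), yet its leaf children are $i$ and $i{+}2$ with a nested $\otimes$ sitting between them, and such a wedge can never be peeled by any $\ul{j}$. It is exactly in this non-peelable configuration that the commutation gets stuck: no relation of $\wt{\Th}$ applies to the pair $(i,(i{+}1)_\circ)$ in either direction. What actually makes the lemma work is the stronger hypothesis the paper uses in \Cref{lem:mnmonoidgeneration} --- that the tree being trimmed has no forests nested underneath it, so its terminal node's children are consecutive leaves $i,i{+}1$. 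With that in hand one rules out exactly the dangerous index configurations (the letter following the responsible unmarked letter of index $a$ having index $a$ or $a{+}1$, or index $a{+}1$ if marked), and each remaining configuration is handled by reading one of the four relations in whichever direction applies, incrementing the moving letter's index when it passes a smaller index and decrementing the passed letter's index otherwise. That case analysis is the substance of the lemma, and it is absent from your sketch.
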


The proof is in \Cref{sec:nested_forests_extra}. This result allows us to tacitly identify $i,i_{\circ}$ with $\underline{i},\underline{i_{\circ}}$ from now on.


\subsection{$\rtseq$ and composites $\prodoperator{\rt}$ of $\tope{}$ and $\rope{}$ operators}
We now study the polynomial representation of the augmented Thompson monoid on $\poly$ given by $i\mapsto \tope{i}$ and $i_{\circ}\mapsto \rope{i}$. The following definition will be useful for indexing sequences of $\rope{i}$ and $\tope{i}$ operators, and to avoid confusing such sequences with composite operators.
\begin{defn}
\label{defn:rt_sequences}
Let $\rtseq$ denote the set of sequences $\rt=\xletter{1}\cdots\xletter{k}$ with $k\geq 0$ and $$\xletter{i}\in \{\rletter{j},\tletter{j}\suchthat j\in \mathbb{Z}_{>0}\}.$$
We write $|\rt|=k$ and let $|\rt|_{\tletter{}}$ be the number of $\xletter{i}$ equal to $\tletter{j}$ for some $j$.
We write $\operatorname{up}(\rletter{i})=\rope{i}$, $\operatorname{up}(\tletter{i})=\tope{i}$, and the composite operator associated to the sequence $\rt\in \rtseq$ to be $$\prodoperator{\rt}\coloneqq\operatorname{up}(\xletter{1})\cdots \operatorname{up}(\xletter{n})\in \End(\poly).$$
\end{defn}

Viewing $\rtseq$ as the free monoid on the alphabet $\{\rletter{j},\tletter{j}\suchthat i\in\NN\}$, there is a natural monoid surjection $\rtseq\to \wt{\Th}\cong \mnfor$ sending $\tletter{i}\mapsto i$ and $\rletter{i}\mapsto i_{\circ}$, and hence to each $\rt\in \rtseq$ we can associate an element $\mnf{\rt}\in \mnfor$. Recursively, $\mnf{\emptyset}=\emptyset$ and for $\rt=\rt'\xletter{}$ we have $$\mnf{\rt}=\begin{cases}\mnf{\rt'}\cdot j&\text{if }\xletter{}=\tletter{j}\\\mnf{\rt'}\cdot j_{\circ}&\text{if }\xletter{}=\rletter{j}.\end{cases}$$
\begin{defn}\label{defn:TrimForest}
    For $\wt{F}\in \mnfor$, we define
    $$\Trim{\wt{F}}=\{\rt\in \rtseq \suchthat \wt{F}(\rt)=\wt{F}\}\subset \rtseq.$$
\end{defn}
    
\begin{prop}
Under the polynomial representation, the image of $\rt$ in $\wt{\Th}$ is $\prodoperator{\rt}$. In particular, 
for $\rt,\rt'\in \Trim{\wt{F}}$, we have $\prodoperator{\rt}=\prodoperator{\rt'}$.
\end{prop}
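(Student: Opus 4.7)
My plan is to package what has already been established into a short two-step argument: first verify that the polynomial representation is well-defined on $\wt{\Th}$, and then factor $\rt \mapsto \prodoperator{\rt}$ through it.

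\textbf{Step 1: Well-definedness of the polynomial representation.} By \Cref{defn:augmented_thompson}, the monoid $\wt{\Th}$ is presented by the four families of relations
\begin{align*}
i\cdot j &= j\cdot (i+1) &&\text{if } i>j,\\
i\cdot j_\circ &= j_\circ \cdot (i+1) &&\text{if } i\ge j,\\
i_\circ \cdot j &= j\cdot (i+1)_\circ &&\text{if } i>j,\\
i_\circ \cdot j_\circ &= j_\circ \cdot (i+1)_\circ &&\text{if } i\ge j.
\end{align*}
Under the substitution $i\mapsto \tope{i}$, $i_\circ\mapsto \rope{i}$, these become exactly the four commutation relations \eqref{eq:t_t_com}, \eqref{eq:r_t_com_1}, \eqref{eq:r_t_com_2}, \eqref{eq:r_t_com_3}, which have been recorded as holding in $\End(\poly)$. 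Hence the assignment $i\mapsto \tope{i}$, $i_\circ\mapsto \rope{i}$ extends to a well-defined monoid homomorphism $\rho: \wt{\Th}\to \End(\poly)$.

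\textbf{Step 2: Factoring through $\wt{\Th}$.} View $\rtseq$ as the free monoid on the alphabet $\{\rletter{j}, \tletter{j}\}_{j\in \NN}$. The map $\xi: \rtseq\to \wt{\Th}$ defined by $\tletter{i}\mapsto i$, $\rletter{i}\mapsto i_\circ$ is then the unique monoid morphism extending these assignments. Similarly, by \Cref{defn:rt_sequences} the map $\rt\mapsto \prodoperator{\rt}$ is the unique monoid morphism $\rtseq\to \End(\poly)$ sending $\tletter{i}\mapsto \tope{i}$ and $\rletter{i}\mapsto \rope{i}$. Since $\rho\circ \xi$ is a monoid morphism $\rtseq\to\End(\poly)$ agreeing with $\rt\mapsto \prodoperator{\rt}$ on generators, the two morphisms coincide. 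Thus $\prodoperator{\rt}=\rho(\xi(\rt))$ for all $\rt\in \rtseq$, which is precisely the statement that the image of $\rt$ in $\wt{\Th}$ (via $\xi$) maps under $\rho$ to $\prodoperator{\rt}$.

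\textbf{Step 3: The consequence for $\Trim{\wt{F}}$.} If $\rt,\rt'\in \Trim{\wt{F}}$, then by definition $\wt{F}(\rt)=\wt{F}(\rt')=\wt{F}$. Under the identification $\wt{\Th}\cong \mnfor$ of \Cref{thm:presentation_mnfor}, the map $\xi$ is precisely the assignment $\rt\mapsto \wt{F}(\rt)$ used in the paragraph preceding \Cref{defn:TrimForest}. Therefore $\xi(\rt)=\xi(\rt')$ in $\wt{\Th}$, and applying $\rho$ gives $\prodoperator{\rt}=\prodoperator{\rt'}$.

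There is essentially no obstacle here: once the commutation relations \eqref{eq:t_t_com}--\eqref{eq:r_t_com_3} are in hand and the isomorphism $\wt{\Th}\cong \mnfor$ from \Cref{thm:presentation_mnfor} has been invoked, the statement is a formal universal-property argument. The real combinatorial content has already been carried by the earlier verification of the commutation relations and by \Cref{thm:presentation_mnfor}.
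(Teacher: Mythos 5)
Your proof is correct and takes the same route as the paper's: the paper's one-line argument ("the first part follows by definition of the polynomial representation on $\rt$, and the second follows from the fact that $\wt{F}(\rt)$ is by construction the image of $\rt$ under $\rtseq\to\wt{\Th}\cong\mnfor$") is exactly the universal-property factorization you spell out in Steps 1--3. You simply make explicit what the paper leaves implicit, namely that the four relations of \Cref{defn:augmented_thompson} map to the verified commutation relations \eqref{eq:t_t_com}--\eqref{eq:r_t_com_3}, so that $\rho$ is well-defined on $\wt{\Th}$.
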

\begin{proof}
    The first part follows by definition of the polynomial representation on $\rt$, and the second follows from the fact that $\wt{F}(\rt)$ is by construction the image of $\rt$ under the composite $\rtseq\to \wt{\Th}\cong \mnfor$.
\end{proof}

\begin{defn}
    For $\wt{F}\in \mnfor$, we define $\prodoperator{\wt{F}}\coloneqq \prodoperator{\mnf{\rt}}$ for any $\rt\in \Trim{\wt{F}}$.
\end{defn}

\begin{rem}A more conceptual proof of the commutation relations \eqref{eq:t_t_com} and ~\eqref{eq:r_t_com_1}--\eqref{eq:r_t_com_3} follows from the observations in \cite{NST_a}:  if we define
\begin{enumerate}
    \item $\tope{}:\poly_1^{\otimes 2}\to \poly_1$ by $\tope{}{f(x,y)}=\frac{f(x,0)-f(0,x)}{x}$, and
    \item $\rope{}:\poly_1\to \poly_0$ by $\rope{}(f(x))=f(0)$,
\end{enumerate}
then under the identification $\poly=\poly_1^{\otimes \infty}$ we have $$\tope{i}=\idem^{\otimes i-1}\otimes \tope{}\otimes \idem^{\otimes \infty}\text{ and }\rope{i}=\idem^{\otimes i-1}\otimes \rope{}\otimes \idem^{\otimes \infty}.$$
The commutation relations are then the universal relations satisfied by compositions of the shifts of any fixed operators $\poly_1^{\otimes 2}\to \poly_1$  and $\poly_1\to \poly_0$, irrespective of the particular definitions. The marked nested forest $\wt{F}$ may be seen as a composition tree for two-to-one and one-to-zero operations acting on consecutive variables, with a marking indicating the application of a one-to-zero operation, and then $\prodoperator{\wt{F}}$ is the associated composition of these operations.
\end{rem}
The polynomial representation of the augmented Thompson monoid is not faithful as a linear representation $\ZZ[\wt{\Th}]\to \End(\poly)$, because of the presence of the nontrivial relation
\begin{align}
\label{eqn:tr_rel}\tope{i}\rope{i+1}=\rope{i}\tope{i+1}+\rope{i+1}\tope{i}
\end{align}
which will play an important role later on. 
As the first part of the next theorem shows however, it is however faithful as a monoid representation.
\begin{thm}
\label{thm:faithfulindexing}
    Let $\wt{F},\wt{F'}\in \mnfor$. Then
    \begin{enumerate}
        \item $\wt{F}=\wt{F'}$  if and only if $\prodoperator{\wt{F}}=\prodoperator{\wt{F'}}$, and
        \item $\wh{F}=\wh{F'}$ if and only if $\ct \prodoperator{\wt{F}}=\ct\prodoperator{\wt{F'}}$.
    \end{enumerate}
\end{thm}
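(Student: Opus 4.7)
The plan is to leverage the identity $\ct\rope{i}=\ct$ (both functionals evaluate all variables to zero), together with the commutation relations \eqref{eq:t_t_com}--\eqref{eq:r_t_com_3} and the nontrivial linear relation \eqref{eqn:tr_rel}, namely $\tope{i-1}\rope{i}=\rope{i-1}\tope{i}+\rope{i}\tope{i-1}$. The forward implications $\wt{F}=\wt{F}' \Rightarrow \prodoperator{\wt{F}}=\prodoperator{\wt{F}'}$ and its $\ct$-prepended analogue are immediate from the well-definedness established in the preceding proposition, so the work is in the reverse directions.

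For the forward direction of (2), I would show $\ct\prodoperator{\wt{F}}$ depends only on $\wh{F}$ and not on the outer markings. Given any $\rt\in\Trim{\wt{F}}$, I migrate each $\rletter{j}$-letter in $\rt$ to the leftmost position, where it gets absorbed via $\ct\rope{j}(\cdots)=\ct(\cdots)$. Commuting $\rope{}$ past $\rope{}$ is always possible via \eqref{eq:r_t_com_3} (or its reverse), and past $\tope{}$ is possible except in the single obstructed case $\tope{j-1}\rope{j}$; here I invoke \eqref{eqn:tr_rel} to split the expression into two summands, each having a $\rope{}$ in an earlier position so the migration continues inductively on each summand. Iterating, $\ct\prodoperator{\wt{F}}$ becomes a finite $\ZZ_{\geq 0}$-linear combination of $\ct\tope{F_i}$ for indexed forests $F_i\in \indexedforests$. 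The key claim is that the multiset of these $F_i$'s depends only on $\wh{F}$: toggling an outer mark corresponds to inserting or removing a $\rletter{}$ whose leftward migration exits cleanly through the leftmost boundary without interacting with the nested-mark structure that governs the branching under \eqref{eqn:tr_rel}.

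For the reverse direction of (2), assume $\ct\prodoperator{\wt{F}}=\ct\prodoperator{\wt{F}'}$. By \Cref{thm:qsymvanishing}, the functionals $\{\ct\tope{F}\suchthat F\in \indexedforests\}$ are linearly independent, so the two $\ZZ_{\geq 0}$-expansions from the forward direction must have identical multisets, and inverting the combinatorial correspondence yields $\wh{F}=\wh{F}'$. For the reverse direction of (1), part (2) first gives $\wh{F}=\wh{F}'$. Then, noting that the operator $\prodoperator{\wt{F}}$ determines its length $|\wt{F}|$ (the variable-count drop $\prodoperator{\wt{F}}(\poly_n)\subset \poly_{n-|\wt{F}|}$), we deduce that $\wt{F}$ and $\wt{F}'$ have the same number of outer marks. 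Their positions are then pinned down by comparing the actions of $\prodoperator{\wt{F}}$ and $\prodoperator{\wt{F}'}$ on non-constant test polynomials such as $x_j\cdot \forestpoly{H}$: the variable-shift effect of $\rope{i}$ at each outer-mark position leaves a detectable signature in the resulting polynomial expansion, distinguishing any two inequivalent marking patterns.

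The hardest part will be rigorously establishing the forward direction of (2), specifically proving that the multiset $\{F_i\}$ produced by the migration/absorption procedure is a well-defined invariant of $\wh{F}$ independent of the choice of $\rt\in \Trim{\wt{F}}$ and of the order in which the relations are applied. This requires showing that the rewriting system generated by the commutation relations and \eqref{eqn:tr_rel} is confluent in an appropriate sense, together with a direct combinatorial description of $\{F_i\}$ in terms of the nested block structure of $\wh{F}$ that manifestly forgets the outer markings.
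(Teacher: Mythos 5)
Your proposal identifies the right landmarks — the absorbing identity $\ct\rope{j}=\ct$, the commutation relations, and the splitting relation \eqref{eqn:tr_rel} — but the route you take through them has two genuine gaps that you yourself flag, and both of them carry essentially the full weight of the theorem.

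First, your forward direction of (2) reduces to showing that the multiset $\{F_i\}$ produced by the $\rletter{}$-migration procedure is well-defined and depends only on $\wh{F}$. You acknowledge this needs a confluence argument for the rewriting system generated by the commutation relations and \eqref{eqn:tr_rel}, plus a combinatorial description of $\{F_i\}$ that visibly forgets the outer markings. Neither is supplied, and the heuristic ``the migration exits cleanly through the leftmost boundary'' is not an argument. The paper sidesteps all of this: prepending $\rletter{1}$ to a sequence marks the first unmarked outer tree of the associated marked nested forest, so for $N,M$ large enough $\mnf{\rletter{1}^N\rt}=\mnf{\rletter{1}^M\rt'}$ whenever $\nf{\rt}=\nf{\rt'}$; then part (1) gives $\rope{1}^N\rtc=\rope{1}^M\prodoperator{\rt'}$, and $\ct\rope{1}^N=\ct$ for $N$ large kills the difference. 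No appeal to \eqref{eqn:tr_rel}, no rewriting, no confluence.

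Second, your reverse direction of (2) concludes ``inverting the combinatorial correspondence yields $\wh{F}=\wh{F}'$.'' This requires the map $\wh{F}\mapsto\{F_i\}$ to be injective, which is exactly what is being proved — you cannot invoke it. The paper instead proves a standalone proposition giving an explicit formula for $\ct\prodoperator{\wt{F}}x^c$ on monomials in terms of paths $P_i$ in the forest, and then directly reconstructs $\wh{F}$ from the functional: the maximal $b_i$ with $\ct\prodoperator{\wt{F}}(x_i^{b_i}\cdot-)\not\equiv 0$ recovers the distance from leaf $i$ to its root, whether $\ct\prodoperator{\wt{F}}(x_i^{b_i}x_j^{b_j}\cdot-)\equiv 0$ detects whether $i$ and $j$ lie in the same tree, and a further quantity $c_{i,j}$ recovers meeting depths. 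That data determines the nested forest. Your reverse (1) sketch (counting outer marks via variable drop, then pinning positions via test polynomials) is vague; the paper instead applies $\tope{i}$ on the left and uses the already-established reverse (2) on $\prodoperator{i\cdot\wt{F}}=\tope{i}\prodoperator{\wt{F}}$ to pin down the $i$'th unmarked outer root, which is a cleaner way to recover markings. In short: your strategy is not wrong in spirit, but the two claims you defer are the theorem, and the paper's proof takes a materially different and shorter path that avoids both.
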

\begin{cor}
    The distinct operators $\prodoperator{\rt}$ for $\rt\in \rtseq$ are indexed by $\widetilde{F}\in \mnfor$ and we can index the distinct operators $\ct\prodoperator{\rt}$ by $\wh{F}\in \nfor$.
\end{cor}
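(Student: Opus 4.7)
My plan is as follows. The forward direction of (1) is immediate from the definition of $\prodoperator{\wt F}$, so I focus on the three nontrivial implications, proving the constant-term statement (2) first because (1)'s converse will reduce to it.

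For the forward direction of (2) --- that $\wh F=\wh{F}'$ implies $\ct\prodoperator{\wt F}=\ct\prodoperator{\wt{F}'}$ --- I would induct on the number of outer-root positions where $\wt F,\wt{F}'$ carry different marks. It suffices to handle a single outer-mark flip, so choose words $\rt\in\Trim{\wt F}$ and $\rt'\in\Trim{\wt{F}'}$ agreeing except for one extra $\rletter{j}$ in $\rt'$. The goal is to show this extra $\rletter{j}$ is absorbed at the constant-term level. The key ingredients are the identity $\ct\rope{i}=\ct$ (immediate since $\rope{i}$ sets a variable to $0$), the augmented Thompson commutation relations of \Cref{defn:augmented_thompson} which allow repositioning $\rletter{j}$ in the word, and the telescoping identity $\rope{i+1}=\rope{i}+x_i\tope{i}$ together with $\ct(x_ih)=0$ to handle commutation steps that fail the required index inequality. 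The outer-root hypothesis ensures this reduction terminates with $\rletter{j}$ absorbed rather than blocked against a nested structure.

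For the backward direction of (2), given $\wh F\neq\wh{F}'$, I must exhibit a polynomial $f$ with $\ct\prodoperator{\wt F}f\neq\ct\prodoperator{\wt{F}'}f$. The strategy is to extend the forest-polynomial duality $\ct\tope{G}\forestpoly{F}=\delta_{G,F}$ of \cite{NST_a} to a distinguishing family $\{P_{\wh F}\in\poly\}_{\wh F\in\nfor}$ indexed by general (not merely indexed) nested forests. For each $\wh F$, I take $P_{\wh F}$ to be a monomial placing variables at the leaves of $\wh F$ with exponents dictated by the binary tree shapes, then verify --- using the explicit actions of $\tope{i}$ and $\rope{i}$ on monomials --- that the pairing $\ct\prodoperator{\wh G}P_{\wh F}$ is upper triangular with nonzero diagonal in a suitable partial order on $\nfor$ refining the noncrossing-partition structure, yielding linear independence of the functionals $\{\ct\prodoperator{\wh F}\}$. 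This is the principal technical obstacle: nested forests with non-interval supports lie outside the indexed-forest framework of \cite{NST_a}, and controlling how $\rope{i}$ shifts variable indices across nested blocks --- the same subtlety responsible for the nontrivial relation $\tope{i}\rope{i+1}=\rope{i}\tope{i+1}+\rope{i+1}\tope{i}$ that makes $\ZZ[\wt{\Th}]\to\End(\poly)$ non-faithful as a linear representation --- is where most of the combinatorial work lies.

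Finally, for the backward direction of (1): from $\prodoperator{\wt F}=\prodoperator{\wt{F}'}$, take $\ct$ of both sides and invoke (2) to conclude $\wh F=\wh{F}'$. Operator equality also forces equal word lengths, and since $\wh F=\wh{F}'$ fixes the internal-node count, the two forests have the same total mark count and hence differ only by a permutation of which outer roots carry marks. A direct test-polynomial argument eliminates this remaining possibility: pick words differing by the position of a single $\rletter{j}$ and evaluate both operators on $x_k^2$ for $k$ tied to that position. While $\ct\rope{j}=\ct$ ensures constant terms agree, $\rope{j}$ itself is not the identity on $\poly$, so its presence genuinely alters the intermediate polynomial produced by the composite, making the two operators' outputs differ as elements of $\poly$.
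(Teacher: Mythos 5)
Your plan actually reproves the underlying \Cref{thm:faithfulindexing}, since the corollary is a direct restatement of it; so I evaluate it against the appendix proof of that theorem.

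There is a genuine gap in your backward direction of (2). You propose to construct polynomials $P_{\wh F}$ so that the pairing $\ct\prodoperator{\wh G}P_{\wh F}$ is upper triangular with nonzero diagonal in a suitable partial order, ``yielding linear independence of the functionals $\{\ct\prodoperator{\wh F}\}$.'' But these functionals are \emph{not} linearly independent, so no such pairing can exist. The relation $\tope{i}\rope{i+1}=\rope{i}\tope{i+1}+\rope{i+1}\tope{i}$ that you yourself cite, once composed with any further $\prodoperator{\rt}$ on the left and hit with $\ct$, produces genuine linear dependencies among the $\ct\prodoperator{\wh F}$ for three pairwise-distinct $\wh F$. (The paper emphasizes exactly this in \Cref{sec:Bott}: the fundamental classes $[X(\wh F)]$ satisfy linear relations for this very reason, which is why duality only survives on the non-nested subfamily.) What is true --- and what must be proved --- is pairwise distinctness, a strictly weaker statement not implied by a triangular pairing. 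The paper's argument does not attempt triangularity; it shows instead that $\ct\prodoperator{\wt F}$ \emph{determines} $\wh F$, by computing $\ct\prodoperator{\wt F}x^c$ via a path-partition criterion on $\wh F$ and then reading off, from the vanishing pattern of $\ct\prodoperator{\wt F}(x_i^{b_i}x_j^{b_j}\cdots)$ for well-chosen exponents, the tree membership of each leaf, its depth, and the meeting heights of pairs of leaves. You would need an argument of this reconstruction type rather than a duality/independence argument.

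Two smaller points. For the forward direction of (2), your induction on a single outer-mark flip is far heavier than what is needed: the paper simply notes that $\wt F(\rletter{1}^N\rt)$ is obtained from $\wt F(\rt)$ by marking the first $N$ unmarked outer roots, so if $\wh F(\rt)=\wh F(\rt')$ one can pick $N,M$ with $\wt F(\rletter{1}^N\rt)=\wt F(\rletter{1}^M\rt')$ and conclude from part (1) plus $\ct\rope{1}^N A=\ct A$. Your telescoping/commutation route may work but risks the ``blocking'' behavior you flag, which the global argument sidesteps entirely. For the backward direction of (1), your reduction from $\Pi_{\wt F}=\Pi_{\wt F'}$ to $\wh F=\wh F'$ plus equal mark-counts is sound, but the final ``test polynomial on $x_k^2$'' step is left too vague to check; the paper instead observes $\Pi_{i\cdot\wt F}=\tope{i}\Pi_{\wt F}=\tope{i}\Pi_{\wt F'}=\Pi_{i\cdot\wt F'}$, applies part (2) to deduce $\wh{i\cdot\wt F}=\wh{i\cdot\wt F'}$ for every $i$, and concludes that the $i$'th unmarked outer roots agree --- a cleaner way to pin down the markings.
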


The proof is given in \Cref{sec:nested_forests_extra}.

\subsection{Fully supported forests and $\rtseq_n$}

To conclude this section, we classify the nested and marked nested forests arising from a subset $\rtseq_n\subset \rtseq$.
\begin{defn}
    We denote by $\rtseq_n\subset\rtseq$ for the subset of sequences $\rt=\xletter{1}\cdots\xletter{n}$ with the restriction that $\xletter{i}\in \{\rletter{1},\tletter{1},\rletter{2},\tletter{2},\ldots,\rletter{i-1},\tletter{i-1},\rletter{i}\}$.
\end{defn}
Note that for $\rt\in \rtseq_n$ we always have $\xletter{1}=\rletter{1}$.

Let us write
$$\prodoperator{\rt}^n:\poly_n\to \ZZ$$
for the restriction of $\prodoperator{\rt}$ to $\poly_n$. This functional descends to $\coinv{n}$ and $\qscoinv{n}$ by the discussion in \Cref{sec:Results}, and we write by abuse of notation $\prodoperator{\rt}^n:\coinv{n},\qscoinv{n}\to \ZZ$.

\begin{prop}
\label{prop:fullsupp}
Let $\rt\in \rtseq_n$, and write $\wt{F}=\mnf{\rt}$. Then \begin{enumerate}
    \item the $(n+1)$'st leaf of $\wt{F}$ onwards belong to trivial trees, and
    \item  all root nodes of the trees of $\wt{F}$ supported on $\{1,\ldots,n\}$ are marked.
\end{enumerate} Conversely, if $\wt{F}\in\mnfor$ satisfies these two properties then $\wt{F}=\mnf{\rt}$ for $\rt\in \rtseq_n$.
\end{prop}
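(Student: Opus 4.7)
The plan is to prove both the forward direction and the converse by induction on $n$, with the trivial base case $n=0$ (where $\rtseq_0=\{\emptyset\}$ corresponds to $\wt{F}=\emptyset$). The key fact used throughout is the explicit description of the two elementary compositions: $\wt{F}\cdot\ul{j_\circ}$ inserts $\otimes$ between the $(j-1)$'st and $j$'th leaves of $\wt{F}$ (so the new $\otimes$ becomes the $j$'th leaf of the product, with every position $\ge j$ of $\wt{F}$ shifted up by one), while $\wt{F}\cdot\ul{j}$ gives the $j$'th leaf of $\wt{F}$ two children (again shifting positions beyond $j$ up by one). Since the map $\rtseq\to\mnfor$ is a monoid homomorphism, $\mnf{\rt'\xletter{n}}=\mnf{\rt'}\cdot(\text{image of }\xletter{n})$, which makes the inductive step possible.

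For the forward direction, decompose $\rt=\rt'\xletter{n}$ with $\rt'\in\rtseq_{n-1}$, let $\wt{F}'=\mnf{\rt'}$, and apply the inductive hypothesis: leaves $n,n+1,\ldots$ of $\wt{F}'$ lie in unmarked trivial trees $\times$, and every tree of $\wt{F}'$ supported on $\{1,\ldots,n-1\}$ has a marked root. If $\xletter{n}=\rletter{j}$ with $j\le n$, the newly inserted $\otimes$ occupies position $j\le n$ and is marked; the old leaves at positions $n,n+1,\ldots$ get shifted to positions $n+1,n+2,\ldots$ of $\wt{F}$, remaining trivial, so (1) holds; the trees supported on $\{1,\ldots,n\}$ of $\wt{F}$ are either this new $\otimes$ or inherited (with shifted support) from trees of $\wt{F}'$ supported on $\{1,\ldots,n-1\}$, all with marked roots. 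If $\xletter{n}=\tletter{j}$ with $j\le n-1$, the tree of $\wt{F}'$ containing the $j$'th leaf gains one element in its support (position $j+1$), and an analogous check using the shift confirms (1) and (2) persist. Crucially, because each $\xletter{i}$ uses only indices $\le i$, no operation ever creates marks or non-trivial structure at a position that ends up $>n$.

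For the converse, take $\wt{F}$ satisfying (1) and (2), and focus on the tree $T$ of $\wt{F}$ containing the $n$'th leaf. Condition (1) forces the support of $T$ to lie entirely in $\{1,\ldots,n\}$, so by (2) its root is marked. If $T$ is trivial, then $T=\otimes$ with support $\{n\}$; remove it to obtain $\wt{F}'$ and set $\xletter{n}=\rletter{n}$, so $\wt{F}=\wt{F}'\cdot\ul{n_\circ}$. Otherwise $T$ is nontrivial, and since every nontrivial binary tree contains a terminal internal node $v$ whose two children are leaves at some adjacent positions $p,p+1$ in the support of $T$ (with $p\le n-1$), collapsing $v$ back to a single leaf at position $p$ yields $\wt{F}'$ with $\wt{F}=\wt{F}'\cdot\ul{p}$ and $\xletter{n}=\tletter{p}$. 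In either case one then verifies directly that $\wt{F}'$ satisfies (1) and (2) with $n$ replaced by $n-1$, so the inductive hypothesis produces $\rt'\in\rtseq_{n-1}$ with $\mnf{\rt'}=\wt{F}'$, and $\rt=\rt'\xletter{n}\in\rtseq_n$ finishes the argument. The main bookkeeping obstacle is checking that collapsing $v$ inside a $T$ that may contain nested subtrees preserves both the noncrossing-partition structure and the ``nested trees must be marked'' requirement for $\wt{F}'$; this rests on the observation that $p$ and $p+1$ are consecutive integers, so no other block of the noncrossing partition can lie between them, and hence no nesting relation is disturbed by the collapse.
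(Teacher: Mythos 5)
Your forward direction is fine and matches the paper's terse ``immediate by induction''; the issue is in the converse.

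The converse relies on the claim that if $T$ (the tree of $\wt{F}$ containing leaf $n$) is nontrivial, then it has a terminal internal node whose two children sit at \emph{consecutive integer} positions $p,p+1$. This is false when $T$ has trees nested under it. A terminal node's children are consecutive \emph{leaves of $T$}, i.e.\ consecutive elements of $T$'s support block $B$, but $B$ need not be an integer interval precisely because of nesting. Concretely, take $n=5$ and $\rt=\rletter{1}\tletter{1}\tletter{1}\rletter{2}\rletter{4}\in\rtseq_5$: then $\wt{F}=\mnf{\rt}$ has a tree $T$ with marked root, support $\{1,3,5\}$, shape $\wedge(\wedge(1,3),5)$, with marked trivial trees $\otimes$ nested at positions $2$ and $4$. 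Here $T$ contains leaf $5$ and is nontrivial, but its unique terminal node has children at positions $1$ and $3$, which are not adjacent integers; there is no $p$ with $\wt{F}=\wt{F}'\cdot\ul{p}$ obtained by collapsing inside $T$. Your later ``bookkeeping'' remark, that no block can sit strictly between $p$ and $p+1$, is true once $p,p+1$ are consecutive integers, but that is exactly the hypothesis you never secured --- the argument is circular at that point.

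The fix is the one the paper actually uses (via \Cref{lem:mnmonoidgeneration}): either remove some marked trivial tree $\otimes$ (and by conditions (1)--(2) its position is at most $n$, so you peel off an $\rletter{i}$ with $i\le n$), or, if there is no $\otimes$ at all, choose a tree with \emph{no forests nested underneath it}; such a tree automatically has support a contiguous interval, so its terminal nodes do have children at adjacent integer positions, and you peel off a $\tletter{i}$. This innermost tree need not be the one containing leaf $n$, but that does not matter: one only needs the resulting index to be admissible for $\rtseq_n$, and one then checks that the smaller forest satisfies (1)--(2) for $n-1$ so the induction closes. Fixating on the tree containing leaf $n$ is the wrong choice precisely because that tree may have nested subtrees and hence non-contiguous support.
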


This is proved in \Cref{sec:nested_forests_extra}.
It follows that for $\rt\in \rtseq_n$ we have $\nf{\rt}=\nf{\rt'}$ if and only if $\mnf{\rt}=\mnf{\rt'}$, so there is no need to remember marked roots.  Let $\nsuppfor{n}$ denote the set of nested forests where the $(n+1)$'st leaf onwards belong to trivial trees. 

\begin{defn}
    For $\wh{F}\in \nsuppfor{n}$, we define
    $$\Trim{\wh{F}}=\{\rt\in \rtseq_n\suchthat \wh{F}(\rt)=\wh{F}\}\subset \rtseq_n,$$
    and we define $\prodoperator{\wh{F}}^n=\prodoperator{\rt}^n$
    for any $\rt\in \Trim{\wh{F}}$.
    As with $\prodoperator{\rt}^n$, by abuse of notation, we also denote by $\prodoperator{\wh{F}}^n$ the induced maps $\coinv{n}\to \ZZ$ and $\qscoinv{n}\to \ZZ$.
\end{defn}
This notion $\prodoperator{\wh{F}}^n$ naturally encompasses the operations $\ct\tope{F}$ used to extract the coefficients of forest polynomials as discussed in \Cref{sec:recallforests}.
\begin{cor}
\label{cor:ev0TF}
    For $F\in \suppfor{n}$ we have $\ct\tope{F}=\prodoperator{F}^n$.
\end{cor}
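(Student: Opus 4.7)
The plan is to exhibit an explicit $\rt \in \Trim{F}$ for which the equality $\prodoperator{\rt}^n f = \ct\tope{F} f$ literally holds for every $f \in \poly_n$, since by definition $\prodoperator{F}^n = \prodoperator{\rt}^n$ whenever $\rt \in \Trim{F}$.

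Given any factorization $F = i_1 \cdots i_k$ in the Thompson monoid, where $k$ is the number of internal nodes of $F$, I will set
\[
\rt = \underbrace{\rletter{1}\cdots\rletter{1}}_{n-k}\,\tletter{i_1}\cdots\tletter{i_k},
\]
a word of length $n$. The intuition is that the leading block of $\rletter{1}$'s inserts $n-k$ marked trivial roots, so the underlying nested forest remains empty, and the subsequent $\tletter{i_j}$'s perform the Thompson monoid multiplications at the level of nested forests; thus $\wh{F}(\rt) = \emptyset \cdot i_1 \cdots i_k = F$, placing $\rt \in \Trim{F}$. At the operator level one has $\prodoperator{\rt}^n = \rope{1}^{n-k}\tope{F}$, and since $\rope{1} g(x_1,\ldots,x_m) = g(0, x_1, \ldots, x_{m-1})$, iterating $n-k$ times on $\tope{F} f \in \poly_{n-k}$ substitutes zero for every remaining variable, yielding $\ct\tope{F} f$.

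The main obstacle is verifying that $\rt \in \rtseq_n$, which amounts to the bound $i_j \le n-k+j-1$ for every $j$. The approach is to track $M_j$, the maximum position of any leaf lying in the support of a nontrivial tree of $F_j \coloneqq i_1 \cdots i_j$. The split at $i_j$ produces two new leaves at positions $i_j$ and $i_j+1$, both inside a nontrivial tree of $F_j$, so $M_j \ge i_j+1$. A short case analysis depending on whether $i_{j+1} \le M_j$ (splitting inside or adjacent to an existing nontrivial tree) or $i_{j+1} > M_j$ (creating a new nontrivial block to the right) gives $M_{j+1} \ge M_j + 1$; iterating yields $M_k \ge M_j + (k-j) \ge i_j + 1 + (k-j)$. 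Since $F \in \suppfor{n}$ forces $M_k \le n$, this gives $i_j \le n-k+j-1$ as required, and in turn ensures that $\tope{i_j}$ is defined on the relevant $\poly_{n-(k-j)}$ at each stage, justifying the operator computation above.
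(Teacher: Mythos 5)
Your proof follows exactly the paper's approach: exhibit the explicit word $\rt=\rletter{1}^{n-k}\tletter{i_1}\cdots\tletter{i_k}$, check it lies in $\Trim{F}$, and note $\prodoperator{\rt}^n=\rope{1}^{n-k}\tope{F}=\ct\tope{F}$. The one thing you do beyond the paper's terse two-line proof is explicitly verify the membership $\rt\in\rtseq_n$ (equivalently, that $\tope{F}$ genuinely maps $\poly_n$ into $\poly_{n-k}$ for $F\in\suppfor{n}$), via the bound $i_j\le n-k+j-1$ derived from tracking $M_j$; the paper implicitly assumes this when writing $\tope{F}f\in\poly_{n-|F|}$. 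That bookkeeping is correct and supplies a small but genuine detail the paper omits, so this is a slightly more careful rendition of the same argument rather than a different route.
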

\begin{proof}
    Suppose $F=i_1\cdots i_k$.
    For $f\in \poly_n$, we have $\tope{F}\in \poly_{n-|F|}$, and we can write $\ct\tope{F}=\prodoperator{\rt}^n$ where $\rt=\rletter{1}^{n-|F|}\tletter{i_1}\cdots \tletter{i_k}$. The nested forest associated to $\rt$ is  $F$, so $\prodoperator{\rt}=\prodoperator{F}^n$.
\end{proof}

We conclude by giving a visual depiction of $\rt\in \Trim{\wh{F}}$. 
For a sequence $\rt=\xletter{1}\cdots\xletter{n}\in\rtseq_n$, define the \textit{trimming diagram} of $\rt$ as follows: replace a letter $\rletter{i}$ (resp. $\tletter{i}$) in position $j$ in $\rt$  by the elementary diagram on the left (resp. right) below, and concatenate such diagrams from top to bottom. 
  \begin{center}
      \includegraphics[scale=0.60]{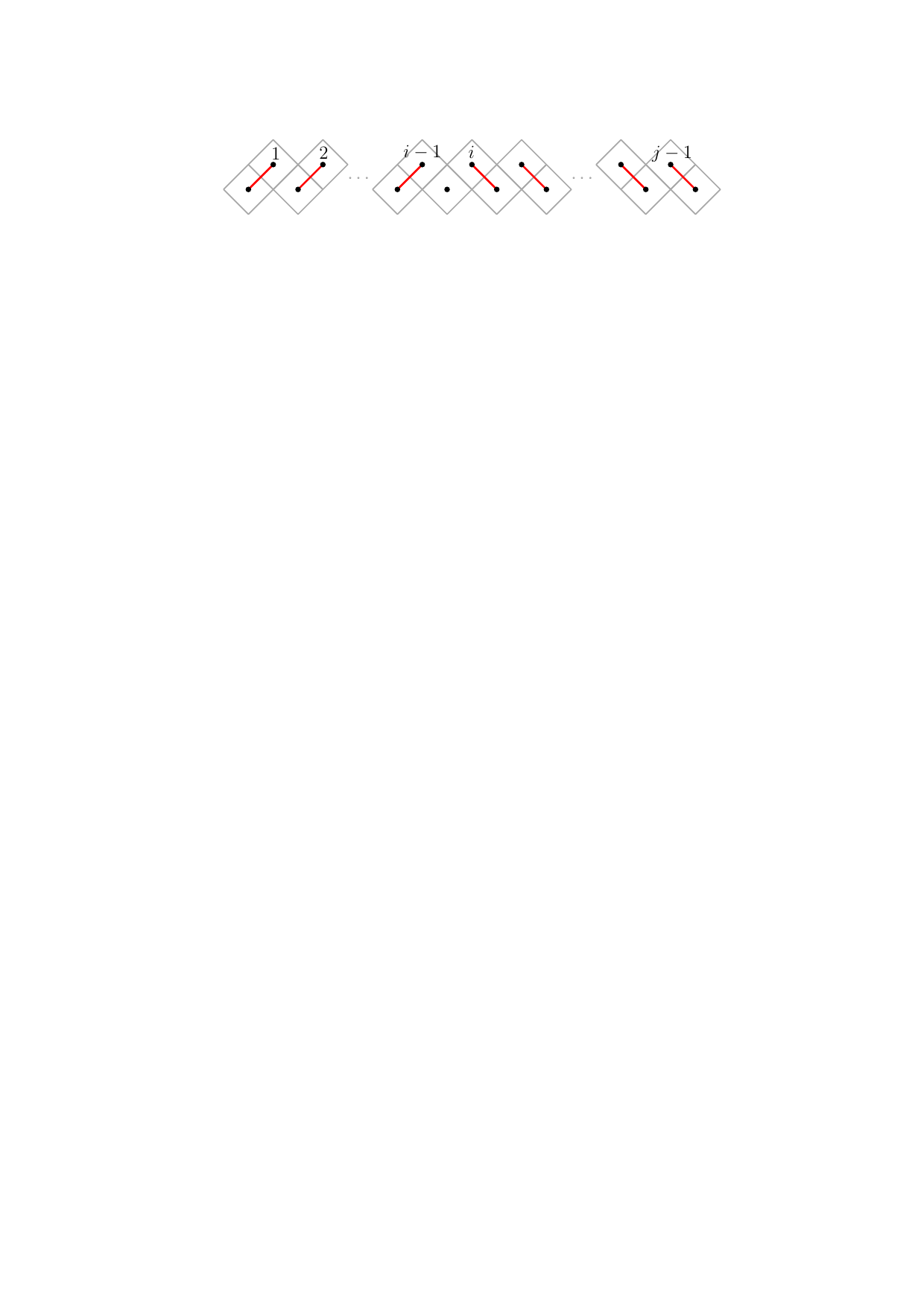}\qquad\quad
      \includegraphics[scale=0.60]{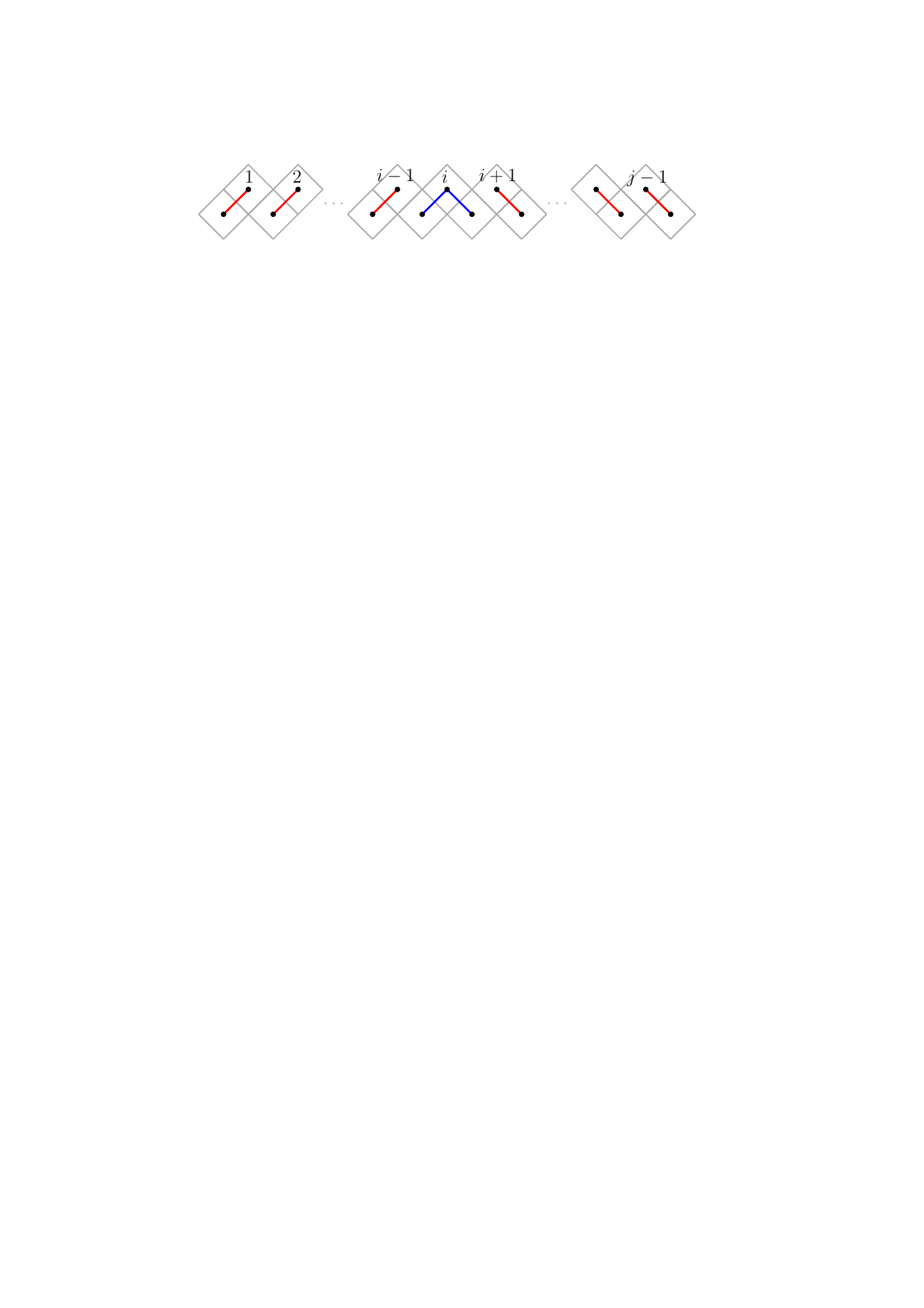}
  \end{center}

We note that the initial $\rletter{1}$ of $\rt$ does not contribute any edges. 
A trimming diagram for a particular $\rt\in \rtseq_{13}$ is given on the left in \Cref{fig:Trimming_diagram_and_forest}.
The highlighted elementary diagram corresponds to the letter in the fifth position being a $\tletter{2}$.
It is easy to see that the nested forest $\wh{F}$ such $\rt\in \Trim{\wh{F}}$ is the blue forest obtained after contracting all red edges. 
This is illustrated on the right.
  
    \begin{figure}[!ht]
    \centering
    \includegraphics[width=0.8\textwidth]{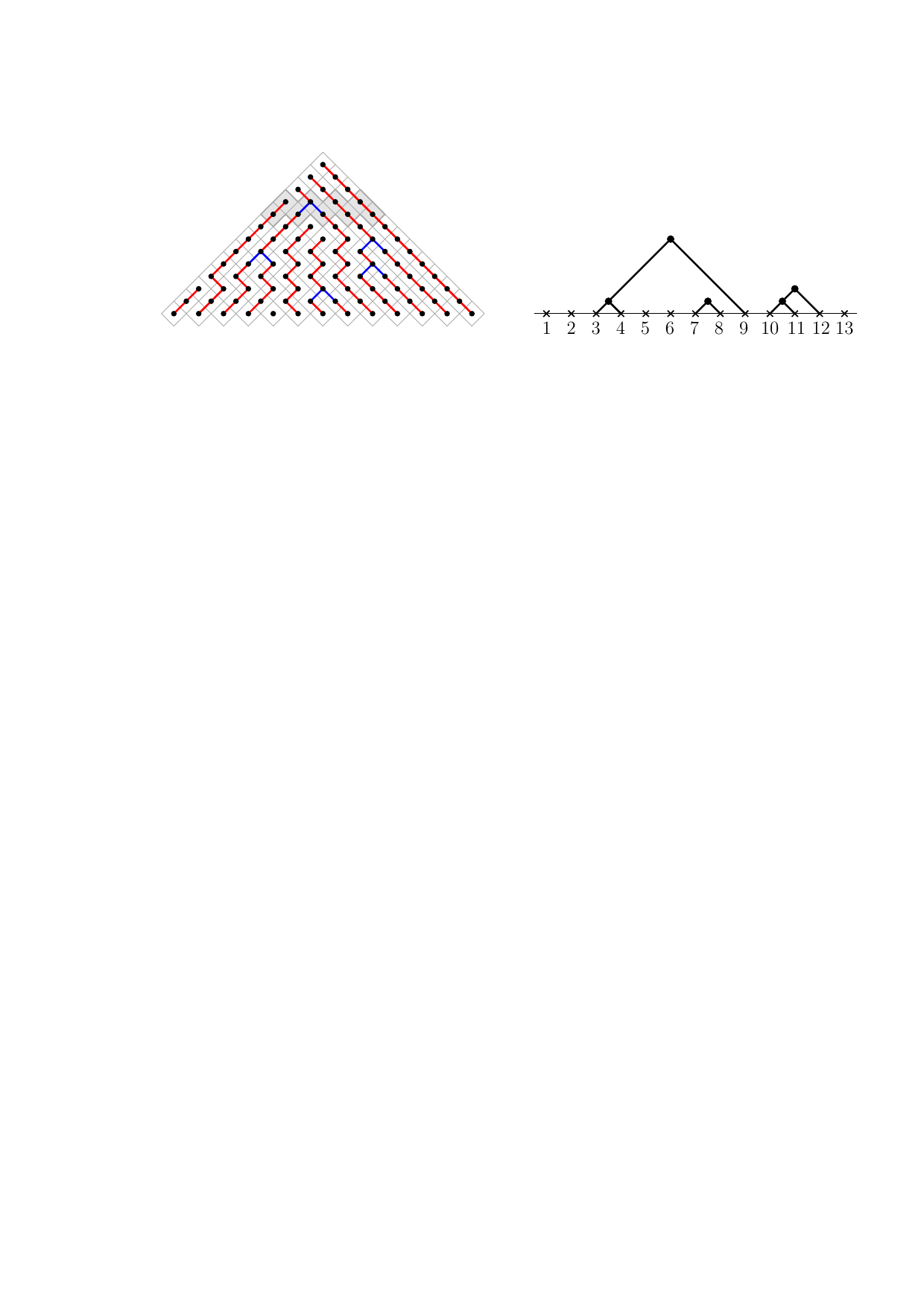}
    \caption{Trimming diagram for $\rletter{1}^4\tletter{2}\rletter{3}\rletter{4}\tletter{6}\tletter{2}\tletter{7}\rletter{1}\tletter{6}\rletter{5}\in \rtseq_{13}$ and the associated nested forest.
    \label{fig:Trimming_diagram_and_forest}}
\end{figure}
In \Cref{sec:Moment} we will show that removing the blue edges from the trimming diagram yields a GZ diagram whose associated polytope is the moment polytope for the $\rt$-Richardson varieties $X(\rt)$ we will be discussing in subsequent sections.

\section{Geometric realizations of \texorpdfstring{$\rope{}$}{R} and \texorpdfstring{$\tope{}$}{T}}
\label{sec:GeomRT}

Denote the tautological flag subbundle on $\fl{n}$ by $\{\bm{0}\}\subset \mathcal{V}_1\subset \mathcal{V}_2\subset \cdots \subset \mathcal{V}_n=\ul{\CC}^n$.
Consider the projection map $$\pi_i:\fl{n}\to \fl{1,\ldots,i-1,i+1,\ldots,n}$$
forgetting the $i$'th subspace, which realizes $\fl{n}$ as the $\PP^1$-bundle given $\PP(\mathcal{V}_{i+1}/\mathcal{V}_{i-1})$ on the partial flag variety. The following folklore facts describe how pullback and pushforward along $\pi_i$ interacts with cohomology.
\begin{fact}
\label{fact:pifacts}\leavevmode
\begin{enumerate}
    \item Pullback  $(\pi_i)^*$ induces an injection $H^\bullet(\fl{1,\ldots,i-1,i+1,\ldots,n})\hookrightarrow H^\bullet(\fl{n})=\coinv{n}$ whose image is the subring generated by $x_1,\ldots,x_{i-1},x_i+x_{i+1},x_ix_{i+1},x_{i+2},\ldots,x_n$, i.e. the subring generated by polynomials symmetric in $x_i,x_{i+1}$,\item Pushforward $(\pi_i)_*:H^\bullet(\fl{n})\to H^\bullet(\fl{1,\ldots,i-1,i+1,\ldots,n})$ is given by $f\mapsto \partial_if$.
\end{enumerate}
\end{fact}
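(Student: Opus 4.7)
The plan is to leverage the $\PP^1$-bundle structure $\pi_i: \fl{n} \to \fl{1,\ldots,i-1,i+1,\ldots,n}$ coming from the identification $\fl{n} = \PP(\mathcal{V}_{i+1}/\mathcal{V}_{i-1})$ over the base partial flag variety, together with the Borel presentation of both cohomology rings and the standard properties of pullback and pushforward for projective bundles.

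For part (1), the Borel presentation of the partial flag variety $\fl{1,\ldots,i-1,i+1,\ldots,n}$ identifies its cohomology with a quotient of a polynomial ring in $x_1,\ldots,x_{i-1}, e_1, e_2, x_{i+2},\ldots,x_n$, where $e_1$ and $e_2$ are the Chern classes of the rank-$2$ tautological quotient $\mathcal{V}_{i+1}/\mathcal{V}_{i-1}$. Under $\pi_i^*$, the classes $x_j$ for $j \neq i, i+1$ pull back to their namesakes in $\coinv{n}$, while $e_1$ and $e_2$ pull back to the elementary symmetric polynomials $x_i + x_{i+1}$ and $x_i x_{i+1}$. This immediately identifies the image of $\pi_i^*$ as the subring of $\coinv{n}$ consisting of polynomials symmetric in $x_i, x_{i+1}$. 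Injectivity of $\pi_i^*$ is an instance of the projective bundle formula, since $\pi_i$ is a smooth proper morphism with connected and simply connected fibers.

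For part (2), I would show that both $(\pi_i)_*$ and $\partial_i$ are degree $-2$ homomorphisms of modules over the image subring from part (1), and then verify that they agree on a pair of module generators. Algebraically, $\partial_i$ satisfies $\partial_i(gh) = g\, \partial_i(h)$ whenever $g$ is symmetric in $x_i, x_{i+1}$, which is an elementary calculation from the defining formula for $\partial_i$. Topologically, $(\pi_i)_*$ is a module map over $\pi_i^* H^\bullet(\fl{1,\ldots,i-1,i+1,\ldots,n})$ by the projection formula. It is standard that $\coinv{n}$ is free of rank $2$ over its subring of polynomials symmetric in $x_i, x_{i+1}$, with module basis $\{1, x_i\}$. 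It therefore suffices to compute both maps on $1$ and $x_i$: both annihilate $1$ on degree grounds, and both send $x_i$ to $1$. For $\partial_i$ this is immediate from the definition; for $(\pi_i)_*$ this follows by restricting $x_i$ to a fiber $\PP^1 = \PP(\mathcal{V}_{i+1}/\mathcal{V}_{i-1})|_{\mathrm{pt}}$, where it becomes the hyperplane class whose integral is $1$.

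The main obstacle I anticipate is little more than a bookkeeping one, namely matching the sign and normalization convention for $x_i$ as a Chern class in the Borel presentation against the corresponding restriction to the fiber; everything else is a direct appeal to the projective bundle formula, which is exactly why these statements are called folklore.
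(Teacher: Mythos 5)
Your proof is correct. The paper states this as a \emph{folklore} fact and supplies no proof of its own, so there is no paper argument to compare against; your route through the projective bundle formula is the standard one, and the key reduction --- that $\coinv{n}$ is free of rank two over the pullback subring with basis $\{1,x_i\}$, combined with the projection formula to make $(\pi_i)_*$ a module map, so one need only check agreement on the two basis elements --- is exactly the right way to pin down the pushforward. One small point worth spelling out if this were to appear in print: the identification of the relative hyperplane class $\xi=c_1(\mathcal{O}_{\PP(\mathcal{V}_{i+1}/\mathcal{V}_{i-1})}(1))$ with $x_i$ (rather than $-x_i$ or $x_{i+1}$) is what justifies taking $\{1,x_i\}$ as the basis and yields $(\pi_i)_*(x_i)=1$; this is consistent with the paper's convention $x_j=c_1\bigl((\mathcal{V}_j/\mathcal{V}_{j-1})^\vee\bigr)$ (visible in the proof of \Cref{thm:ropepullback}), since $\mathcal{V}_i/\mathcal{V}_{i-1}$ is the tautological subbundle of the projective bundle. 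You flagged precisely this normalization issue yourself, so the argument is sound.
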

We recall the thrust of the BGG computation in the following geometric fact.
\begin{fact}
\label{fact:BGGP1}
    If $w\in S_n$ and $i\not\in \des{w}$, then $X^{ws_i}\to \pi_i(X^{ws_i})$ is birational onto its image, and $X^w=\pi_i^{-1}\pi_i(X^{ws_i})$ is a $\PP^1$-bundle over $\pi_i(X^w)=\pi_i(X^{ws_i})$.
\end{fact}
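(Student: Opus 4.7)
The plan is to analyze the map $\pi_i$ cell-by-cell using the Bruhat decomposition of $\fl{n}$. The key structural input is that the $\PP^1$-fiber of $\pi_i$ from Fact~\ref{fact:pifacts} decomposes under the Borel action as a point plus an $\mathbb{A}^1$: concretely, each fiber is a coset of $P_i/B$ where $P_i = B \sqcup B s_i B \subset GL_n$ is the subgroup generated by $B$ and $s_i$, and the two $B$-orbits of $P_i/B$ are the $B$-fixed point and its affine complement.

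From this I would deduce that $\pi_i(C^w) = \pi_i(C^{ws_i})$ for every $w$, where $C^v := BvB/B$, and that the fiber of $\pi_i$ over a point of this common image meets one of the two cells in a single point and the other in an $\mathbb{A}^1$, according to which of $w, ws_i$ is the minimum-length representative of the coset $w\langle s_i\rangle$. With $w$ and $ws_i$ labeled so that $\ell(ws_i) = \ell(w) + 1$, the cell $C^w$ maps isomorphically onto $\pi_i(C^w)$, while $C^{ws_i}$ surjects onto the same image with $\mathbb{A}^1$-fibers; taking closures this gives birationality of the map to the image of the minimum length representative.

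To obtain the bundle identification I would then use the open equality $\pi_i^{-1}(\pi_i(C^w)) = C^w \sqcup C^{ws_i}$. Since $\pi_i$ is proper, passing to closures gives $\pi_i^{-1}(\pi_i(X^w)) \supseteq \overline{C^w \sqcup C^{ws_i}} = X^{ws_i}$; irreducibility of the preimage as a $\PP^1$-bundle over the irreducible closed subvariety $\pi_i(X^w)$, together with a dimension count, forces the reverse containment and hence the identification $\pi_i^{-1}(\pi_i(X^w)) = X^{ws_i}$. The $\PP^1$-bundle structure over $\pi_i(X^w) = \pi_i(X^{ws_i})$ is then just the base change of the ambient $\PP^1$-bundle $\pi_i$ along the closed embedding $\pi_i(X^w) \hookrightarrow \fl{1,\ldots,i-1,i+1,\ldots,n}$.

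The hard part is really only book-keeping: the fiber picture $P_i/B = \{\text{pt}\} \sqcup \mathbb{A}^1$ under the $B$-action is elementary, and once one correctly identifies which of $w, ws_i$ is the minimum-length representative of the coset, the roles of birational projection versus projection with $\PP^1$-fibers are pinned down and the rest is formal from properness and the bundle property of $\pi_i$.
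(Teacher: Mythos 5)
The paper states this as a folklore fact (going back to BGG/Demazure) and gives no proof, so there is no in-paper argument to compare against. Your cell-by-cell Bruhat argument is the standard one and is correct in substance: the $B$-orbit decomposition $P_i/B\cong\{\mathrm{pt}\}\sqcup\mathbb{A}^1$ of the $\PP^1$-fiber, the resulting identity $\pi_i^{-1}\pi_i(C^w)=C^w\sqcup C^{ws_i}$ (using $\ell(ws_i)=\ell(w)+1$ so that $BwBs_iB=Bws_iB$), the one-point versus $\mathbb{A}^1$ fiber dichotomy, and the closure argument via properness of $\pi_i$ together with irreducibility of the $\PP^1$-bundle $\pi_i^{-1}\pi_i(X^w)$ and a dimension count are all in order.

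Do note, however, that you have silently corrected the statement as printed. Under the stated hypothesis $i\not\in\des{w}$ one has $\ell(ws_i)=\ell(w)+1$, so $\dim X^{ws_i}=\ell(w)+1>\ell(w)=\dim X^{w}$. Consequently $X^{ws_i}\to\pi_i(X^{ws_i})$ is \emph{not} birational (its generic fiber is the entire $\PP^1$), and $X^w=\pi_i^{-1}\pi_i(X^{ws_i})$ would force $X^w\supseteq X^{ws_i}$, contradicting the dimension count. The intended assertion --- and the one the paper actually invokes in the proof of \Cref{prop:projectionNiceRichardsons}, where it writes $X^{\ins_i(v)s_i}=\pi_i^{-1}\pi_i X^{\ins_i(v)}$ for $i\not\in\des{\ins_i(v)}$ --- has the roles of $w$ and $ws_i$ exchanged: $X^w\to\pi_i(X^w)$ is birational and $X^{ws_i}=\pi_i^{-1}\pi_i(X^w)$ is the $\PP^1$-bundle over $\pi_i(X^w)=\pi_i(X^{ws_i})$. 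Your proposal, with its explicit normalization ``$w$ and $ws_i$ labeled so that $\ell(ws_i)=\ell(w)+1$,'' proves precisely this corrected version.
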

From this fact we see that
$$\deg_{X^{w}}f(x_1,\ldots,x_n)=\deg_{\pi_i(X^{w})}\partial_if=\deg_{X^{ws_i}}\partial_i f.$$

In this section we carry out this computation with certain Richardson varieties that will later allow us to geometrically interpret the composite operators $\prodoperator{\rt}^n$ for $\rt\in \rtseq_n$.
For $1\le i \le n$, define the map $\ins_i:S_{n-1}\to S_n$ on permutations by
$$\ins_i(w)(j)=\begin{cases}w(j)+1&j<i\\1&j=i\\
w(j-1)+1&j>i.\end{cases}$$
Concretely this inserts a $1$ in the $i$'th available position in the one-line notation for $w$ and increases all other values by $1$, so e.g. $\ins_3(143652)=2514763$.

\subsection{A geometric realization of $\rope{}$}

The geometric description of the Bergeron--Sottile maps $\rope{i}$ comes from the seminal paper \cite{BS02} of the Pieri rule for Schubert varieties, in particular the map $\Psi_i$ -- we make no claims of originality.
    For $1\le i \le n$, let $\Psi_i:\fl{n-1}\to \fl{n}$ denote the map
    $$\left(\Psi_i(\mathcal{V})\right)_j=\begin{cases}\{0\}\oplus \mathcal{V}_j&\text{if }j<i\\
    \ul{\CC}\oplus \mathcal{V}_{j-1}&\text{if }j\geq  i.
    \end{cases}$$

\begin{thm}
\label{thm:ropepullback}
For $f\in \poly_n$ we have
    $$\Psi_i^*f=\rope{i}f.$$ 
\end{thm}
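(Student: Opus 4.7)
The strategy is to reduce to computing the pullback on the generators $x_1,\ldots,x_n$ of $H^\bullet(\fl{n})=\coinv{n}$, using that $\Psi_i^*$ is a ring homomorphism and that $f\in \poly_n$ acts on $\fl{n}$ via its image in $\coinv{n}$ (where $x_j$ is the first Chern class of the line bundle $\mathcal V_j/\mathcal V_{j-1}$ on $\fl{n}$). Correspondingly, let $y_1,\ldots,y_{n-1}$ denote the analogous generators of $H^\bullet(\fl{n-1})$.

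First I would unwind the definition of $\Psi_i$ at the level of the tautological flag on $\fl{n-1}$. Writing $\mathcal{V}^{(n-1)}_\bullet$ for the tautological flag on $\fl{n-1}$, we have
\begin{align*}
\Psi_i^*(\mathcal V_j/\mathcal V_{j-1}) &\cong \mathcal V^{(n-1)}_j/\mathcal V^{(n-1)}_{j-1} && \text{for } j<i,\\
\Psi_i^*(\mathcal V_i/\mathcal V_{i-1}) &\cong \ul{\CC}, &&\\
\Psi_i^*(\mathcal V_j/\mathcal V_{j-1}) &\cong \mathcal V^{(n-1)}_{j-1}/\mathcal V^{(n-1)}_{j-2} && \text{for } j>i.
\end{align*}
Taking first Chern classes gives $\Psi_i^*x_j=y_j$ for $j<i$, $\Psi_i^*x_i=0$ (since the bundle is trivial), and $\Psi_i^*x_j=y_{j-1}$ for $j>i$.

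Since $\Psi_i^*$ is a ring homomorphism and polynomials in the $x_j$ surject onto $\coinv{n}$, this determines $\Psi_i^*$ on all of $H^\bullet(\fl{n})$. For $f\in \poly_n$, substituting gives
$$\Psi_i^* f = f(y_1,\ldots,y_{i-1},0,y_i,\ldots,y_{n-1}),$$
and by the very definition of $\rope{i}$ in \eqref{eqn:ropedef}, this equals $\rope{i}f$ (viewed as a class in $\coinv{n-1}$).

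There is no real obstacle: the only point requiring care is the identification of the pulled-back graded pieces of the tautological flag, which I would check directly from the explicit formula for $\Psi_i$. Since the line bundle inserted at position $i$ is the trivial bundle $\ul{\CC}$, its first Chern class vanishes — this is precisely the mechanism that zeroes out the $i$-th variable, matching the defining property of $\rope{i}$.
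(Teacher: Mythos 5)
Your proof is correct and takes essentially the same approach as the paper: determine $\Psi_i^*$ on the tautological flag, read off its action on the degree-one generators $x_1,\ldots,x_n$, and conclude by ring-homomorphism property that it agrees with $\rope{i}$. One small discrepancy of convention: the paper has $x_1+\cdots+x_j = c_1(\mathcal V_j^\vee)$, i.e.\ $x_j = c_1\bigl((\mathcal V_j/\mathcal V_{j-1})^\vee\bigr)$, whereas you identify $x_j$ with $c_1(\mathcal V_j/\mathcal V_{j-1})$ without the dual; this sign choice is immaterial here since it only flips both sides of $\Psi_i^* x_j = y_{j-\delta_{j\ge i}}$ (with the understanding that $\Psi_i^* x_i = 0$), so the conclusion $\Psi_i^*f=\rope{i}f$ is unaffected.
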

\begin{proof}
    We have
    $$\Psi_i^*(\mathcal{V}_j)=\begin{cases}\mathcal{V}_j&\text{if }j<i\\
    \ul{\CC}\oplus \mathcal{V}_{j-1}&\text{if }j\ge i.\end{cases}$$
Therefore
$$\Psi_i^*(x_1+\cdots+x_j)=c_1(\Psi_i^*(\mathcal{V}_j)^{\vee})=c_1((\mathcal{V}_{j-\delta_{j\geq i}})^{\vee})=x_1+\cdots+x_{j-\delta_{j\geq i}}=\rope{i}(x_1+\cdots+x_j).$$
Since $\Psi_i^*$ and $\rope{i}$ agree on the generators $x_1,\ldots,x_{n}$ of $H^\bullet(\fl{n})$, they are equal.
\end{proof}

\begin{thm}
\label{thm:geometricrope}
For $u\le v$ in $S_{n-1}$, the map $\Psi_i$ is an isomorphism $X^v_u\to X^{\ins_i(v)}_{\ins_i(u)}$. In particular, $$\deg_{X^{\ins_i(v)}_{\ins_i(u)}}f=\deg_{X^v_u}\rope{i}f.$$
\end{thm}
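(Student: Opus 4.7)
The plan is to show that $\Psi_i$ restricts to an isomorphism $X^v_u \xrightarrow{\sim} X^{\ins_i(v)}_{\ins_i(u)}$; the degree identity then follows immediately from \Cref{thm:ropepullback} together with the fact that isomorphisms preserve degrees under pullback.

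First I would observe that $\Psi_i : \fl{n-1} \to \fl{n}$ is a locally closed embedding with image $\{\mathcal{W} : \ul{\CC} \subset \mathcal{W}_i,\ \ul{\CC} \not\subset \mathcal{W}_{i-1}\}$, the inverse being given by the projection $\CC^n \to \CC^n/\ul{\CC} \cong \CC^{n-1}$ applied to each subspace (appropriately re-indexed). Next, using the identification $E^n_b = \ul{\CC} \oplus E^{n-1}_{b-1}$ of the standard flags, a direct intersection-dimension computation yields
\[\dim\left(\Psi_i(\mathcal{V})_a \cap E^n_b\right) = \begin{cases} \dim(\mathcal{V}_a \cap E^{n-1}_{b-1}) & \text{if } a < i,\\ 1 + \dim(\mathcal{V}_{a-1} \cap E^{n-1}_{b-1}) & \text{if } a \ge i,\ b \ge 1.\end{cases}\]
Unpacking the rank function $r^{\ins_i(v)}_{a,b} = \#\{k \le a : \ins_i(v)(k) \le b\}$ directly from the one-line notation of $\ins_i(v)$ produces $r^v_{a, b-1}$ for $a < i$ and $r^v_{a-1,b-1} + 1$ for $a \ge i,\ b \ge 1$. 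Matching these formulae gives $\Psi_i^{-1}(X^{\ins_i(v)}) = X^v$, and the parallel argument with the opposite flag gives $\Psi_i^{-1}(X_{\ins_i(u)}) = X_u$; intersecting yields $\Psi_i(X^v_u) \subset X^{\ins_i(v)}_{\ins_i(u)}$.

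To upgrade this inclusion to an equality, I would use the length identity $\ell(\ins_i(w)) = \ell(w) + (n-i)$ (verified by an inversion count), so that both varieties have dimension $\ell(v) - \ell(u)$. Since $\Psi_i(X^v_u)$ is projective and irreducible (as the injective image of the projective irreducible variety $X^v_u$) and $X^{\ins_i(v)}_{\ins_i(u)}$ is irreducible of the same dimension, the two coincide; combined with the locally-closed-embedding property, $\Psi_i$ restricts to the desired isomorphism. The main obstacle will be the rank-condition bookkeeping --- in particular, keeping the shifted indexing in $E^n_b = \ul{\CC} \oplus E^{n-1}_{b-1}$ aligned with the insertion combinatorics of $\ins_i$. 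Once this is set up cleanly, the opposite-flag computation follows by a mirror argument and the promotion from inclusion to equality is standard dimension-irreducibility arithmetic.
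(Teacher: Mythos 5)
Your overall structure matches the paper's (show $\Psi_i(X^v_u) \subset X^{\ins_i(v)}_{\ins_i(u)}$, then upgrade to equality by irreducibility and a dimension count using $\ell(\ins_i(w)) = \ell(w) + (n-i)$), but you prove the inclusion by a genuinely different method: you match rank conditions $\dim(\mathcal{W}_a \cap E_b)$ against the explicit intersection dimensions of $\Psi_i(\mathcal{V})$, whereas the paper instead constructs an adapted basis $w_1,\dots,w_{n-1}$ of $\CC^{n-1}$ witnessing membership in a Schubert cell $X^{v'}$ with $v'\le v$ and lifts it to $\CC^n$. Your rank-function computation is correct (I checked both the standard-flag and opposite-flag cases, including the shift by $\ul{\CC}$), so the proof goes through.

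One factual slip worth fixing: $\Psi_i$ is a \emph{closed} embedding, not merely locally closed, and your claimed image $\{\mathcal{W} : \ul{\CC} \subset \mathcal{W}_i,\ \ul{\CC} \not\subset \mathcal{W}_{i-1}\}$ is wrong for $i\ge 2$. The actual image is the closed set $\{\mathcal{W} : \mathcal{W}_{i-1} \subset \{0\}\oplus\CC^{n-1},\ \ul{\CC} \subset \mathcal{W}_i\}$, which has dimension $\binom{n-1}{2}$; your candidate set is an open subset of the Schubert condition $\{\ul{\CC}\subset\mathcal{W}_i\}$ of dimension $\binom{n}{2}-(n-i)$, strictly larger when $i>1$ (and your proposed ``inverse'' $q:\CC^n\to\CC^n/\ul{\CC}$ does not recover $\mathcal{W}$ from $q(\mathcal{W}_\bullet)$ on that larger locus, e.g.\ $\mathcal{W}_1=\CC(e_1+e_2)$). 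This doesn't invalidate the argument, since all you actually use is that $\Psi_i$ is an embedding and that $\Psi_i(X^v_u)$ is a closed irreducible subvariety of $X^{\ins_i(v)}_{\ins_i(u)}$ of the same dimension, both of which hold; but you should either state the correct image or, more simply, just cite properness of $\fl{n-1}$ to get closedness.
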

\begin{proof}
    $X^v_u$ and $X^{\ins_i(v)}_{\ins_i(u)}$ are irreducible of the same dimension $\ell(v)-\ell(u)=\ell(\ins_i(v))-\ell(\ins_i(u))$. Since $\Psi_i$ is a closed embedding, it suffices to show that $\Psi_i(X^v_u)\subset X^{\ins_i(v)}_{\ins_i(u)}.$ 
    
    For $\mathcal{V}\in X^v_u$, let $w_1,\ldots,w_{n-1}$ be a basis of $\CC^{n-1}$ such that for all $j$ we have $\langle w_1,\ldots,w_j\rangle=\mathcal{V}_j$ and $\langle w_{(v')^{-1}(1)},\ldots,w_{(v')^{-1}(j)}\rangle=\langle e_1,\ldots,e_j\rangle$ for all $j$ for some $v'\le v$. Then the vectors $\widetilde{w}_1,\ldots,\widetilde{w}_n\in \CC^n$ with $$\widetilde{w}_j=\begin{cases}\{0\}\oplus w_j&j<i\\e_1&j=i\\ \{0\}\oplus w_{j-1}&j>i\end{cases}$$ are a basis of $\CC^n$ such that $\langle \widetilde{w}_1,\ldots,\widetilde{w}_j\rangle=\Psi_i(\mathcal{V})_j$ and $\langle \widetilde{w}_{\ins_i(v')^{-1}(1)},\ldots,\widetilde{w}_{\ins_i(v')^{-1}(j)}\rangle =\langle e_1,\ldots,e_j\rangle$ so $\Psi_i(\mathcal{V})\in X^{\ins_i(v')}\subset X^{\ins_i(v)}$. 
    Similarly $\Psi_i(\mathcal{V})\in X_{\ins_i(u)}$ and the result follows since $X^{\ins_i(v)}_{\ins_i(u)}=X^{\ins_i(v)}\cap X_{\ins_i(u)}$.
\end{proof}
Recall that a \emph{toric Richardson variety} in $\fl{n}$ is a Richardson variety which is also a torus-orbit closure under the left action of torus of diagonal matrices $(\mathbb{C}^*)^n$ on $GL_n/B=\fl{n}$.
\begin{cor}
\label{cor:iterativetoricR}
    If $X^v_u$ is a toric Richardson variety, then so is $X^{\ins_{i}(v)}_{\ins_i(u)}$.
\end{cor}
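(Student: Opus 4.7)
The plan is to promote the isomorphism $\Psi_i\colon X^v_u \xrightarrow{\sim} X^{\ins_i(v)}_{\ins_i(u)}$ from \Cref{thm:geometricrope} to a statement about torus-orbit closures by establishing an equivariance property of $\Psi_i$.

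First I would identify $\CC^n=\CC\oplus\CC^{n-1}$ via the convention used in the definition of $\Psi_i$, so that the torus $T_n=(\CC^*)^n$ splits as $\CC^*\times T_{n-1}$ where $T_{n-1}=(\CC^*)^{n-1}$. I claim that $\Psi_i$ is equivariant with respect to the inclusion $\tau_i\colon T_{n-1}\hookrightarrow T_n$ given by $\tau_i(t_1,\ldots,t_{n-1})=(1,t_1,\ldots,t_{n-1})$. This is a direct verification: for $\mathcal{V}\in\fl{n-1}$ and $t\in T_{n-1}$, in the regime $j<i$ we have $\Psi_i(t\cdot\mathcal{V})_j=\{0\}\oplus t\mathcal{V}_j=\tau_i(t)\cdot(\{0\}\oplus\mathcal{V}_j)$, and for $j\geq i$ we have $\Psi_i(t\cdot\mathcal{V})_j=\ul\CC\oplus t\mathcal{V}_{j-1}=\tau_i(t)\cdot(\ul\CC\oplus\mathcal{V}_{j-1})$ because $\tau_i(t)$ acts as the identity on the first coordinate.

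Next I would observe that the complementary factor $\CC^*\times\{1\}\subset T_n$ acts trivially on the image $\Psi_i(\fl{n-1})$: rescaling $e_1$ preserves every subspace of the form $\{0\}\oplus\mathcal{V}_j$ (which has zero first coordinate) and every subspace of the form $\ul\CC\oplus\mathcal{V}_{j-1}$ (which already contains $\ul\CC$). Hence the full $T_n$-action on $\Psi_i(\fl{n-1})$ factors through $T_{n-1}$ and agrees with the $T_{n-1}$-action on $\fl{n-1}$ transported by $\Psi_i$.

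Putting the pieces together: by hypothesis $X^v_u=\overline{T_{n-1}\cdot\mathcal V_0}$ for some point $\mathcal V_0$. Since $\Psi_i$ is a closed embedding, it commutes with taking closure, so
\[
X^{\ins_i(v)}_{\ins_i(u)}=\Psi_i(X^v_u)=\overline{\Psi_i(T_{n-1}\cdot\mathcal V_0)}=\overline{\tau_i(T_{n-1})\cdot\Psi_i(\mathcal V_0)}=\overline{T_n\cdot\Psi_i(\mathcal V_0)},
\]
where the last equality uses that the kernel $\CC^*\times\{1\}$ of $T_n\twoheadrightarrow T_{n-1}$ fixes $\Psi_i(\mathcal V_0)$. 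Thus $X^{\ins_i(v)}_{\ins_i(u)}$ is a $T_n$-orbit closure, and it is a Richardson variety by construction, proving that it is toric Richardson. The main (minor) subtlety is the bookkeeping in the second step to confirm that no information is lost when passing from $T_{n-1}$-orbits upstairs to $T_n$-orbits downstairs; everything else is a direct invocation of \Cref{thm:geometricrope}.
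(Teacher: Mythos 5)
Your proposal is correct and takes essentially the same approach as the paper's: the paper's proof is the one-line equivariance identity $(t_1,\ldots,t_n)\cdot\Psi_i(\mathcal{V})=\Psi_i((t_2,\ldots,t_n)\cdot\mathcal{V})$, which is exactly what you verify (split into the $\tau_i$-equivariance plus the triviality of the residual $\CC^*\times\{1\}$ action on $\Psi_i(\fl{n-1})$).
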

\begin{proof}
For $(t_1,\ldots,t_n)\in (\mathbb{C}^*)^n$
    this follows because
    $(t_1,\ldots,t_n)\cdot \Psi_i(\mathcal{V})=\Psi_i((t_2,\ldots,t_n)\mathcal{V})$. 
\end{proof}

\subsection{A geometric realization of $\tope{}$}
To realize $\tope{i}$ geometrically we will compose the geometric realization of $\rope{i}$ from the previous subsection with $\pi_i^{-1}\pi_i$, as in the BGG realization of $\partial_i$. This happens to interact well with the Richardson varieties produced in the previous subsection.
\begin{prop}
\label{prop:projectionNiceRichardsons}    $X^{\ins_{i+1}(v)}_{\ins_i(u)}=\pi_i^{-1}\pi_i\left(X^{\ins_{i+1}(v)}_{\ins_i(u)}\right)$ and $\pi_i\left(X^{\ins_{i+1}(v)}_{\ins_i(u)}\right)=\pi_i\left(X^{\ins_{i}(v)}_{\ins_i(u)}\right)$
\end{prop}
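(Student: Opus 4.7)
The plan is to verify that both $X^{\ins_{i+1}(v)}$ and $X_{\ins_i(u)}$ are individually $\pi_i$-saturated, giving the first equality through a standard intersection argument, and then to combine the Bruhat containment $X^{\ins_i(v)}_{\ins_i(u)} \subseteq X^{\ins_{i+1}(v)}_{\ins_i(u)}$ with a torus-fixed-point computation inside the resulting $\mathbb{P}^1$-bundle to obtain the second equality.

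First, from the definition of $\ins_i$ I would check that $\ins_{i+1}(v)(i) = v(i)+1 \ge 2 > 1 = \ins_{i+1}(v)(i+1)$ and $\ins_i(u)(i) = 1 < u(i)+1 = \ins_i(u)(i+1)$, so $i\in\des{\ins_{i+1}(v)}$ while $i\notin\des{\ins_i(u)}$, and comparing one-line notations gives $\ins_{i+1}(v) = \ins_i(v)\cdot s_i$ with $\ell(\ins_{i+1}(v)) = \ell(\ins_i(v))+1$. Applying \Cref{fact:BGGP1} with $w=\ins_i(v)$ (so $ws_i=\ins_{i+1}(v)$) then gives $\pi_i^{-1}\pi_i(X^{\ins_i(v)}) = X^{\ins_{i+1}(v)}$, i.e.\ $X^{\ins_{i+1}(v)}$ is $\pi_i$-saturated; the analogous fact for opposite Schubert varieties (valid because $\ins_i(u)$ has ascent at $i$) gives $\pi_i$-saturation of $X_{\ins_i(u)}$. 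The general inclusion $\pi_i^{-1}\pi_i(A\cap B) \subseteq \pi_i^{-1}\pi_i(A)\cap\pi_i^{-1}\pi_i(B)$ then forces the intersection to be $\pi_i$-saturated, establishing the first equality.

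For the second equality, the Bruhat relation $\ins_i(v)\le \ins_{i+1}(v)$ gives $X^{\ins_i(v)}_{\ins_i(u)}\subseteq X^{\ins_{i+1}(v)}_{\ins_i(u)}$ and hence one inclusion of $\pi_i$-images. From the first part, $X^{\ins_{i+1}(v)}_{\ins_i(u)}$ is a $\mathbb{P}^1$-bundle over its $\pi_i$-image, inside which $X^{\ins_i(v)}_{\ins_i(u)}$ is irreducible of codimension one (by the length computation and the formula $\dim X^a_b=\ell(a)-\ell(b)$). Such a subvariety of a $\mathbb{P}^1$-bundle is either the pullback of a codimension-one subvariety of the base, or else surjects generically finitely onto the base. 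I would rule out the first possibility by noting that $e_{\ins_i(v)}$ lies in $X^{\ins_i(v)}_{\ins_i(u)}$ (using $\ins_i(u)\le\ins_i(v)$, a standard consequence of $u\le v$ under the insertion map), but the other torus-fixed point $e_{\ins_i(v) s_i} = e_{\ins_{i+1}(v)}$ of the $\pi_i$-fiber through $e_{\ins_i(v)}$ fails to lie in $X^{\ins_i(v)}$, since $\ins_{i+1}(v) \not\le \ins_i(v)$ (as $\ell(\ins_{i+1}(v)) > \ell(\ins_i(v))$), so this fiber is not fully contained in $X^{\ins_i(v)}_{\ins_i(u)}$. This forces the second case, so $\pi_i(X^{\ins_i(v)}_{\ins_i(u)}) = \pi_i(X^{\ins_{i+1}(v)}_{\ins_i(u)})$.

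The main delicate point is the ``fiber versus multisection'' dichotomy for codimension-one subvarieties of $\mathbb{P}^1$-bundles; once this structural fact is invoked, the single $T$-fixed-point computation cleanly excludes the degenerate case, and the remaining steps are routine.
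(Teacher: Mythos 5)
Your argument is correct, but for the second equality it follows a genuinely different path than the paper. The paper obtains both statements from a single chain of equalities: it applies the identity $\pi_i^{-1}\pi_i(A\cap B)=A\cap\pi_i^{-1}\pi_i(B)$, valid whenever $A$ alone is $\pi_i$-saturated (here $A=X_{\ins_i(u)}$ and the \emph{non}-saturated $B=X^{\ins_i(v)}$), to compute $\pi_i^{-1}\pi_i\bigl(X^{\ins_i(v)}_{\ins_i(u)}\bigr)=X_{\ins_i(u)}\cap\pi_i^{-1}\pi_i\bigl(X^{\ins_i(v)}\bigr)=X_{\ins_i(u)}\cap X^{\ins_{i+1}(v)}=X^{\ins_{i+1}(v)}_{\ins_i(u)}$; applying $\pi_i$ to this yields the second statement and applying $\pi_i^{-1}\pi_i$ once more (by idempotence) yields the first. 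Your treatment of the first equality is essentially equivalent in content (both $X^{\ins_{i+1}(v)}$ and $X_{\ins_i(u)}$ are saturated, hence so is their intersection), but for the second you invoke the vertical-or-dominant dichotomy for an irreducible codimension-one closed subvariety of a $\PP^1$-bundle, and exclude the vertical case by noting that the $\pi_i$-fiber through $e_{\ins_i(v)}\in X^{\ins_i(v)}_{\ins_i(u)}$ also contains $e_{\ins_{i+1}(v)}\notin X^{\ins_i(v)}$. That argument is sound and gives a nice geometric picture of why the projection restricted to $X^{\ins_i(v)}_{\ins_i(u)}$ must be dominant, but it brings in irreducibility and dimension bookkeeping that the set-theoretic route avoids entirely; upgrading the one-sided inclusion you cite to the equality $\pi_i^{-1}\pi_i(A\cap B)=A\cap\pi_i^{-1}\pi_i(B)$ for $A$ saturated (the reverse containment holds because the full $\pi_i$-fiber through any point of $A$ stays inside $A$) recovers the paper's shorter computation directly.
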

\begin{proof}
 By \Cref{fact:BGGP1} we have  $X^{\ins_{i+1}(v)}=X^{\ins_i(v)s_i}=\pi_i^{-1}\pi_i X^{\ins_i(v)}$ because $i\not\in \des{\ins_i(v)}$.  
 Since $X_{\ins_i(u)}=w_{0,n}X^{w_{0,n}\ins_i(u)}$, where $w_{0,n}$ denotes the longest permutation in $S_n$, we may apply \Cref{fact:BGGP1} again to conclude that $X_{\ins_i(u)}=\pi_i^{-1}\pi_iX_{\ins_i(u)}$ because $i\not\in \des{\ins_i(u)}$.
 
 Combining these, we obtain the equality
$$\pi_i^{-1}\pi_iX^{\ins_i(v)}_{\ins_i(u)}=X_{\ins_i(u)}\cap \pi_i^{-1}\pi_iX^{\ins_i(v)}=X_{\ins_i(u)}\cap X^{\ins_{i+1}(v)}=X^{\ins_{i+1}(v)}_{\ins_i(u)},$$
where we note that the first equality follows just from the $\pi_i$-saturatedness $\pi_i^{-1}\pi_i X_{\epsilon_i(u)}=X_{\epsilon_i(u)}$. This implies both desired statements. 
\end{proof}
\begin{thm}
\label{thm:geometrictope}
For $f\in \poly_n$ and $1\le i \le n-1$ we have
    \begin{align}\deg_{X^{\ins_{i+1}(v)}_{\ins_i(u)}}f&=\deg_{X^v_u}\tope{i}f.\end{align}
\end{thm}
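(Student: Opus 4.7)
The plan is to factor the projection formula $\tope{i}f = \rope{i}\partial_if$ geometrically, combining the $\PP^1$-bundle description of $\partial_i$ (\Cref{fact:pifacts} and \Cref{fact:BGGP1}) with the isomorphism $\Psi_i \colon X^v_u \cong X^{\ins_i(v)}_{\ins_i(u)}$ of \Cref{thm:geometricrope}, and using \Cref{prop:projectionNiceRichardsons} to glue the two steps together on the Richardson variety $X^{\ins_{i+1}(v)}_{\ins_i(u)}$.

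First, by \Cref{prop:projectionNiceRichardsons} we have $X^{\ins_{i+1}(v)}_{\ins_i(u)} = \pi_i^{-1}\pi_i\bigl(X^{\ins_{i+1}(v)}_{\ins_i(u)}\bigr)$, so this Richardson variety is saturated with respect to the $\PP^1$-bundle $\pi_i$. Pushforward along $\pi_i$ is $\partial_i$ (\Cref{fact:pifacts}(2)), and the fundamental class of $X^{\ins_{i+1}(v)}_{\ins_i(u)}$ pushes forward to the fundamental class of its image (with multiplicity one), so
\[
\deg_{X^{\ins_{i+1}(v)}_{\ins_i(u)}}f \;=\; \deg_{\pi_i X^{\ins_{i+1}(v)}_{\ins_i(u)}}\partial_if.
\]

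Second, \Cref{prop:projectionNiceRichardsons} also gives $\pi_i X^{\ins_{i+1}(v)}_{\ins_i(u)} = \pi_i X^{\ins_i(v)}_{\ins_i(u)}$. I would then argue that $X^{\ins_i(v)}_{\ins_i(u)} \to \pi_i X^{\ins_i(v)}_{\ins_i(u)}$ is birational: both are irreducible, and a dimension count using $\ell(\ins_i(v))-\ell(\ins_i(u)) = \ell(v)-\ell(u) = \ell(\ins_{i+1}(v))-\ell(\ins_i(u))-1$ shows the image has the same dimension as the source (alternately, this follows from the birationality half of \Cref{fact:BGGP1} for $\ins_i(v)$, since $i \notin \des{\ins_i(v)}$, restricted to the Richardson). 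Therefore
\[
\deg_{\pi_i X^{\ins_i(v)}_{\ins_i(u)}}\partial_if \;=\; \deg_{X^{\ins_i(v)}_{\ins_i(u)}}\partial_if.
\]

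Finally, \Cref{thm:geometricrope} says $\Psi_i$ induces an isomorphism $X^v_u \cong X^{\ins_i(v)}_{\ins_i(u)}$, and \Cref{thm:ropepullback} says $\Psi_i^*g = \rope{i}g$ for any $g \in \poly_n$. Applying this with $g = \partial_if$ and using $\tope{i}f = \rope{i}\partial_if$ from \eqref{eqn:topedef} yields
\[
\deg_{X^{\ins_i(v)}_{\ins_i(u)}}\partial_if \;=\; \deg_{X^v_u}\rope{i}\partial_if \;=\; \deg_{X^v_u}\tope{i}f,
\]
and chaining the three equalities gives the claim. The only step with any subtlety is confirming the birationality in the middle step — everything else is an immediate assembly of the preceding facts.
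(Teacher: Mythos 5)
Your proof is correct and follows essentially the same route as the paper: the same chain of equalities, justified in the same way, using \Cref{prop:projectionNiceRichardsons} for the $\pi_i$-saturatedness and the collapse of images, \Cref{fact:pifacts} for the pushforward/pullback along $\pi_i$, and \Cref{thm:geometricrope} for the $\Psi_i$ step. The only difference is cosmetic: you spell out the birationality of $X^{\ins_i(v)}_{\ins_i(u)} \to \pi_i X^{\ins_i(v)}_{\ins_i(u)}$ explicitly via a length/dimension count (and note the alternative deduction from \Cref{fact:BGGP1}), whereas the paper absorbs this into the citation to \Cref{fact:pifacts}; both justifications are sound.
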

\begin{proof}
We have \begin{equation*}\deg_{X^{\ins_{i+1}(v)}_{\ins_i(u)}}f=\deg_{\pi_i(X^{\ins_{i+1}(v)}_{\ins_i(u)})}\partial_i f=\deg_{\pi_i(X^{\ins_{i}(v)}_{\ins_i(u)})}\partial_i f=\deg_{X^{\ins_{i}(v)}_{\ins_i(u)}}\partial_if=\deg_{X^v_u}\rope{i}\partial_if=\deg_{X^v_u}\tope{i}f.\end{equation*}
Here the first and third equalities follow from \Cref{fact:pifacts}, the second equality follows from \Cref{prop:projectionNiceRichardsons}, the fourth equality follows from \Cref{thm:geometricrope}, and the fifth equality follows from the identity $\rope{i}\partial_i=\tope{i}$.
\end{proof}

Note that $\pi_i\left(X^{\ins_{i+1}(v)}_{\ins_i(u)}\right)=\pi_i\left(X^{\ins_i(v)}_{\ins_i(u)}\right)=(\pi_i\circ \Psi_i)(X^v_u)$, and $\pi_i\circ \Psi_i|_{X^v_u}$ is an isomorphism onto its image since $\Psi_i$ is a closed embedding and $\pi_i:X^v_u\to \pi_i\left(X^{\ins_i(v)}_{\ins_i(u)}\right)$ is an isomorphism. 
This allows us to make the following definition.

\begin{defn}
\label{defn:Phi}
    For $u\le v$ permutations in $S_{n-1}$ and $1\le i \le n-1$, let $\Phi_i:X^{\ins_{i+1}(v)}_{\ins_i(u)}\to X^v_u$ denote the composite map $(\pi_i\circ \Psi_i)^{-1}\circ \pi_i$. 
\end{defn}

\begin{thm}
\label{thm:TP1bundle}
    For $u\le v$ permutations in $S_{n-1}$ and $1\le i \le n-1$, the map $\Phi_i:X^{\ins_{i+1}(v)}_{\ins_i(u)}\to X^v_u$ realizes $X^{\ins_{i+1}(v)}_{\ins_i(u)}$ as the $\PP^1$-bundle $\PP(\ul{\CC}\oplus (\mathcal{V}_i^{(n-1)}/\mathcal{V}_{i-1}^{(n-1)}))$ on $X^v_u$. 
\end{thm}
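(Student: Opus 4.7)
The plan is to unpack $\Phi_i=(\pi_i\circ\Psi_i)^{-1}\circ\pi_i$ as a composition of an isomorphism followed by a projection of an already-established $\PP^1$-bundle, then identify the rank-$2$ bundle whose projectivization realizes it.

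First, recall that $\pi_i:\fl{n}\to\fl{1,\ldots,i-1,i+1,\ldots,n}$ is globally the projectivization $\PP(\mathcal{V}_{i+1}/\mathcal{V}_{i-1})$. By \Cref{prop:projectionNiceRichardsons}, $X^{\ins_{i+1}(v)}_{\ins_i(u)}$ is $\pi_i$-saturated, so the restriction of $\pi_i$ to it remains a $\PP^1$-bundle onto its image $\pi_i(X^{\ins_{i+1}(v)}_{\ins_i(u)})=\pi_i(X^{\ins_i(v)}_{\ins_i(u)})$. Next, since $\Psi_i$ is a closed embedding and $\pi_i$ restricts to an isomorphism $X^{\ins_i(v)}_{\ins_i(u)}\to\pi_i(X^{\ins_i(v)}_{\ins_i(u)})$ (being a section of the $\PP^1$-bundle that misses no fiber, as explained just before \Cref{defn:Phi}), the composite $\pi_i\circ\Psi_i:X^v_u\to\pi_i(X^{\ins_{i+1}(v)}_{\ins_i(u)})$ is an isomorphism. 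Composing the bundle projection $\pi_i|_{X^{\ins_{i+1}(v)}_{\ins_i(u)}}$ with the inverse of this isomorphism yields $\Phi_i$ as a $\PP^1$-bundle over $X^v_u$.

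It then remains to identify the rank-$2$ bundle. Using the explicit formula for $\Psi_i$ in \Cref{defn:Phi} (given just above \Cref{thm:ropepullback}), we compute pullbacks of the tautological subbundles: $\Psi_i^*\mathcal{V}_{i-1}=\{0\}\oplus\mathcal{V}_{i-1}^{(n-1)}$ (since $i-1<i$) and $\Psi_i^*\mathcal{V}_{i+1}=\underline{\CC}\oplus\mathcal{V}_{i}^{(n-1)}$ (since $i+1\ge i$), so taking quotients gives
\[
\Psi_i^*\!\left(\mathcal{V}_{i+1}/\mathcal{V}_{i-1}\right)\;\cong\;\underline{\CC}\oplus\bigl(\mathcal{V}_i^{(n-1)}/\mathcal{V}_{i-1}^{(n-1)}\bigr).
\]
Projectivizing and transporting the bundle structure of $\pi_i$ along the isomorphism $\pi_i\circ\Psi_i$ gives the claimed description of $\Phi_i$ as $\PP\bigl(\underline{\CC}\oplus(\mathcal{V}_i^{(n-1)}/\mathcal{V}_{i-1}^{(n-1)})\bigr)\to X^v_u$.

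The only mildly delicate step is the identification of the image of $\pi_i$ restricted to $X^{\ins_{i+1}(v)}_{\ins_i(u)}$ with $(\pi_i\circ\Psi_i)(X^v_u)$ and the verification that $\pi_i\circ\Psi_i$ is an isomorphism onto this image; both of these were handled in \Cref{prop:projectionNiceRichardsons} and the remark following \Cref{thm:geometrictope}, so the argument reduces to a clean bookkeeping of the bundle pullback. Everything else is a routine invocation of the fact that saturated subvarieties of a $\PP^1$-bundle inherit the bundle structure.
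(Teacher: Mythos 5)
Your proof follows essentially the same route as the paper's: use the $\pi_i$-saturation of $X^{\ins_{i+1}(v)}_{\ins_i(u)}$ (from \Cref{prop:projectionNiceRichardsons}) to inherit the $\PP^1$-bundle $\PP(\mathcal{V}^{(n)}_{i+1}/\mathcal{V}^{(n)}_{i-1})$, then transport the bundle across the isomorphism $\pi_i\circ\Psi_i$ and identify the rank-$2$ bundle by computing $\Psi_i^*(\mathcal{V}^{(n)}_{i+1}/\mathcal{V}^{(n)}_{i-1})=\ul{\CC}\oplus(\mathcal{V}^{(n-1)}_i/\mathcal{V}^{(n-1)}_{i-1})$ from the explicit description of $\Psi_i^*\mathcal{V}_j$. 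The only cosmetic difference is that the paper writes the bundle identification as $\Psi_i^*\pi_i^*(\mathcal{V}^{(n)}_{i+1}/\mathcal{V}^{(n)}_{i-1})$, emphasizing that this bundle is pulled back from the base, whereas you compute $\Psi_i^*\mathcal{V}_{i-1}$ and $\Psi_i^*\mathcal{V}_{i+1}$ separately and quotient; both calculations are the same.
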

\begin{proof}
    The map $\pi_i$ makes $X^{\ins_{i+1}(u)}_{\ins_i(v)}$ a $\PP^1$-bundle $\PP\left(\mathcal{V}^{(n)}_{i+1}/\mathcal{V}^{(n)}_{i-1}\right)$ over $\pi_i\left(X^{\ins_i(v)}_{\ins_i(u)}\right)$. 
    Under the isomorphism $\pi_i\circ \Psi_i:X^v_u\cong  \pi_i\left(X^{\ins_i(v)}_{\ins_i(u)}\right)$, we see that $\mathcal{V}^{(n)}_{i+1}/\mathcal{V}^{(n)}_{i-1}$ corresponds to the vector bundle
    \begin{equation*}
        \Psi_i^*\pi_i^*\left(\mathcal{V}^{(n)}_{i+1}/\mathcal{V}^{(n)}_{i-1}\right)=\Psi_i^*\left(\mathcal{V}^{(n)}_{i+1}/\mathcal{V}^{(n)}_{i-1}\right)=\left(\ul{\CC}\oplus \mathcal{V}_i^{(n-1)}\right)/\left(\{0\}\oplus\mathcal{V}_{i-1}^{(n-1)}\right)=\ul{\CC}\oplus \left(\mathcal{V}_{i}^{(n-1)}/\mathcal{V}_{i-1}^{(n-1)}\right).\qedhere
    \end{equation*}
\end{proof}
\begin{cor}
\label{cor:iterativetoricT}
    If $X^v_u$ is a toric Richardson variety, then so is $X^{\ins_{i+1}(v)}_{\ins_i(u)}$.
\end{cor}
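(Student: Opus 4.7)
The plan is to mimic the strategy of the proof of \Cref{cor:iterativetoricR}, now using the $\PP^1$-bundle structure from \Cref{thm:TP1bundle} in place of the isomorphism $\Psi_i$. Since $X^{\ins_{i+1}(v)}_{\ins_i(u)}$ is a Richardson variety by construction, the content is to exhibit a dense $(\CC^*)^n$-orbit.

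First I would verify that $\Phi_i \colon X^{\ins_{i+1}(v)}_{\ins_i(u)} \to X^v_u$ is equivariant with respect to the homomorphism $\rho \colon (\CC^*)^n \twoheadrightarrow (\CC^*)^{n-1}$ dropping the first coordinate. The equivariance of $\Psi_i$, $(t_1,\ldots,t_n)\cdot \Psi_i(\mathcal{V})=\Psi_i((t_2,\ldots,t_n)\cdot \mathcal{V})$, already appeared in the proof of \Cref{cor:iterativetoricR}, and $\pi_i$ is fully $(\CC^*)^n$-equivariant; combining these via $\Phi_i=(\pi_i\circ \Psi_i)^{-1}\circ \pi_i$ gives $\rho$-equivariance.

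Next, since $X^v_u$ is toric under $(\CC^*)^{n-1}$ by hypothesis, there exists $\mathcal{V}\in X^v_u$ with dense $(\CC^*)^{n-1}$-orbit, so its $(\CC^*)^n$-stabilizer (under the $\rho$-action) is $\mathbb{T}'\times (\textup{finite})$, where $\mathbb{T}'=\ker \rho=\{(t_1,1,\ldots,1)\}$. The crucial step is to show $\mathbb{T}'$ acts nontrivially on the fiber $\Phi_i^{-1}(\mathcal{V})\cong \PP^1$. By \Cref{thm:TP1bundle} this fiber is identified with $\PP(\ul{\CC}\oplus (\mathcal{V}_i/\mathcal{V}_{i-1}))$ where $\ul{\CC}=\CC e_1$. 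The subtorus $\mathbb{T}'$ scales $\ul{\CC}$ by weight $t_1$ but acts trivially on $\mathcal{V}_i/\mathcal{V}_{i-1}$ (which lies in the span of $e_2,\ldots,e_n$), so it acts on $\PP^1$ with two fixed points and a dense orbit on the complement.

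Combining these, any $\mathcal{W}\in \Phi_i^{-1}(\mathcal{V})$ avoiding the two $\mathbb{T}'$-fixed points has $(\CC^*)^n$-orbit of dimension $\dim X^v_u+1=\dim X^{\ins_{i+1}(v)}_{\ins_i(u)}$, hence dense, completing the proof. The only mild subtlety, which I would take to be the main obstacle, is correctly identifying the characters by which $\mathbb{T}'$ acts on $\ul{\CC}$ and on $\mathcal{V}_i/\mathcal{V}_{i-1}$ under the isomorphism of \Cref{thm:TP1bundle}; this reduces to the observation that $\ul{\CC}$ is precisely the first coordinate line while $\Psi_i$ otherwise embeds into the last $n-1$ coordinates.
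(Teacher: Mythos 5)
Your proof is correct and follows essentially the same approach as the paper: both use the $\PP^1$-bundle structure of \Cref{thm:TP1bundle}, observe that $t_1$ scales $\ul\CC$ while acting trivially on $\mathcal{V}_i^{(n-1)}/\mathcal{V}_{i-1}^{(n-1)}$ (so $\ker\rho$ acts nontrivially on the fiber), and let $t_2,\ldots,t_n$ handle the base. The paper phrases this as directly moving a generic triple $(x,a,s)$ around, whereas you package the same computation via stabilizers and a dimension count, but the mathematical content is identical.
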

\begin{proof}
    The action of  $(\mathbb{C}^*)^n$ on a triple $(x,a,s)$ where $x\in X^{\ins_i(v)}_{\ins_i(u)}$, $a\in \CC$ and $s\in \left(\mathcal{V}_{i}^{(n-1)}/\mathcal{V}_i^{(n-1)}\right)_x$ is given by \[
    (t_1,\ldots,t_n)\cdot (x,a,s)=((t_2,\ldots,t_n)\cdot x,t_1\cdot a, (t_2,\ldots,t_n)\cdot s).\] 
    For generic $x$, we can use $t_2,\dots,t_n$ to move $x$ to any point in the base and then for $a,s\ne 0$ we can use $t_1$ to change the fiber to any $a',s'\ne 0$.
\end{proof}



\section{\texorpdfstring{$\rt$}{Omega}-Richardson varieties}
\label{sec:rtRichardson}

We will want to iterate the results of the previous section, which will involve iteratively applying the maps $(u,v)\mapsto (\ins_j(u),\ins_j(v))$ and $(u,v)\mapsto (\ins_j(u),\ins_{j+1}(v))$ for varying $j$. Let $S_{\infty}=\bigcup S_n$ denote the group of permutations of $\{1,2,\ldots\}$ that fix all but finitely many elements, with $S_n$ identified with the subgroup fixing all $i\ge n+1$. 
Then we may view $\ins_i:S_{\infty}\to S_{\infty}$.
\begin{defn}
\label{defn:uvfromrt}
    For $\rt\in\rtseq$ we define $u(\rt),v(\rt)\in S_{\infty}$ recursively as follows. 
    If $|\rt|=0$ then we set $u=v=\idem$. 
    If $|\rt|\ge 1$ then writing $\rt=\rt'\xletter{}$, we define $$(u(\rt),v(\rt))=\begin{cases}(\ins_ju(\rt'),\ins_jv(\rt'))&\text{ if }\xletter{}=\rletter{j}\\(\ins_ju(\rt'),\ins_{j+1}v(\rt'))&\text{ if }\xletter{}=\tletter{j}.
    \end{cases}$$
\end{defn}
Appendix~\ref{sec:uvPerms} collects relevant combinatorial results about $u(\rt)$ and $v(\rt)$. 
We shall recall them wherever necessary.

It is immediate that for $\rt\in \rtseq_n$ the permutations $u(\rt)$ and $v(\rt)$ are in $S_n$.
We are now ready to introduce a class of Richardson varieties $X(\rt)$ which will play the quasisymmetric analogue of the Bott--Samelson varieties. 
\begin{defn}
    For $\rt\in \rtseq_n$ we define the \emph{$\rt$-Richardson variety} to be
    $$X(\rt)\coloneqq X^{v(\rt)}_{u(\rt)}\subset \fl{n}.$$
\end{defn}
The dimension of this variety is given by
\begin{align}\dim X(\rt)=\ell (v(\rt))-\ell (u(\rt))=|\rt|_{\tletter{}}.\end{align}
By \Cref{cor:minmaxrt}, the minimal dimension $X(\rt)$ are in bijection with the $T$-fixed points $X^u_u$ of $\fl{n}$, and the maximal dimension $X(\rt)$, i.e. those with $|\rt|_{\tletter{}}=n-1$, are exactly those of the form $X^{uc_n}_u$ for $u\in S_n$ satisfying $u(n)=n$ and $c_n=n1\cdots (n-1)$ the backwards long cycle, which are precisely the toric Richardson varieties considered in \cite{HHMP,lian2023hhmp}.

Recall that for any $f\in H^\bullet(\fl{n})$, the Bott--Samelson resolution $\BS(i_1,\ldots,i_k)\to X^w$ has the property that
$\deg_{\BS(i_1,\ldots,i_k)}f=\deg_{\BS(i_1,\ldots,i_{k-1})}\partial_{i_k}f$, and
$$\deg_{X^w}f=\deg_{\BS(i_1,\ldots,i_k)}f=\deg_{\BS(i_1,\ldots,i_{k-1})}\partial_{i_k}f=\cdots =\deg_{\BS()}\partial_{i_1}\cdots \partial_{i_k} f=\ct\partial_w f.$$

\begin{thm}[{\Cref{thm:introP1degree}}]
\label{thm:mainP1degree}
Let $f(x_1,\ldots,x_n)\in H^\bullet(\fl{n})$ and $\rt\in \rtseq_n$. Then writing $\rt=\rt'\xletter{}$, the following are true.
\begin{enumerate}
    \item If $\xletter{}=\rletter{i}$ then $X(\rt)\cong X(\rt')$ and $\deg_{X(\rt)}f=\deg_{X(\rt')}\rope{i}f$.
    \item If $\xletter{}=\tletter{i}$ then $X(\rt)\to X(\rt')$ is a $\PP^1$-bundle given as a projectivization $\PP(\CC\oplus \mathcal{L})$, and $\deg_{X(\rt)}f=\deg_{X(\rt')}\tope{i}f$.
\end{enumerate}
Furthermore 
$$\deg_{X(\rt)}f(x_1,\ldots,x_n)=\prodoperator{\rt}^n f.$$   
\end{thm}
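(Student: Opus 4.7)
The plan is to directly reduce this theorem to the single-step geometric results already established, namely \Cref{thm:geometricrope}, \Cref{thm:geometrictope}, and \Cref{thm:TP1bundle}, combined with the recursive definition of $u(\rt),v(\rt)$ in \Cref{defn:uvfromrt}. Since $\rt=\xletter{1}\cdots\xletter{n}\in\rtseq_n$ we have $\rt'=\xletter{1}\cdots\xletter{n-1}\in \rtseq_{n-1}$, so the inductive permutations satisfy $u(\rt'),v(\rt')\in S_{n-1}$ and hence $X(\rt')\subset \fl{n-1}$. (Implicitly one uses from \Cref{sec:uvPerms} that $u(\rt')\le v(\rt')$ in Bruhat order so that the Richardson variety is well-defined; this also validates writing $\dim X(\rt)=|\rt|_{\tletter{}}$.)

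For (1), when $\xletter{n}=\rletter{i}$ with $1\le i\le n$, the recursion gives $u(\rt)=\ins_i u(\rt')$ and $v(\rt)=\ins_i v(\rt')$, so $X(\rt)=X^{\ins_i v(\rt')}_{\ins_i u(\rt')}$. Then \Cref{thm:geometricrope} immediately yields that $\Psi_i$ restricts to an isomorphism $X(\rt')\cong X(\rt)$ and $\deg_{X(\rt)}f=\deg_{X(\rt')}\rope{i}f$. For (2), when $\xletter{n}=\tletter{i}$ with $1\le i\le n-1$, the recursion gives $u(\rt)=\ins_i u(\rt')$ and $v(\rt)=\ins_{i+1}v(\rt')$, so $X(\rt)=X^{\ins_{i+1}v(\rt')}_{\ins_i u(\rt')}$. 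Now \Cref{thm:TP1bundle} gives the $\PP^1$-bundle structure $\Phi_i:X(\rt)\to X(\rt')$ presented as $\PP(\ul{\CC}\oplus(\mathcal{V}^{(n-1)}_i/\mathcal{V}^{(n-1)}_{i-1}))$, and \Cref{thm:geometrictope} gives the identity $\deg_{X(\rt)}f=\deg_{X(\rt')}\tope{i}f$.

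For the final claim, iterate: set $\rt_j=\xletter{1}\cdots\xletter{j}\in \rtseq_j$, with $\rt_0=\emptyset$ and $X(\rt_0)$ a point (since $u(\rt_0)=v(\rt_0)=\idem$). Applying (1) or (2) at each step (and using that each $X_j$ descends to $\coinv{j}\to\coinv{j-1}$ as recalled in \Cref{sec:Results}) yields
\begin{equation*}
\deg_{X(\rt)}f=\deg_{X(\rt_{n-1})}X_nf=\deg_{X(\rt_{n-2})}X_{n-1}X_nf=\cdots=\deg_{X(\rt_0)}X_1X_2\cdots X_nf=\prodoperator{\rt}^n f,
\end{equation*}
where the last equality uses that $\deg_{\{\mathrm{pt}\}}$ is the constant-term evaluation and $\prodoperator{\rt}^n$ is valued in $\ZZ$ so $\prodoperator{\rt}^n=\ct\,\prodoperator{\rt}^n$.

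The main obstacle is not really in this induction, which is essentially bookkeeping; it is in setting up the single-step statements compatibly with the insertion maps $\ins_i$. That work has been done in the preceding section, where the careful matching of the Bruhat-order combinatorics of $\ins_i$ with the geometry of $\Psi_i$ and $\pi_i$ (most notably the saturation identity $\pi_i^{-1}\pi_iX^{\ins_i(v)}_{\ins_i(u)}=X^{\ins_{i+1}(v)}_{\ins_i(u)}$ from \Cref{prop:projectionNiceRichardsons}) is what makes the single-step identities possible. Granted those, the theorem reduces to a clean induction on $|\rt|$.
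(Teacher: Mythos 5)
Your proof is correct and takes essentially the same approach as the paper's: unwind the recursive definition of $(u(\rt),v(\rt))$, apply \Cref{thm:geometricrope}, \Cref{thm:geometrictope}, and \Cref{thm:TP1bundle} for the single-step statements, and iterate. The only difference is that you spell out the bookkeeping that the paper's two-sentence proof leaves implicit.
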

\begin{proof}
The result follows from \Cref{thm:geometricrope} and the second result follows from \Cref{thm:TP1bundle} and \Cref{thm:geometrictope}. The final result follows from iteratively applying these first two results.
\end{proof}

Recall that an algebraic variety $X$ is called a \emph{Bott manifold}  if there is a sequence of maps (called a \emph{Bott tower}) $$X_m\to X_{m-1}\to \cdots \to X_0=\{\text{pt}\}$$ where $X_i=\mathbb{P}^1(\ul{\CC}\oplus \mathcal{L})$ for some line bundle $\mathcal{L}$ on $X_{i-1}$ \cite[\S 2]{GK94}.
Bott manifolds are always smooth toric varieties whose moment polytopes are combinatorial cubes \cite{MP08} (i.e. a polytope whose face poset is isomorphic to the face poset of a cube). We will now see that $X(\rt)$ are in fact Bott manifolds, and are smooth torus-orbit closures under the $T$-action on $\fl{n}$. The following result may also be deduced from \cite{Lee21} on smooth toric Bruhat interval polytopes, but it will be important for us to identify explicitly the way the Bott manifold structure is realized on $X(\rt)$.

\begin{thm}
    The $\rt$-Richardson variety $X(\rt)$ is a Bott manifold and a smooth toric Richardson variety.
\end{thm}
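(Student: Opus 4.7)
My plan is to induct on $|\rt|$.

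For the base case $|\rt|=0$, we have $\rt = \rletter{1}$ (the minimum-length element of $\rtseq_n$ starts with $\rletter{1}$; more precisely for $n=1$ we have $\rt = \rletter{1}$ giving $X(\rt) = X^{\idem}_{\idem} = \{\mathrm{pt}\}$, which is tautologically a Bott manifold and a toric Richardson variety). More generally I would set up the induction on sequences $\rt$ in the larger indexing set $\rtseq$, taking the empty sequence as the base case and noting that $X(\rt) \subset \fl{n}$ makes sense for any $\rt \in \rtseq_n$.

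For the inductive step, write $\rt = \rt'\xletter{}$ and assume by induction that $X(\rt')$ is a Bott manifold and a smooth toric Richardson variety. There are two cases. If $\xletter{} = \rletter{i}$, then \Cref{thm:mainP1degree}(1) gives $X(\rt) \cong X(\rt')$, hence $X(\rt)$ is a Bott manifold; and \Cref{cor:iterativetoricR} (applied to $(u(\rt'),v(\rt'))$) implies $X(\rt) = X^{\ins_i v(\rt')}_{\ins_i u(\rt')}$ is a toric Richardson variety. If $\xletter{} = \tletter{i}$, then \Cref{thm:mainP1degree}(2) identifies $X(\rt) \to X(\rt')$ with the projectivization $\PP(\ul{\CC} \oplus \mathcal{L})$ of a rank-one summand added to a line bundle on $X(\rt')$; appending this $\PP^1$-bundle to the Bott tower for $X(\rt')$ produces a Bott tower for $X(\rt)$, so $X(\rt)$ is a Bott manifold. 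That it is a smooth toric Richardson variety then follows from \Cref{cor:iterativetoricT}.

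Thus both properties propagate simultaneously through the recursive construction of $\rt$. The main thing to verify is that the two structures are compatible: the Bott manifold structure on $X(\rt)$ is realized \emph{inside} $\fl{n}$ through the explicit geometric maps $\Psi_i$ and $\Phi_i$ of \Cref{sec:GeomRT}, whereas the toric structure is realized through the diagonal $T$-action on $GL_n/B$. This compatibility is essentially built into the proofs of \Cref{cor:iterativetoricR} and \Cref{cor:iterativetoricT}, where the torus action on the bigger flag variety is seen to restrict (in the $\rletter{}$ case) or extend (in the $\tletter{}$ case) to the torus action witnessing $X(\rt')$ as a toric Richardson variety, with the new $\PP^1$-fiber direction in the $\tletter{}$ case acted on nontrivially by the extra coordinate $t_1$. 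I expect no serious obstacle beyond unpacking these two corollaries, since the $\PP^1$-bundle in \Cref{thm:TP1bundle} is of the form $\PP(\ul{\CC} \oplus \mathcal{L})$ with $\mathcal{L} = \mathcal{V}_i^{(n-1)}/\mathcal{V}_{i-1}^{(n-1)}$, matching exactly the Bott tower definition.
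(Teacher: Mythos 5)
Your proof is correct and follows the same route as the paper's: build the chain $X(\rt_n)\to\cdots\to X(\rt_0)=\{\mathrm{pt}\}$ by truncating $\rt$, use \Cref{thm:mainP1degree} to identify each step as an isomorphism or a $\PP^1$-bundle $\PP(\ul{\CC}\oplus\mathcal{L})$ (giving the Bott tower), and iterate \Cref{cor:iterativetoricR} and \Cref{cor:iterativetoricT} to propagate the smooth toric Richardson property. The extra paragraph about compatibility of the two structures is a sound observation but not required, as both corollaries already track the torus action through the constructions $\Psi_i$ and $\Phi_i$.
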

\begin{proof}
Let $\rt=\xletter{1}\cdots \xletter{n}\in\rtseq_n$ and $\rt_i=\xletter{1}\cdots\xletter{i}$. Then by \Cref{thm:mainP1degree} there are maps
$$X(\rt)=X(\rt_{n})\to X(\rt_{n-1})\to \cdots \to X(\rt_0)=\{\mathrm{pt}\}$$
such that each map is either an isomorphism or realizes  $X(\rt_i)=\PP(\underline{\CC}\oplus \mathcal{L})$ for some line bundle $\mathcal{L}$ on $X(\rt_{i-1})$, which shows that it is a Bott manifold. Furthermore, iteratively applying  \Cref{cor:iterativetoricR} and \Cref{cor:iterativetoricT} to this sequence shows that $X(\rt)$ is in fact a smooth torus-orbit under the $T$-action on $\fl{n}$.
\end{proof}
We now show that the sub-torus-orbit closures in $X(\rt)$ are themselves of the form $X(\rt')$.
\begin{thm}
\label{thm:containedtorusorbit}
     The torus-orbit closures contained inside $X(\rt)$ are exactly $X(\rt')$ with $\rt'$ obtained by replacing some subset of the $\tletter{i}$ appearing in $\rt$ with either $\rletter{i}$ or $\rletter{i+1}$.
\end{thm}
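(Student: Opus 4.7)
The proof proceeds by induction on $n=|\rt|$. The base case $n=0$ is immediate since $X(\emptyset)$ is a point. For the inductive step, write $\rt=\rt'\xletter{}$ and treat the two cases depending on the type of $\xletter{}$.

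If $\xletter{}=\rletter{i}$ then, by \Cref{thm:geometricrope} and the argument in \Cref{cor:iterativetoricR}, the map $\Psi_i$ gives a torus-equivariant isomorphism $X(\rt')\to X(\rt)$, where the $(\CC^*)^n$-action on $X(\rt)$ corresponds to the $(\CC^*)^{n-1}$-action on $X(\rt')$ via the projection onto the last $n-1$ coordinates. Hence sub-torus-orbit closures of $X(\rt)$ are in bijection with those of $X(\rt')$, which by induction are the $X(\rt''')$ with $\rt'''$ a replacement of $\rt'$; and $\Psi_i(X(\rt'''))=X(\rt'''\rletter{i})$ by \Cref{defn:uvfromrt}. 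These are precisely the replacements of $\rt=\rt'\rletter{i}$, the trailing $\rletter{i}$ being unchanged by any such replacement.

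If $\xletter{}=\tletter{i}$ then, by \Cref{thm:TP1bundle}, the map $\Phi_i\colon X(\rt)\to X(\rt')$ is a torus-equivariant $\PP^1$-bundle of the form $\PP(\ul{\CC}\oplus\mathcal{L})$ with $\mathcal{L}=\mathcal{V}_i^{(n-1)}/\mathcal{V}_{i-1}^{(n-1)}$. Standard toric fan theory classifies the sub-torus-orbit closures of such a bundle into three families indexed by a sub-torus-orbit closure $Z$ of the base: the preimage $\Phi_i^{-1}(Z)$, and the two sections over $Z$ obtained by choosing the $T$-fixed line $\ul{\CC}$ or $\mathcal{L}$ in each fiber. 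By \Cref{prop:projectionNiceRichardsons} the preimage of $X(\rt''')$ is $X(\rt'''\tletter{i})$. The two sections over $X(\rt')$ itself are $\Psi_i(X(\rt'))=X(\rt'\rletter{i})$ and $\Psi_{i+1}(X(\rt'))=X(\rt'\rletter{i+1})$, by \Cref{thm:geometricrope} together with the inclusion $\Psi_{i+1}(X(\rt'))\subset X(\rt)$ (which reduces to the Bruhat-order inequality $\ins_i u(\rt')\le \ins_{i+1}u(\rt')$, a routine consequence of \Cref{defn:uvfromrt}). Restricting these sections to $X(\rt''')\subset X(\rt')$ yields $X(\rt'''\rletter{i})$ and $X(\rt'''\rletter{i+1})$, so altogether we recover exactly the replacements of $\rt=\rt'\tletter{i}$.

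The main obstacle is to pin down the classification of sub-torus-orbit closures in the toric $\PP^1$-bundle and to identify its two sections as the images of $\Psi_i$ and $\Psi_{i+1}$; once these geometric facts are in place, the rest of the induction is bookkeeping of the recursive definitions of $u(\rt)$ and $v(\rt)$.
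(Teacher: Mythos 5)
Your proof is correct, but it takes a genuinely different route from the paper's. The paper's argument is very short: since $X(\rt)$ is toric, it invokes the result of Tsukerman--Williams that the $T$-orbit closures in a toric Richardson variety are precisely the Richardson varieties it contains, and then appeals to \Cref{prop:Bruhat} (proved in Appendix~\ref{sec:uvPerms} via the tableau criterion for Bruhat order) to classify those Richardsons as exactly the $X(\rt')$ with $\rt'$ a replacement. You instead avoid the Tsukerman--Williams black box entirely and run an induction along the Bott-tower structure, using the orbit--cone correspondence for toric $\mathbb{P}^1$-bundles (each cone of the base fan lifts to three cones of the total-space fan, giving the preimage and the two $T$-invariant sections) and identifying the two sections as $\Psi_i$ and $\Psi_{i+1}$ via the proof of \Cref{thm:TP1bundle}. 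Both are sound; the paper's version is quicker given the citation, while yours is more self-contained and makes the geometry of the $\mathbb{P}^1$-tower do the work directly. Two small points worth spelling out in your write-up: (a) that $\Phi_i\circ\Psi_{i+1}=\mathrm{id}$, which follows from $\pi_i\circ\Psi_i=\pi_i\circ\Psi_{i+1}$ (they differ only in the $i$'th flag step that $\pi_i$ forgets); and (b) that $\Psi_{i+1}$ really is the $\PP(\mathcal{L})$-section and $\Psi_i$ the $\PP(\ul{\CC})$-section, which can be read off the identification in \Cref{thm:TP1bundle}. Note also that the "routine Bruhat inequality" $\ins_i u\le\ins_{i+1}u$ you invoke is the content of the proposition preceding \Cref{prop:insBruhat} in Appendix~\ref{sec:uvPerms}, so you haven't escaped that appendix entirely, just reduced your reliance on it.
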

\begin{proof}
     Because $X(\rt)$ is toric, by \cite{TW15} the sub-torus-orbit closures are the Richardson varieties contained in $X(\rt)$. These are the $X(\rt')$ as described in~\Cref{prop:Bruhat}. Each of these is smooth since $X(\rt)$ is smooth, and the moment polytope of $X(\rt')$ is a face of the moment polytope of $X(\rt)$ which is therefore a combinatorial cube, so the result follows.
\end{proof}

We now give a more concrete interpretation of the $\rt$-Richardson varieties by describing a parametrization of the open torus orbit in $X(\rt)$, which for the maximal dimension $X(\rt)$ recovers the description from \cite[Section 4.2]{lian2023hhmp}.

\begin{defn}
    For $\rt\in \rtseq_n$, let $M(\rt)$ be an $n\times n$ matrix filled with $0,1,\ast$ defined recursively as follows. Set $M(\emptyset)$ to be the unique $0\times 0$ matrix, and for $M(\rt')=\begin{bmatrix}v_1&\cdots & v_{n-1}\end{bmatrix}$  let
    \begin{align*}
        M(\rt'\rletter{i})&=\begin{bmatrix}0&\cdots & 0 & 1 & 0 & \cdots & 0\\
    v_1 & \cdots & v_{i-1}& \bm{0}&v_i&\cdots & v_{n-1}\end{bmatrix},\\
    M(\rt'\tletter{i})&=\begin{bmatrix}0&\cdots &  \ast & 1 & 0 & \cdots & 0\\
    v_1 & \cdots & v_{i}& \bm{0}&v_{i+1}&\cdots & v_{n-1}\end{bmatrix}.
    \end{align*}
\end{defn}

\begin{eg}
\label{eg:matrices}
The matrices for $\rletter{1},\rletter{1}\tletter{1},\rletter{1}\tletter{1}\tletter{1},\rletter{1}\tletter{1}\tletter{1}\rletter{2}$ and $\rletter{1}\tletter{1}\tletter{1}\rletter{2}\tletter{4}$ are
\begin{align*}
    \begin{bmatrix}1\end{bmatrix},\text{ }\begin{bmatrix}\ast&1\\1&0\end{bmatrix},\text{ }\begin{bmatrix}\ast &1 & 0\\
    \ast&0 & 1\\
    1 & 0&0\end{bmatrix}\text{, }\begin{bmatrix}0&1&0&0\\ \ast&0 & 1 & 0\\ \ast & 0 & 0 & 1\\ 1 & 0 & 0 & 0\end{bmatrix},\text{ } \begin{bmatrix}0&0&0&\ast&1\\
    0&1&0&0&0\\
    \ast&0&1&0&0\\
    \ast&0&0&1&0\\1&0&0&0&0\end{bmatrix}.
\end{align*}
\end{eg}
\begin{thm}
    The open torus in $X(\rt)$ is parametrized by replacing the $|\rt|_t$-many $\ast$ that appear in $M(\rt)$ with elements of $\CC^*$.
\end{thm}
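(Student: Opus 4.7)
The plan is to prove the statement by induction on $|\rt|$, using the geometric interpretations of the $\rope{i}$ and $\tope{i}$ operators from \Cref{sec:GeomRT} together with the Bott tower structure on $X(\rt)$ coming from \Cref{thm:mainP1degree}.

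First I would fix the convention that an $n \times n$ matrix $M$ with entries in $\CC$ represents the flag $\mathcal{V} \in \fl{n}$ whose $k$-th subspace $\mathcal{V}_k$ is spanned by the first $k$ columns of $M$. Under this convention, one can directly check from \Cref{defn:Phi} and the definition of $\Psi_i$ that: (i) if $M'$ represents $\mathcal{V}' \in \fl{n-1}$, then $M(\rt'\rletter{i})$ built from $M' = M(\rt')$ represents $\Psi_i(\mathcal{V}')$; and (ii) for $M(\rt'\tletter{i})$, the column span of the first $i+1$ columns recovers $\ul{\CC} \oplus \mathcal{V}'_i$, while the column span of the first $i$ columns is a hyperplane therein containing $\{0\} \oplus \mathcal{V}'_{i-1}$ parametrized by the new entry $\ast$, sitting in the $\PP^1$-bundle $\PP(\ul{\CC} \oplus (\mathcal{V}_i'/\mathcal{V}_{i-1}'))$ of \Cref{thm:TP1bundle}.

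Next I set up the induction. The base case $\rt = \rletter{1}$ is trivial: $M(\rt) = [1]$ parametrizes the unique point of $X^1_1 = \fl{1}$ and has zero stars. For the inductive step, suppose the entries of $M(\rt')$ with $\ast \in \CC^*$ parametrize the open torus orbit of $X(\rt')$. For $\rt = \rt'\rletter{i}$, the matrix $M(\rt)$ has the same stars as $M(\rt')$, and by (i) above the corresponding flag equals $\Psi_i$ applied to the $M(\rt')$-flag. Since $\Psi_i: X(\rt') \to X(\rt)$ is an equivariant isomorphism (\Cref{cor:iterativetoricR}), it sends the open torus orbit to the open torus orbit, completing the case. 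For $\rt = \rt'\tletter{i}$, observe by (ii) that the new $\ast$ is precisely an inhomogeneous affine coordinate on the $\PP^1$-fiber of $X(\rt) \to X(\rt')$ over the $M(\rt')$-flag; the two special values $\ast = 0$ and $\ast = \infty$ correspond to the fixed points of $\PP(\ul{\CC} \oplus \mathcal{V}_i'/\mathcal{V}_{i-1}')$ under the induced $\mathbb{C}^*$-action, so $\ast \in \CC^*$ carves out the open orbit on each fiber.

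Putting these together shows that the map $(\CC^*)^{|\rt|_\tletter{}} \to X(\rt)$ given by the parametrization is injective (by induction) and lands inside the open $T$-orbit (by the inductive compatibility with the Bott tower). Since $X(\rt)$ is a smooth toric variety of dimension $|\rt|_\tletter{}$, its open torus orbit is isomorphic to $(\CC^*)^{|\rt|_\tletter{}}$, so the map must in fact be an isomorphism onto the open torus orbit. The main obstacle I anticipate is bookkeeping in step (ii): one has to check carefully that the new column $(0,\dots,0,\ast,1,0,\dots,0)^T \bmod \mathcal{V}_{i-1}$ picks out exactly the inhomogeneous coordinate on $\PP(\ul{\CC} \oplus \mathcal{V}_i'/\mathcal{V}_{i-1}')$ that vanishes at the $\Psi_i$-section of the $\PP^1$-bundle and blows up at the complementary $\ul{\CC}$-section, so that the two torus-fixed points correspond to $\ast = 0$ and $\ast = \infty$.
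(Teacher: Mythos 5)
Your proof takes essentially the same route as the paper's: both induct on $|\rt|$, handle the $\rletter{i}$ case via $\Psi_i$, and handle the $\tletter{i}$ case by passing to the $\PP^1$-bundle $\pi_i^{-1}\pi_i$ over $\Psi_i(X(\rt'))$. The only cosmetic difference is that the paper carries out the $\tletter{i}$ step by explicit column operations in the matrix model, identifying the two degenerate normal forms with the boundary closures $X(\rt'\rletter{i})$ and $X(\rt'\rletter{i+1})$, whereas you phrase it via the abstract affine fiber coordinate on $\PP(\ul\CC\oplus\mathcal{V}'_i/\mathcal{V}'_{i-1})$ and its $\CC^*$-fixed points, then add a clean dimension-count finish that the paper leaves implicit.
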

\begin{proof}
    Assume by induction that the open torus in $X(\rt')$ is given by this description, and
    let $A=\begin{bmatrix}w_1&\cdots & w_{n-1}\end{bmatrix}$ be a matrix representing a point in this open torus-orbit. Then $$\begin{bmatrix}0&\cdots & 0 & 1 & 0 &0& \cdots & 0\\
    w_1 & \cdots & w_{i-1}& \bm{0}&w_i&w_{i+1}&\cdots & w_{n-1}\end{bmatrix}
    $$ represents $\Psi_i(A)$ so applying this for all points in the open torus of $X(\rt')$ shows that the new open torus in $X(\rt'\rletter{i})$ is $M(\rt'\rletter{i})$ as desired. Applying $\pi_i^{-1}\pi_i$ to $\Psi_i(A)$ corresponds to replacing the $i$'th and $(i+1)$'st column of this matrix with two linear combinations. 
    In $GL_n/B$ two matrices are the same if we can obtain one from the other by scaling columns and adding multiples of one column to a future column. 
    Doing this we can put every such matrix either into the form $$\begin{bmatrix}0&\cdots &0& \ast & 1 & 0 & \cdots & 0\\
    w_1 & \cdots &w_{i-1}& w_{i}& \bm{0}&w_{i+1}&\cdots & w_{n-1}\end{bmatrix}\text{ or }\begin{bmatrix}0&\cdots & 0 & 1 & 0 &0& \cdots & 0\\
    w_1 & \cdots & w_{i-1}& \bm{0}&w_i&w_{i+1}&\cdots & w_{n-1}\end{bmatrix}$$
    with $\ast\in \CC$. The first matrix with $\ast=0$ is gives a point in the boundary torus-orbit closure $X(\rt'\rletter{i+1})$, the second matrix gives a point in the boundary torus-orbit closure $X(\rt'\rletter{i})$, so the remaining points where $\ast\in \CC^*$ in the first matrix lie in the open torus-orbit of $X(\rt'\tletter{i})$.
\end{proof}

From $M(\rt)$ one can read off the sequence in $\rt\in \rtseq_n$ by reversing the recursive process used to build $M(\rt)$. The following non-recursive characterization of the matrices $M(\rt)$ is straightforward to show.
\begin{thm}
\label{thm:rtmatrix}
    A square matrix filled with $0,1,\ast$ is of the form $M(\rt)$ if and only if
    \begin{itemize}
        \item The $1$'s form a permutation matrix
        \item There is at most one $\ast$ per row. 
        \item Every $\ast$ appears above the $1$ in its column and to the left of the $1$ in its row.
        \item For every $0$ between $\ast$ and $1$ in a row, all the entries below this $0$ are also $0$.
    \end{itemize}
\end{thm}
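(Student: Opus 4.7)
The plan is to prove both directions of the biconditional by induction on the size $n$ of the matrix.

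For the forward direction, I would induct on $n = |\rt|$, with the empty matrix $M(\emptyset)$ trivially satisfying all four conditions as the base case. Assuming $M(\rt')$ satisfies the conditions for $\rt' \in \rtseq_{n-1}$, I would verify each condition is preserved when passing to $M(\rt'\rletter{i})$ or $M(\rt'\tletter{i})$, both of which are obtained by inserting a zero column into $M(\rt')$ at position $p$ (where $p=i$ or $p=i+1$) and prepending a new top row. The permutation property is preserved since we add exactly one new $1$ in the new row and new column. The new row contributes at most one $\ast$ (only for $\tletter{i}$), placed immediately left of the new $1$. Existing $\ast$'s remain left of their respective $1$'s because insertion of a column preserves the left/right ordering within each row. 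The crucial point for property (4) is that in any row $r \geq 2$, any new $0$ between an $\ast$ and its $1$ created by the inserted column sits in that inserted column, which is zero below the top row by construction; meanwhile the new top row has $\ast$ and $1$ adjacent so there are no interior $0$'s.

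For the backward direction, let $N$ be an $n\times n$ matrix satisfying the four properties. The key preliminary observation is that if the top row of $N$ contains a $\ast$, then it must sit immediately left of the top-row $1$: if $\ast$ were at column $j$ and the $1$ at column $k$ with $j < k - 1$, then by property (4) each column strictly between $j$ and $k$ would be entirely zero, contradicting the permutation structure (every column must contain a $1$). With this in hand, let $k$ denote the column of the top-row $1$, and split into two cases. If the top row has no $\ast$, I would remove the top row and column $k$ to obtain $N'$; if the top row has $\ast$ (necessarily at column $k-1$, forcing $k \geq 2$), remove the top row and column $k$ again. In either case, column $k$ of $N$ consists of a single $1$ at the top and zeros below (forced by the permutation structure), so no $\ast$'s are lost in removing it.

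The main technical step is verifying $N'$ inherits the four properties (especially (4)) so that induction applies. The permutation and ``at most one $\ast$ per row'' conditions are immediate. For the placement and property (4) conditions on a row $r \geq 2$ of $N$ with $\ast$ at column $j$ and $1$ at column $\ell$: only the case $j < k < \ell$ is delicate, and there the $0$'s between $\ast$ and $1$ in $N'$ form the set $\{j+1,\dots,\ell-1\}\setminus\{k\}$, each of which already had $0$'s below in $N$ and therefore in $N'$. By induction $N' = M(\rt')$ for some $\rt' \in \rtseq_{n-1}$, and then $N = M(\rt'\rletter{k})$ or $N = M(\rt'\tletter{k-1})$ respectively, with the index in range because $1 \leq k \leq n$ (and $2 \leq k \leq n$ in the second case). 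The main obstacle I anticipate is handling property (4) carefully through the column-insertion/deletion procedure; everything else reduces to bookkeeping on the permutation structure.
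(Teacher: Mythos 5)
Your proof is correct. The paper declares this characterization ``straightforward to show'' and omits the argument entirely, so there is no proof to compare against; your inductive argument is the expected one, and you correctly isolate the one genuinely delicate point — namely that the only new $0$'s appearing between an $\ast$ and its $1$ in rows below the top come from the inserted column (which is zero below the top row by construction), and conversely that after peeling off the top row and the column of its $1$, condition (4) survives because the deleted column $k$ has no $\ast$'s and all the remaining interior $0$'s already had zero columns beneath them.
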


\begin{figure}[!ht]
    \centering
    \includegraphics[width=0.5\linewidth]{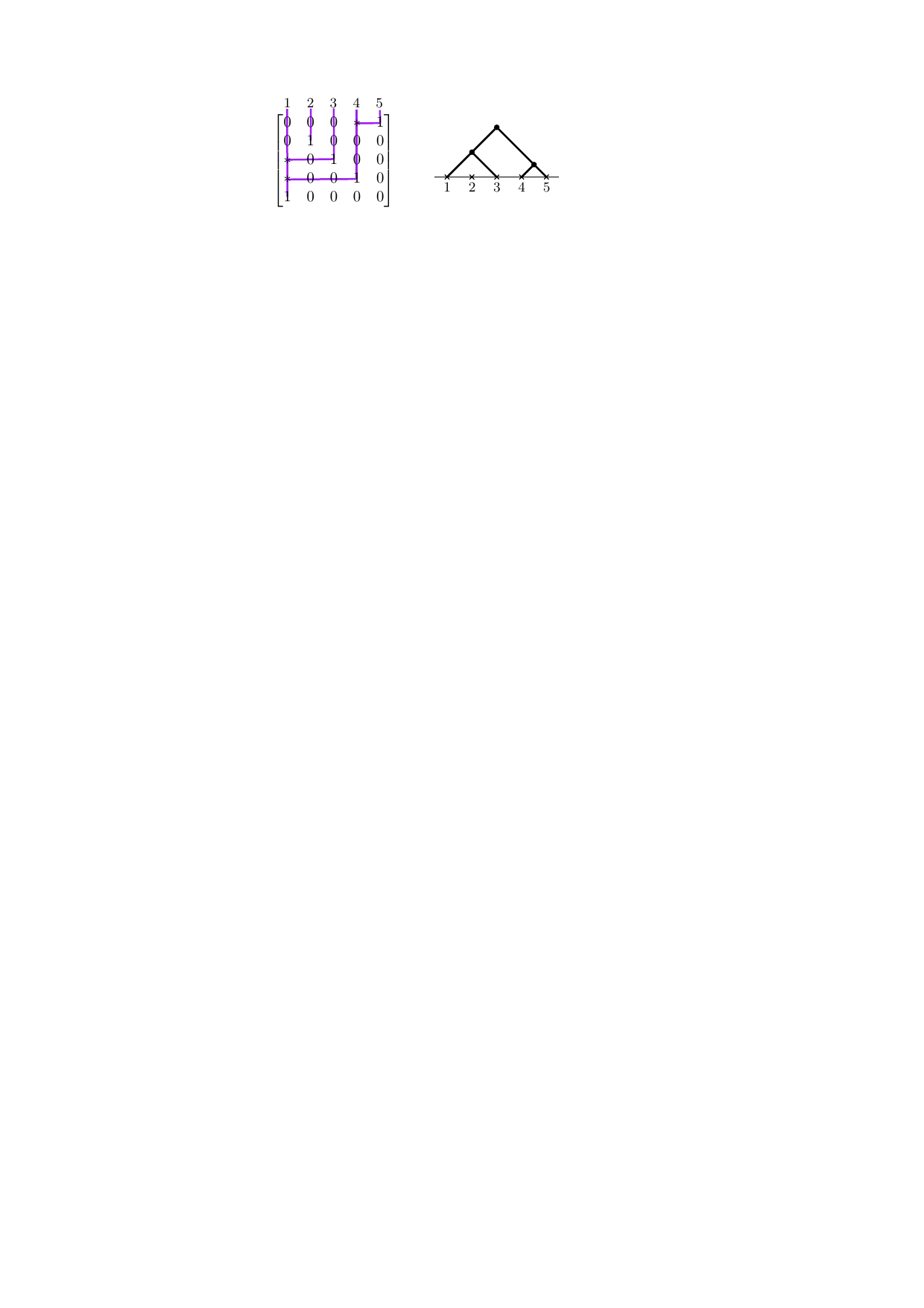}
    \caption{Recovering $\wh{F}(\Omega)$ from $M(\Omega)$}
    \label{fig:matrix-tree}
\end{figure}

We conclude by describing how to directly recover the nested forest $\wh{F}(\rt)$ from $M(\rt)$. 
The forest $\wh{F}(\rt)$ has internal nodes the nonzero entries of $M(\rt)$ and leaves $\{1,\ldots,n\}$. For each row containing an $\ast$ and a $1$, we do the following. 
Connect the $1$ and $\ast$, and then connect each of these to the nearest $\ast$ above them in the same column provided it exists; otherwise connect to the leaf $i$ where $i$ is the column number. 
This is illustrated in Figure~\ref{fig:matrix-tree} using the last matrix in \Cref{eg:matrices}.

\section{The quasisymmetric Schubert cycles \texorpdfstring{$X(\wh{F})$}{X(F tilde)} and Bott manifold structures}
\label{sec:Bott}
We now define the quasisymmetric Schubert cycles associated to $\wh{F}\in \nsuppfor{n}$. In order to define them, we will need to prove the following theorem.

\begin{thm}
\label{thm:rtwexists}
If $\wh{F}\in \nfor_n$ and $\rt,\rt'\in \Trim{\wh{F}}$, then
    $$u(\rt)^{-1}X(\rt)=u(\rt')^{-1}X(\rt).$$
\end{thm}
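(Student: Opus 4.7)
The plan is to reduce the claim to checking invariance under the four elementary commutation moves of the augmented Thompson monoid $\wt{\Th}$, and then to verify it via the matrix parametrization of the open torus orbit in $X(\rt)$. By \Cref{thm:presentation_mnfor}, any two $\rt, \rt' \in \Trim{\wh{F}}$ are connected by a sequence of such moves, so this reduction is valid.

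Since $X(\rt)$ is a smooth toric subvariety of $\fl{n}$ whose open torus orbit is parametrized by the matrix $M(\rt)$ (\Cref{thm:rtmatrix}) with the $\ast$'s ranging over $\CC^*$, and $u(\rt)^{-1}$ normalizes the diagonal torus $T\subset GL_n$, the variety $u(\rt)^{-1}X(\rt)$ is the closure of the torus orbit through the row-permuted matrix $N(\rt)\coloneqq u(\rt)^{-1}\cdot M(\rt)$. The theorem thus reduces to the purely combinatorial claim that $N(\rt)$, viewed as a matrix with entries in $\{0, 1, \ast\}$, depends only on $\wh{F}(\rt)$.

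The first and main step is to derive a clean recursion for $N$. A direct computation shows that $N(\rt'\rletter{i})$ is obtained from $N(\rt')$ by inserting a new row $i$ and a new column $i$ with a $1$ at the intersection $(i,i)$ and $0$'s elsewhere in the new row/column, and that $N(\rt'\tletter{i})$ is obtained by inserting a new row $i$ of the form $(0, \ldots, 0, \ast, 1, 0, \ldots, 0)$ (with $\ast$ at column $i$ and $1$ at column $i+1$) together with a new zero column at position $i+1$. Although each of $M(\rt)$ and $u(\rt)$ transforms in a way that genuinely depends on the prior permutation, their combination $N$ transforms according to a rule depending only on $\xletter{}$, not on $u(\rt')$.

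With this recursion in hand, invariance of $N(\rt)$ under each of the four commutation relations is a direct visual verification. For instance, for $\rletter{i}\rletter{j}\leftrightarrow \rletter{j}\rletter{i+1}$ with $i \ge j$, in either order the resulting matrix has crosses at $(j, j)$ and $(i+1, i+1)$ (in the first order, the second insertion shifts the first cross from $(i,i)$ to $(i+1,i+1)$; in the second order, the cross is directly placed at $(i+1, i+1)$); the three remaining relations are handled analogously, with the inserted ``L''-shapes of the $\tletter{}$ operations shifting in a parallel way. The main obstacle is the recursion step: showing that the two updates combine cleanly despite the fact that $\ins_i$ is not a group homomorphism requires carefully tracking how the ``new'' top row of $M(\rt'\xletter{})$ lands at row $i$ of $N(\rt'\xletter{})$ under the row permutation by $\ins_i(u(\rt'))^{-1}$, and checking that the non-homomorphism discrepancy is exactly absorbed by the new zero column inserted into $M$.
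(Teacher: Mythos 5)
Your proposal is correct, and it takes a genuinely different route from the paper's argument. The paper avoids any reduction to the monoid presentation: it observes directly that the \emph{set} of row vectors of $M(\rt)$ is determined by $\wh{F}$ (describing the rows explicitly in terms of leftmost-leaf descendants of internal nodes), then notes that $v(\rt)$ is the permutation that selects the rightmost nonzero entry of each row --- so $v(\rt)^{-1}X(\rt)$ is the result of sorting the rows in a way that depends only on the row set, hence only on $\wh{F}$ --- and finally that $u(\rt)^{-1}v(\rt)$ is the product of backward cycles on the tree supports, which is again visibly $\wh{F}$-determined. By contrast, you work with the normalized matrix $N(\rt)=u(\rt)^{-1}M(\rt)$ directly, derive the clean per-letter recursion for $N$ (which I verified is correct: the $\ins_i$-shift of $u(\rt')$ exactly compensates the shift introduced by inserting the new top row of $M$), and then check invariance of $N$ under the four relations of $\wt{\Th}$. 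The paper's proof is shorter and extracts more information along the way (the explicit forms of $v(\rt)^{-1}$ and $u(\rt)^{-1}v(\rt)$, which the authors reuse elsewhere), while yours is more mechanical and local, but at the cost of a four-case verification.

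One point you should make explicit to close the argument: reducing to the four elementary moves requires not only that $\rt,\rt'\in\Trim{\wh{F}}$ are connected by a chain of moves in the free monoid (which \Cref{thm:presentation_mnfor} gives), but also that every intermediate word in that chain lies in $\rtseq_n$, since $M(\cdot)$, and hence $N(\cdot)$, is only defined there. This does hold --- one checks directly from the position bounds defining $\rtseq_n$ that each of the four relations, applied to a word in $\rtseq_n$, produces another word in $\rtseq_n$ --- but it is a genuine step that your write-up currently leaves implicit.
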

Given this theorem, the following is well-defined.
\begin{defn}
    We define the \emph{quasisymmetric Schubert cycle} associated to $\wh{F}\in \nfor_n$ to be $X(\wh{F})=u(\rt)^{-1}X(\rt)$ for any $\rt\in \Trim{\wh{F}}$.
\end{defn}
\begin{proof}[Proof of \Cref{thm:rtwexists}]
First, note that the set of row vectors in $M(\rt)$ is determined by $\wh{F}$. Indeed, there is a row with exactly one $1$ in position $i$ exactly when there is a tree in $\wh{F}$ whose leftmost leaf is $i$, and there is a row with a $\ast$ in position $i$ and a $1$ is position $j$ if there is an internal node $v$ with the property that the leftmost leaf descendent of its left child $v_L$ is $i$ and the leftmost leaf descendent of its right child $v_R$ is $j$.

By induction, $v(\rt)$ is the permutation matrix obtained by selecting the rightmost entry from each row of $M(\rt)$. Therefore $v(\rt)^{-1}X(\rt)$ is obtained by permuting the rows in the matrix model in the unique way so that the rightmost entries lie on the main diagonal, which shows $v(\rt)^{-1}X(\rt)=v(\rt')^{-1}X(\rt')$.

By induction, $u(\rt)^{-1}v(\rt)$ is the product of the backwards cycles on the support sets of the trees in $\wh{F}$. Therefore $u(\rt)^{-1}v(\rt)=u(\rt')^{-1}v(\rt')$, and so $u(\rt)^{-1}X(\rt)=u(\rt')^{-1}X(\rt')$ as desired.
\end{proof}

The following is analogous to the fact that for $f\in H^\bullet(\fl{n})$ we have $\deg_{X^w}f=\ct\partial_w f$.
\begin{cor}
For $\wh{F}\in \nsuppfor{n}$, we have $$\deg_{X(\wh{F})}f=\prodoperator{\wh{F}}^nf.$$
\end{cor}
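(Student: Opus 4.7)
The plan is to pick any trimming sequence $\rt\in \Trim{\wh{F}}$, reduce the degree computation on $X(\wh{F})$ to one on $X(\rt)$, and then invoke \Cref{thm:mainP1degree} (together with the definition of $\prodoperator{\wh{F}}^n$) to conclude.

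First, by the definition of $X(\wh{F})$ (valid thanks to \Cref{thm:rtwexists}), for any $\rt\in \Trim{\wh{F}}$ we have
\[
X(\wh{F}) = u(\rt)^{-1}\cdot X(\rt),
\]
where $u(\rt)^{-1}$ is interpreted as left-multiplication by a permutation matrix acting on $\fl{n}=GL_n/B$. The key observation is that left multiplication by any element $g\in GL_n$ is an algebraic automorphism $L_g:\fl{n}\to \fl{n}$, and since $GL_n$ is connected, $L_g$ is homotopic to the identity. Consequently $L_g^*$ acts as the identity on $H^\bullet(\fl{n})$, and so for any subvariety $Y\subset \fl{n}$ and any $f\in H^\bullet(\fl{n})$ we have
\[
\deg_{gY}f \;=\; \int_{gY}f \;=\; \int_{Y}L_g^*f \;=\; \int_{Y}f \;=\; \deg_Y f.
\]

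Applying this with $g=u(\rt)^{-1}$ and $Y=X(\rt)$ yields $\deg_{X(\wh{F})}f=\deg_{X(\rt)}f$. By \Cref{thm:mainP1degree} the right-hand side equals $\prodoperator{\rt}^n f$, and by the definition of $\prodoperator{\wh{F}}^n$ (which was shown in the preceding section to be independent of the choice of $\rt\in \Trim{\wh{F}}$) we have $\prodoperator{\rt}^n f = \prodoperator{\wh{F}}^n f$. Chaining these equalities together gives the desired identity $\deg_{X(\wh{F})}f=\prodoperator{\wh{F}}^n f$.

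There is no real obstacle here: the content of the corollary is entirely packaged into the earlier results. The only subtlety worth flagging explicitly is the cohomological triviality of left $GL_n$-translation, which is why moving from $X(\rt)$ to $X(\wh{F})=u(\rt)^{-1}X(\rt)$ does not affect the degree pairing with classes pulled back from $\fl{n}$. This is the same principle underlying why $[X^v_u]$ only depends on $v$ and $u$ rather than on a specific choice of Borel representative.
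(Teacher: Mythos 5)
Your proof is correct and follows essentially the same route as the paper: pick $\rt\in\Trim{\wh{F}}$, reduce to $X(\rt)$, apply the $\PP^1$-bundle degree formula, and use the fact that $\prodoperator{\rt}^n$ depends only on $\wh{F}(\rt)$. The one thing you do that the paper leaves implicit is to explicitly justify $\deg_{X(\wh{F})}f=\deg_{X(\rt)}f$ via the connectedness of $GL_n$ (so $L_{u(\rt)^{-1}}$ is homotopic to the identity and acts trivially on $H^\bullet(\fl{n})$); the paper simply writes $\deg_{X(\rt)}f=\prodoperator{\rt}^nf=\prodoperator{\wh{F}}^nf$ and treats the translation-invariance of the degree as understood.
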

\begin{proof}
If $\rt\in \Trim{F}$, then for any $f\in H^\bullet(\fl{n})$ we have
$\deg_{X(\rt)}f=\prodoperator{\rt}^nf=\prodoperator{\wh{F}}^nf.$
\end{proof}

Fix some $\rt=\xletter{1}\cdots \xletter{n}\in \Trim{\wh{F}}$, and let $\rt_i=\xletter{1}\cdots \xletter{i}$. 
Then under the isomorphism $X(\rt)\to X(\wh{F})$, if $F\in H^\bullet(\fl{n})$ we can compute using the Bott manifold structure on $X(\rt)$
$$\deg_{X(\wh{F})}f=\deg_{X(\rt_n)}f=\deg_{X(\rt_{n-1})}\operatorname{up}(\xletter{n})f=\cdots = \deg_{X(\rt_0)}\operatorname{up}(\xletter{1})\cdots \operatorname{up}(\xletter{n})f=\prodoperator{\wh{F}}^nf.$$
In this way, the isomorphisms $X(\rt)\to X(\wh{F})$ play an analogous role as the Bott--Samelson resolutions play for Schubert varieties outlined in \Cref{sec:Results}.

The fundamental classes of the quasisymmetric Schubert cycles $X(\wh{F})$ satisfy linear relations among them, but by passing to the subset indexed by indexed forests $\suppfor{n}\subset \nsuppfor{n}$ we obtain a duality with the family of forest polynomials.
\begin{thm}
For $F\in \suppfor{n}\subset \widehat{\suppfor{n}}$ and $f\in \poly_n$ we have $\deg_{X(F)} f=\ct\tope{F}f$. In particular,
    $$\deg_{X(F)}\forestpoly{G}=\delta_{F,G},$$
    i.e. the forest polynomials $\{\forestpoly{G}\suchthat G\in \suppfor{n}\}\subset H^\bullet(\fl{n})$ are Kronecker dual to the fundamental classes $\{[X(F)]\suchthat F\in \suppfor{n}\}\subset H^\bullet(\fl{n})$.
\end{thm}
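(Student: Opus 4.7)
The plan is to observe that this statement is the capstone assembling several pieces already established, rather than requiring new geometric input. First I would note that by Remark \ref{rem:indexed_and_marked} an indexed forest $F\in\suppfor{n}$ embeds naturally into $\nsuppfor{n}$ as a nested forest with no nontrivial nesting (all blocks are intervals), and hence $X(F)$ is defined via the construction $u(\rt)^{-1}X(\rt)$ for any $\rt\in\Trim{F}\subset \rtseq_n$, where $\Trim{F}$ is nonempty because for example $\rt=\rletter{1}^{n-|F|}\tletter{i_1}\cdots\tletter{i_k}$ for any factorization $F=i_1\cdots i_k$ lies in $\Trim{F}$ (as in the proof of Corollary \ref{cor:ev0TF}).

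Next I would apply the corollary immediately preceding (that $\deg_{X(\wh{F})}f=\prodoperator{\wh{F}}^n f$) to get
$$\deg_{X(F)}f=\prodoperator{F}^n f$$
for any $f\in\poly_n$ (viewed as a class in $H^\bullet(\fl{n})=\coinv{n}$; homogeneous $f$ of the correct degree recover the usual degree map, and for other degrees both sides vanish). Then I would invoke Corollary \ref{cor:ev0TF} which asserts $\prodoperator{F}^n=\ct\tope{F}$ on $\poly_n$ for $F\in\suppfor{n}$. Concatenating these two identities yields $\deg_{X(F)}f=\ct\tope{F}f$, which is the first claim.

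For the duality statement, I would invoke the recursive characterization of forest polynomials from Definition \ref{defn:forpolyindirect}, which in the form recalled just after that definition (and \cite[Corollary 6.6]{NST_a}) gives $\ct\tope{F}\,\forestpoly{G}=\delta_{F,G}$ for $F,G\in\indexedforests$. Restricting to $F,G\in\suppfor{n}$ and combining with the identity $\deg_{X(F)}f=\ct\tope{F}f$ applied to $f=\forestpoly{G}\in\poly_n$ (which lies in $\poly_n$ by Theorem \ref{thm:qsymvanishing}) immediately gives $\deg_{X(F)}\forestpoly{G}=\delta_{F,G}$.

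There is no real obstacle here — every ingredient has been set up already. The only subtlety worth flagging in the write-up is that the matching between the two forest notions (indexed forests as in \cite{NST_a} versus marked nested forests with all blocks outer intervals) must be explicit, so that one can justify both that $\Trim{F}\subset \rtseq_n$ is nonempty for every $F\in\suppfor{n}$ and that the operator $\prodoperator{F}^n$ defined here coincides with the operator $\ct\tope{F}$ from \Cref{sec:recallforests}. This is exactly the content of Corollary \ref{cor:ev0TF}, so the proof reduces to a citation chain.
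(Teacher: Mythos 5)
Your proof is correct and takes essentially the same approach as the paper's: combine the corollary $\deg_{X(\wh{F})}f=\prodoperator{\wh{F}}^n f$ with \Cref{cor:ev0TF} (which gives $\ct\tope{F}=\prodoperator{F}^n$ for $F\in\suppfor{n}$) and the Kronecker duality $\ct\tope{F}\,\forestpoly{G}=\delta_{F,G}$ from \cite{NST_a}. The paper's write-up simply inlines the explicit choice $\rt=\rletter{1}^{n-\sum c_i}\tletter{1}^{c_1}\cdots\tletter{k}^{c_k}$ rather than citing \Cref{cor:ev0TF} by number, but the content is identical.
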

\begin{proof}
Because $F\in \suppfor{n}$, we can express $F=\nf{\rt}$ for $\rt=\rletter{1}^{n-\sum c_i}\tletter{1}^{c_1}\cdots \tletter{k}^{c_k}$ where $F=1^{c_1}2^{c_2}\cdots k^{c_k}$. This means that $\ct\rtc=\ct \rletter{1}^{n-\sum c_i} \tletter{1}^{c_1}\cdots \tletter{k}^{c_k}=\ct\tope{F}$. Therefore
\begin{equation*}
    \deg_{X(F)}\forestpoly{G}=\ct\tope{F} \forestpoly{G}=\delta_{F,G}.\qedhere
\end{equation*}
\end{proof}
We now show that the remaining fundamental classes are nonnegative linear combinations of the fundamental classes associated to the non-nested forests.
\begin{thm}\label{th:a_F}
For $\widehat{H}\in\nsuppfor{n}$, we have $$[X(\widehat{H})]=\sum_{F\in \suppfor{n}} a_F[X(F)]\in H^\bullet(\fl{n})$$
for nonnegative integers $a_F$.
\end{thm}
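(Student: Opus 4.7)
The plan is to derive the statement from an operator identity
\[
\prodoperator{\widehat{H}}^n \;=\; \sum_{F\in\suppfor{n}} a_F\,\ct\tope{F},
\]
holding as functionals on $\poly_n$ with $a_F\in\ZZ_{\ge 0}$. Using \Cref{thm:mainP1degree} to identify $\deg_{X(\widehat{H})}f=\prodoperator{\widehat{H}}^n f$, \Cref{cor:ev0TF} to identify $\deg_{X(F)}f=\ct\tope{F}f$ for $F\in\suppfor{n}$, and the fact that all moves used below preserve $|\rt|_{\tletter{}}$ (so the two sides live in the same cohomological degree), Poincar\'e duality for $\fl{n}$ promotes this operator identity to the claimed equality of fundamental classes $[X(\widehat{H})]=\sum a_F[X(F)]$.

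To prove the operator identity, fix any $\rt\in\Trim{\widehat{H}}$ and introduce the monovariant
\[
\nu(\rt)=\#\{(k,k')\;:\;k<k',\ \xletter{k}\in\{\tletter{\bullet}\},\ \xletter{k'}\in\{\rletter{\bullet}\}\}.
\]
While $\nu(\rt)>0$, some consecutive pair $\xletter{k}\xletter{k+1}=\tletter{i}\rletter{j}$ exists, and we rewrite it by
\[
\tope{i}\rope{j}\;=\;\begin{cases}\rope{j}\tope{i+1}&\text{if }i\ge j,\\ \rope{j-1}\tope{i}&\text{if }i<j-1,\\ \rope{i+1}\tope{i}+\rope{i}\tope{i+1}&\text{if }j=i+1,\end{cases}
\]
the first two being rearrangements of \eqref{eq:r_t_com_1}--\eqref{eq:r_t_com_2} and the third being the nontrivial relation \eqref{eqn:tr_rel}. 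A direct check on the defining position constraints of $\rtseq_n$ shows every resulting subsequence remains in $\rtseq_n$, and a short contribution analysis restricted to pairs involving positions $k, k+1$ shows that $\nu$ strictly decreases (by $1$) in each resulting summand. Iterating terminates in a $\ZZ_{\ge 0}$-linear combination
\[
\prodoperator{\widehat{H}}^n\;=\;\sum_j\prodoperator{\rt_j}^n
\]
in which every $\rt_j\in\rtseq_n$ satisfies $\nu(\rt_j)=0$, i.e.\ $\rt_j=\rletter{a_1}\cdots\rletter{a_p}\tletter{b_1}\cdots\tletter{b_q}$ with all $\rletter{}$'s preceding all $\tletter{}$'s.

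For such a terminal $\rt_j$, one verifies directly from the recursive description of \Cref{sec:sequences_and_forests} that $F_j:=\widehat{F}(\rt_j)\in\suppfor{n}$: the $\rletter{}$-prefix produces only outer trees (no nontrivial tree exists yet for any new tree to be nested inside), and each subsequent $\tletter{b_k}$ simply expands a leaf within an already-present tree, creating no new blocks; \Cref{prop:fullsupp} supplies the full-support condition. Applying \Cref{cor:ev0TF} we obtain $\prodoperator{\rt_j}^n=\ct\tope{F_j}$, and grouping identical forests produces the desired operator identity with $a_F\in\ZZ_{\ge 0}$.

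The main obstacle is the design of the monovariant $\nu$ and the uniform verification that each of the three moves above -- the two commutations and the nontrivial split from \eqref{eqn:tr_rel} -- strictly decreases $\nu$ in every resulting summand while preserving $\rtseq_n$-validity. Once the monovariant is in place, the identification of terminal forests with $\suppfor{n}$ and the passage from operators to fundamental classes are routine consequences of the recursive construction of $\widehat{F}(\rt)$ and Poincar\'e duality on $\fl{n}$.
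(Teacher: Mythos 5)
Your proof is correct and follows essentially the same route as the paper: fix $\rt\in\Trim{\widehat{H}}$, repeatedly move $\rletter{}$'s to the left of $\tletter{}$'s using the augmented-Thompson commutations (which are free, preserving $\wh F$ and hence $[X(\rt)]$) together with the split $\tope{i}\rope{i+1}=\rope{i}\tope{i+1}+\rope{i+1}\tope{i}$, and then identify terminal sequences of the form $\rletter{a_1}\cdots\rletter{a_p}\tletter{b_1}\cdots\tletter{b_q}$ with elements of $\suppfor{n}$, passing from operators to classes by Poincar\'e duality. The paper's proof is terser (it states only the split and leaves the free commutations and the termination argument implicit), whereas you make the termination explicit via the $\tletter{}$-before-$\rletter{}$ inversion count $\nu$ and check the $\rtseq_n$-constraints for each local move; these are the same steps the paper takes for granted.
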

\begin{proof}
Let $\rt\in \rtseq_n$ have $\nf{\rt}=\wh{H}$ so that $[X(\rt)]=[X(\wh{H})]$.
    Note that $$\tope{i}\rope{i+1}=\rope{i}\tope{i+1}+\rope{i+1}\tope{i}.$$
    If $\tletter{i}\rletter{i+1}$ in $\rt$ and $\rt_1$, $\rt_2$  are the sequences where these two letters are replaced with $\rletter{i}\tletter{i+1}$ and $\rletter{i+1}\tletter{i}$ respectively, then $\rt_1$, $\rt_2\in \rtseq_n$ and $\prodoperator{\rt}^n=\prodoperator{\rt_1}^n+\prodoperator{\rt_2}^n$. This implies  $[X(\rt)]=[X(\rt_1)]+[X(\rt_2)]$ by Poincar\'e duality.
    Applying this repeatedly allows us to move all $\rletter{i}$ to the left of all $\tletter{j}$ to express $$[X(\rt)]=\sum a_{\rt'}[X(\rt')]$$
    where each $\rt'=\rletter{i_1}\cdots \rletter{i_k}\tletter{j_1}\cdots \tletter{j_\ell}$, and so $\nf{\rt'}\in \indexedforests_n$.
\end{proof}

\section{The \texorpdfstring{$\rt$}{Omega}-flag manifold}
\label{sec:qfl}
We are now ready to define our quasisymmetric analogue of the flag variety. Rather than take the union of the $X(\wh{F})$, which turns out to be a much more combinatorially opaque object as a toric complex, we instead take the union of the $X(\rt)$ with $\rt\in \rtseq_n$, each of which as we have already shown is isomorphic to some $X(\wh{F})$.
\begin{defn}
We define the $\rt$-flag variety to be 
$$\hhmp_n=\bigcup_{\rt\in \rtseq_n}X(\rt)\subset \fl{n}.$$
\end{defn}

This also has a natural intrinsic recursive characterization.
\begin{thm}
\label{thm:recursivehhmp}
We have $\hhmp_1=\fl{1}$, and for $n>1$ we have $\hhmp_n$ is the set of all $\mathcal{V}\in \hhmp_n$ such that there is $\mathcal{W}\in \hhmp_{n-1}$ and $i\in\{1,\ldots,n-1\}$ such that $\mathcal{V}_j=\{0\}\oplus \mathcal{W}_j$ for $j<i$ and $\mathcal{V}_j=\CC \oplus \mathcal{W}_{j-1}$ for $j>i$.
\end{thm}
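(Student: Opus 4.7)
The plan is to observe that the condition on $\mathcal{V}$ in the statement is exactly the unwinding of the geometric description ``$\mathcal{V}\in \pi_i^{-1}\pi_i\Psi_i(\mathcal{W})$''. Indeed, the prescribed values $\mathcal{V}_j=\{0\}\oplus\mathcal{W}_j$ for $j<i$ and $\mathcal{V}_j=\CC\oplus\mathcal{W}_{j-1}$ for $j>i$ are literally $\Psi_i(\mathcal{W})_j$ for $j\neq i$, and leaving $\mathcal{V}_i$ unconstrained (subject only to $\mathcal{V}_{i-1}\subset\mathcal{V}_i\subset\mathcal{V}_{i+1}$) corresponds precisely to replacing the $i$-th subspace of $\Psi_i(\mathcal{W})$ freely, i.e.\ to applying $\pi_i^{-1}\pi_i$. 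Hence the theorem is equivalent to the set-theoretic equality
$$\hhmp_n=\bigcup_{i=1}^{n-1}\pi_i^{-1}\pi_i\Psi_i(\hhmp_{n-1}),$$
with the base case $\hhmp_1=\fl{1}$ following from $\rtseq_1=\{\rletter{1}\}$ and $X(\rletter{1})=\fl{1}$.

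For the $\supseteq$ direction, given $\mathcal{W}\in\hhmp_{n-1}$ and $i\in\{1,\dots,n-1\}$, pick $\rt'\in\rtseq_{n-1}$ with $\mathcal{W}\in X(\rt')$. Since $i\leq n-1$, the sequence $\rt\coloneqq\rt'\tletter{i}$ lies in $\rtseq_n$, and by the geometric description of $\tope{i}$ (\Cref{prop:projectionNiceRichardsons} combined with \Cref{thm:geometricrope}) we have $X(\rt)=\pi_i^{-1}\pi_i\Psi_i(X(\rt'))\supset \pi_i^{-1}\pi_i\Psi_i(\mathcal{W})$, so any such $\mathcal{V}$ lies in $X(\rt)\subset\hhmp_n$.

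For the $\subseteq$ direction, take $\mathcal{V}\in X(\rt)$ with $\rt=\rt'\xletter{n}\in\rtseq_n$ and split into three cases on $\xletter{n}$. If $\xletter{n}=\tletter{i}$ then $i\leq n-1$ automatically and $X(\rt)=\pi_i^{-1}\pi_i\Psi_i(X(\rt'))$, so $\mathcal{V}\in\pi_i^{-1}\pi_i\Psi_i(\mathcal{W})$ for some $\mathcal{W}\in X(\rt')\subset\hhmp_{n-1}$. If $\xletter{n}=\rletter{i}$ with $i<n$, then $\mathcal{V}=\Psi_i(\mathcal{W})$ by \Cref{thm:geometricrope}, and $\Psi_i(\mathcal{W})\in\pi_i^{-1}\pi_i\Psi_i(\mathcal{W})$ trivially, so the condition holds with this $i$.

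The only case requiring a moment's thought is $\xletter{n}=\rletter{n}$, where $\mathcal{V}=\Psi_n(\mathcal{W})$ and the theorem does not allow $i=n$. Here we reinterpret $\mathcal{V}$ using $i'=n-1$: we have $\mathcal{V}_j=\{0\}\oplus\mathcal{W}_j$ for all $j<n$, so in particular for $j<n-1$, and $\mathcal{V}_n=\CC^n=\CC\oplus\mathcal{W}_{n-1}$ since $\mathcal{W}_{n-1}=\CC^{n-1}$, giving the required relation for $j>n-1$. The middle subspace $\mathcal{V}_{n-1}=\{0\}\oplus\mathcal{W}_{n-1}$ is an admissible unconstrained value, so $\mathcal{V}$ satisfies the theorem's condition with $(\mathcal{W},i')=(\mathcal{W},n-1)$. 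This last bookkeeping observation is the only non-mechanical step; everything else is a direct application of the geometric realizations of $\rope{i}$ and $\tope{i}$ from \Cref{sec:GeomRT} together with the recursive definition of $\rtseq_n$.
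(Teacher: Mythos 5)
Your proof is correct. Both you and the paper first observe that the theorem's condition on $\mathcal{V}$ is exactly $\mathcal{V}\in\pi_i^{-1}\pi_i\Psi_i(\mathcal{W})$, so the claim to be proved is
\[
\hhmp_n=\bigcup_{i=1}^{n-1}\pi_i^{-1}\pi_i\Psi_i(\hhmp_{n-1})=\bigcup_{i=1}^{n-1}\bigcup_{\rt'\in\rtseq_{n-1}}X(\rt'\,\tletter{i}).
\]
Where you diverge from the paper is in the $\subseteq$ direction. You argue directly on the last letter of $\rt=\rt'\xletter{n}$, handling $\xletter{n}=\tletter{i}$ and $\xletter{n}=\rletter{i}$ with $i<n$ immediately from \Cref{prop:projectionNiceRichardsons} and \Cref{thm:geometricrope}, and doing an explicit subspace calculation to show $\Psi_n(\mathcal{W})\in\pi_{n-1}^{-1}\pi_{n-1}\Psi_{n-1}(\mathcal{W})$ for the boundary case $\xletter{n}=\rletter{n}$. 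The paper instead reduces to showing that the union of the top-dimensional $X(\rt)$ (those with $\rt=\rletter{1}\tletter{i_1}\cdots\tletter{i_{n-1}}$) is already all of $\hhmp_n$, and then cites \Cref{thm:containedtorusorbit} to place every other $X(\rt')$ inside one of these. Your route is a bit more hands-on and avoids any dependence on \Cref{thm:containedtorusorbit} (which itself relies on the torus-orbit-closure machinery of \cite{TW15}), at the cost of the one explicit boundary computation; the paper's route offloads the boundary bookkeeping to an already-proved structural result. Both correctly establish the containment, and your $\supseteq$ direction matches the paper's.
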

\begin{proof}
    This recursive description claims that $\hhmp_n=\Phi_1(\hhmp_{n-1})\cup \cdots \cup \Phi_{n-1}(\hhmp_{n-1})$ where $\Phi_i$ is the operator from \Cref{defn:Phi}. By induction, it remains to prove
    $$\bigcup_{1\le i_j\le j}\Phi_{i_{n-1}}\cdots \Phi_{i_1}(\fl{1})=\hhmp_n.$$
     Because $\Psi_1(\fl{0})=\fl{1}$, by the recursive construction of the $X(\rt)$, we see that the left hand side is the union of all $X(\rt)$ with $\rt\in \rtseq_n$ of the form $\rletter{1}\tletter{i_1}\cdots \tletter{i_{n-1}}$. But by \Cref{thm:containedtorusorbit}, every other $X(\rt')$ with $\rt'\in \rtseq_n$ is a subvariety of the $X(\rt)$ with $\rt\in \rtseq_n$ obtained by replacing each $\xletter{i}=\rletter{j}$ beyond the first one with either $\tletter{j}$ or $\tletter{j-1}$ so that the index lies in $\{1,\ldots,i\}$, so we conclude.
\end{proof}

\begin{eg}
\label{eg:small_qfl}
$\hhmp_1=\fl{1}$ is a point, and $\hhmp_2=\fl{2}$.\\
For $\mathcal{V}=(0\subset \mathcal{V}_1\subset\mathcal{V}_2\subset\CC^3)\in \fl{3}$, we have $\mathcal{V}\in \hhmp_{3}$ if $\mathcal{V}_2$ contains $\CC e_1$ or if $\mathcal{V}_1$ is contained in $\CC e_2 \oplus \CC e_3$. Note that both can be true, and this occurs precisely for flags with $\mathcal{V}_1\in \CC e_2 \oplus \CC e_3$ and $\mathcal{V}_2=\CC e_1\oplus \mathcal{V}_1$. 
\end{eg}
\subsection{A cubical complex}
We now describe the structure of the toric complex $\hhmp_n$. In \cite{HHMP} it was shown that the images of the top-dimensional $X(\rt)$ appearing in $\hhmp_n$ under the moment map $\fl{n}\to \mathbb{R}^n/\langle (1,\ldots,1)\rangle$ give a subdivision of the permutahedron
$$\Perm_n\coloneqq \operatorname{conv}\{w\cdot (n,\ldots,1)\suchthat w\in S_n\}$$
into combinatorial cubes.

As a consequence of the $\rtseq_n$-encoding for the combinatorial cubes appearing in this subdivision, this subdivision is combinatorially isomorphic to a unit cube subdivision of a particular cuboid (a fact that does not appear to have been previously observed).
\begin{defn}
 For $\rt=\xletter{1}\cdots\xletter{n}\in \rtseq_n$,
 let $\square_\rt$ be defined as the box $Y_2\times \cdots \times Y_{n}$ where
    $$Y_i=\begin{cases}[j,j+1]&\xletter{i}=\tletter{j}\\\{j\}&\xletter{i}=\rletter{j}.\end{cases}$$
\end{defn}

\begin{thm}
There is a face preserving bijection from the image of $\hhmp_n$ under the moment map to the unit cube subdivision of the $(n-1)$-dimensional cuboid $[1,2]\times [1,3]\times \cdots \times [1,n]$, mapping the polytope $P^{v(\rt)}_{u(\rt)}$ to the box $\square_{\rt}$.
\end{thm}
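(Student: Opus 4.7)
The plan is to set up the bijection $\rt\mapsto(\square_\rt,P^{v(\rt)}_{u(\rt)})$ for $\rt\in\rtseq_n$, verify that both assignments give bijections onto the face sets of the two subdivisions, and then check that inclusion of faces is preserved.

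First I would record that faces of the unit cube subdivision of $[1,2]\times\cdots\times[1,n]$ are parametrized by tuples $(Y_2,\ldots,Y_n)$ with each $Y_i$ either a vertex $\{j\}$ of $[1,i]$ or a unit edge $[j,j+1]\subseteq[1,i]$. Writing $\xletter{i}=\rletter{j}$ in the first case and $\xletter{i}=\tletter{j}$ in the second, and prepending the forced $\xletter{1}=\rletter{1}$, identifies cube faces with elements of $\rtseq_n$ so that the face in question is precisely $\square_\rt$. Next I would show $\rt\mapsto(u(\rt),v(\rt))$ is injective. By \Cref{defn:uvfromrt}, if $\rt=\rt'\xletter{n}$ then the value $1$ appears at position $j$ in $u(\rt)$ and at position $j$ (resp.\ $j+1$) in $v(\rt)$ whenever $\xletter{n}=\rletter{j}$ (resp.\ $\xletter{n}=\tletter{j}$), so the last letter of $\rt$ is read off from $(u(\rt),v(\rt))$; removing these $1$'s while decrementing the remaining entries returns $(u(\rt'),v(\rt'))$, and an induction on $|\rt|$ finishes the argument. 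Since $P^{v(\rt)}_{u(\rt)}$ has vertex set $\{\sigma\cdot(n,n-1,\ldots,1):u(\rt)\leq\sigma\leq v(\rt)\}$ and hence determines $(u(\rt),v(\rt))$, we conclude $\rt\mapsto P^{v(\rt)}_{u(\rt)}$ is injective. For surjectivity onto the faces of the $\hhmp$-subdivision, \Cref{thm:containedtorusorbit} ensures every torus-orbit closure inside any $X(\rt)$ is itself of the form $X(\rt')$ with $\rt'\in\rtseq_n$, so every face of the $\hhmp$-subdivision is some $P^{v(\rt')}_{u(\rt')}$.

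For face preservation, \Cref{thm:containedtorusorbit} gives $P^{v(\rt')}_{u(\rt')}\subseteq P^{v(\rt)}_{u(\rt)}$ iff at each position $i$ either $\xletter{i}'=\xletter{i}$, or $\xletter{i}=\tletter{j}$ and $\xletter{i}'\in\{\rletter{j},\rletter{j+1}\}$. On the cube side, $\square_{\rt'}\subseteq\square_\rt$ iff $Y_i'\subseteq Y_i$ for every $i$, which unpacks to: when $Y_i=\{j\}$ (so $\xletter{i}=\rletter{j}$) we must have $Y_i'=\{j\}$ (so $\xletter{i}'=\rletter{j}$), while when $Y_i=[j,j+1]$ (so $\xletter{i}=\tletter{j}$) we may have $Y_i'\in\{\{j\},\{j+1\},[j,j+1]\}$ (so $\xletter{i}'\in\{\rletter{j},\rletter{j+1},\tletter{j}\}$). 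These conditions match verbatim, so $\rt\mapsto P^{v(\rt)}_{u(\rt)}$ is the required face-preserving bijection. The main obstacle is the last-letter reconstruction underpinning the injectivity step, which requires carefully tracking the effect of $\ins_j$ on the position of the value $1$ in one-line notation; once this is in place the rest of the argument is a direct dictionary between two matching combinatorial encodings, with \Cref{thm:containedtorusorbit} supplying the geometric input.
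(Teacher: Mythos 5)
Your proof is correct and follows essentially the same route as the paper: the core geometric input in both arguments is \Cref{thm:containedtorusorbit}, which characterizes the sub-torus-orbit closures of $X(\rt)$ as those $X(\rt')$ obtained by replacing some $\tletter{j}$'s by $\rletter{j}$ or $\rletter{j+1}$, and the key observation is that this condition matches the containment $\square_{\rt'}\subseteq\square_\rt$ verbatim. The one genuine addition on your side is the explicit injectivity argument for $\rt\mapsto(u(\rt),v(\rt))$ via last-letter reconstruction (tracking where the value $1$ lands under $\ins_j$); the paper leaves this to Appendix~B (\Cref{prop:rtseq_and_permutations}) rather than spelling it out in the proof, so your version is a bit more self-contained but not a different approach.
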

\begin{proof}
For a toric variety $X$, the faces of the moment polytope are the images of the sub-torus-orbit closures of $X$. For any $X(\rt)$ with $\rt\in \rtseq_n$, these sub-torus-orbit closures were identified in \Cref{thm:containedtorusorbit} to be the $X(\rt')$ with $\rt'$ obtained by replacing some subset of the $\tletter{j}$ with either $\rletter{j}$ or $\rletter{j+1}$. By construction this happens precisely when $\square_{\rt'}\subset \square_{\rt}$, and we conclude.
\end{proof}
See Figure~\ref{fig:squares_and_cubes_4} demonstrating the $\hhmp$-subdivision and the unit cube subdivision for $n=4$.
\begin{figure}
    \centering
    \includegraphics[scale=1.0]{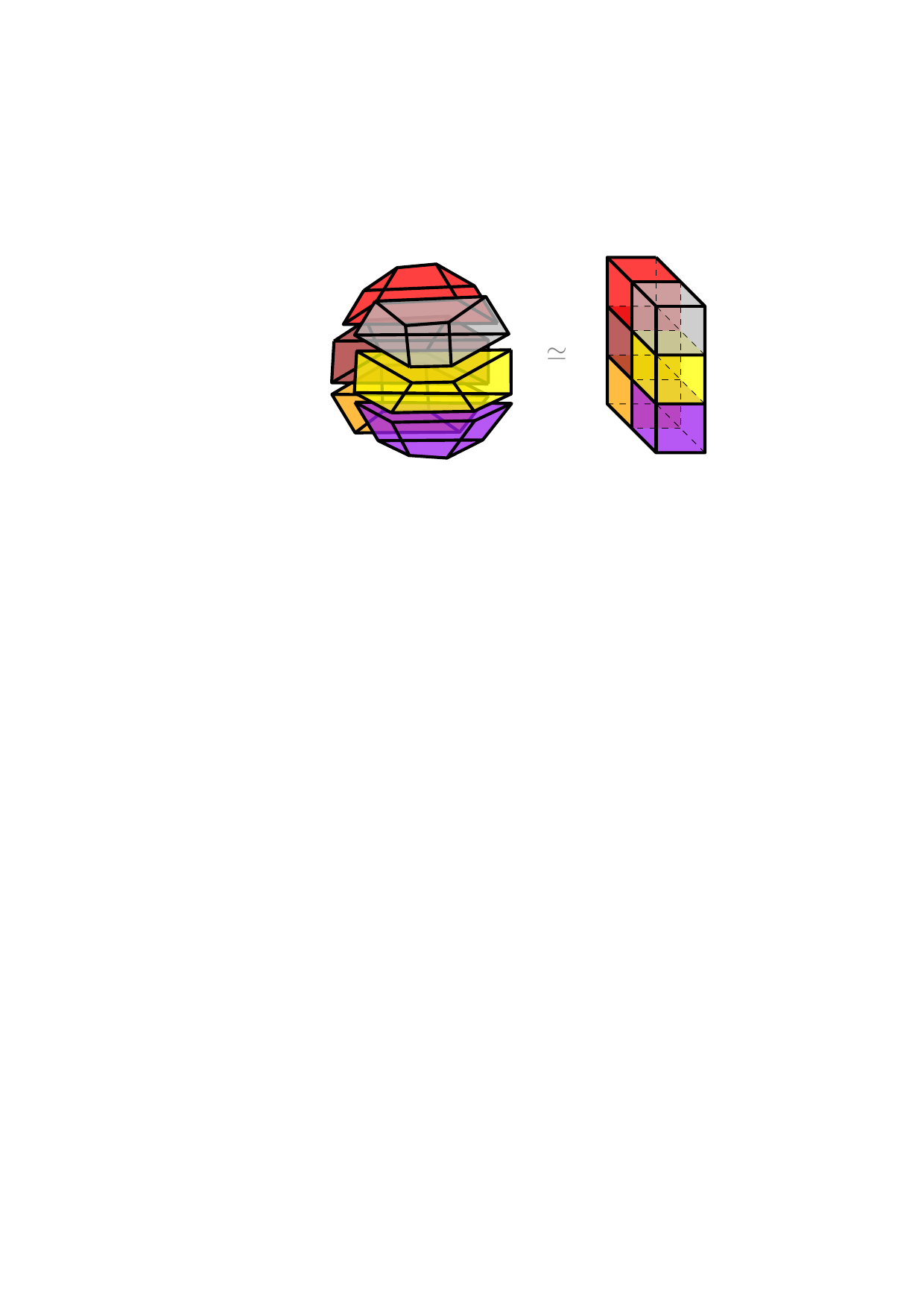}
    \caption{The $\hhmp$ subdivision and the unit cube subdivision for $n=4$}
    \label{fig:squares_and_cubes_4}
\end{figure}

\subsection{Quasisymmetric coinvariants in $H^\bullet(\hhmp_n)$}

We now prove the following.
\begin{thm}
\label{thm:QsymSchubertcycles}[{\Cref{maintheorem:QsymSchubertcycles}}]
    The image of $H^\bullet(\fl{n})$ in $H^\bullet(\hhmp_n)$ under the natural restriction map is isomorphic to $\qscoinv{n}$.
\end{thm}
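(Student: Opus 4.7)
The plan is to factor $\psi^*\colon H^\bullet(\fl{n})\to H^\bullet(\hhmp_n)$ through $\qscoinv{n}$, giving an induced ring homomorphism $\Phi\colon\qscoinv{n}\to H^\bullet(\hhmp_n)$, and then to show $\Phi$ is injective. Since the image of $\Phi$ coincides with $\psi^*(H^\bullet(\fl{n}))$, together these yield $\qscoinv{n}\cong \psi^*(H^\bullet(\fl{n}))$.

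First I would show $\psi^*$ factors through $\qscoinv{n}$. Since $\psi^*$ is a ring homomorphism and $\qsymide{n}$ is generated as an ideal by positive-degree quasisymmetric polynomials, it is enough to prove $\psi^*(q)=0$ for each positive-degree $q\in\qsym{n}$. The key computational input is the lemma that $\prodoperator{\rt}^n(qh)=0$ for all $h\in\poly_n$ and $\rt\in\rtseq_n$. This I would prove by induction on $n$: writing $\rt=\xletter{1}\cdots\xletter{n}$ and $\rt'=\xletter{1}\cdots\xletter{n-1}\in\rtseq_{n-1}$, we have $\prodoperator{\rt}^n(qh)=\prodoperator{\rt'}^{n-1}(\operatorname{up}(\xletter{n})(qh))$. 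Quasisymmetry of $q$ gives $\rope{j}(qh)=\rope{1}(q)\rope{j}(h)$ and $\tope{j}(qh)=\rope{1}(q)\tope{j}(h)$, so $\operatorname{up}(\xletter{n})(qh)=\rope{1}(q)\cdot\operatorname{up}(\xletter{n})(h)$. For $n\geq 2$, $\rope{1}(q)$ is a positive-degree element of $\qsym{n-1}$, by injectivity of $\rope{1}\colon\qsym{n}\to\qsym{n-1}$ (a nonzero quasisymmetric polynomial divisible by $x_1$ would contradict the quasisymmetric constraint), so the inductive hypothesis applies; the base case $n=1$ is immediate. By \Cref{thm:mainP1degree} this lemma gives $\deg_{X(\rt)}(qh)=0$ for every such $\rt$ and $h$. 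Because $\hhmp_n$ is paved by the affine Bia\l{}ynicki--Birula cells of its smooth Bott-manifold components, the fundamental classes $[X(\rt)]$ for $\rt\in\rtseq_n$ generate $H_\bullet(\hhmp_n)$ and $H^\bullet(\hhmp_n)$ is torsion-free; the resulting nondegenerate Poincar\'e pairing then forces $\psi^*(q)=0$.

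For the injectivity of $\Phi$, I use the forest polynomial basis $\{\bar{\forestpoly{F}}\suchthat F\in\suppfor{n}\}$ of $\qscoinv{n}$ from \Cref{thm:qsymvanishing}. Suppose $\Phi(\sum_F a_F\bar{\forestpoly{F}})=0$; separating graded components, we may assume the sum is homogeneous. For each $G=i_1\cdots i_{|G|}\in\suppfor{n}$, set $\rt_G\coloneqq\rletter{1}^{n-|G|}\tletter{i_1}\cdots\tletter{i_{|G|}}\in\rtseq_n$, so that $X(\rt_G)\subset\hhmp_n$. Then by \Cref{thm:mainP1degree} and \Cref{cor:ev0TF},
\[
0\;=\;\deg_{X(\rt_G)}\!\Bigl(\sum_F a_F\forestpoly{F}\Bigr)\;=\;\sum_F a_F\,\ct\tope{G}\forestpoly{F}\;=\;a_G,
\]
using the forest-polynomial duality $\ct\tope{G}\forestpoly{F}=\delta_{F,G}$ from \Cref{defn:forpolyindirect}. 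Hence each $a_F=0$.

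The principal obstacle will be the cohomological step in the first part: passing from the numerical vanishing $\deg_{X(\rt)}(qh)=0$ to the vanishing of $\psi^*(q)$ as a class in $H^\bullet(\hhmp_n)$. This relies on the affine paving of $\hhmp_n$ and classical Poincar\'e-duality statements for such toric complexes; the rest of the argument is a direct consequence of the framework already developed in the paper.
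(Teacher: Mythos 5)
Your overall two-step plan (factor $\psi^*$ through $\qscoinv{n}$, then show the induced map is injective) is in the same spirit as the paper, and your injectivity argument in the second step is essentially correct: evaluating $\deg_{X(\rt_G)}$ against the forest-polynomial basis and invoking $\ct\tope{G}\forestpoly{F}=\delta_{F,G}$ is exactly the content of the reverse implication in the paper's \Cref{prop:normalqfl}. Your inductive lemma that $\prodoperator{\rt}^n$ kills $\qsymide{n}$ is also fine (and the paper treats this as already established in \Cref{sec:Results}).

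The genuine gap is in the last step of your factoring argument, where you pass from the numerical statement $\deg_{X(\rt)}(qh)=0$ to the cohomological vanishing $\psi^*(q)=0\in H^\bullet(\hhmp_n)$. You invoke a nondegenerate Poincar\'e pairing on $\hhmp_n$ together with a claimed Bia\l{}ynicki--Birula paving. But $\hhmp_n$ is a singular, reducible toric complex (a union of smooth toric varieties glued along torus-invariant subvarieties), not a manifold, so Poincar\'e duality in the form you need is not available off the shelf. Even with rational coefficients, what the pairing argument buys you is precisely that a class pairing to zero against a \emph{generating set} of $H_\bullet(\hhmp_n)$ vanishes -- but the claim that the $[X(\rt)]$ generate $H_\bullet(\hhmp_n)$, and that $H^\bullet(\hhmp_n;\ZZ)$ is torsion-free, are exactly the nontrivial topological facts about this singular space that need to be established. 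Asserting a BB paving of the reducible $\hhmp_n$ and concluding from it is hiding the hard part.

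The paper resolves this by a different route: it first proves the analogous statement for the normalization $\wt{\hhmp}_n$ (a disjoint union of \emph{smooth} toric varieties, where the Minkowski-weight theorem of Fulton--Sturmfels does give that classes are detected by degrees on torus-orbit closures), and then separately proves that the pullback $H^\bullet(\hhmp_n)\to H^\bullet(\wt{\hhmp}_n)$ is injective. That injectivity is the technical heart of the proof, and is established via a Serre spectral sequence argument (\Cref{prop:spectral}), after showing $\bm{\pi}_1(\hhmp_n)$ is trivial and $\hhmp_n\to\hhmp_{n-1}$ is a fiber bundle with fiber a chain of $\PP^1$'s. Your approach cannot avoid an equivalent topological input. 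If you want to salvage your line of argument, you would need to supply (i) a proof that $H_\bullet(\hhmp_n;\ZZ)$ is free and generated by the $[X(\rt)]$, and (ii) a nondegeneracy statement for the pairing, or else replace the Poincar\'e-duality step with the normalization-plus-spectral-sequence argument as in the paper.
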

First, we show that the normalization $\wt{\hhmp}_n$ of $\hhmp_n$ knows $\qscoinv{n}$. 
Because $X(\rt)$ are smooth,  $\wt{\hhmp}_n$ is the disjoint union $$\bigsqcup_{\substack{\rt\in \rtseq_n\\ |\rt|_{\tletter{}}=n-1}}X(\rt).$$

\begin{prop}
\label{prop:normalqfl}
The image of $H^\bullet(\fl{n})$ in 
    $H^\bullet(\widetilde{\hhmp}_n)$ is isomorphic to $\qscoinv{n}$.
\end{prop}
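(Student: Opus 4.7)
The plan is to identify the kernel of the restriction map $r\colon H^\bullet(\fl{n}) \to H^\bullet(\wt{\hhmp}_n)$ with the image $\overline{\qsymide{n}}$ of $\qsymide{n}$ inside $\coinv{n}$, from which the image of $r$ is isomorphic to $\coinv{n}/\overline{\qsymide{n}} \cong \qscoinv{n}$.

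For the containment $\overline{\qsymide{n}} \subseteq \ker r$, I plan to prove by induction on $n$ the stronger statement that \emph{for every $\rt \in \rtseq_n$ (not only the maximal ones) and every $f \in \qsymide{n}$, we have $f|_{X(\rt)} = 0$ in $H^\bullet(X(\rt))$}. The base case $n \le 1$ is trivial. For the inductive step, write $\rt = \rt'\xletter{}$ with $\rt' \in \rtseq_{n-1}$; by the descent of $\tope{i}, \rope{i}$ to $\qscoinv{}$ recalled in \Cref{sec:Results}, both $\rope{i} f$ and $\tope{i} f$ lie in $\qsymide{n-1}$. If $\xletter{} = \rletter{i}$, the isomorphism $\Psi_i\colon X(\rt') \xrightarrow{\sim} X(\rt)$ identifies $f|_{X(\rt)}$ with $(\rope{i} f)|_{X(\rt')}$, which vanishes by induction. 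If $\xletter{} = \tletter{i}$, then $\Phi_i\colon X(\rt) \to X(\rt')$ realizes $X(\rt) = \PP(\ul{\CC} \oplus \mathcal{L})$ by \Cref{thm:TP1bundle}; write
\[
f|_{X(\rt)} = \Phi_i^*a + \Phi_i^*b \cdot \xi
\]
in the Leray--Hirsch decomposition, with $\xi = c_1(\mathcal{O}_{\PP}(1))$ and $a, b \in H^\bullet(X(\rt'))$ uniquely determined. I recover $a$ by pulling back along the zero section $s_0 = \Psi_i|_{X(\rt')}$ (the section corresponding to the trivial summand $\ul{\CC}$, on which $\xi$ restricts to zero):
\[
a = s_0^*(f|_{X(\rt)}) = (\rope{i} f)|_{X(\rt')} = 0
\]
by induction. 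I recover $b$ via the pushforward
\[
b = (\Phi_i)_*(f|_{X(\rt)}) = (\tope{i} f)|_{X(\rt')} = 0,
\]
again by induction. Hence $f|_{X(\rt)} = 0$.

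For the reverse inclusion $\ker r \subseteq \overline{\qsymide{n}}$, suppose $f|_{X(\rt)} = 0$ for all maximal $\rt \in \rtseq_n$. For each $F \in \suppfor{n}$, pick $\rt_F \in \Trim{F}$ and extend it to a maximal $\rt \in \rtseq_n$ by replacing each $\rletter{j}$ at positions $i \ge 2$ with an allowable $\tletter{j'}$, $j' \in \{j-1, j\}$. By \Cref{thm:containedtorusorbit}, $X(\rt_F) \subset X(\rt)$, so $f|_{X(\rt_F)} = 0$, and hence
\[
0 = \deg_{X(\rt_F)} f = \prodoperator{\rt_F}^n f = \ct \tope{F} f
\]
using \Cref{cor:ev0TF}. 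Since this vanishing holds for every $F \in \suppfor{n}$, \Cref{thm:qsymvanishing} applied to any lift of $f$ to $\poly_n$ yields $f \in \overline{\qsymide{n}}$.

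The main technical obstacle is upgrading the degree identity of \Cref{thm:mainP1degree} to the cohomology-class identity $(\Phi_i)_*(f|_{X(\rt)}) = (\tope{i} f)|_{X(\rt')}$. This follows by applying proper base change to the Cartesian square of \Cref{prop:projectionNiceRichardsons} together with $(\pi_i)_* = \partial_i$ from \Cref{fact:pifacts}, and the observation that $\partial_i f$ is $\pi_i$-pulled back so that $\Psi_i^* \pi_i^* \partial_i f = \rope{i} \partial_i f = \tope{i} f$.
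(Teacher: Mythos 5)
Your proposal is correct, and the reverse inclusion $\ker r \subseteq \overline{\qsymide{n}}$ is argued essentially as in the paper: restrict to the $X(\rt_F)$ with $F\in\suppfor{n}$, apply \Cref{cor:ev0TF}, and invoke \Cref{thm:qsymvanishing}. Where you genuinely diverge is the forward containment. The paper invokes the Minkowski weight description of $H^\bullet$ of a smooth complete toric variety from \cite{FuSt97}: a degree-$k$ class vanishes iff its degree pairing with every $k$-dimensional torus-orbit closure vanishes. Since those orbit closures are themselves $X(\rt')$ with $\rt'\in\rtseq_n$, and $\deg_{X(\rt')}f = \prodoperator{\rt'}^n f$ by \Cref{thm:mainP1degree}, the vanishing is then immediate because $\prodoperator{\rt'}^n$ descends to $\qscoinv{n}$. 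You instead run an explicit induction on $n$ through the Bott tower structure, decomposing $f|_{X(\rt)} = \Phi_i^*a + \Phi_i^*b\cdot\xi$ via Leray--Hirsch and recovering $a$ and $b$ by the section pullback and the fiber integration respectively. This is more self-contained (it never needs \cite{FuSt97}) but it requires you to upgrade the numerical identity $\deg_{X(\rt)}f = \deg_{X(\rt')}\tope{i}f$ to the class-level push-pull identity $(\Phi_i)_*(f|_{X(\rt)}) = (\tope{i}f)|_{X(\rt')}$, which you correctly derive from base change across the Cartesian square implicit in \Cref{prop:projectionNiceRichardsons}. Two minor points to keep in mind if you write this up: (i) the identification of $\Psi_i$ as the section on which $\xi = c_1(\mathcal{O}_{\PP}(1))$ vanishes depends on checking that $\Psi_i$ picks out the $\ul{\CC}$-summand in $\PP(\ul{\CC}\oplus\mathcal{L})$ (which it does, since $\Psi_i(\mathcal{V})_i/\Psi_i(\mathcal{V})_{i-1} = \ul{\CC}$); and (ii) depending on sign conventions, $(\Phi_i)_*\xi = \pm 1$, so $b = \pm(\Phi_i)_*(f|_{X(\rt)})$ — harmless for the vanishing argument but worth flagging. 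The paper's route is shorter given that it already uses toric machinery elsewhere; yours makes the proposition essentially a corollary of the Bott manifold structure in \Cref{sec:Bott}.
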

\begin{proof}
    The statement is equivalent to showing that $\qsymide{n}$ is the kernel of the map 
    \[\ZZ[x_1,\ldots,x_n]\to\prod_{\substack{\rt\in \rtseq_n\\ |\rt|_{\tletter{}}=n-1}}H^\bullet(X(\rt)).
    \]
    By the Minkowski weight description of the cohomology ring of smooth toric varieties \cite{FuSt97}, a cohomology class of degree $k$ vanishes if and only if its restriction to every sub-torus-orbit closure is $0$. Indeed, \cite[Proposition 1.1]{FuSt97} tells us that this cohomology ring is generated by the sub-torus-orbit closures.
    Since the sub-torus-orbit closures occurring in some $X(\rt)$ are of the form $X(\rt')$ with $\rt'\in \rtseq_n$ and $\deg_{X(\rt')}f=\prodoperator{\rt'}f$ for $f\in \ZZ[x_1,\ldots,x_n]$, we have reduced to showing that
    $$f\in \qsymide{n}\Longleftrightarrow \prodoperator{\rt'}^nf=0\text{ for all }\rt'\in\rtseq_n.$$ 
    The forward direction follows because, as mentioned in \Cref{sec:Results}, the operator  $\prodoperator{\rt'}$ descends to $\qscoinv{n}$. 
    For the reverse direction we note that $\prodoperator{\wh{F}}f=0$ for all $\wh{F}\in \nsuppfor{n}$, so we apply this fact to the subset of indexed forests $\suppfor{n}\subset \nsuppfor{n}$. By \Cref{cor:ev0TF} we have $\ct\tope{F}=\prodoperator{F}^n$ for $F\in \suppfor{n}$ and we conclude.
\end{proof}

To show that $H^\bullet(\hhmp_n)$ contains the quasisymmetric coinvariants, we will show that 
\[
H^\bullet(\hhmp_n)\to H^\bullet(\wt{\hhmp}_n)
\]
is an injection. This is a topological statement, and we show this using a spectral sequence.
\begin{defn}
        Let $F_n$ be the topological space obtained by gluing $n$ copies of the sphere $\PP^1$ in a line. Formally, if we define points $s_i$ for $1\le i \le n-1$ belonging to the $i$'th sphere and $t_i$ for $1\le i \le n-1$ belonging to the $(i+1)$'st sphere, with $s_{i+1}\ne t_i$ for $2\le i \le n-1$, then we define $F$ to be the quotient
        $$F=\underbrace{\mathbb{P}^1\sqcup \cdots \sqcup \mathbb{P}^1}_n/\sim$$ by the equivalence relation $\sim$ identifying $s_{i}$ with $t_i$.
\end{defn}
    \begin{prop}
    \label{prop:spectral}
        Suppose that $X$ is a (not necessarily reducible) algebraic variety with trivial first fundamental group $\bm{\pi}_1(X)$ and all even dimensional cohomology groups. 
        Suppose $Y_1,\ldots,Y_k\to X$ are $\PP^1$-bundles with distinguished sections $s_i:X\to Y_i$ for $i=1,\ldots,k-1$ and $t_i:X\to Y_{k+1}$ for $i=1,\ldots,k-1$ such that $s_{i+1}$ and $t_i$ are disjoint for $2\le i \le n-1$.
        Let $Y$ be the space obtained by identifying the sections $s_i$ and $t_i$ for each  $1\leq i\leq n-1$. 
        Then the pullback map$$H^\bullet(Y)\to H^\bullet(Y_1\sqcup \cdots \sqcup Y_k)$$
        is an injection.
    \end{prop}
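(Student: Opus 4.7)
The plan is to set up the $\check{\mathrm{C}}$ech--Mayer--Vietoris spectral sequence associated to a cover of $Y$ by open neighborhoods of the individual $\PP^1$-bundles $Y_j$ and to reduce the desired injectivity to an explicit surjectivity statement that can be checked directly using the sections. More precisely, let $\widetilde{Y}=Y_1\sqcup\cdots\sqcup Y_k$ and $Z=\bigsqcup_{j=1}^{k-1}X$, and let $\alpha,\beta:Z\to \widetilde{Y}$ denote the maps whose $j$-th component is $s_j$ and $t_j$ respectively. The disjointness hypothesis on $s_{i+1}$ and $t_i$ inside $Y_{i+1}$ allows one to pick open neighborhoods $U_j\supset Y_j$ in $Y$ so that $U_j$ retracts to $Y_j$, each $U_j\cap U_{j+1}$ retracts to the $j$-th copy of $X$ in $Z$, and all triple intersections are empty. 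The resulting spectral sequence has only two nonzero columns, degenerates at $E_2$, and yields a long exact sequence
\begin{equation*}
\cdots\to H^{p-1}(\widetilde{Y})\xrightarrow{\delta} H^{p-1}(Z)\to H^p(Y)\to H^p(\widetilde{Y})\xrightarrow{\delta} H^p(Z)\to\cdots
\end{equation*}
whose $j$-th component is given by $(\gamma_1,\ldots,\gamma_k)\mapsto s_j^*\gamma_j-t_j^*\gamma_{j+1}$. Hence the kernel of $H^p(Y)\to H^p(\widetilde{Y})$ is identified with the cokernel of $\delta$ in degree $p-1$, and the proposition reduces to showing that $\delta$ is surjective in every degree.

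To verify surjectivity I would exhibit an explicit preimage. Given $(\alpha_1,\ldots,\alpha_{k-1})\in\bigoplus_{j=1}^{k-1}H^{p-1}(X)$, set
\begin{equation*}
\gamma_j=\pi_j^*\left(\sum_{l=j}^{k-1}\alpha_l\right)\in H^{p-1}(Y_j),
\end{equation*}
where $\pi_j:Y_j\to X$ is the bundle projection (with the convention that $\gamma_k=0$). Since $\pi_j\circ s_j=\id_X$ for $j\le k-1$ and $\pi_{j+1}\circ t_j=\id_X$ for $j\le k-1$, telescoping gives
\begin{equation*}
s_j^*\gamma_j-t_j^*\gamma_{j+1}=\sum_{l=j}^{k-1}\alpha_l-\sum_{l=j+1}^{k-1}\alpha_l=\alpha_j,
\end{equation*}
so $\delta(\gamma)=(\alpha_1,\ldots,\alpha_{k-1})$ as required.

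The main obstacle is in the first step: the careful construction of the open cover in this algebraic-topological gluing setting, where the disjointness of consecutive sections is precisely what prevents triple intersections and allows the spectral sequence to collapse to a two-column object. The remaining computation is then essentially formal, relying only on the fact that $\pi_j$ has $s_j$ (resp. $t_{j-1}$) as a section. The hypotheses that $\pi_1(X)=0$ and that $X$ has only even cohomology play no role in the injectivity argument itself; they instead ensure, via the same exact sequence together with Leray--Hirsch for the bundles $Y_j$, that $Y$ inherits these properties, which is what will permit inductive use of the proposition in subsequent parts of the paper.
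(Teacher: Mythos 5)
Your proof is correct, and it takes a genuinely different route from the paper's. The paper observes that $Y\to X$ is a fiber bundle with fiber $F_k$ (a chain of $k$ spheres glued in a line), invokes the Serre spectral sequence, uses $\bm{\pi}_1(X)=0$ to kill local systems and the evenness hypotheses to force degeneration, and then deduces injectivity from the fact that $H^\bullet(F_k)\hookrightarrow H^\bullet(S^2\sqcup\cdots\sqcup S^2)$ is an inclusion of free abelian groups. You instead run a Mayer--Vietoris (Čech) spectral sequence for a cover of $Y$ by neighborhoods of the $Y_j$, exploit the absence of triple intersections to get a two-column spectral sequence and hence a long exact sequence, and reduce injectivity of $H^\bullet(Y)\to H^\bullet(\widetilde Y)$ to surjectivity of the difference map $\delta$ — which you then verify with the explicit telescoping preimage $\gamma_j=\pi_j^*\bigl(\sum_{l\ge j}\alpha_l\bigr)$. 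Your telescoping step is clean and correct, and it is the key new idea relative to the paper.

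One virtue of your approach, which you correctly point out, is that the injectivity statement holds without either hypothesis $\bm{\pi}_1(X)=0$ or $H^{\mathrm{odd}}(X)=0$; the paper only uses those to make the Serre spectral sequence tractable (and, separately, so that it can inductively propagate those properties to $Y$ for use in the proof of the main theorem). The one place where your write-up is a bit terse is the construction of the open cover $\{U_j\}$: you should at least remark that $Y$ carries a CW structure making each $Y_j$ (and each identified copy of $X$) a subcomplex, so regular neighborhoods with the required retraction and intersection properties exist; or, alternatively, invoke the Mayer--Vietoris spectral sequence for a closed cover directly, which applies here since the $Y_j$ are closed in $Y$ and only consecutive ones meet. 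With that remark added, your argument is complete and arguably simpler than the one in the paper.
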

    \begin{proof}
        $Y\to X$ is a fiber bundle with fiber $F_k$, which has $H^0(F)=\ZZ$ and $H^2(F)=\ZZ^k$. 
        Because $\bm{\pi}_1(X)=0$ there are no nontrivial local systems on $X$, so  $H^i(X;H^j(F))=0$ if $i,j$ are not both even, so the first page of the Serre spectral sequence degenerates and we have as abelian groups
        \begin{align*}
        \bigoplus_{i,j} H^i(X;H^j(F)))\cong H^\bullet(Y).
        \end{align*}
        The same reasoning for $Y_1\sqcup \cdots \sqcup Y_k\to X$ shows that
        \[
        \bigoplus_{i,j} H^i(X;H^j(S^2\sqcup \cdots \sqcup S^2)))\cong H^\bullet(Y).
        \]
        Finally, the map $H^\bullet(Y)\to H^\bullet(Y_1\sqcup \cdots \sqcup Y_k)$ is induced by the natural maps
        $H^i(X;H^j(F))\to H^i(X;H^j(\PP^1\sqcup \cdots \sqcup \PP^1))$. Because $H^j(F)$ and $H^j(\PP^1\sqcup \cdots \sqcup \PP^1)$ are free abelian groups, this map is given by the composite
        $$H^i(X;H^j(F))\cong H^i(X)\otimes H^j(F)\to H^i(X)\otimes H^j(S^2\sqcup \cdots \sqcup S^2)\cong H^i(X;H^j(S^2\sqcup \cdots \sqcup S^2)),$$
        and the middle map is an injection because $H^j(F)\hookrightarrow H^j(S^2\sqcup \cdots \sqcup S^2)$ is an injection of free abelian groups.
    \end{proof}
    We are now ready to prove \Cref{thm:QsymSchubertcycles}. 
    \begin{proof}[{Proof of \Cref{thm:QsymSchubertcycles}}]
    By \Cref{prop:normalqfl}, it suffices to show that the pullback map $H^\bullet(\hhmp_n)\to H^\bullet(\wt{\hhmp}_n)$ is an injection. 
    First we show that $\bm{\pi}_1(\hhmp_n)$ is trivial by induction. 
    For $1\le i \le n-1$ let $$Y_i=\bigcup_{\rt=\xletter{1}\cdots \xletter{n}\text{ with }\xletter{n}\in \{\rletter{i},\tletter{i},\rletter{i+1}\}} X(\rt),$$
        and note that this is a $\PP^1$-bundle over $\hhmp_{n-1}$ by the map $(\Psi_i\circ \pi_i)^{-1}\pi_i$, which has distinguished sections coming from $\Psi_i$ and $\Psi_{i+1}$. This realizes $\hhmp_n$ as an $F_{n-1}$-bundle over $\hhmp_{n-1}$. As $\bm{\pi}_1(F_{n-1})$ is trivial we have by the long exact sequence on homotopy groups that $\bm{\pi}_1(\hhmp_n)=\bm{\pi}_1(\hhmp_{n-1})$, which is trivial by the inductive hypothesis. 
        
        Now we prove that $H^\bullet(\hhmp_n)\to H^\bullet(\wt{\hhmp}_n)$ is an injection by induction.  By the inductive hypothesis we have an injection $H^\bullet(\hhmp_{n-1})\hookrightarrow H^\bullet(\wt{\hhmp}_{n-1})$, so because $\hhmp_{n-1}$ is a disjoint union of smooth projective varieties we have that the  even-dimensional cohomology of $\hhmp_{n-1}$ vanishes.

The variety $\hhmp_n$ arises as the $Y$ from \Cref{prop:spectral} applied to the $\PP^1$-bundles $Y_1,\ldots,Y_{n-1}\to Y$, so we conclude that $ H^\bullet(\hhmp_n)\to H^\bullet(Y_1\sqcup \cdots \sqcup Y_{n-1})$ is an injection. 
Let $$\wt{Y_i}=\bigsqcup_{\rt=\xletter{1}\cdots \xletter{n}\text{ with }\xletter{n}\in \{\rletter{i},\tletter{i},\rletter{i+1}\}} X(\rt).$$
   Note that $\wt{Y_i}$ is the $\PP^1$-bundle $Y_i\to \hhmp_{n-1}$ pulled back along the normalization $\wt{\hhmp}_{n-1}\to \hhmp_{n-1}$. By induction $H^\bullet(\hhmp)\hookrightarrow H^\bullet(\wt{\hhmp}_{n-1})$ is an injection on cohomology, so by the projective bundle formula we have the pullback map $H^\bullet(Y_i)\to H^\bullet(\wt{Y_i})$ is an injection. Therefore we have the composite pullback map
        $$H^\bullet(\hhmp_n)\hookrightarrow H^\bullet(Y_1\sqcup \cdots \sqcup Y_n)\hookrightarrow H^\bullet(\wt{Y_1}\sqcup \cdots \sqcup \wt{Y_n})\hookrightarrow H^\bullet(\wt{\hhmp}_n),$$
        is an injection as desired.     
    \end{proof}

\section{Applications to generalized Littlewood--Richardson coefficients}
\label{sec:generalizedLR}
In this section we show the combinatorial aspects of \Cref{thm:introP1degree} and its application to generalized LR coefficients $c^v_{u,w}$ by recasting various coefficients of interest in algebraic combinatorics as generalized LR coefficients.
Recall that the $c^v_{u,w}$  are defined by the equation $$\schub{u}\schub{w}=\sum_v c^v_{u,w}\schub{v},$$ or equivalently as 
$$c^v_{u,w}=\ct\partial_v(\schub{u}\schub{w}).$$
We have $\ct\partial_v(\schub{u}f)=0$ if $u\not \le v$ in the Bruhat order on $S_{\infty}$, so in particular in this case $c^v_{u,w}=0$ for all $w$. Coming up with a combinatorial rule for the $c_{u,w}^v$ is a major open problem in combinatorial algebraic geometry; see \cite{GaoZhu24, Hu23, huang2022bumpless, Ko01, KZJ1, KZJ3, PW22} for various results in special cases.


If we want to show combinatorially for fixed $u\le v$ that $c^v_{u,w}$ is nonnegative for all $w$, then this is equivalent to showing combinatorially that the operator $$f\mapsto \ct\partial_v(\schub{u}f)$$
is nonnegative on Schubert polynomials. 
We will show for $(u,v)=(u(\rt),v(\rt))$ for $\rt\in \rtseq$ that we can combinatorially realize this nonnegativity by a recursive procedure, and then give certain pairs of $(u,v)$ where $c^v_{u,w}$ computes interesting combinatorial invariants of $\schub{w}$.
As we shall need them we record here the \emph{twisted Leibniz relations} satisfied by the divided difference operators:
$$\partial_i(fg)=f\partial_i(g)+\partial_i(f)(s_i\cdot g)$$
for all $f,g\in \poly$, so in particular if $f$ is symmetric in variables $\{x_i,x_{i+1}\}$ then $\partial_i(fg)=f\partial_i(g)$.



The results in this section rest on the following proposition. 
We shall give two proofs, one geometric and the other combinatorial. 
The latter is to emphasize the fact that the generalized LR coefficient computation of this section can be made entirely combinatorial.
\begin{prop}
\label{prop:combinatorialropetope}
For $u,v\in S_{\infty}$ we have
    \begin{align} 
\label{eqn:combinatorialrope}
\ct\partial_{\ins_j(v)}(\schub{\ins_j(u)}f)&=\ct\partial_{v}(\schub{u}\rope{j}f),\\
\label{eqn:combinatorialtope}
\ct \partial_{\ins_{i+1}(v)}(\schub{\ins_i(u)}f)&=\ct \partial_{v}(\schub{u}\tope{i}f).
    \end{align}
    Furthermore, for $\rt\in \rtseq$ we have
    \begin{align}\label{eq:gen_lr}
        \ct \partial_{v(\rt)}(\schub{u(\rt)}f)=\ct\prodoperator{\rt}f.
    \end{align}
\end{prop}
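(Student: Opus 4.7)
The plan is to prove the three identities in turn, with \eqref{eq:gen_lr} following from \eqref{eqn:combinatorialrope} and \eqref{eqn:combinatorialtope} by a straightforward induction on $|\rt|$ that exactly matches the recursion in \Cref{defn:uvfromrt}: at each step one removes the last letter $\xletter{}$ of $\rt$ and applies either \eqref{eqn:combinatorialrope} or \eqref{eqn:combinatorialtope} depending on whether $\xletter{}=\rletter{j}$ or $\xletter{}=\tletter{j}$. I would present two proofs of the first two identities, one geometric and one combinatorial, following the paper's indication that both are natural.

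The geometric proof uses the classical identity $\ct\partial_w(\schub{u}f)=\deg_{X^w_u}f$ for $u\le w$ in Bruhat order and $f\in H^\bullet(\fl{n})$, with both sides zero when $u\not\le w$. Given this, \eqref{eqn:combinatorialrope} falls out by combining the isomorphism $\Psi_j\colon X^v_u\xrightarrow{\sim}X^{\ins_j(v)}_{\ins_j(u)}$ from \Cref{thm:geometricrope} with the identification $\Psi_j^*f=\rope{j}f$ from \Cref{thm:ropepullback}:
$$\ct\partial_{\ins_j(v)}(\schub{\ins_j(u)}f)=\deg_{X^{\ins_j(v)}_{\ins_j(u)}}f=\deg_{X^v_u}\rope{j}f=\ct\partial_v(\schub{u}\rope{j}f),$$
and \eqref{eqn:combinatorialtope} is equally direct from \Cref{thm:geometrictope}.

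For the combinatorial proof of \eqref{eqn:combinatorialrope}, I would induct on $j$. The base case $j=1$ rests on three ingredients: the commutation $\rope{1}\partial_i=\partial_{i-1}\rope{1}$ for $i\ge 2$, which gives $\rope{1}\partial_{\ins_1(v)}=\partial_v\rope{1}$; the evaluation $\rope{1}\schub{\ins_1(u)}=\schub{u}$, which follows because $\schub{\ins_1(u)}$ uses only $x_2,x_3,\dots$; and the observation that $\rope{1}$ is a ring homomorphism with $\ct=\ct\rope{1}$. For the inductive step from $j-1$ to $j$, I would use the reduced factorization $\ins_j(v)=\ins_{j-1}(v)s_{j-1}$, note that $j-1\in\des{\ins_j(u)}$ forces $\partial_{j-1}\schub{\ins_j(u)}=\schub{\ins_{j-1}(u)}$, and expand
$$\partial_{j-1}(\schub{\ins_j(u)}f)=\schub{\ins_{j-1}(u)}s_{j-1}(f)+\schub{\ins_j(u)}\partial_{j-1}(f)$$
via the twisted Leibniz rule. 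Applying $\ct\partial_{\ins_{j-1}(v)}$, the inductive hypothesis converts the first summand into $\ct\partial_v(\schub{u}\rope{j-1}s_{j-1}(f))$, and the elementary substitution identity $\rope{j-1}s_{j-1}f=\rope{j}f$ recovers $\rope{j}f$. Identity \eqref{eqn:combinatorialtope} then follows quickly: since $i\notin\des{\ins_i(u)}$ we have $\partial_i\schub{\ins_i(u)}=0$, so twisted Leibniz gives $\partial_i(\schub{\ins_i(u)}f)=\schub{\ins_i(u)}\partial_if$, and applying \eqref{eqn:combinatorialrope} with $f\mapsto\partial_if$ combined with $\tope{i}=\rope{i}\partial_i$ closes this case.

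The main obstacle in the combinatorial argument is the second summand $\ct\partial_{\ins_{j-1}(v)}(\schub{\ins_j(u)}\partial_{j-1}f)$ arising from twisted Leibniz, which is not directly handled by the inductive hypothesis. The resolution is that this quantity is the coefficient of $\schub{\ins_{j-1}(v)}$ in the Schubert expansion of $\schub{\ins_j(u)}\partial_{j-1}f$, so it vanishes unless $\ins_j(u)\le\ins_{j-1}(v)$ in the Bruhat order. A short rank comparison at position $(j-1,2)$ (there are $j-1$ positions among the first $j-1$ of $\ins_j(u)$ containing values $\ge 2$, but only $j-2$ such positions for $\ins_{j-1}(v)$) shows this Bruhat inequality always fails, so the error term vanishes and the induction closes.
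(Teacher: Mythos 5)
Your proof is correct and follows the paper's own argument essentially line for line: the geometric route via $\ct\partial_w(\schub{u}f)=\deg_{X^w_u}f$, and the combinatorial induction on $j$ using the twisted Leibniz rule together with the Bruhat vanishing of the error term. The only cosmetic deviations are that you kill the error term via the rank criterion where the paper compares the position of $1$ in the one-line notation, and you write $\partial_i\schub{\ins_i(u)}=0$ where the paper says $\schub{\ins_i(u)}$ is symmetric in $x_i,x_{i+1}$; these are the same elementary facts.
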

\begin{proof}[Geometric Proof of \Cref{prop:combinatorialropetope}]
    Because $\ct\partial_v(\schub{u}f)=\deg_{X^v_u}f$, these results follow directly from \Cref{thm:mainP1degree}.
\end{proof}
\begin{proof}[Combinatorial Proof of \Cref{prop:combinatorialropetope}]
    First, we establish \eqref{eqn:combinatorialrope}. For $j=1$ we first note that if $v=s_{i_1}\cdots s_{i_k}$ is a reduced word then $\ins_1(v)=s_{i_1+1}\cdots s_{i_k+1}$ is a reduced word for $\ins_1(v)$. Therefore because $\partial_i\rope{1}=\rope{1}\partial_{i+1}$, we have $\partial_v\rope{1}=\rope{1}\partial_{\ins_1(v)}$.
    Furthermore, because $\schub{u}$ is characterized by the property that $\ct \partial_v\schub{u}=\delta_{u,v}$ and $\ct \partial_v\rope{1}\schub{\ins_1(u)}=\ct \rope{1}\partial_{\ins_1(v)}\schub{\ins_1(u)}=\delta_{\ins_1(u),\ins_1(v)}=\delta_{u,v}$ we conclude that $\rope{1}\schub{\ins_1(u)}=\schub{u}$. Therefore,
    $$\ct\partial_v(\schub{u}(\rope{1}f))=\ct\partial_v(\rope{1}(\schub{\ins_1(u)}f)=\ct\rope{1}\partial_{\ins_1(v)}(\schub{\ins_1(u)}f)=\ct\partial_{\ins_1(v)}(\schub{\ins_1(u)}f).$$

    Now, assume the result for $j$, we will show the result for $j+1\geq 2$. 
    Since $\varepsilon_{j+1}(v)=\varepsilon_j(v)s_j$ we know that $\partial_{\varepsilon_{j+1}(v)}=\partial_{\varepsilon_{j}(v)}\partial_j$.
    Thus we compute first that
    \begin{align*}
        \ct\partial_{\ins_{j+1}(v)}(\schub{\ins_{j+1}}(u)f)=\ct\partial_{\ins_j(v)}\partial_j(\schub{\ins_{j+1}(u)}f).
    \end{align*}
    Now applying the twisted Leibniz rule to the right-hand side we get
    \begin{align*}
        \ct\partial_{\ins_{j+1}(v)}(\schub{\ins_{j+1}}(u)f)=\ct\partial_{\ins_j(v)}((\partial_j\schub{\ins_{j+1}(u)})\,s_jf)+\ct\partial_{\ins_j(v)}(\schub{\ins_{j+1}(u)}\partial_jf).
    \end{align*}
    Now consider the second term on the right-hand side.
     Since $\ins_j(v)\not\ge \ins_{j+1}(u)$ (as $j=(\ins_j(v))^{-1}(1)<(\ins_{j+1}(u))^{-1}(1)=j+1$) we have the vanishing 
     \[
     \ct \partial_{\ins_j(v)}(\schub{\ins_{j+1}(u)}\partial_jf)=0.
     \] 
     For the first term, using  $\ins_{j+1}(u)=\ins_j(u)s_j$, we know that $\partial_j\schub{\ins_{j+1}(u)}=\schub{\ins_j(u)}$.
     Thus it remains to compute $\ct\partial_{\ins_j(v)}(\schub{\ins_{j}(u)}\,s_jf)$.
     By our inductive hypothesis we have
     \begin{align*}
         \ct\partial_{\ins_j(v)}(\schub{\ins_{j}(u)}\,s_jf)=\ct\partial_v(\schub{u}\,\rope{j}s_jf)=\ct\partial_v(\schub{u}\,\rope{j+1}f),
     \end{align*}
      as desired.

     Now we establish \eqref{eqn:combinatorialtope}. Like before we have $\partial_{\ins_{i+1}(v)}=\partial_{\ins_i(v)}\partial_i$  and so we get 
     \begin{align*}
         \ct\partial_{\ins_{i+1}(v)}(\schub{\ins_i(u)}f)=\ct\partial_{\ins_i(v)}\partial_i(\schub{\ins_i(u)}f)
    \end{align*}
    Since $\schub{\ins_i(u)}$ is symmetric in variables $\{x_i,x_{i+1}\}$, the twisted Leibniz rule simplifies to give
    \begin{align*}
       \ct\partial_{\ins_{i+1}(v)}(\schub{\ins_i(u)}f) =\ct\partial_{\ins_i(v)}(\schub{\ins_i(u)}\,\partial_if)=\ct\partial_v(\schub{u}\rope{i}\partial_if)=\ct\partial_v(\schub{u}\tope{i}f),
    \end{align*}
     where the last two equalities use~\eqref{eqn:combinatorialrope} and $\tope{i}=\rope{i}\partial_i$ respectively.

     Finally, iterating~\eqref{eqn:combinatorialrope} and~\eqref{eqn:combinatorialtope} yields~\eqref{eq:gen_lr}, thereby concluding the proof.
\end{proof}

\begin{thm}
   For $\rt\in \rtseq$ we have
    $$c^{v(\rt)}_{u(\rt),w}=\ct \rtc \schub{w}$$
\end{thm}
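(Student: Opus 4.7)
The plan is to observe that this theorem is an immediate specialization of equation \eqref{eq:gen_lr} in \Cref{prop:combinatorialropetope}. Indeed, by the definition of generalized LR coefficients recalled at the start of the section, we have
\[
c^{v(\rt)}_{u(\rt),w} = \ct \partial_{v(\rt)}(\schub{u(\rt)}\schub{w}).
\]
Applying \eqref{eq:gen_lr} with $f = \schub{w}$ yields
\[
\ct \partial_{v(\rt)}(\schub{u(\rt)}\schub{w}) = \ct \prodoperator{\rt}\schub{w},
\]
which is exactly the desired identity.

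So the entire proof is a one-line specialization, and there is no serious obstacle: all the work has already been done in \Cref{prop:combinatorialropetope}, either via the geometric route (using $\ct \partial_v(\schub{u}f) = \deg_{X^v_u}f$ together with \Cref{thm:mainP1degree}) or via the combinatorial route (inducting on $|\rt|$ using the twisted Leibniz rule and the fact that $\rope{1}\schub{\ins_1(u)} = \schub{u}$, $\partial_j\schub{\ins_{j+1}(u)} = \schub{\ins_j(u)}$). In particular the nonnegativity of $c^{v(\rt)}_{u(\rt),w}$ as $w$ varies is recast combinatorially as the nonnegativity of $\ct\prodoperator{\rt}\schub{w}$, which is the viewpoint exploited by the subsequent applications in the section.
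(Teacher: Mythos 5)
Your proof is correct and is essentially identical to the paper's: both simply specialize \eqref{eq:gen_lr} of \Cref{prop:combinatorialropetope} to $f=\schub{w}$, using $c^v_{u,w}=\ct\partial_v(\schub{u}\schub{w})$. Nothing more is needed.
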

\begin{proof}
    This follows from \Cref{prop:combinatorialropetope} as $c^v_{u,w}=\ct \partial_v(\schub{u}\schub{w})$.
\end{proof}

We now show that these LR coefficients are combinatorially nonnegative with an explicit combinatorial rule for computing them. To do this, we remind the reader of Sottile's Pieri rule for the Schubert polynomial expansion of $x_1\cdots x_k\schub{w}$ \cite[Theorem I]{Sot96} phrased  in terms of  the \emph{$k$-Bruhat order} introduced by Bergeron--Sottile \cite[Section 3]{BS98}.

Fix $k$ a positive integer.
We say that $u$ is covered by $v$ in $k$-Bruhat order if $v=us_{ij}$ if $i\leq k<j$ and $\ell(v)=\ell(u)+1$, in which case we write $u\prec_{v(j)}v$.
Here $s_{ij}$ denotes the transposition swapping $i$ and $j$.
A saturated chain $u\prec_{i_1} u_1\prec\cdots \prec_{i_p} u_p=v$ in $k$-Bruhat order is said to be decreasing if $i_1>\cdots>i_p$. If a decreasing chain from $u$ to $v$ exists then it is unique -- we write $u\overset{c_{k,p}}{\to}v$ and write $c_k$ as a shorthand for $c_{k,k}$. 
Sottile \cite{Sot96} established that
\begin{equation}\label{eq:sottile_pieri}
    c_{s_1\cdots s_{k},w}^v=\left \lbrace \begin{array}{ll} 1 & w\overset{c_{k}}{\longrightarrow} v\\ 0 & \text{otherwise.}\end{array}\right.
\end{equation}

\begin{eg}
    Let $w=15243$ and $k=3$, which means $s_1s_2s_3=2341$ in one line notation. One can check that 
    \[
        \schub{2341}\schub{15243}=\schub{263415}+\schub{264135},
    \]
    and the terms on the right-hand come from the following decreasing chains in $3$-Bruhat order:
    $152436 \prec_6 162435 \prec_4 164235 \prec_2  264135$ and $152436 \prec_6 162435 \prec_3 163425 \prec_2  263415$.
\end{eg}

Equation~\ref{eq:sottile_pieri} was used \cite{BS98} in the computation of the Schubert expansion of $\rope{i}\schub{w}$ -- we rederive this result emphasizing that by following the combinatorial proof above one could avoid all geometric considerations. 
From this result we also compute the Schubert expansion of $\tope{i}\schub{w}$.

\begin{prop}
\label{thm:pieriRiTi}
    Given a positive integer $i$ and $w\in S_{\infty}$ we have
    \begin{align}\label{eq:Ri_sottile}
    \rope{i}\schub{w}=&\sum_{w\overset{c_{i-1}}{\to}\ins_i(v)}\schub{v}\\   \label{eq:Ti_from_Ri}
    \tope{i}\schub{w}=&\delta_{i\in \des{w}}\sum_{ws_i\overset{c_{i-1}}{\to}\ins_i(v)} \schub{v}
    \end{align}
\end{prop}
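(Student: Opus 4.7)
\medskip

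The plan is to deduce both formulas by combining Sottile's Pieri rule \eqref{eq:sottile_pieri} with the identity \eqref{eqn:combinatorialrope} in Proposition~\ref{prop:combinatorialropetope}. To find the Schubert expansion of $\rope{i}\schub{w}$, I extract the coefficient of $\schub{v}$ via the pairing $a_v = \ct\partial_v(\rope{i}\schub{w})$. Applying \eqref{eqn:combinatorialrope} with $u=\idem$, $j=i$, and $f = \schub{w}$ gives
$$a_v \;=\; \ct\partial_v(\schub{\idem}\cdot \rope{i}\schub{w}) \;=\; \ct\partial_{\ins_i(v)}(\schub{\ins_i(\idem)}\schub{w}) \;=\; c^{\ins_i(v)}_{\ins_i(\idem),\,w}.$$

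The next step is to identify $\ins_i(\idem)$. Directly from the definition of $\ins_i$, one finds that $\ins_i(\idem) = 2\,3\,\cdots\,i\,1\,(i{+}1)\,(i{+}2)\cdots$ in one-line notation; iterating the rule ``$ws_j$ swaps positions $j,j+1$'' starting from the identity shows that this permutation is exactly $s_1 s_2 \cdots s_{i-1}$, and hence $\schub{\ins_i(\idem)} = x_1 x_2 \cdots x_{i-1}$. Sottile's Pieri rule \eqref{eq:sottile_pieri} with $k = i-1$ then yields $c^{\ins_i(v)}_{s_1\cdots s_{i-1},\,w} = 1$ precisely when $w \overset{c_{i-1}}{\longrightarrow} \ins_i(v)$, and $0$ otherwise, giving \eqref{eq:Ri_sottile}.

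For \eqref{eq:Ti_from_Ri}, I use the defining relation $\tope{i} = \rope{i}\partial_i$ from \eqref{eqn:topedef}, together with the action of $\partial_i$ on Schubert polynomials:
$$\tope{i}\schub{w} \;=\; \rope{i}\bigl(\partial_i\schub{w}\bigr) \;=\; \begin{cases}\rope{i}\schub{ws_i} & \text{if } i\in\des{w},\\ 0 & \text{otherwise.}\end{cases}$$
Substituting the formula just derived for $\rope{i}\schub{ws_i}$ gives \eqref{eq:Ti_from_Ri}.

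There is no real obstacle here: the whole argument is an application of $\eqref{eqn:combinatorialrope}$ plus the observation that $\schub{\ins_i(\idem)} = x_1\cdots x_{i-1}$, which is exactly the left factor Sottile's rule handles. The only point that requires care is to verify that $\ins_i(\idem)$ admits the reduced word $s_1 s_2 \cdots s_{i-1}$ under the conventions of the paper, which is a short direct check from \Cref{defn:uvfromrt}-style computations with $\ins_i$.
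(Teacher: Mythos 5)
Your proposal is correct and follows essentially the same route as the paper's own proof: apply \eqref{eqn:combinatorialrope} with $u=\idem$, $j=i$, $f=\schub{w}$ to recognize the coefficient $\ct\partial_v(\rope{i}\schub{w})$ as the generalized LR coefficient $c^{\ins_i(v)}_{\ins_i(\idem),w}$, identify $\ins_i(\idem)=s_1\cdots s_{i-1}$, invoke Sottile's Pieri rule, and then derive \eqref{eq:Ti_from_Ri} from $\tope{i}=\rope{i}\partial_i$. The only cosmetic difference is that you spell out the one-line computation of $\ins_i(\idem)$ and note $\schub{\ins_i(\idem)}=x_1\cdots x_{i-1}$ explicitly, which the paper treats as immediate.
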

\begin{proof}
Noting that $\schub{\idem}=1$ and $\ins_i(\idem)=s_1\cdots s_{i-1}$, we compute the coefficient of $\schub{v}$ in the Schubert expansion of $\rope{i}\schub{w}$ as 
    $$\ct \partial_v\rope{i}\schub{w}=\ct \partial_{\ins_i(v)}(\schub{\ins_i(\idem)}\schub{w})=c^{\ins_i(v)}_{s_1\cdots s_{i-1},w}.$$  where the first equality follows by \Cref{prop:combinatorialropetope}.
    Since $\tope{i}=\rope{i}\partial_i$ we infer~\eqref{eq:Ti_from_Ri} from~\eqref{eq:Ri_sottile}.
\end{proof}

\begin{eg}
    Take $w=146352$ and $i=4$.
    We get the following expansions
    \begin{align*}
        \rope{4}\schub{w}=\schub{346215}, \quad 
        \tope{4}\schub{w}=\schub{246315}
    \end{align*}
    from the decreasing chains $1465327 \prec_7 1475326 \prec_5 1574326 \prec_4 4571326=\varepsilon_4(346215)$ and $1463527 \prec_7 1473526 \prec_5 1573426 \prec_3 3571426=\varepsilon_4(246315)$  respectively.
\end{eg}

Before stating the central result in this section, we recall here that the basis of slide polynomials was introduced and studied in Assaf--Searles \cite{AS17}. 
This family of polynomials is precisely the family that appears in the seminal work of Billey--Jockusch--Stanley \cite{BJS93} describing Schubert polynomials combinatorially via reduced pipe dreams.
One consequence of our next result is a new proof of the nonnegativity of the slide expansion of Schubert polynomials, which can be considered either geometric or combinatorial depending on how one reads Proposition~\ref{prop:combinatorialropetope}.
\begin{thm}
\label{thm:LR}[{\Cref{maintheorem:LR}}]
    Let $w\in S_n$. The coefficients of
    \begin{enumerate}
        \item A monomial $x_1^{c_1}\cdots x_n^{c_n}$ in the monomial expansion of $\schub{w}$
        \item A slide polynomial coefficient in the slide polynomial expansion $\schub{w}=\sum a_{\textbf{i}}\slide{\textbf{i}}$
        \item A forest polynomial coefficient in the $m$-forest polynomial expansion $\schub{w}=\sum a_F\forestpoly[m]{F}$
    \end{enumerate}
    are all generalized LR coefficients $c^{v(\rt)}_{u(\rt),w}$ for some $\rt\in \rtseq_{N}$ with $N$ possibly larger than $n$, with an explicit combinatorially nonnegative rule for computing them.
\end{thm}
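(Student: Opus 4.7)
The overarching strategy for all three parts relies on the master identity
\[
c^{v(\rt)}_{u(\rt),w}=\ct\prodoperator{\rt}\,\schub{w}
\]
proved in \Cref{prop:combinatorialropetope} (equation~\eqref{eq:gen_lr}). Accordingly, for each coefficient in (1)--(3) the plan is: (a) express it as $\ct\Phi\,\schub{w}$ for some composite $\Phi$ of $\tope{}$ and $\rope{}$ operators; (b) realize $\Phi=\prodoperator{\rt}^N$ for an explicit $\rt\in\rtseq_N$, padding with initial $\rletter{1}$'s if necessary so that every letter $\xletter{i}$ has index at most $i$; and (c) produce the explicit nonnegative combinatorial rule by iterating the Pieri-type rule \Cref{thm:pieriRiTi} along $\rt$.

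For the $m$-forest polynomial expansion (3) I would use the $m$-forest analogue from \cite{NST_a} of the duality $a_F=\ct\tope{F}\schub{w}$ provided by \Cref{thm:qsymvanishing}. When $m=1$ this is already of the form $\prodoperator{\rt}^N$ via \Cref{cor:ev0TF}, with $\rt=\rletter{1}^{N-|F|}\tletter{i_1}\cdots\tletter{i_k}$ coming from any decomposition $F=i_1\cdots i_k$ in the Thompson monoid; for general $m$ one splices in additional $\rletter{}$'s dictated by the marks. For the slide expansion (2) I would invoke the slide expansion formula from \cite{NST_a},
\[
f=\sum_{k=1}^N\sum_{i_k,\ldots,i_N\ge 1}(\ct\tope{k}^{i_k}\cdots\tope{N}^{i_N}f)\,\slide{i_k,\ldots,i_N},
\]
so that $a_{\mathbf{i}}=\prodoperator{\rt}^{N'}\schub{w}$ for the sequence $\rt$ starting with $\rletter{1}$'s and ending with $\tletter{k}^{i_k}\cdots\tletter{N}^{i_N}$; the $m$-slide generalization follows identically from \cite{NST_2}. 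The monomial case (1) is then deduced either by expanding $x_1^{c_1}\cdots x_n^{c_n}$ as a short integer combination of fundamental slide polynomials and reducing to (2), or via a direct composite from the well-chosen sequences of \cite{NST_a, NST_2}.

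To produce the explicit combinatorial rule, I would iterate \Cref{thm:pieriRiTi} along the letters of $\rt$: each $\rope{j}$ or $\tope{i}$ expands a Schubert polynomial into a nonnegative integer combination of Schubert polynomials indexed by decreasing chains in $(j-1)$-Bruhat order (respectively $(i-1)$-Bruhat order paired with a descent at $i$), and the terminal $\ct$ selects the coefficient of $\schub{\idem}=1$. The resulting count of admissible sequences of decreasing chains from $w$ to the identity furnishes the combinatorial rule and realizes the coefficient as $c^{v(\rt)}_{u(\rt),w}$. The main obstacle I anticipate is case (3): tracking the $m$-forest bookkeeping carefully enough to verify that each letter of the constructed $\rt$ respects the index bound of $\rtseq_N$. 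Once this is in hand, cases (1) and (2) reduce fairly directly to the master identity together with the coefficient-extraction formulas above.
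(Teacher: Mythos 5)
Your proposal is essentially the paper's approach: reduce each coefficient to $\ct\prodoperator{\rt}\schub{w}$ for some $\rt\in\rtseq$, pad with $\rletter{1}$'s to land in $\rtseq_N$, then iterate the Pieri-type rule of \Cref{thm:pieriRiTi} to obtain nonnegativity and an explicit rule. The paper's own proof is terse at exactly the point you labor over --- it simply cites the ``extractors'' constructed in \cite{NST_2} for all three families --- so the structure matches.

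One concrete caveat: your first suggested route for case (1), expanding the monomial $x_1^{c_1}\cdots x_n^{c_n}$ as an integer combination of fundamental slide polynomials and ``reducing to (2),'' does not give what the theorem asserts. That change of basis generally involves signs and more than one term, so it would express the monomial coefficient as a signed sum of several generalized LR coefficients, not as a single $c^{v(\rt)}_{u(\rt),w}$. The claim requires exhibiting one $\rt$ with $\ct\prodoperator{\rt}\schub{w}=[x^c]\schub{w}$, which is what the direct extractor from \cite{NST_a,NST_2} provides (your second alternative, and the paper's actual argument). Similarly, the ``main obstacle'' you anticipate for case (3) --- worrying that the constructed $\rt$ might violate the index bound of $\rtseq_N$ --- is a non-issue once you have any $\rt\in\rtseq$, since the prefix-padding lemma you already invoke at the start handles it automatically for $N$ large enough; there is no separate bookkeeping to check.
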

\begin{proof}
We note that if $\rt\in \rtseq$, if $N$ is sufficiently large then $\rletter{1}^{N-|\rt|}\rt\in \rtseq_N$ and $$\ct\prodoperator{\rletter{1}^{N-|\rt|}\rt}=\ct\rope{1}^{N-|\rt|}\prodoperator{\rt}=\ct\prodoperator{\rt}$$ so it suffices to find such an $\rt\in \rtseq$ rather than $\rtseq_N$.
Having established the explicit combinatorial rules for $\rope{i}\schub{w}$ and $\tope{i}\schub{w}$, we conclude that $\rtc\schub{w}$, and hence $\ct \rtc\schub{w}$, has an explicit combinatorial rule for computing it. 
Therefore it remains to show that each of these quantities can be computed as $\ct \rtc \schub{w}$ for some $\rt$. But this follows from results \cite{NST_2} wherein we find \emph{extractors} $\Omega$ for the families under consideration here.
\end{proof}

\section{Divided symmetrization}
\label{sec:DS}
Divided symmetrization (henceforth DS)  is the map $\langle\rangle_n:\poly_n\to \sym{n}$ given by
\begin{align}
    \label{eq:def_ds}
    \langle f\rangle_n=\sum_{\pi\in S_n}\pi\left(\frac{f}{\prod_{1\leq i\leq n-1}(x_i-x_{i+1})}\right).
\end{align}
It was introduced by Postnikov \cite[Section 3]{Pos09} in the context of volume polynomials of permutahedra.
This operator was then shown \cite{DS,NT20} to come up naturally when expressing the class of the permutahedral toric variety, i.e. the closure $\overline{T\cdot x}$ of a generic torus orbit in the flag variety $\fl{n}$, in terms of Schubert classes. Indeed, by work of Anderson--Tymoczko \cite{And10} we know that the constant term of divided symmetrization computes $\deg_{\overline{T\cdot x}}f$, so by the Kronecker duality between Schubert polynomials $\schub{w}$ and Schubert cycles $X^w$ we have
\begin{align}\label{eq:perm_class_expansion}
    [\overline{T\cdot x}]=\sum_{\substack{w\in S_n\\ \ell(w)=n-1}} \langle \schub{w}\rangle_n [X^w].
\end{align}
The main result of \cite{DS} is that $\langle f\rangle_n$ for any homogenous polynomial $f$ of degree $n-1$ can be computed from the knowledge of the representative in $\qscoinv{n}$ of $f$ expressed in the distinguished monomial basis of Aval--Bergeron--Bergeron \cite{ABB04}.
For this purpose forest polynomials were introduced in \cite{NT_forest}.
We now come full circle and demonstrate how our $\tope{}$ operators arise from grouping terms on the right-hand side of~\eqref{eq:def_ds} appropriately.

In contrast to previous work that has primarily focused on understanding DS in degree $n-1$, we give a formula that works in all degrees and is amenable to combinatorics.

\subsection{DS via a generalized trim}
\label{subsec:the_genesis}

For $1\leq i\leq n$ define the map $\rope{i,n}:\poly_n\to \poly_n$ by
\[
f(x_1,\dots,x_n)\mapsto f(x_1,\dots,x_{i-1},x_n,x_i,\dots,x_{n-1}).
\]
It is simply the action of the permutation with a single nontrivial cycle $(n,n-1,\ldots,i)$. 
Note that after the specialization $x_n=0$, we obtain the operator $\rope{i}$ on $\poly_{n}$. 
The following analogue of $\tope{i}$ is now natural: For $1\leq i\leq n-1$ define $\tope{i,n}:\poly_n\to \poly_n$ by
\[
    \tope{i,n}(f)=\frac{\rope{i+1,n}f-\rope{i,n}f}{x_i-x_n}=\rope{i+1,n}\partial_i\,f=\rope{i,n}\partial_i\,f.
\]

We recall the divided symmetrization operator $f\mapsto \langle f\rangle_n$ on $\poly_n$ defined by $$\langle f(x_1,\ldots,x_n) \rangle_n=\sum_{\sigma\in S_n}\frac{f(x_{\sigma(1)},\ldots,x_{\sigma(n)})}{(x_{\sigma(1)}-x_{\sigma(2)})\cdots (x_{\sigma(n-1)}-x_{\sigma(n)})}.$$

For $1\leq j\leq n$ let $\mathrm{cyc}_{j,n}\in S_n$ be the cycle $(n,n-1,\dots,n-j+1)$, and subsequently define the group algebra element $\tau_n\coloneqq \sum_{1\leq j\leq n} \mathrm{cyc}_{j,n}$.
    We then have the following factorization in the group algebra (over $\mathbb{Z}$) of $S_n$:
    \begin{align}\label{eq:important_factorization}
        \sum_{\sigma\in S_n}\sigma=\tau_2\tau_3\cdots\tau_n.
    \end{align}

\begin{thm}\label{th:ds_as_trims}
    For $f\in \poly_n$ we have
    \begin{align}
    \label{eq:ds_as_trims}
        \langle f\rangle_n=\tope{1,2}(\tope{1,3}+\tope{2,3})\cdots (\tope{1,n}+\cdots + \tope{n-1,n})f.
    \end{align}
    In particular, if $f$ is homogenous of degree $n-1$ then
    \[
    \langle f\rangle_n=\tope{1}(\tope{1}+\tope{2})\cdots (\tope{1}+\cdots + \tope{n-1})f=\sum_{\substack{F\in \suppfor{n}\\|F|=n-1}}|\Trim{F}|\tope{F}f.
    \]
\end{thm}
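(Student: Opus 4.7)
The strategy is to decompose $\sum_{\sigma\in S_n}\sigma=\tau_2\tau_3\cdots\tau_n$ from \eqref{eq:important_factorization} and peel off the factors $\tau_k$ from the right one at a time, reducing the denominator $(x_1-x_2)\cdots(x_{k-1}-x_k)$ by a single factor at each step. The central identity is the following key lemma: for any polynomial $g$ and any $k\ge 2$,
\[
\tau_k\cdot\frac{g}{(x_1-x_2)\cdots(x_{k-1}-x_k)}=\frac{H_k g}{(x_1-x_2)\cdots(x_{k-2}-x_{k-1})},
\]
where $H_k\coloneqq\tope{1,k}+\cdots+\tope{k-1,k}$. Iterating this identity for $k=n,n-1,\ldots,2$ then yields $\langle f\rangle_n=H_2H_3\cdots H_nf$, establishing the first equality of the theorem.

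To prove the key lemma, I would first identify $\mathrm{cyc}_{j,k}$ with the operator $\rope{k-j+1,k}$ directly from the cycle definition, so that $\tau_k=\sum_{i=1}^k\rope{i,k}$. A direct computation of $\rope{i,k}[\prod_{j=1}^{k-1}(x_j-x_{j+1})]$ by tracking how each factor transforms under the cyclic substitution produces $-(x_{i-1}-x_k)(x_i-x_k)\prod_{j=1,\,j\ne i-1}^{k-2}(x_j-x_{j+1})$ for $2\le i\le k-1$, with slightly simpler expressions in the boundary cases $i=1$ and $i=k$ (and no contribution in the denominator for $i=k$ since $\rope{k,k}=\idem$). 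Cross-multiplying both sides of the lemma by $\prod_{j=1}^{k-2}(x_j-x_{j+1})$ and expanding each $\tope{i,k}g$ via $\tope{i,k}g=(\rope{i+1,k}g-\rope{i,k}g)/(x_i-x_k)$ reduces the identity to a coefficient-by-coefficient check on each $\rope{i,k}g$; the coefficients cancel pairwise via the partial-fraction identity $\frac{1}{x_{i-1}-x_k}-\frac{1}{x_i-x_k}=\frac{x_i-x_{i-1}}{(x_{i-1}-x_k)(x_i-x_k)}$.

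For the degree $n-1$ formula I would induct on $n$, the case $n=2$ being an immediate check that $\tope{1,2}f=\tope{1}f$ when $\deg f=1$. Since $\langle f\rangle_n$ is now a constant, evaluating $H_2\cdots H_nf$ at $x_n=0$ leaves the result unchanged. The defining identity $\tope{i,n}f|_{x_n=0}=\tope{i}f$—which follows at once from $\tope{i,n}f=\rope{i+1,n}\partial_if$ and $\tope{i}f=\rope{i+1}\partial_if$—yields $H_nf|_{x_n=0}=T_nf$ where $T_k\coloneqq\tope{1}+\cdots+\tope{k-1}$; since $H_2,\ldots,H_{n-1}$ only touch $x_1,\ldots,x_{n-1}$, this evaluation passes through them. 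As $T_nf\in\poly_{n-1}$ has degree $n-2$, the inductive hypothesis applied to $T_nf$ gives $H_2\cdots H_{n-1}(T_nf)=T_2\cdots T_{n-1}(T_nf)=T_2\cdots T_nf$. Expanding the product $T_2\cdots T_n=\sum_{(i_2,\ldots,i_n)}\tope{i_2}\cdots\tope{i_n}$ over sequences $i_k\in\{1,\ldots,k-1\}$ and collecting terms by the associated indexed forest $F=\underline{i_2}\cdots\underline{i_n}\in\suppfor{n}$ produces the weighted sum $\sum_{F,\,|F|=n-1}|\Trim{F}|\tope{F}f$, since these sequences correspond bijectively to the $\rt\in\Trim{F}$ of the form $\rletter{1}\tletter{i_2}\cdots\tletter{i_n}$, which exhaust $\Trim{F}$ when $|F|=n-1$.

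The main obstacle is the key lemma: its proof requires careful sign bookkeeping stemming from $\rope{i,k}(x_i-x_{i+1})=x_k-x_i$ and correct handling of the degenerate boundary cases $i\in\{1,k\}$. Everything else—the iteration, the reduction via evaluation at $x_n=0$, and the forest-labelled collection of terms—is routine once the key lemma is in hand.
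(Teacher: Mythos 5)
Your proposal is correct and takes essentially the same approach as the paper: both start from the factorization $\sum_{\sigma\in S_n}\sigma=\tau_2\cdots\tau_n$, peel off $\tau_n$ via the identity $\tau_k(g/D_k)=(\tope{1,k}+\cdots+\tope{k-1,k})g/D_{k-1}$ (your key lemma is exactly the paper's equations \eqref{eq:recursion_thanks_to_factorization_without_q}--\eqref{eq:one_step_of_recursion}, and your partial-fraction identity $\frac{1}{x_{i-1}-x_k}-\frac{1}{x_i-x_k}=\frac{x_i-x_{i-1}}{(x_{i-1}-x_k)(x_i-x_k)}$ is the paper's rewriting $x_i-x_{i-1}=(x_i-x_n)-(x_{i-1}-x_n)$), and then for degree $n-1$ both set $x_n=0$ to replace $\tope{i,n}$ by $\tope{i}$ and recurse. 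The only cosmetic difference is that you frame the degree-$(n-1)$ reduction as an explicit induction on $n$ where the paper compresses it into the phrase ``immediate recursion.''
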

\begin{proof}
Let $D_n\coloneqq (x_1-x_2)(x_2-x_3)\cdots (x_{n-1}-x_{n})$.
Begin by noting thanks to~\eqref{eq:important_factorization} that    \begin{align}\label{eq:recursion_thanks_to_factorization_without_q}
        \langle f\rangle_n= \tau_2\tau_3\cdots\tau_n\left(\frac{f}{D_n}\right).
    \end{align}
    Note that $\tau_2\cdots \tau_{n-1}$ only acts on variables $x_1$ through $x_{n-1}$.
    Let us consider $\tau_n(f/D_n)$.
    
    Writing $\tau_n$ as $\sum_{1\leq j\leq n}\mathrm{cyc}_{j,n}$, we have
    \begin{align}
        \tau_n\left(\frac{f}{D_n}\right)=
        \frac{1}{D_{n-1}}\left(-\frac{\rope{1,n}f}{(x_1-x_n)}+\sum_{2\leq i\leq n-1}\frac{(x_i-x_{i-1})\rope{i,n}f}{(x_{i-1}-x_n)(x_i-x_n)} +\frac{\rope{n,n}f}{(x_{n-1}-x_n)}\right).
    \end{align}
    Rewriting $x_i-x_{i-1}$ as $(x_{i}-x_n)-(x_{i-1}-x_n)$ for $2\leq i\leq n-1$ then yields
    \begin{align}\label{eq:manifest_polynomiality}
        \tau_n\left(\frac{f}{D_n}\right)=
        \frac{1}{D_{n-1}}\left(-\frac{\rope{1,n}f}{(x_1-x_n)} 
        +
        \sum_{i=2}^{n-1}\frac{\rope{i,n}f}{(x_{i-1}-x_n)}
        -
        \sum_{i=2}^{n-1}\frac{\rope{i,n}f}{(x_i-x_n)}
        +
        \frac{\rope{n,n}f}{(x_{n-1}-x_n)}\right), 
    \end{align}
    which given the definition of the operator $\tope{i,n}$ translates to
    \begin{align}\label{eq:one_step_of_recursion}
        \tau_n\left(\frac{f}{D_n}\right)=\frac{1}{D_{n-1}}\left(\sum_{1\leq i\leq n-1}\tope{i,n}(f)\right).
    \end{align}

    The desired identity \eqref{eq:ds_as_trims} is obtained by immediate recursion using~\eqref{eq:recursion_thanks_to_factorization_without_q}.

    When $\deg(f)\leq n-1$, then $\langle f\rangle_n\in \ZZ$. In particular, setting $x_n=0$ in~\eqref{eq:one_step_of_recursion} does not impact the final result, and so we get
    \begin{align}
        \langle f\rangle_n=\tope{1}(\tope{1}+\tope{2})\cdots (\tope{1}+\cdots + \tope{n-1})f.
    \end{align}
    To conclude, observe that
    \begin{align}
       \tope{1}(\tope{1}+\tope{2})\cdots (\tope{1}+\cdots + \tope{n-1})=\sum_{\substack{(i_1,\dots,i_{n-1})\\ i_j\leq j}} \tope{i_1}\cdots \tope{i_k}.
    \end{align}
    The set of sequences over which the sum ranges is closed under the relation $\tope{a}\tope{b}=\tope{b}\tope{a+1}$ where $b<a$.
    Furthermore the subset of weakly increasing sequences $(i_1,\dots,i_k)$ subject to $i_j\leq j$ contains precisely the trimming sequences for indexed forests $F$ in $\suppfor{n}$ with $|F|=n-1$.
    It then follows
    \begin{equation}\label{eq:product_of_sums_of_Ts}
      \tope{1}(\tope{1}+\tope{2})\cdots (\tope{1}+\cdots + \tope{n-1})  = \sum_{\substack{F\in \suppfor{n}\\|F|=n-1}}|\Trim{F}|\,\tope{F}. \qedhere
    \end{equation}
\end{proof}
\begin{rem}
    Setting $t_i=x_{n-i}$, then the part of the above computation showing
    $$\langle f \rangle_n=\langle (\tope{1,n}+\cdots+\tope{n-1,n})f \rangle_{n-1}$$
    is in fact exactly the computation one would do using the Atiyah--Bott localization formula to show that under the projection map $\pi:X_{\Perm_{n-1}}\to X_{\Perm_{n-2}}$ induced by the map on normal fans obtained by forgetting the first coordinate, we have $\pi_*f=(\tope{1,n}+\cdots+\tope{n-1,n})f$. \footnote{Hai Zhu has independently found a proof of the factorization of DS in degree $n-1$ using our trimming operators.}
\end{rem}
The following corollary generalizes the nonnegativity of $a_w$ to higher degree Schuberts.
\begin{cor}\label{cor:higher_deg_schubert_ds}
For any Schubert polynomial $\schub{w}\in \poly_n$ we have
    $\langle \schub{w}\rangle_n$ is a polynomial with coefficients that are combinatorially nonnegative.
\end{cor}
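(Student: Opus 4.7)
The plan is to leverage the factorization from Theorem~\ref{th:ds_as_trims}, namely
\[
\langle \schub w\rangle_n=\tope{1,2}(\tope{1,3}+\tope{2,3})\cdots (\tope{1,n}+\cdots + \tope{n-1,n})\schub w.
\]
Expanding the product into a sum of compositions $\tope{i_2,2}\tope{i_3,3}\cdots \tope{i_n,n}\schub w$ over tuples $(i_2,\ldots,i_n)$ with $1\le i_k\le k-1$, it suffices to show that each such summand is a polynomial with nonnegative monomial coefficients.

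I would proceed right-to-left. The innermost application yields
\[
\tope{i_n,n}\schub w=\delta_{i_n\in\des w}\rope{i_n,n}\schub{ws_{i_n}},
\]
and since $\rope{i_n,n}$ is a variable permutation while Schubert polynomials have nonnegative coefficients, this is nonneg. For each successive $\tope{i_k,k}=\rope{i_k,k}\partial_{i_k}$, I would use the commutation identity $\partial_j(\rho f)=\rho(\partial_{\rho^{-1}(j),\rho^{-1}(j+1)}f)$ (with $\rho$ the variable permutation already accumulated to the left) to push all $\rope{}$ operators outward. The result is a composite of (possibly long) divided differences applied to $\schub w$, followed by a single variable permutation that preserves nonnegativity.

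The main obstacle I anticipate is the appearance of long divided differences $\partial_{a,b}$ for non-adjacent $a<b$, which do not satisfy the clean Schubert rule $\partial_i\schub w=\delta_{i\in\des w}\schub{ws_i}$. To handle these I would decompose $\partial_{a,b}$ combinatorially using a reduced expression for $s_{a,b}$ and iterate Monk's rule, carefully tracking nonnegativity via the combinatorial rule in Proposition~\ref{thm:pieriRiTi}. Equivalently, I expect each summand $\tope{i_2,2}\tope{i_3,3}\cdots\tope{i_n,n}\schub w$ to admit a combinatorial interpretation as a nonneg-coefficient polynomial via a Pieri-type expansion generalizing~\eqref{eq:Ti_from_Ri}.

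An alternative strategy I would consider is induction on $n$ using the recursive form $\langle f\rangle_n=\langle(\tope{1,n}+\cdots+\tope{n-1,n})f\rangle_{n-1}$ (with $x_n$ treated as a parameter) established within the proof of Theorem~\ref{th:ds_as_trims}. Here the inductive step reduces to showing that $\sum_{i\in\des w}\rope{i,n}\schub{ws_i}$ expands as a $\ZZ_{\ge 0}[x_n]$-linear combination of Schubert polynomials in $x_1,\ldots,x_{n-1}$; combined with the linearity of $\langle\cdot\rangle_{n-1}$ and the inductive hypothesis, this would yield the result. Establishing this ``nice'' Schubert-positive expansion of the variable-cycled Schubert $\rope{i,n}\schub v$ in fewer variables with $x_n$-coefficients is the principal technical input needed for this alternative, and appears to me to be the cleaner of the two routes.
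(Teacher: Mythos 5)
Your second (inductive) strategy is essentially the paper's approach, and you have correctly identified the crux: everything reduces to the claim that $\rope{i,n}\schub{v}$, i.e.\ $\schub{v}(x_1,\ldots,x_{i-1},x_n,x_i,\ldots,x_{n-1})$, expands as a nonnegative $\ZZ_{\ge 0}[x_n]$-combination of Schubert polynomials in fewer variables. The gap in your proposal is that you leave this as an unestablished step (``the principal technical input needed''). This fact is not something you need to prove from scratch: it is precisely Bergeron--Sottile's substitution formula \cite[Theorem 5.1]{BS02}, which gives $\schub{w}(x_1,\ldots,x_{i-1},z,x_i,\ldots)=\sum a_{i,v}\,z^{i}\,\schub{v}$ with combinatorially nonnegative coefficients $a_{i,v}$ computed via chains in the $k$-Bruhat order. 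The paper's proof is exactly this: after the factorization, note $\tope{i,k}=\rope{i,k}\partial_i$, observe $\partial_i$ sends Schuberts to Schuberts or zero, and then invoke \cite{BS02} for $\rope{i,k}$; iterating preserves Schubert-positivity with coefficients in $\ZZ_{\ge 0}[x_n,x_{n-1},\ldots]$.

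Your first strategy (commuting all the $\rope{i_k,k}$ out to the left, leaving a product of long divided differences $\partial_{a,b}$) would genuinely run into trouble. Long divided differences do not act nicely on Schubert polynomials -- $\partial_{a,b}\schub{w}$ is generally not a single Schubert polynomial and can even have negative coefficients -- and trying to control signs via reduced-word decompositions and iterated Monk's rule would not transparently give nonnegativity. You were right to flag this as an obstacle; the way around it is not to fight the long divided differences at all, but to handle each $\tope{i,k}$ locally as $\rope{i,k}\partial_i$ (ordinary $\partial_i$ only) and apply the Bergeron--Sottile expansion to the $\rope{}$ step.
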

\begin{proof}
    It suffices to show that $\tope{i,n}$ applied to $\schub{w}$ is a combinatorially nonnegative combination of Schubert polynomials. 
    To do this, note that $\tope{i,n}=\rope{i,n}\partial_i$ so it suffices to show this result for $\rope{i,n}$. 
    But this follows from one of the main results of Bergeron--Sottile \cite[Theorem 5.1]{BS02}, which expresses
    $$\schub{w}(x_1,\ldots,x_{i-1},z,x_i,\ldots)=\sum a_{i,v}z^i\schub{v}$$
    for combinatorially nonnegative coefficients $a_{i,v}$ (computed using chains in $k$-Bruhat order for various $k$).
\end{proof}

\subsection{Strict positivity of $\langle \schub{w}\rangle_n$ via trims}
\label{subsec:strict_positivity_aw}
We use Theorem~\ref{th:ds_as_trims} to give a straightforward combinatorial proof of the \emph{strict} positivity $\langle \schub{w}\rangle_n>0$ for $w\in S_n$,  answering \cite[Problem 6.6]{HHMP}.
This strict positivity was established earlier in \cite{NT20} by expressing $\langle \schub{w}\rangle_n$ as a sum of normalized mixed Eulerian numbers \cite[\S 16]{Pos09} and using positivity of mixed volumes. 
A combinatorial proof of nonnegativity was then given in \cite{NT_forest} via a parking procedure applied to reduced words of $w$, but strict positivity via this method is unclear.

Fix $w\in S_{n}$ with $\ell(w)=n-1$.
By Theorem~\ref{th:ds_as_trims} we have
\begin{equation}
    \langle \schub{w}\rangle_n=\sum_{i_j\le j} \tope{i_1}\cdots \tope{i_{n-1}}\schub{w}.
\end{equation}
It suffices to construct a sequence of positive integers $\mathbf{i}=(i_1,\dots,i_{n-1})$ satisfying $i_j\leq j$ for $1\leq j\leq n-1$ such that $\tope{\mathbf{i}}\schub{w}>0$.
\begin{lem}
\label{lem:tells_you_the_trim}
    For $w\in S_n$, if there is $1\le i\le n-1$ such that $w^{-1}(1) <i \leq w^{-1}(n)$, then the Schubert expansion of $\rope{i}\schub{w}$ contains some $\schub{v}$ with $v\in S_{n-1}$.
\end{lem}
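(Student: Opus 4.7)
My plan is to use the Sottile formula \eqref{eq:Ri_sottile} from \Cref{thm:pieriRiTi} to translate the lemma into a combinatorial existence statement about decreasing saturated chains in the $(i-1)$-Bruhat order, and then to construct such a chain explicitly.

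By \eqref{eq:Ri_sottile}, $\rope{i}\schub{w}=\sum_{w\overset{c_{i-1}}{\to}\ins_i(v)}\schub{v}$ is a nonnegative sum of Schubert polynomials. Since $w\in S_n$ gives $\schub{w}\in \ZZ[x_1,\ldots,x_{n-1}]$ and hence $\rope{i}\schub{w}\in \ZZ[x_1,\ldots,x_{n-2}]$, Schubert linear independence combined with the nonnegativity of the expansion coefficients forces every $\schub{v}$ that appears to have $v\in S_{n-1}$. Thus it suffices to exhibit a single decreasing saturated chain of length $i-1$ in the $(i-1)$-Bruhat order, starting at $w$, whose endpoint places the value $1$ in position $i$.

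Set $p=w^{-1}(1)$. Any cover that moves the value $1$ carries label $1$, and since the labels along a decreasing chain form a strictly decreasing sequence of positive integers, the unique cover involving $1$ must occur last and is forced to be $s_{p,i}$. The preceding $i-2$ covers must therefore have strictly decreasing labels greater than $1$, avoid position $p$, and leave a permutation $u_{i-2}$ satisfying $u_{i-2}(k)>u_{i-2}(i)$ for every $k\in(p,i)$, so that $s_{p,i}$ is itself a valid cover.

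For these preceding covers, the hypothesis $w^{-1}(n)\ge i$ is the crucial ingredient: it supplies a right position holding the maximally large value $n$, which can serve as a swap partner producing a cover of the largest possible label. My plan is to process the $i-2$ left positions in $\{1,\ldots,i-1\}\setminus\{p\}$ in order of decreasing $w$-value, swapping each with position $i$ or with a carefully chosen position $\ge i$ so as to produce a valid cover whose label is strictly decreasing and exceeds $1$. The base case $i=2$ is immediate: no preceding cover is required, and the final swap $s_{1,2}$ is valid because $w(1)=1<w(2)$. The hard part will be certifying at each step the existence of a right position whose value is simultaneously larger than the current left label and separated from it by no intermediate value at any in-between position, as demanded by Sottile's characterization \eqref{eq:sottile_pieri} of $(i-1)$-Bruhat covers; I anticipate resolving this by a small case analysis separating $p=i-1$ (where the final minimum-value constraint is vacuous and the construction essentially reduces to a known chain from the identity) from $p<i-1$, with the hypothesis $w^{-1}(n)\ge i$ ensuring that a suitable large-value partner remains available throughout.
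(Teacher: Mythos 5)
Your overall strategy coincides with the paper's --- translate via Sottile's formula \eqref{eq:Ri_sottile} into a statement about decreasing chains in the $(i-1)$-Bruhat order and construct one explicitly --- but there are two factual errors in the setup and the construction itself is never carried out. First, the degree reduction fails: nonnegativity of the Sottile coefficients together with $\rope{i}\schub{w}\in\ZZ[x_1,\ldots,x_{n-2}]$ only forces each $\schub{v}$ that appears to lie in $\ZZ[x_1,\ldots,x_{n-2}]$, i.e.\ that the Lehmer code of $v$ is supported on $\{1,\ldots,n-2\}$, which is strictly weaker than $v\in S_{n-1}$; for instance $\schub{312}=x_1^2\in\ZZ[x_1]$ yet $312\notin S_2$. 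So even after exhibiting one chain you would still owe an argument that its endpoint $\ins_i(v)$ lies in $S_n$. Second, the label of a cover in the convention used here is the \emph{larger} of the two swapped values (compare the worked chains just after \eqref{eq:sottile_pieri}, e.g.\ $164235\prec_2 264135$ swaps $1\leftrightarrow 2$ and carries label $2$), so every label is $\ge 2$ and the cover that moves $1$ is labeled by its partner's value, not by $1$; nothing on label-theoretic grounds forces it to be last.

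Third, and decisively, the chain is never built --- you gesture at a case analysis and ``carefully chosen'' partners but never specify a rule that demonstrably produces valid covers with strictly decreasing labels, which you yourself flag as the hard part. The paper's proof gives a concrete rule: list the left values $a_1>\cdots>a_{i-1}$ of $\{w(1),\ldots,w(i-1)\}$ in decreasing order (so $a_{i-1}=1$ since $w^{-1}(1)<i$), and at step $j$ swap $a_j$ with $b_j$, the \emph{first} entry of $w_{j-1}(i),\ldots,w_{j-1}(n)$ that is $\ge a_j$. The hypothesis $w^{-1}(n)\ge i$ guarantees $b_1$ exists because $n$ is a right value; for $j\ge 2$ the value $a_{j-1}>a_j$ has just moved into the right part and guarantees $b_j$ exists. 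One then verifies that the $b_j$ strictly decrease, each step is a genuine $(i-1)$-Bruhat cover, all transpositions lie in $S_n$, and the endpoint has $1$ in position $i$, hence equals $\ins_i(v)$ for some $v\in S_{n-1}$. Your sketch is in this spirit but would need all of these verifications, together with a correct reason for placing the $1$-move last (it follows simply because $1$ is the smallest of $w(1),\ldots,w(i-1)$, not because of its label).
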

\begin{proof}
By \Cref{thm:pieriRiTi}, it suffices to show there is a decreasing chain in the $(i-1)$-Bruhat order for some $v\in S_{n-1}$ as follows:
\[
w=w_0\prec_{b_1} w_1\prec \cdots \prec_{b_{i-1}} w_{i-1}=\ins_i(v).
\]
 We describe how to create this chain. 
 
 Let $a_1,\ldots,a_{i-1}$ be the numbers $w(1),\ldots,w(i-1)$ written in decreasing order.
 Since $w^{-1}(1)<i$, we must have $a_{i-1}=1$.
 For $1\leq j\leq i-1$, take 
 \begin{align}\label{eq:wsajbj}
    w_j\coloneqq w_{j-1}s_{a_jb_j}    
 \end{align}
 where  $b_j$ is the first number in the list $w_{j-1}(i),\ldots,w_{j-1}(n)$ such that $a_j\le b_j$. 
 The existence of $b_1$ follows because $n\in \{w(i),\ldots,w(n)\}$ and the existence of $b_j$ is clear for $j\ge 2$ because $a_{j-1}\in \{w_{j-1}(i),\ldots,w_{j-1}(n)\}$. 
 We also have $w_{j-1}\prec_{b_j} w_j$ in the $(i-1)$-Bruhat order by construction. 
 The numbers $b_1,b_2,\ldots$ are a decreasing sequence since if $b_j<b_{j+1}$ then $b_{j+1}$ must have appeared later than $b_j$ in the list $w_{j-1}(i),\ldots,w_{j-1}(n)$, so in the list $w_j(i),\ldots,w_j(n)$ we know that $a_j$ appears before $b_{j+1}$ which contradicts that $b_{j+1}$ is the first number in this list larger than $a_{j-1}$.

All transpositions $s_{a_jb_j}$ in~\eqref{eq:wsajbj} lie in $S_n$, so $w_{i-1}\in S_n$. 
Additionally, $a_{i-1}=1$ implies $w_{i-1}=w_{i-2}s_{(1,w_{i-1}(i))}$, which means $w_{i-1}(i)=1$. 
Therefore $w_{i-1}=\ins_i(v)$ for some $v\in S_{n-1}$.
\end{proof}
\begin{thm}
 Let $w\in S_n$ with $\ell(w)=n-1$.
 Then there exists a sequence $\mathbf{i}=(i_1,\dots,i_{n-1})$ of positive integers satisfying $i_j\leq j$ and $\tope{\mathbf{i}}\schub{w}>0$.
 In particular $a_w>0$.
 \end{thm}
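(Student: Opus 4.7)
We proceed by strong induction on $n$. The base case $n = 1$ is immediate: $\schub{\idem} = 1$ and the empty sequence trivially satisfies all constraints.

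For the inductive step with $n \geq 2$, given $w \in S_n$ with $\ell(w) = n-1$, the key is to find $i \in \{1, \ldots, n-1\}$ and a permutation $w' \in S_{n-1}$ with $\ell(w') = n-2$ such that $\schub{w'}$ occurs with positive coefficient in the Schubert expansion of $\tope{i}\schub{w}$. Once such $i$ and $w'$ are in hand, the inductive hypothesis applied to $w'$ yields $(i_1, \ldots, i_{n-2})$ with $i_j \leq j$ and $\tope{i_1}\cdots \tope{i_{n-2}}\schub{w'} > 0$; setting $i_{n-1} = i$ and using the combinatorial nonnegativity of each $\tope{j}$ on Schubert polynomials from~\eqref{eq:Ti_from_Ri}, we obtain
\[
\tope{i_1}\cdots\tope{i_{n-1}}\schub{w}\;\geq\; \tope{i_1}\cdots \tope{i_{n-2}}\schub{w'}\;>\;0,
\]
completing the inductive step.

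To locate $i$, set $a = w^{-1}(1)$ and $b = w^{-1}(n)$. In the principal case $a < b \leq n-1$, the choice $i = b$ works: it lies in $\des(w)$ since $w(b) = n > w(b+1)$, and Lemma~\ref{lem:tells_you_the_trim} applies to $ws_b$ with index $b$ because $(ws_b)^{-1}(1) = a < b$ while $(ws_b)^{-1}(n) = b+1 \geq b$. The lemma then produces $w' \in S_{n-1}$ with $\ell(w') = \ell(ws_b) = n-2$ appearing in the Schubert expansion of $\rope{b}\schub{ws_b} = \tope{b}\schub{w}$.

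The edge cases are $b = n$ (equivalent to $w(n) = n$) and $a > b$. In these, Lemma~\ref{lem:tells_you_the_trim} cannot be directly invoked for any choice of $i \in \des(w)$ applied to $ws_i$. A suitable $i$ can nonetheless be exhibited: for instance, when $\max\des(w) < a$ (as happens for the dominant-staircase permutation $w = a(a{-}1)\cdots 21(a{+}1)\cdots n$), the polynomial $\schub{w}$ involves only $x_1, \ldots, x_{\max\des(w)}$, so choosing $i = \max\des(w)$ forces $\tope{i}\schub{w}$ to be supported on Schubert polynomials indexed by permutations in $S_{n-1}$; in the remaining configurations one constructs explicitly the requisite decreasing chain in $(i-1)$-Bruhat order via Sottile's rule~\eqref{eq:sottile_pieri}. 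The main obstacle will be providing such a uniform existence argument for $i$ across all configurations falling outside the scope of Lemma~\ref{lem:tells_you_the_trim}.
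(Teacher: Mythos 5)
Your proposal follows the same inductive strategy as the paper's proof: locate a descent $i$ so that $\tope{i}\schub{w}$ has a summand $\schub{w'}$ with $w'\in S_{n-1}$ (necessarily $\ell(w')=n-2$), apply the inductive hypothesis to $w'$, and conclude by nonnegativity of the remaining terms. Your ``principal case'' $a<b\le n-1$ with $i=b$ is correct and matches one of the paper's subcases, and the bookkeeping for the constraint $i_j\le j$ and the final positivity argument are fine. The gap is exactly where you flag it: the edge cases $b=n$ and $a>b$ are not completed, so the proof as written is incomplete.

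That gap is smaller than you fear, and Lemma~\ref{lem:tells_you_the_trim} does \emph{not} fail to apply there --- one only has to choose $i$, and the index in the lemma, a bit more carefully, remembering that $\tope{i}=\rope{i}\partial_i=\rope{i+1}\partial_i$, so the lemma can be invoked with index $i$ \emph{or} $i+1$ on $ws_i$. Concretely: if $a>b$, counting the inversions involving positions $a$ and $b$ gives $\ell(w)\ge n+a-b-2$, so $\ell(w)=n-1$ forces $a=b+1$; then $i=b\in\des(w)$, $(ws_b)^{-1}(1)=b$ and $(ws_b)^{-1}(n)=b+1$, so the lemma applies with index $b+1$. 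If $b=n$, the right-hand condition $(ws_i)^{-1}(n)=n\ge i$ is automatic; if $a>1$, take $i=a-1\in\des(w)$ (since $w(a)=1$), giving $(ws_{a-1})^{-1}(1)=a-1$, and apply the lemma with index $a$; if $a=1$, any $i\in\des(w)$ (nonempty as $\ell(w)\ge 1$) has $(ws_i)^{-1}(1)=1<i$, and the lemma applies with index $i$. Unifying all cases, what you need is the paper's single claim that some $i\in\des(w)$ satisfies $(ws_i)^{-1}(1)\le i\le (ws_i)^{-1}(n)$, which is exactly equivalent to the lemma being applicable for one of the two indices $i$ or $i+1$. (Incidentally, the one-line justification in the paper --- that $\ell(w)=n-1$ implies $w^{-1}(1)\le w^{-1}(n)-1$ --- is false, e.g.\ for $w=c_n$, but the claim itself is correct by the case analysis above.)
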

 \begin{proof}
 We claim that there is a descent $i\in \des{w}$ such that $(ws_i)^{-1}(1)\le i \le (ws_i)^{-1}(n)$. Indeed, because $\ell(w)=n-1$ we know that $w^{-1}(1)\le w^{-1}(n)-1$. If this is an equality then $i=w^{-1}(1)$ works. Otherwise $w^{-1}(1)<w^{-1}(n)$ and we can take
 \begin{itemize}
 \item $i=w^{-1}(1)-1$ if $w^{-1}(1)\ne 1$
 \item $i=w^{-1}(n)$ if $w^{-1}(n)\ne n$
 \item Any $i\in \des{w}$ if $w^{-1}(1)=1$ and $w^{-1}(n)=n$.
 \end{itemize}
 By \Cref{lem:tells_you_the_trim} this implies that $\tope{i}\schub{w}=\rope{i}\partial_i\schub{w}$ contains a summand $S_v$ with $v\in S_{n-1}$. Iterating this produces the desired sequence.
\end{proof}

\subsection{$q$-divided symmetrization}

Let $\Delta_n\coloneqq \prod_{1\leq i<j\leq n}(x_i-x_j)$ and $\widehat{\Delta}_n^q\coloneqq \prod_{1\leq i+1<j\leq n}(qx_i-x_j)$.
Recall from \cite[Definition 4.1]{NT21} the $q$-DS operator $f\mapsto \langle f\rangle_n^q$ on $\poly_n$ defined by 
\[
\langle f(x_1,\ldots,x_n) \rangle_n^q=\sum_{\sigma\in S_n}\sigma \left(\frac{f(x_{1},\dots,x_{n})\cdot \widehat{\Delta}_n^q}{\Delta_n}\right).
\]
This clearly recovers ordinary DS at $q=1$.
The first and third authors, by leveraging a connection \cite[Theorem 4.11]{NT21} between $q$-DS  and coefficient extraction in the $q$-Klyachko algebra, showed that several known results involving DS $q$-deformed rather nicely.

We then have the following result that helps us completely understand $q$-DS in degree $n-1$. 
Compare this with the analogous statement in Theorem~\ref{th:ds_as_trims}.
\begin{thm}\label{th:qds_as_trims}
    For $f\in \poly_n$ homogenous of degree $n-1$ we have
    \[
    \langle f\rangle_n^q=\tope{1}(\tope{1}+q\tope{2})\cdots (\tope{1}+q\tope{2}+\cdots + q^{n-2}\tope{n-1})f.
    \]
\end{thm}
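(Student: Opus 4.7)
The plan is to proceed by induction on $n$, mimicking the proof of Theorem~\ref{th:ds_as_trims} with $q$-deformations inserted at the key steps. The inductive step requires a $q$-analogue of the one-step recursion~\eqref{eq:one_step_of_recursion}: for a suitably chosen group-algebra element $\tau_n^q = \sum_{j=1}^n c_j(q)\,\mathrm{cyc}_{j,n}$, I want to establish
$$\tau_n^q\!\left(\frac{f\,\widehat{\Delta}_n^q}{\Delta_n}\right) = \frac{\widehat{\Delta}_{n-1}^q}{\Delta_{n-1}}\bigl(\tope{1,n}+q\tope{2,n}+\cdots+q^{n-2}\tope{n-1,n}\bigr)f,$$
together with an identity $\sum_{\sigma \in S_n}\sigma(h) = \tau_2^q\cdots\tau_n^q(h)$ holding as a rational-function identity for the particular $h = f\,\widehat{\Delta}_n^q/\Delta_n$. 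Granting these, iterating the reduction $n-1$ times and then specializing $x_n = 0$ at the end---valid because the hypothesis $\deg f = n-1$ forces $\langle f\rangle_n^q$ to be a constant---converts each $\tope{i,n}$ into $\tope{i}$ and produces the stated product.

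To identify $\tau_n^q$ and establish the one-step reduction, I would split
$$\frac{\widehat{\Delta}_n^q}{\Delta_n} = \frac{\widehat{\Delta}_{n-1}^q}{\Delta_{n-1}} \cdot \frac{\prod_{i=1}^{n-2}(qx_i-x_n)}{\prod_{i=1}^{n-1}(x_i-x_n)}.$$
Each cycle $\mathrm{cyc}_{j,n}$ permutes only $x_{n-j+1},\ldots,x_n$, so its action on $f$ is $\rope{n-j+1,n}(f)$, while its action on the remaining rational factor can be expanded explicitly. A partial-fraction decomposition in $x_n$ mirroring~\eqref{eq:manifest_polynomiality} then identifies the residues at $x_n = x_i$ as multiples of $\rope{i,n}f$, with the extra $q$-factors coming from the $(qx_a-x_n)$ numerator pieces; adjacent-residue pairs combine into $q^{i-1}\tope{i,n}f$. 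Matching the expected output forces the weights $c_j(q) = q^{j-1}$, which is the familiar Lehmer-code factorization giving $\tau_2^q\cdots\tau_n^q = \sum_\sigma q^{\mathrm{inv}(\sigma)}\sigma$ in $\ZZ[q][S_n]$.

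The identity $\sum_\sigma \sigma(h) = \tau_2^q\cdots\tau_n^q(h)$ is not a group-algebra identity, since the right-hand side differs from $\sum_\sigma \sigma$ by the $q^{\mathrm{inv}(\sigma)}$ weights. Rather, it must hold as a rational-function identity because of a cancellation between the $q$-inversion weights and the automorphic factor $\sigma(\widehat{\Delta}_n^q)/\widehat{\Delta}_n^q$: each non-adjacent pair $a+1<b$ contributes $(qx_a - x_b) \mapsto (qx_{\sigma(a)}-x_{\sigma(b)})$, and tracking the accumulation of $q$-factors across all such pairs must match $q^{\mathrm{inv}(\sigma)}$ exactly. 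Verifying this cancellation is the main obstacle. A potentially cleaner route is to invoke \cite[Theorem 4.11]{NT21}, which relates $\langle\cdot\rangle_n^q$ to coefficient extraction in the $q$-Klyachko algebra and may allow the two identities above to be packaged into a single algebraic statement, bypassing the direct partial-fraction computation altogether.
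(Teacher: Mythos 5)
Your proposal diverges from the paper's proof in a way that breaks it. The paper does \emph{not} $q$-deform $\tau_n$: it keeps the group-algebra factorization $\sum_{\sigma\in S_n}\sigma=\tau_2\cdots\tau_n$ \eqref{eq:important_factorization} exactly as in the $q=1$ case, applies the plain $\tau_n$ to $f\widehat{\Delta}_n^q/\Delta_n$, and the powers $q^{j-1}$ emerge when the products of factors $(qx_i-x_n)/(x_i-x_n)$ appearing in \eqref{eq:rationality_alone} are evaluated at $x_n=0$. Your version replaces this with a deformed $\tau_n^q=\sum_j q^{j-1}\mathrm{cyc}_{j,n}$ together with the claim that
\[
\sum_{\sigma\in S_n}\sigma(h)=\tau_2^q\cdots\tau_n^q(h)\quad\text{for }h=f\widehat{\Delta}_n^q/\Delta_n
\]
holds as a rational-function identity. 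That claim is false. Since $\tau_2^q\cdots\tau_n^q=\sum_\sigma q^{\mathrm{inv}(\sigma)}\sigma$ in $\ZZ[q][S_n]$, the claim is equivalent to $\sum_\sigma(1-q^{\mathrm{inv}(\sigma)})\sigma(h)=0$. Already for $n=2$ one has $\widehat{\Delta}_2^q=1$, so $h=f(x_1,x_2)/(x_1-x_2)$ and the difference is $(1-q)\,f(x_2,x_1)/(x_2-x_1)\ne 0$. The ``cancellation between the $q$-inversion weights and the automorphic factor $\sigma(\widehat{\Delta}_n^q)/\widehat{\Delta}_n^q$'' you invoke cannot happen: that ratio is a genuine rational function of the $x_i$, not a pure power of $q$, and in the $n=2$ case it is identically $1$, leaving nothing to absorb the $q^{\mathrm{inv}(\sigma)}$ weights. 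You correctly flag this as ``the main obstacle''; it is in fact an impassable one under your framing.

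There is a second mismatch worth noting. You plan to iterate a one-step rational-function reduction $n-1$ times and only then set $x_n=0$. The paper explicitly remarks, just after the proof, that its one-step reduction \eqref{eq:rationality_alone} is \emph{not} a polynomial identity before specialization: the bracketed term is generally a rational function in $x_n$, and the substitution $x_n=0$ (justified by $\langle f\rangle_n^q\in\ZZ[q]$, which holds since $\deg f=n-1$) is what renders it polynomial and allows the recursion to proceed in the form $\langle f\rangle_n^q=\langle\sum_j q^{j-1}\tope{j}f\rangle_{n-1}^q$. So a clean rational-function one-step identity of the shape you want is not available even with the undeformed $\tau_n$; the specialization has to happen at each stage, not at the end. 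Your fallback suggestion of routing through \cite[Theorem 4.11]{NT21} is a reasonable alternative direction, but as written it is unsubstantiated. To repair the proof along the paper's lines, drop $\tau_n^q$ entirely, apply the undeformed $\tau_n$ to $f\widehat{\Delta}_n^q/\Delta_n$, set $x_n=0$ before recursing, and read off $q^{j-1}\tope{j}$ from the specialized products.
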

\begin{proof}
    
    Using~\eqref{eq:important_factorization} again we may write
    \begin{align}\label{eq:recursion_thanks_to_factorization}
        \langle f\rangle_n^q= \tau_2\tau_3\cdots\tau_n\left(\frac{f\cdot \widehat{\Delta}_n^q}{\Delta_n}\right)
    \end{align}
   We have
    \begin{multline}\label{eq:rationality_alone}
        \tau_n\left(\frac{f\cdot \widehat{\Delta}_n^q}{\Delta_n}\right)=
        \frac{\widehat{\Delta}_{n-1}^q}{\Delta_{n-1}} 
        \Big( \frac{\rope{1,n}f}{x_n-x_1}\prod_{i=2}^{n-1}\frac{qx_n-x_i}{x_n-x_i}
        +
        \sum_{j=2}^{n-1}
        \frac{(qx_{j-1}-x_j)\rope{j,n}f}{(x_{j-1}-x_n)(x_n-x_j)}\prod_{i=1}^{j-2}\frac{qx_i-x_n}{x_i-x_n}
        \prod_{i=j+1}^{n-1} \frac{qx_n-x_i}{x_{n}-x_i}
        \\+
        \frac{\rope{n,n}f}{x_{n-1}-x_n}\prod_{i=1}^{n-2}\frac{qx_i-x_{n}}{x_i-x_n}\Big).
    \end{multline}
    Since $\langle f\rangle_n^q\in \ZZ[q]$ given that $\deg(f)=n-1$ we may set $x_n=0$ throughout in the preceding equality without impacting the eventual result.
    We then obtain
    \begin{align}
        \tau_n\left(\frac{f\cdot \widehat{\Delta}_n^q}{\Delta_n}\right)|_{x_n=0}=
        \frac{\widehat{\Delta}_{n-1}^q}{\Delta_{n-1}} 
        \Big( -\frac{\rope{1}f}{x_1}
        +
        \sum_{j=2}^{n-1}
        q^{j-2}\frac{\rope{j}f}{x_{j-1}}
        -
        \sum_{j=2}^{n-1}
        q^{j-1}\frac{\rope{j}f}{x_j}
        +
        q^{n-2}\frac{\rope{n}f}{x_{n-1}}\Big)
    \end{align}
    which upon rearranging terms and using the definition of $\tope{}$ operators becomes
    \begin{align}
        \tau_n\left(\frac{f\cdot \widehat{\Delta}_n^q}{\Delta_n}\right)|_{x_n=0}=\frac{\widehat{\Delta}_{n-1}^q}{\Delta_{n-1}} \left(\sum_{1\leq j\leq n-1}q^{j-1}\tope{j}f\right).    
    \end{align}
    Recursing using~\eqref{eq:recursion_thanks_to_factorization} concludes the proof.
\end{proof}

    Observe that our statement of Theorem~\ref{th:qds_as_trims} only concerns the case $\deg(f)=n-1$, even though it is largely in the same spirit as the proof of Theorem~\ref{th:ds_as_trims}.
    Indeed, in contrast to the fact that the term within parentheses on the right-hand side in~\eqref{eq:manifest_polynomiality} is already a polynomial, we see that the analogous term in~\eqref{eq:rationality_alone} is not necessarily a polynomial. 
    This deficit is precisely what the specialization $x_n=0$ fixes as the resulting expression is then a polynomial.

\begin{rem}
    As in Theorem~\ref{th:ds_as_trims} we may express the operator $\tope{1}(\tope{1}+q\tope{2})\cdots(\tope{1}+q\tope{2}+\cdots+q^{n-2}\tope{n-1})$ as a $q$-weighted sum of $\tope{F}$ for $F\in \suppfor{n}$ with $|F|=n-1$.
    One writes
    \begin{align}
       \tope{1}(\tope{1}+q\tope{2})\cdots(\tope{1}+q\tope{2}+\cdots+q^{n-2}\tope{n-1})=\sum_{\substack{\mathbf{i}=(i_1,\dots,i_{n-1})\\ i_j\leq j}} q^{i_1+\cdots+i_{n-1}-(n-1)}\tope{i_1}\cdots\tope{i_{n-1}}. 
    \end{align}
    Collecting terms according to the indexing sequences $(i_1,\dots,i_k)$ that trim a common indexed forest $F$, we get one term for each decreasing labeling of $F$.
    Let us denote the set of such labelings by $\mathrm{Dec}(F)$.
    By reading such a labeling $\kappa$ in inorder one obtains a permutation in $S_{n-1}$ that we shall continue to call $\kappa$. 
    Tracking the $q$-weight tells us that
    \begin{align}\label{eq:product_of_sums_of_Ts_with_q}
    \sum_{\substack{\mathbf{i}=(i_1,\dots,i_{n-1})\\ i_j\leq j}} q^{i_1+\cdots+i_{n-1}-(n-1)}\tope{i_1}\cdots\tope{i_{n-1}}=\sum_{\substack{F\in \suppfor{n}\\|F|=n-1}}\left(\sum_{\kappa\in \mathrm{Dec}(F)}q^{\mathrm{inv}(\kappa)}\right)\tope{F},   
    \end{align}  
    where $\mathrm{inv}(\kappa)$ is the number of inversions of $\kappa$.
    To complete this remark we note that when $F\in\suppfor{n}$ with $|F|=n-1$ then $|\Trim{F}|$ in~\eqref{eq:product_of_sums_of_Ts} is the specialization at $x_i=1$ for all $i$ \cite[Proposition 3.14]{NT_forest}, whereas  $\sum_{\kappa\in \mathrm{Dec}(F)}q^{\mathrm{inv}(\kappa)}$ in~\eqref{eq:product_of_sums_of_Ts_with_q} is the specialization of $\forestpoly{F}$ at $x_i=q^{i-1}$ for all $i$.
\end{rem}
We conclude this section by speculating a bit more on the case where $\deg(f)$ exceeds $n-1$.
Recall that the $P$-Hall--Littlewood polynomial $P_{\lambda}(x_1,\dots,x_n;q)$ \cite{Mac95} for a partition $(\lambda_1,\dots,\lambda_n)$ is obtained by normalizing  the following symmetric polynomial 
    \[
        \sum_{\sigma\in S_n}\sigma\left(x_1^{\lambda_1}\cdots x_n^{\lambda_n}\prod_{1\leq i<j\leq n}\frac{x_i-qx_j}{x_i-x_j}\right).
    \]
Thus it is a common generalization of monomial symmetric polynomials and Schur polynomials, recovering the former at $q=1$ and the latter at $q=0$.
We conjecture the following generalization of Corollary~\ref{cor:higher_deg_schubert_ds} which we have verified on all permutations up until $S_8$.
\begin{conj}\label{conj:HL_positivity}
    For any Schubert polynomial $\schub{w}\in \poly_n$ we have
    \[
        \langle \schub{w}\rangle_n^q=\sum_{\lambda=(\lambda_1\geq \cdots \geq \lambda_n)}b_{\lambda,w}(q)P_{\lambda}(x_1,\dots,x_n;q^{-1}),
    \]
    where the $b_{\lambda,w}(q)$ are Laurent polynomials in $q$ with nonnegative integer coefficients.
\end{conj}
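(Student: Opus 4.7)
Since the statement is a conjecture, I propose a plan rather than a proof, combining a higher-degree generalization of \Cref{th:qds_as_trims} with the classical $t$-positivity of the Kostka--Foulkes polynomials. The first step is to establish an analogue of \Cref{th:qds_as_trims} valid on any $f\in\poly_n$, not only in degree $n-1$. The $x_n=0$ shortcut used in the proof there is unavailable in higher degree, but one can hope to verify the one-step recursion
\begin{align*}
    \tau_n\!\left(\frac{f\,\widehat{\Delta}_n^q}{\Delta_n}\right) = \frac{\widehat{\Delta}_{n-1}^q}{\Delta_{n-1}}\sum_{j=1}^{n-1}q^{j-1}\tope{j,n}f
\end{align*}
directly by matching rational terms in \eqref{eq:rationality_alone} without specializing, and then iterate via the factorization \eqref{eq:important_factorization}. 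This would express $\langle f\rangle_n^q$ as a $\ZZ_{\geq 0}[q]$-linear combination of iterated applications $\tope{i_1,2}\tope{i_2,3}\cdots\tope{i_{n-1},n}f$ with $i_j\le j$, where each $\tope{i,k}$ is interpreted on $\poly_n=\poly_k[x_{k+1},\ldots,x_n]$.

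Next, each $\tope{i,k}=\rope{i,k}\partial_i$ applied to a Schubert polynomial should yield a $\ZZ_{\geq 0}$-combination of Schuberts. This is exactly the positivity input in the proof of \Cref{cor:higher_deg_schubert_ds}: the Bergeron--Sottile substitution rule $\schub{v}(x_1,\ldots,x_{i-1},z,x_i,\ldots)=\sum_u a_{i,u}(z)\schub{u}$ gives, upon $z\mapsto x_k$, a $\ZZ_{\geq 0}$-combination of terms $x_k^\bullet\schub{u}$, which are in turn Schubert-positive by iterated Monk's rule. Iterating yields
\begin{align*}
    \langle\schub{w}\rangle_n^q=\sum_v c_{v,w}(q)\,\schub{v}, \qquad c_{v,w}(q)\in\ZZ_{\geq 0}[q].
\end{align*}
Because $\langle\schub{w}\rangle_n^q$ is symmetric in $x_1,\ldots,x_n$ and Schuberts form a $\ZZ$-basis of $\poly_n$, uniqueness forces $c_{v,w}(q)=0$ unless $v=w_\lambda$ is Grassmannian with sole descent at $n$, in which case $\schub{w_\lambda}=s_\lambda(x_1,\ldots,x_n)$. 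Finally, invoking Lascoux--Sch\"utzenberger's positivity of the Kostka--Foulkes polynomials $K_{\lambda\mu}(t)\in\ZZ_{\geq 0}[t]$ and the expansion $s_\lambda=\sum_\mu K_{\lambda\mu}(t)P_\mu(\cdot;t)$, setting $t=q^{-1}$ converts the Schur expansion into the conjectured Hall--Littlewood expansion with $b_{\mu,w}(q)=\sum_\lambda c_{w_\lambda,w}(q)K_{\lambda\mu}(q^{-1})$, manifestly a Laurent polynomial in $q$ with nonnegative integer coefficients.

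The principal hurdle is the higher-degree factorization. Without the $x_n=0$ shortcut the cycle sum $\tau_n(f\widehat{\Delta}_n^q/\Delta_n)$ produces genuinely rational expressions whose polynomiality emerges only after the full outer symmetrization, and a direct verification requires a delicate telescoping/cancellation argument. A cleaner route may be a conceptual reformulation: the $q$-Klyachko algebra connection of \cite{NT21} may detect the Hall--Littlewood structure intrinsically, or an equivariant geometric interpretation of $\langle\cdot\rangle_n^q$ as a degree map on a $q$-deformation of the permutahedral variety could relate naturally to Hall--Littlewood polynomials via the affine Grassmannian, where $P_\lambda(x;t)$ arise as Schubert classes. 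A secondary subtlety in the second step is justifying Schubert-positivity of $\rope{i,k}$ acting on polynomials that already involve the parameter variables $x_{k+1},\ldots,x_n$; treating these as formal parameters should reduce to the substitution case, but the bookkeeping must be done carefully.
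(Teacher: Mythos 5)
The statement is a conjecture in the paper, verified computationally up to $S_8$; the authors do not supply a proof, so this can only be assessed as a proof sketch. Your plan has two genuine gaps, one of which looks difficult and one of which is simply incorrect.

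The first gap, which you flag yourself, concerns the one-step recursion
\[
\tau_n\!\left(\frac{f\,\widehat{\Delta}_n^q}{\Delta_n}\right) \stackrel{?}{=} \frac{\widehat{\Delta}_{n-1}^q}{\Delta_{n-1}}\sum_{j=1}^{n-1}q^{j-1}\tope{j,n}f
\]
without the $x_n=0$ specialization. The remark following \Cref{th:qds_as_trims} notes explicitly that, unlike the $q=1$ case (where the parenthetical in~\eqref{eq:manifest_polynomiality} collapses to a polynomial), at generic $q$ the intermediate expression $\tau_n(f\widehat{\Delta}_n^q/\Delta_n)$ is \emph{not} of the form $(\widehat{\Delta}_{n-1}^q/\Delta_{n-1})\cdot(\text{polynomial})$, so the recursion you want to iterate is false as an identity of rational functions. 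This is not a bookkeeping issue to ``verify by matching terms''; the one-step factorization simply does not exist beyond degree $n-1$, and any extension would require a fundamentally different decomposition of the full symmetrization.

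The second gap is a concrete error in your positivity step. You claim that after the Bergeron--Sottile substitution one obtains a nonnegative combination of terms $x_k^\bullet\schub{u}$, ``which are in turn Schubert-positive by iterated Monk's rule.'' This is not true. Monk's rule governs multiplication by $\schub{s_k}=x_1+\cdots+x_k$, not by the single variable $x_k$; multiplication by $x_k$ alone is governed by Lascoux's transition formula, which has both positive and negative terms. For instance $x_2 = \schub{132}-\schub{213}$, so already $x_2\cdot\schub{\idem}$ is not Schubert-positive. Consequently the proof of \Cref{cor:higher_deg_schubert_ds} does \emph{not} establish Schubert-positivity of $\tope{i,n}\schub{w}$; it only establishes monomial positivity (one gets a nonnegative combination of monomials $x_n^j$ times nonnegative polynomials $\schub{v}$). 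With only monomial positivity in hand, you cannot conclude a nonnegative Schur expansion of $\langle\schub{w}\rangle_n^q$, and the final reduction to $P$-Hall--Littlewood positivity via Kostka--Foulkes polynomials collapses. Note also that your plan, if it could be repaired, would prove the strictly stronger statement that $\langle\schub{w}\rangle_n^q$ is Schur-positive with coefficients in $\ZZ_{\geq 0}[q]$; the conjecture only asserts $P$-Hall--Littlewood positivity, and the authors' remarks (e.g.\ that at $q=1$ one recovers a monomial positivity statement) suggest they are not expecting Schur-positivity.
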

At $q=1$ this recovers the nonnegative expansion involving monomial symmetric polynomials.
We also note that the appearance of $q^{-1}$ is simply a reflection of the fact that our definition of $q$-DS employed factors $qx_i-x_j$ (keeping with the choice in \cite{NT21}) rather than $x_i-qx_j$ used in the definition of the $P$-Hall--Littlewoods. 
With this latter choice, Conjecture~\ref{conj:HL_positivity} becomes a $P$-Hall--Littlewood positivity statement with coefficients in $\NN[q]$.
The conjecture is already nontrivial in the simplest case where $\schub{w}$ is a dominant monomial $x_{1}^{\lambda_1}\cdots x_{n}^{\lambda_n}$ where $\lambda_1\geq \cdots \geq \lambda_n$, i.e. when $w$ is a 132-avoiding permutation.
The $q=1$ case is due to Postnikov \cite[Theorem 4.3]{Pos09} and is a simple consequence of a famous result of Brion \cite{Bri88} on integer point transforms of integral polytopes.
We are unaware of a generalization that incorporates the parameter $q$.

\section{A geometric perspective on a  formula of Gessel}
\label{sec:Gessel}

We now give a geometric interpretation for a result of Gessel on extracting the coefficients for the expansion of a symmetric polynomial in $\sym{n}$ into fundamental quasisymmetric polynomials in $\qsym{n}$.

In \cite[Corollary 8.6]{NST_a} the present authors showed that for any polynomial $f\in \poly_n$,  there is a decomposition into fundamental quasisymmetric polynomials given by
$$f(x_1,\ldots,x_n)=\sum_k\sum_{a_k,\ldots,a_n\ge 1}(\ct \mathsf{T}_{k}^{a_k}\mathsf{T}_{k+1}^{a_{k+1}}\cdots \mathsf{T}_{n}^{a_n}f)\,\slide{a_k,\ldots,a_n}(x_1,\ldots,x_n)$$
where  
$\slide{a_k,\ldots,a_n}(x_1,\ldots,x_n)$ for $a_k,\ldots,a_n\ge 1$ is the fundamental quasisymmetric polynomial\footnote{The indexing on fundamental quasisymmetric polynomials is different from \cite{NST_a}.} whose reverse lexicographic leading monomial is $x_k^{a_k}\cdots x_n^{a_n}$.

Recall that the Schubert cells $Y^\lambda$ and the opposite Schubert cells $Y_\mu$ of $\gr(n;N)$ are indexed by partitions $\lambda,\mu$ of integers into at most $n$ parts with the largest part of size at most $N-n$. 
If $\lambda\ge \mu$ coordinate-wise, then there is an associated Grassmannian Richardson variety $Y^\lambda_\mu\subset \gr(n;N)$.
We show that certain $\rt$-Richardson varieties with $\rt\in \rtseq_N$ push forward to the Grassmannian Richardson varieties associated to ribbon shapes.
\begin{defn}\label{defnribbon_attached_to_sequence}
    The ribbon shape $\lambda/\mu$ indexed by $(a_k,\ldots,a_n)$ for $a_k,\ldots,a_n\ge 1$ is the one which associated to partitions
    \begin{align*}
    \lambda&=(a_n+\cdots+a_k-(n-k),\ldots, a_n+a_{n-1}-1,a_n)\text{ of length $n-k+1$, and }\\
        \mu&=(a_n+\cdots+a_{k+1}-(n-k), \cdots ,a_n+a_{n-1}-2,a_n-1)\text{ of length $n-k$}
        .
    \end{align*}
\end{defn}
These are the skew shapes whose rows reading from top to bottom are of lengths $a_k,\ldots,a_n$, and such that there is exactly one square in the same column in two consecutive rows.
For instance take $k=2$, $n=5$, and suppose that $(a_2,\dots,a_5)=(2,1,1,3)$. Then the corresponding $\lambda$ and $\mu$ equal $(7-3,5-2,4-1,3)=(4,3,3,3)$ and $(5-3,4-2,3-1)=(2,2,2)$ respectively. 
See Figure~\ref{fig:ribbon} which demonstrates the ribbon $\lambda/\mu$ with shaded cells corresponding to $\mu$.
\begin{figure}[!ht]
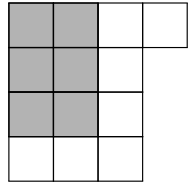

    \centering
 \ytableausetup{centertableaux}
\ytableaushort{}
* {4,3,3,3}
* [*(gray!60)]{2,2,2}
    \caption{The ribbon corresponding to $(a_2,\dots,a_5)=(2,1,1,3)$ according to Definition~\ref{defnribbon_attached_to_sequence}}
    \label{fig:ribbon}
\end{figure}

For fixed $N\ge n$ denote the forgetful map $\pi:\fl{N}\to \gr(n;N)$. 
We will show the following theorem.
\begin{thm}\label{th:geometric_gessel}
    For any $N\ge k+\sum a_i$, let $\rt=\rletter{1}^{N-k-\sum a_i}\tletter{1}^{a_k}\cdots \tletter{n-(k-1)}^{a_n}\rletter{1}^{k-1}\in \rtseq_N.$
    Then
    $$\pi_*(X(\rt))=Y^{\lambda}_{\mu}\subset \gr(n;N)$$
    where $\lambda/\mu$ is the ribbon indexed by $(a_k,\dots,a_n)$. 
\end{thm}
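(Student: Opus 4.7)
The approach is to show that $\pi|_{X(\rt)}\colon X(\rt)\to \gr(n,N)$ maps birationally onto the Grassmannian Richardson variety $Y^\lambda_\mu$. Both varieties are irreducible of dimension $\sum_i a_i$: the former by $\dim X(\rt)=|\rt|_\tletter{}$; the latter as the number of cells in the ribbon $\lambda/\mu$. Hence it suffices to prove (a) $\pi(X(\rt))\subseteq Y^\lambda_\mu$ and (b) $\pi|_{X(\rt)}$ is generically injective; these together force $\pi(X(\rt))=Y^\lambda_\mu$ set-theoretically, and (b) upgrades this to $\pi_*[X(\rt)]=[Y^\lambda_\mu]$ on cycles.

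For (a), I would explicitly determine $u(\rt)$ and $v(\rt)$ via the recursion in Definition~\ref{defn:uvfromrt}, tracking them through the three blocks of $\rt$. The initial $\rletter{1}$'s produce the identity permutation in $S_{N-k-\sum a_i}$. The body $\tletter{1}^{a_k}\cdots\tletter{n-k+1}^{a_n}$ builds up a predictable staircase pattern that I would analyze by induction on the running index $j$ of $\tletter{j}$. The trailing $\rletter{1}^{k-1}$ prepends the segment $1,2,\ldots,k-1$ to both permutations after shifting other values up by $k-1$. Translating the sorted sets $\{v(\rt)(i):i\leq n\}$ and $\{u(\rt)(i):i\leq n\}$ into partitions via the standard Grassmannian bijection $\lambda_r=i_{n-r+1}-(n-r+1)$, one verifies these match the $\lambda$ and $\mu$ of Definition~\ref{defnribbon_attached_to_sequence}. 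Then $\pi(X^{v(\rt)}_{u(\rt)})\subseteq \pi(X^{v(\rt)})\cap \pi(X_{u(\rt)})=Y^\lambda\cap Y_\mu=Y^\lambda_\mu$.

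For (b), I would use the matrix parametrization of the open torus orbit in $X(\rt)$ given just before Theorem~\ref{thm:rtmatrix}. For our specific $\rt$, every $\ast$ in $M(\rt)$ lies within one of the first $n$ columns: a $\ast$ placed by a $\tletter{j}$ operation (with $j\le n-k+1$) is only shifted by the $k-1$ subsequent copies of $\rletter{1}$, ending in column $j+k-1\le n$; moreover, each $\ast$ occupies a distinct row. Projecting to $\gr(n,N)$ by extracting the first $n$ columns therefore preserves all $\ast$-parameters, and each such parameter can be recovered uniquely from a specific Plücker coordinate of the image $n$-plane. This gives injectivity on the open torus orbit, hence birationality.

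The main technical obstacle is the combinatorial bookkeeping required to verify that the sorted first-$n$ values of $v(\rt)$ give exactly the partition $\lambda$ (and similarly for $u(\rt)$ and $\mu$). A conceptually cleaner alternative I would explore in parallel is induction on $\sum_i a_i$: peeling off the last $\tletter{j}$ letter of $\rt$ realizes $X(\rt)$ as a $\PP^1$-bundle over $X(\rt')$ by Theorem~\ref{thm:mainP1degree}, while removing a corresponding corner cell from the ribbon gives a smaller ribbon Richardson variety over which $Y^\lambda_\mu$ admits a compatible $\PP^1$-fibration; the inductive step then reduces to verifying compatibility of these $\PP^1$-structures under $\pi$.
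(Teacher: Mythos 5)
Your proposal takes a genuinely different route from the paper. Part~(a), identifying $u(\rt),v(\rt)$ explicitly and matching their Grassmannian projections with $\lambda,\mu$, is essentially the paper's Lemma~\ref{lem:special_u_v} followed by a Lehmer-code computation — same content, phrased with sorted sets instead of codes. The real divergence is in part~(b). The paper invokes Proposition~\ref{prop:speyer}, the $n$-Bruhat-order criterion from Speyer's survey: birationality of $\pi|_{X^v_u}\to Y^{g(v)}_{g(u)}$ reduces to checking $u\le_n v$ (verified in Lemma~\ref{lem:special_u_v}) and the elementary length equation $\ell(v)-\ell(u)=\ell(g(v))-\ell(g(u))$. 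This is both short and robust. You instead work through the matrix parametrization of the open torus orbit. Your key structural observation — that all $\ast$'s of $M(\rt)$ land in the first $n$ columns because the indices of the $\tletter{j}$ block are weakly increasing (so no later $\tletter{j'}$ shifts an earlier $\ast$), and the trailing $\rletter{1}^{k-1}$ shifts a $\ast$ born in column $j$ to column $j+k-1\le n$ — is correct and is a nice alternative to the abstract criterion.

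However, the claim that "each such parameter can be recovered uniquely from a specific Plücker coordinate of the image $n$-plane" is asserted, not proved, and this is the load-bearing step for generic injectivity. It does hold for these ribbon-type $\rt$: after listing the $n$ rows carrying the $1$'s of the first $n$ columns in suitable order, the reference minor is anti-triangular with determinant $\pm1$, and replacing the row of any one $1$ by the row carrying a given $\ast$ in the same column again yields an anti-triangular minor equal to $\pm$ that $\ast$. But this hinges on the "one $\ast$ per row, staircase of $1$'s" shape specific to the ribbon $\rt$ (it is not true for arbitrary $\rt\in\rtseq_N$), and your sketch does not supply the argument. You would need to establish this anti-staircase structure explicitly — which is comparable work to what the paper saves by citing Speyer's criterion. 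The alternative inductive strategy at the end has an additional hiccup: for $k>1$ the last letter of $\rt$ is $\rletter{1}$, not $\tletter{j}$, so you cannot peel a $\PP^1$-bundle directly; you must first strip the trailing $\rletter{1}^{k-1}$ (an isomorphism of $X(\rt)$, but one that changes the ambient $\fl{N}$ and hence the target Grassmannian), which reintroduces bookkeeping not addressed in your sketch.
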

As a corollary of this geometric fact we show how to recover a formula of Gessel for the coefficient of the fundamental quasisymmetric polynomial $\slide{a_k,\ldots,a_n}(x_1,\ldots,x_n)$ in the fundamental quasisymmetric polynomial expansion of $f$.
\begin{cor}[{\cite[Theorem 3]{Ges84}}]
    For $a_k,\ldots,a_n\ge 1$, the coefficient of $\slide{a_k,\ldots,a_n}(x_1,\ldots,x_n)$ in $f\in \sym{n}$ is $\langle s_{\lambda/\mu},f\rangle_{\mathrm{Hall}}$ where $\lambda/\mu$ is the ribbon shape for $(a_k,\ldots,a_n)$ and $\langle ,\rangle_{\mathrm{Hall}}$ is the Hall inner product on $\sym{n}$.
\end{cor}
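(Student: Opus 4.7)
The plan is to chain together four observations to pass from the coefficient extraction to the Hall inner product. Throughout, fix $N \ge k + \sum a_i$ and let $\rt\in \rtseq_{N}$ be the sequence from Theorem~\ref{th:geometric_gessel}.

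First, I would invoke \cite[Corollary 8.6]{NST_a}, which asserts that the coefficient of $\slide{a_k,\ldots,a_n}(x_1,\ldots,x_n)$ in the fundamental quasisymmetric expansion of any $f\in\poly_n$ equals $\ct\,\tope{k}^{a_k}\cdots\tope{n}^{a_n} f$. Thus it suffices to prove
\[
\ct\,\tope{k}^{a_k}\cdots\tope{n}^{a_n} f \;=\; \langle s_{\lambda/\mu},\,f\rangle_{\mathrm{Hall}}
\]
for $f\in \sym{n}$.

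Second, I would rewrite the left-hand side as $\prodoperator{\rt}^N f$. The commutation relation $\tope{i}\rope{1} = \rope{1}\tope{i+1}$ from \eqref{eq:r_t_com_1} allows us to slide the trailing block $\rope{1}^{k-1}$ in $\rt$ past the $\tope{}$-block, shifting each $\tope{j}$ to $\tope{j+k-1}$, yielding
\[
\prodoperator{\rt} \;=\; \rope{1}^{\,N-1-\sum a_i}\,\tope{k}^{a_k}\tope{k+1}^{a_{k+1}}\cdots\tope{n}^{a_n}.
\]
By degree considerations both sides of the desired identity vanish outside the homogeneous component of $f$ of degree $\sum a_i$; on that component $\tope{k}^{a_k}\cdots\tope{n}^{a_n} f$ is already a constant, left invariant by the leading $\rope{1}$'s, so $\prodoperator{\rt}^N f = \ct\,\tope{k}^{a_k}\cdots\tope{n}^{a_n} f$ as desired.

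Third, I would pass to geometry. By Theorem~\ref{thm:mainP1degree}, $\prodoperator{\rt}^N f = \deg_{X(\rt)} f$. Since $f\in\sym{n}$, it is the pullback $\pi^*\tilde f$ of a class $\tilde f\in H^\bullet(\gr(n;N))$ under $\pi:\fl{N}\to \gr(n;N)$. Dimensions match ($\dim X(\rt) = \sum a_i = \dim Y^\lambda_\mu$), and Theorem~\ref{th:geometric_gessel} identifies $\pi_*[X(\rt)] = [Y^\lambda_\mu]$ in $H^\bullet(\gr(n;N))$, so the projection formula gives $\deg_{X(\rt)}\pi^*\tilde f = \deg_{Y^\lambda_\mu}\tilde f$. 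Finally, I would invoke the classical fact on the Grassmannian that for $\tilde f\in\sym{n}$ of the appropriate degree one has $\deg_{Y^\lambda_\mu}\tilde f = \langle s_{\lambda/\mu}, \tilde f\rangle_{\mathrm{Hall}}$; this is a direct computation using the Schur-class description of Schubert cycles, the Poincar\'e pairing $\int_{\gr(n;N)} s_\alpha s_\beta = \delta_{\alpha,\beta^c}$, and the Hall-adjointness of multiplication by $s_\mu$ with skewing by $s_\mu$. Chaining these equalities yields the corollary.

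The main obstacle I anticipate is the last step: the sign/complement conventions on the Grassmannian have to be tracked carefully against the paper's conventions for $Y^\lambda_\mu$ (dimension $|\lambda/\mu|$ with $\lambda\supseteq\mu$), since the correct identification $[Y^\lambda]=s_{\lambda^c}$ versus $s_\lambda$ depends on whether $Y^\lambda$ is indexed by its codimension or its dimension. The remaining steps are essentially bookkeeping plus direct applications of Theorems~\ref{thm:mainP1degree} and~\ref{th:geometric_gessel}.
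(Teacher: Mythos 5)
Your proof is correct and takes essentially the same approach as the paper: both invoke \cite[Corollary 8.6]{NST_a} for the coefficient formula, use the commutation $\tope{i}\rope{1}=\rope{1}\tope{i+1}$ together with $\ct\rope{1}^m=\ct$ to identify $\ct\,\tope{k}^{a_k}\cdots\tope{n}^{a_n}$ with $\prodoperator{\rt}^N$, apply Theorem~\ref{thm:mainP1degree} to get $\deg_{X(\rt)}f$, push forward to the Grassmannian via Theorem~\ref{th:geometric_gessel} and the projection formula, and close with the classical identification $\deg_{Y^\lambda_\mu}=\langle s_{\lambda/\mu},\cdot\rangle_{\mathrm{Hall}}$. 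The only cosmetic difference is that you chain the equalities in the opposite direction and spend an unnecessary sentence on degree bookkeeping where $\ct\rope{1}^m=\ct$ suffices unconditionally.
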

\begin{proof}
The Hall inner product with a skew Schur function $s_{\lambda/\mu}$ computes the degree of $f$ on the Grassmannian Richardson variety $Y^{\lambda}_\mu\subset \gr(n;N)$. 
The pullback $\pi^*:H^\bullet(\operatorname{Gr}(n;N))\to H^\bullet(\fl{N})$ identifies $H^\bullet(\operatorname{Gr}(n;N))$ with the subalgebra of $H^\bullet(\fl{N})=\coinv{N}$ generated by $\sym{n}$.  
Therefore
     $$\langle s_{\lambda/\mu},f\rangle_{\mathrm{Hall}}=\deg_{Y^\lambda_\mu}f=\deg_{\gr(n;N)} \pi_*[X(\rt)]f=\deg_{\fl{N}}[X(\rt)]f=\prodoperator{\rt}^N\,f=\ct \mathsf{T}_{k}^{a_k}\cdots \mathsf{T}_{n}^{a_n}f,$$
    where in the last step we used that $\mathsf{T}_{i-(k-1)}^{a_i}\rope{1}^{k-1}=\rope{1}^{k-1}\mathsf{T}_{i}^{a_i}$ and $\ct\rope{1}^{N-1-\sum a_i}=\ct$.
\end{proof}

\begin{rem} 
    To emphasize the contrast to the geometric approach here,
   we briefly remark on Gessel's proof\footnote{which while phrased in the language of (quasi)symmetric functions,  restricts to the finite variable case stated earlier} in \cite{Ges84}. 
   Given a symmetric function $f$, the coefficient of a fixed monomial symmetric function $m_{\lambda}$ equals $\langle f,h_{\lambda}\rangle_{\mathrm{Hall}}$ where $h_{\lambda}$ is the complete homogenous symmetric function associated with $\lambda$. Observing that the coefficient of $m_{\lambda}$ is in fact equal to the coefficient of the monomial quasisymmetric function $M_{\alpha}$ for  \emph{any} sequence $\alpha$ of positive integers that rearranges to $\lambda$, we see that the monomial quasisymmetric expansion may thus be easily obtained. 
   The punchline then relies on the observation that the expansion of a monomial quasisymmetric function in terms of fundamental quasisymmetric functions is the same as an expansion (going back to MacMahon \cite{MacM}) of skew ribbon Schurs in terms of homogenous symmetric functions.
\end{rem}

We prove Theorem~\ref{th:geometric_gessel} in the rest of this section. 
For $w\in S_N$, define $g(w)$ to be the $n$-Grassmannian permutation obtained by sorting the first $n$ and the last $N-n$ entries in the one-line notation of $w$. Then we always have $\pi(X^v_u)\subset Y^{g(v)}_{g(u)}$.
Note that $\pi(X_{u}^v)$ is a \emph{projected Richardson variety} in the sense of \cite{KLS14}, or \emph{projection variety} in the sense of \cite{BC12}.
In fact, since $\pi$ is a map into $\gr(n;N)$, we obtain \emph{positroid varieties}.
\begin{prop}
\label{prop:speyer}    $\pi|_{X^v_u}:X^v_u\to Y^{g(v)}_{g(u)}$ is birational if and only if $u\le_n v$ and $\ell(v)-\ell(u)=\ell(g(v))-\ell(g(u))$. (Here $\le_n$ stands for comparison in the $n$-Bruhat order.)
\end{prop}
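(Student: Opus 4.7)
The plan is a generic-fiber analysis. Both $X^v_u$ and $Y^{g(v)}_{g(u)}$ are irreducible projective and $\pi(X^v_u)\subset Y^{g(v)}_{g(u)}$, so $\pi|_{X^v_u}$ is birational iff it is surjective with generic fiber a reduced point. Surjectivity combined with generic injectivity forces $\dim X^v_u=\dim Y^{g(v)}_{g(u)}$, giving the length equality $\ell(v)-\ell(u)=\ell(g(v))-\ell(g(u))$. Conversely, once this length equality is assumed, birationality reduces to showing that the generic fiber is a single point, since surjectivity then follows by dimension (the image is closed, irreducible, and of the correct dimension inside an irreducible variety).

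The key step is analyzing $\pi^{-1}(W)\cap X^v_u$ for generic $W\in Y^{g(v)}_{g(u)}$. The fiber $\pi^{-1}(W)$ is canonically $\fl{W}\times \fl{\CC^N/W}$, and writing $u=g(u)\cdot(\tau^u_1,\tau^u_2)$ and $v=g(v)\cdot(\tau^v_1,\tau^v_2)$ for the standardizations of the first $n$ and last $N-n$ entries of the one-line notation (so $\tau^\bullet_1\in S_n$ and $\tau^\bullet_2\in S_{N-n}$), the Schubert and opposite-Schubert conditions defining $X^v_u$ restrict on a generic fiber to precisely those defining the product Richardson variety $X^{\tau^v_1}_{\tau^u_1}\times X^{\tau^v_2}_{\tau^u_2}$, with respect to reference flags on $W$ and $\CC^N/W$ induced from the standard and opposite flags on $\CC^N$. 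Thus the generic fiber is a single point iff $\tau^u_i=\tau^v_i$ for $i=1,2$, i.e.\ iff no inversion of $u^{-1}v$ is supported entirely within $\{1,\dots,n\}$ or entirely within $\{n{+}1,\dots,N\}$---the standard characterization of $u\le_n v$ due to Bergeron--Sottile.

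To complete the ``if'' direction constructively, I would also induct on $\ell(v)-\ell(u)$ by choosing a saturated chain $u=u_0\prec_n u_1\prec_n\cdots\prec_n u_m=v$ in the $n$-Bruhat order. Each cover is multiplication by some $s_{ij}$ with $i\le n<j$, and \eqref{eq:Ri_sottile} together with \Cref{thm:pieriRiTi} exhibits the corresponding elementary projection as a birational Pieri-type move on the positroid side; composing gives the full birationality. For the ``only if'' direction, the fiber analysis above directly supplies the required constraints on reduced words.

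The main obstacle is the generic-fiber decomposition claimed in the second paragraph: one must choose a reference configuration for $(W,\CC^N/W)$ sufficiently transverse to both the standard and opposite flags so that the induced Schubert data on the two factors factors cleanly as a product of Richardson varieties associated to $(\tau^u_i,\tau^v_i)$. This amounts to careful bookkeeping of the Bruhat stratification under the parabolic decomposition of $S_N$ into $W_P$-cosets, and the dimension counts must be pinned down precisely to rule out degenerations in the fiber that would artificially shrink it below the expected product Richardson variety.
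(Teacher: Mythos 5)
Your approach takes a genuinely different, and much harder, route than the paper's, which simply cites \cite[Proposition~1.24]{speyer2024richardson} (giving birationality of $\pi|_{X^v_u}$ onto its image $\pi(X^v_u)$ if and only if $u\le_n v$) and then notes that the irreducible closed subvariety $\pi(X^v_u)\subset Y^{g(v)}_{g(u)}$ equals the irreducible $Y^{g(v)}_{g(u)}$ exactly when $\dim X^v_u=\ell(v)-\ell(u)$ agrees with $\dim Y^{g(v)}_{g(u)}=\ell(g(v))-\ell(g(u))$. You are instead attempting to re-derive Speyer's result from scratch via a generic-fiber analysis, which is a substantially heavier task than the proposition actually requires.

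That analysis currently has two genuine gaps. First, the combinatorial chain at the end of your second paragraph is false: with $n=2$, $N=4$, $u=2134$, $v=2314=us_{23}$ we have $u\le_2 v$ (a single $2$-Bruhat cover), and $u^{-1}v=s_2$ has no inversion supported in $\{1,2\}$ or $\{3,4\}$, yet $\tau^u_1=21\ne 12=\tau^v_1$. This is not a fluke: the Bergeron--Sottile description of $u\le_n v$ is asymmetric in $u$ and $v$ (it forbids an ascent of $u$ paired with a descent of $v$ inside a block, but not the reverse pairing), whereas $\tau^u_i=\tau^v_i$ is a symmetric condition, so the two cannot coincide. Second, the assertion that the fiber of $\pi|_{X^v_u}$ over a generic point of $Y^{g(v)}_{g(u)}$ is precisely the product Richardson variety $X^{\tau^v_1}_{\tau^u_1}\times X^{\tau^v_2}_{\tau^u_2}$ is stated but not established; you flag it yourself as the main obstacle, and it carries essentially all of the content. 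Until that fiber description (and the correct combinatorial translation of it, which is evidently subtler than the standardization pair $(\tau^u,\tau^v)$) is pinned down, the argument is not a proof. The cited result of Speyer isolates exactly the projected-Richardson fact one would otherwise have to establish here.
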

\begin{proof}
As recalled in {\cite[Proposition 1.24]{speyer2024richardson}},  
the map $\pi|_{X^v_u}:X^v_u\to \pi(X^{v}_u)$ is birational if and only if $u\le_n v$. The result follows because $X^v_u$ and $Y^{g(v)}_{g(u)}$ are irreducible and we have
 $\dim X^v_u=\ell(v)-\ell(u)$ and $\dim Y^{g(v)}_{g(u)}=\ell(g(v))-\ell(g(u))$.
\end{proof}

\begin{lem}\label{lem:special_u_v}
    Letting $\rt=\rletter{1}^{N-k-\sum a_i}\tletter{1}^{a_k}\cdots \tletter{n-(k-1)}^{a_n}\rletter{1}^{k-1}\in \rtseq_N$ as in the statement of Theorem~\ref{th:geometric_gessel} we have
    \begin{align*}
        u(\rt)&=1\cdots (k-1) p_{k+1}\cdots p_{n}k\cdots\\
        v(\rt)&=1\cdots (k-1)p_k\cdots p_{n}k\cdots 
    \end{align*}
    where $p_i=k+\sum_{j=i}^na_j$ and the tails of unlisted numbers are in increasing order. Furthermore we have $u(\Omega)\leq_n v(\Omega)$.
\end{lem}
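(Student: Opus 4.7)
The plan is to apply Definition~\ref{defn:uvfromrt} iteratively to $\Omega$. Set $S=\sum_{i=k}^{n}a_i$. Since $\ins_1(\idem)=\idem$ in $S_\infty$, the initial block of $\rletter{1}$'s leaves both coordinates at the identity, so it suffices to study the effect of $\tletter{1}^{a_k}\tletter{2}^{a_{k+1}}\cdots\tletter{n-k+1}^{a_n}$ applied to $(\idem,\idem)$ and then to apply $\ins_1^{k-1}$, which prepends $1,2,\ldots,k-1$ and shifts every remaining entry up by $k-1$.

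The heart of the computation is an induction on $m\in\{k,k+1,\ldots,n\}$. Writing $q_j^{(m)}=1+\sum_{\ell=j}^{m}a_\ell$, the inductive claim is that after applying $\tletter{1}^{a_k}\tletter{2}^{a_{k+1}}\cdots\tletter{m-k+1}^{a_m}$ the resulting pair is
\[
u^{(m)}=q_{k+1}^{(m)}\,q_{k+2}^{(m)}\,\cdots\,q_m^{(m)}\,\langle\text{tail}\rangle,\qquad
v^{(m)}=q_k^{(m)}\,q_{k+1}^{(m)}\,\cdots\,q_m^{(m)}\,\langle\text{tail}\rangle,
\]
where $\langle\text{tail}\rangle$ denotes the remaining positive integers in increasing order. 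The base case $m=k$ reduces to the direct computation $\ins_2^{a_k}(\idem)=(a_k{+}1)\,1\,2\,\cdots\,a_k\,(a_k{+}2)\,\cdots$. For the inductive step one applies $\ins_{m-k+2}^{a_{m+1}}$ to $u^{(m)}$ and $\ins_{m-k+3}^{a_{m+1}}$ to $v^{(m)}$: each individual $\ins_r$ increments the first $r-1$ entries by one and plants a fresh $1$ at position $r$, so after $a_{m+1}$ iterations each leading value $q_j^{(m)}$ is raised by $a_{m+1}$ to become $q_j^{(m+1)}$, the first tail slot (the old $1$) is promoted to $1+a_{m+1}=q_{m+1}^{(m+1)}$, and a new $1$ sits where the tail now resumes. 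Finally, $\ins_1^{k-1}$ transforms $q_j^{(n)}+(k-1)=p_j$ and sends the tail $1,2,\ldots$ to $k,k+1,\ldots$, giving exactly the claimed formulas.

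For $u(\Omega)\leq_n v(\Omega)$, one reads off from the explicit formulas that the parabolic factorizations through $S_n\times S_{N-n}$ coincide in their parabolic part:
\[
u(\Omega)=g(u(\Omega))\cdot\sigma,\qquad v(\Omega)=g(v(\Omega))\cdot\sigma,
\]
where $\sigma\in S_n$ is the involution fixing $\{1,\ldots,k-1\}$ and reversing $\{k,\ldots,n\}$. The Grassmannian permutations $g(u(\Omega))$ and $g(v(\Omega))$ correspond to the inner and outer partitions $\mu\subseteq\lambda$ of the ribbon $\lambda/\mu$ from Definition~\ref{defnribbon_attached_to_sequence}, so $g(u(\Omega))\leq g(v(\Omega))$ in Bruhat order (partition containment), and right multiplication by the common $\sigma$ yields $u(\Omega)\leq v(\Omega)$ in Bruhat order. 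A direct count gives $\ell(v(\Omega))-\ell(u(\Omega))=S=|\lambda/\mu|=\ell(g(v(\Omega)))-\ell(g(u(\Omega)))$. With $u(\Omega)\leq v(\Omega)$ together with matching length differences for $u,v$ and $g(u),g(v)$, the Bergeron--Sottile characterization of the $n$-Bruhat order gives $u(\Omega)\leq_n v(\Omega)$.

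The main obstacle is the inductive step in the computation of $u^{(m)}$ and $v^{(m)}$: one must verify that a single $\ins_r$ correctly increments the leading $q$-values, consumes exactly one slot from the tail (promoting its $1$), and reinserts a fresh $1$ precisely at the position where the tail will resume, so that the prescribed shape is preserved block by block. The verification of the parabolic compatibility $\sigma_u=\sigma_v$ is a short computation once the formulas are in hand.
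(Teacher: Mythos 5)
The induction computing $u(\Omega)$ and $v(\Omega)$ is correct and is essentially the same argument the paper gives (the paper simply states the closed form of $\ins_{j_1}^{c_1}\cdots\ins_{j_m}^{c_m}(\idem)$ without spelling out the induction).

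For the second half your route differs from the paper's: the paper directly verifies the two Bergeron--Sottile conditions listed just before the lemma's use, whereas you pass through the parabolic factorization $u(\Omega)=g(u(\Omega))\sigma$, $v(\Omega)=g(v(\Omega))\sigma$ with a common $\sigma$, and through partition containment $\mu\subseteq\lambda$. That structural observation is correct and is arguably a cleaner explanation of \emph{why} the conditions hold. However, the final inference has a genuine gap. You conclude ``$u\le v$ together with $\ell(v)-\ell(u)=\ell(g(v))-\ell(g(u))$, hence $u\le_n v$ by the Bergeron--Sottile characterization.'' That implication is false in general, and it is not what the cited characterization says. Counterexample: $N=4$, $n=2$, $u=1243$, $v=4123$. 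Then $u\le v$ in Bruhat order, $\ell(v)-\ell(u)=2=\ell(g(v))-\ell(g(u))$ (with $g(u)=1234$, $g(v)=1423$), yet condition (1) of the Bergeron--Sottile criterion fails at $a=2$ since $u(2)=2>1=v(2)$, so $u\not\le_2 v$. The culprit is that here $\sigma_u=s_3\ne s_1=\sigma_v$, so matching \emph{lengths} of the parabolic parts is not the same as the parabolic parts being \emph{equal}.

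The good news is that the stronger facts you actually establish \emph{do} suffice, and closing the gap is short. Because $\sigma\in S_n$ and $g(u(\Omega))|_{\{1,\ldots,n\}}$, $g(v(\Omega))|_{\{1,\ldots,n\}}$ are both increasing, any pair $a<b\le n$ has $u(a)<u(b)\iff\sigma(a)<\sigma(b)\iff v(a)<v(b)$ (and similarly for $n<a<b$), which gives condition (2). And the partition containment $\mu\subseteq\lambda$ is equivalent to $g(u)(c)\le g(v)(c)$ for $c\le n$ and $g(u)(c)\ge g(v)(c)$ for $c>n$; applying these at $c=\sigma(a)$ gives condition (1). So you should drop the appeal to ``matching lengths'' and instead say explicitly that $\sigma_u=\sigma_v$ together with $g(u(\Omega))\le g(v(\Omega))$ implies both Bergeron--Sottile conditions. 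The length computation can be kept as a sanity check but it is not what carries the inference.
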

\begin{proof}
    The first half of the claim rests on the elementary observation that for a sequence $c_2,\ldots,c_m\ge 1$ we have
$$\ins_{m}^{c_m}\cdots \ins_{2}^{c_2}(\idem)=b_2b_3\cdots b_m 1 \cdots$$
where $b_i=1+\sum_{j=i}^mc_i$ and the tail of unlisted numbers is given by the numbers in $\NN\setminus \{b_2,\ldots,b_m\}$ listed in increasing order.

For the second half, it is not hard to see that $u(\rt)\le_n v(\rt)$  from the characterization \cite[Section 3.1]{BS98} that for $u,v$ permutations, we have $u\le_n v$ if
\begin{enumerate}
    \item $i\le n <j$ implies $u(i)\le w(i)$ and $u(j)\ge w(j)$, and
    \item if $i<j$, $u(i)<u(j)$, and $v(i)>v(j)$, then $i\le n < j$.
\end{enumerate}
We leave the details to the reader.
\end{proof}

\begin{proof}[Proof of Theorem~\ref{th:geometric_gessel}]
We set $u\coloneqq u(\Omega)$ and $v\coloneqq v(\Omega)$ for brevity. 
Lemma~\ref{lem:special_u_v} implies that
\begin{align*}
    g(u)&=1\cdots (k-1)kp_n\cdots p_{k+1}\cdots, \\
    g(v)&=1\cdots (k-1)p_n\cdots p_kk\cdots. 
\end{align*}
We have $\ell(g(u))=\ell(u)-(n-k+1)$ and $\ell(g(v))=\ell(v)-(n-k+1)$, so we conclude by Proposition~\ref{prop:speyer} that $\pi|_{X^v_u}:X^v_u\to Y^{g(v)}_{g(u)}$ is birational, and in particular $\pi_*(X^v_u)=Y^{g(v)}_{g(u)}$. It remains to identify $Y^{g(v)}_{g(u)}$.

The Lehmer codes of $g(u)$ and $g(v)$ are
    \begin{align*}
        \lcode{u(\rt)}&=(0^{k-1},0,a_n-1,a_n+a_{n-1}-2,\ldots,a_n+\cdots+a_{k+1}-(n-k))\\
        \lcode{v(\rt)}&=(0^{k-1},a_n,a_n+a_{n-1}-1,\ldots,a_n+\cdots+a_{k}-(n-k),0).
    \end{align*}
This implies that the skew shape $\lambda/\mu$ associated to $u$ and $v$ is the ribbon indexed by $(a_k,a_{k+1},\ldots,a_n)$, and we conclude.
\end{proof}

\begin{eg}\label{eg:gessel}
    Consider the fundamental quasisymmetric polynomial $\slide{2,1,1,3}(x_1,x_2,x_3,x_4)$. Then for $N=9$ we have $\rt=\tletter{1}^2\tletter{2}\tletter{3}\tletter{4}^3\in \rtseq_9$. The permutations $u(\rt),v(\rt)$ are equal to $65412378\cdots$ and $86541237\cdots$.
    On sorting the first four letters in each, we get the $4$-Grassmannian permutations $g(u)$ and $g(v)$ equaling $\textcolor{red}{1456}2378\cdots$ and $\textcolor{blue}{4568}1237\cdots$ with Lehmer codes $(0,2,2,2,0,\dots)$ and $(3,3,3,4,0,\dots)$, which yields the ribbon shape indexed by $(2,1,1,3)$.
\end{eg}

\appendix

\section{Nested Forest combinatorics}
\label{sec:nested_forests_extra}

Here we gather the proofs of various statements of \Cref{sec:compositesTR}.

\begin{lem}
 \label{lem:mnmonoidgeneration}
     The forests of the form $\ul{i}$ and $\ul{i_{\circ}}$ generate the monoid $\mnfor$.
 \end{lem}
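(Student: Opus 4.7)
The plan is to proceed by induction on the complexity $c(\wt{F}) := |\internal{\wt{F}}| + (\text{number of marked roots of }\wt{F})$. When $c(\wt{F})=0$, there are no internal nodes and no marked roots, so every tree of $\wt{F}$ is a trivial unmarked $\times$, forcing $\wt{F}=\emptyset$, which is the empty product of generators. Otherwise, I will produce a ``last step'' in the factorization by exhibiting $\wt{F}=\wt{F'}\cdot \ul{j}$ or $\wt{F}=\wt{F'}\cdot \ul{j_\circ}$ with $c(\wt{F'})=c(\wt{F})-1$, and then conclude by induction.

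The decomposition is guided by reversing the recursive description of the generators: $\ul{j}$ gives a leaf two children, and $\ul{j_\circ}$ inserts a trivial marked tree $\otimes$ between two consecutive leaves. Concretely, I will use two elementary moves:
\begin{enumerate}
\item[(a)] If $\wt{F}$ contains a trivial marked tree $\otimes$ sitting at global leaf position $j$, delete it to obtain $\wt{F'}$; then $\wt{F}=\wt{F'}\cdot \ul{j_\circ}$.
\item[(b)] If $\wt{F}$ contains a terminal internal node $v$ whose two children occupy consecutive global positions $\ell, \ell+1$, collapse $v$ and its two leaves to a single leaf at position $\ell$ to obtain $\wt{F'}$; then $\wt{F}=\wt{F'}\cdot \ul{\ell}$.
\end{enumerate}
In either case $c(\wt{F'})=c(\wt{F})-1$, so the inductive hypothesis applies to $\wt{F'}$.

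The main step, though not difficult, is to verify that (a) or (b) is always available when $\wt{F}\neq \emptyset$. Suppose no $\otimes$ occurs in $\wt{F}$. Since every nested block of $\wt{F}$ must carry a marked root and the only marked trivial tree is $\otimes$, every nested tree is non-trivial and hence contains an internal node. In particular $\wt{F}$ itself has an internal node, hence a terminal node. Among all terminal nodes of $\wt{F}$, pick $v$ minimizing $\ell_R-\ell_L$, where $\ell_L<\ell_R$ are the global positions of its two leaf children. If $\ell_R>\ell_L+1$, then some integer $k\in (\ell_L,\ell_R)$ belongs, by noncrossingness, to a block $B'\subset (\ell_L,\ell_R)$; the tree on $B'$ is not $\otimes$ by assumption, so it contains a terminal node $v'$ whose two children lie in $B'$ and therefore have a strictly smaller gap than $\ell_R-\ell_L$, contradicting the minimality of $v$. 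Hence $\ell_R=\ell_L+1$ and (b) applies.

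Combining this case analysis with the inductive hypothesis applied to $\wt{F'}$ expresses $\wt{F}$ as a product of elementary forests $\ul{i}$ and $\ul{i_\circ}$, completing the proof.
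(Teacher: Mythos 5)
Your proof is correct and follows essentially the same strategy as the paper's: induct on $|\internal{\wt F}| + (\text{number of marked roots})$ and find a ``last factor'' by either deleting a trivial marked tree $\otimes$ (giving $\ul{j_\circ}$) or collapsing a terminal node whose two leaf children occupy consecutive global positions (giving $\ul{\ell}$). The one place you diverge is in locating such a terminal node when no $\otimes$ is present: the paper picks a non-trivial tree with nothing nested beneath it, observes its support must then be an integer interval, and takes any terminal node; you instead run an extremal argument, choosing the terminal node with minimal gap $\ell_R-\ell_L$ and showing the gap is forced to be $1$ because any block strictly inside $(\ell_L,\ell_R)$ would be nested (hence marked, hence non-trivial since no $\otimes$) and would yield a terminal node with strictly smaller gap. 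Both routes are valid and of comparable length. One small imprecision in your write-up: the sentence ``every nested tree is non-trivial\dots In particular $\wt F$ itself has an internal node'' does not quite follow as stated, since $\wt F$ might have no nested trees at all; the cleaner phrasing is that $\wt F\neq\emptyset$ with no $\otimes$ forces some tree to be non-trivial (trivial unmarked trees contribute nothing, and a marked trivial tree would be $\otimes$), which gives the needed internal node.
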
  

\begin{proof}[Proof of Lemma~\ref{lem:mnmonoidgeneration}]
    Let $\wt{G}$ be a nontrivial forest. If the $i$'th leaf is the trivial marked tree $\otimes$, then  $\wt{G}=\wt{F}\cdot \ul{i_{\circ}}$ where $\wt{F}$ is obtained by deleting $\otimes$. Otherwise, consider a tree in $\wt{G}$ which has no forests nested underneath it, so its support is a contiguous discrete interval $\{a,a+1,\ldots,b\}$, and let $v$ be an internal node of this tree which is farthest from the root. Then its children are two consecutive leaves $i,i+1$, and we can write $\wt{G}=\wt{F}\cdot i$ where $\wt{F}$ is obtained by deleting the two children of $v$ (making $v$ a leaf).
    
    We conclude by induction on the sum of the number of marked nodes and the number of internal nodes of $\wt{G}$.
\end{proof}

We recall the usual code map on plane forests.
 \begin{defn}
 \label{defncodemap}
 Let $\nvect$ denote finite supported sequences of natural numbers.
     For each plane forest $F$, define the \emph{code of $F$}, $c(F)=(c_1,c_2,\ldots)\in\nvect$ to be sequence defined by taking $c_i$ to be the number of internal nodes of $F$ whose leftmost leaf descendent is labeled $i\in \NN$.
 \end{defn}
 For marked nested plane forests, we will need to augment the code map. Let $\wt{\nvect}$ denote the set of infinite sequences indexed by $\NN$ whose elements are pairs $(\ins,i)$ with $\ins\in \{0,1\}$ and $i\in \{0,1,2,\ldots\}$, such that only finitely many terms are not $(0,0)$.
    
    We define the augmented code map $$\wt{c}:\mnfor\to \wt{\nvect}.$$ as follows: $\wt{c}(F)_i=(\ins,c)$ where $c$ is the number of non-leaf nodes such that iteratively taking left children leads to the $i$'th leaf, and $\ins=1$ if the highest such node is the root of its connected component and is marked, and $\ins=0$ otherwise.

We note that by its construction, we have
\begin{align}
\label{eq:special_code}
    \wt{c}(\ul{1_{\circ}}^{\ins_1}\cdot\ul{1}^{a_1}\cdot \ul{2_{\circ}}^{\ins_2}\cdot\ul{2}^{a_2}\cdots)=((\ins_1,a_1),(\ins_2,a_2),\ldots).
\end{align}

\begin{proof}[Proof of Theorem~\ref{thm:presentation_mnfor}]
We directly check the relations \begin{align}
    \ul{i}\cdot \ul{j}=\underbrace{\times\cdots \times}_{j-1} \wedge \underbrace{\times\cdots \times}_{i-j-1} \wedge \times\cdots = &\ul{j}\cdot \ul{i+1}\text{ for }i>j\\ \ul{i}\cdot \ul{j_{\circ}}=\underbrace{\times\cdots \times}_{j-1}\otimes \underbrace{\times \cdots \times}_{i-j}\wedge\times \cdots=&\ul{j_{\circ}}\cdot \ul{i+1}\text{ for }i\geq j,\\
        \ul{i_{\circ}}\cdot \ul{j}=\underbrace{\times\cdots\times}_{j-1}\wedge\underbrace{\times\cdots\times}_{i-j-1}\otimes\times \cdots =&\ul{j}\cdot \ul{{(i+1)}_{\circ}}\text{ for }i> j,\\\ul{i_{\circ}}\cdot \ul{j_{\circ}}=\underbrace{\times\cdots\times}_{j-1}\otimes\underbrace{\times\cdots\times}_{i-j}\otimes\times \cdots =&\ul{j_{\circ}}\cdot \ul{{(i+1)}_{\circ}}\text{ for }i\ge j.
\end{align} so there is a monoid morphism $\wt{\Th}\to \mnfor$ given by $i\mapsto \ul{i}$ and $i_{\circ}\mapsto \ul{i_{\circ}}$. Because $\mnfor$ is generated by the elements $\ul{i}$ and $\ul{i_{\circ}}$, the morphism $\wt{\Th}\to \mnfor$ is surjective.

Using the relations in $\wt{\Th}$, we may write any element as $1_{\circ}^{\ins_1}1^{a_1}2_{\circ}^{\ins_2}2^{a_2}\cdots $ with each $\ins_i\in \{0,1\}$. Indeed, let $m\in\wt{\Th}$, and consider any factorization of $m$ in the generators. The relations  of \Cref{defn:augmented_thompson} for $j=1$ and $i>1$ are $i\cdot 1=1\cdot (i+1)$,  $i\cdot 1_{\circ}=1_{\circ}\cdot (i+1)$, $i_{\circ}\cdot 1=1\cdot (i+1)_{\circ}$ $i_{\circ}\cdot 1_{\circ}=1_{\circ}\cdot (i+1)_{\circ}$.  Using those we can obtain a factorization of $m$ where the occurrences of $1$ and $1_{\circ}$ form a prefix. Using the extra relations  $1\cdot 1_{\circ}=1_{\circ}\cdot 2$, $1_{\circ}\cdot 1_{\circ}=1_{\circ}\cdot 2_{\circ}$, we can in fact have this prefix be of the form $1_{\circ}^{\ins_1}1^{a_1}$. It is then easily concluded by induction that we can have the desired factorization, as the relations  $j>1$ are just shifts of the ones for $j=1$.

Now these special factorizations map to distinct elements of $\mnfor$ via the morphism, because of~\eqref{eq:special_code}. We conclude that the morphism is injective, and thus  $\wt{\Th}\cong \mnfor$.

Finally, the augmented code map is surjective by~\eqref{eq:special_code}, and injective because $\wt{\Th}\to \mnfor$ is surjective and we have just shown that the composite $\wt{\Th}\to \mnfor\to \wt{\nvect}$ is injective.
\end{proof}

\begin{proof}[Proof of forward implications of \Cref{thm:faithfulindexing}]
The first forward implication was already proved. Now note that if $\wt{F}=\mnf{\rt}$, then the forest $\mnf{\rletter{1}^N\rt}$ is obtained by marking the first $N$ unmarked outer trees in  $\wt{F}$. It follows that if $\nf{\rt}=\nf{\rt'}$ then there are $N,M\gg 1$ so that $\mnf{\rletter{1}^N\rt}=\mnf{\rletter{1}^M\rt'}$; by the first part, we get $\rope{1}^N\rtc=\rope{1}^M\prodoperator{\rt'}$. Now for any operator $A$ and polynomial $f$ we have $\rope{1}^NAf=\rope{1}^Mf=\ct Af$ for $N,M$ large enough, so we can conclude $\ct\rtc=\ct\prodoperator{\rt'}$.
\end{proof}

We now prove the reverse implications. For this we make use of the following proposition of independent interest:

\begin{prop}
Let $c\in \nvect$. For each leaf $i$, consider the unique path $P_i$ of length $c_i$ towards the root of the tree containing $i$. If any of these paths does not exist, or if the paths do not partition all internal nodes of $\nf{\rt}$ then $\ct \prodoperator{\wt{F}} x^c=0$. Otherwise, $\ct \prodoperator{\wt{F}} x^c=(-1)^m$ where $m$ is the number of edges connecting a node to a right child in $\bigsqcup P_i$.
\end{prop}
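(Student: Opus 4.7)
The plan is to induct on $|\rt|$ for any $\rt\in\rtseq$ with $\mnf{\rt}=\wt{F}$, peeling off the last letter. For the base case $\rt=\emptyset$, we have $\wt{F}=\emptyset$ and $\prodoperator{\wt{F}}=\idem$, so $\ct x^c$ is $1$ if $c=0$ and $0$ otherwise; on the combinatorial side, the paths $P_i$ all have length $0$ and trivially partition the (empty) internal node set precisely when $c=0$, in which case $m=0$ gives $(-1)^0=1$.

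For the inductive step I would write $\rt=\rt'\xletter{}$ so that $\prodoperator{\wt{F}}=\prodoperator{\wt{F}'}\circ\operatorname{up}(\xletter{})$, where $\wt{F}'=\mnf{\rt'}$. If $\xletter{}=\rletter{i}$, then $\wh{F}$ is obtained from $\wh{F}'$ by inserting a trivial leaf at position $i$, so the internal node sets coincide. A direct computation shows $\rope{i}x^c$ vanishes if $c_i>0$ (matching the fact that a path $P_i$ of length $c_i>0$ cannot emanate from a trivial leaf) and otherwise equals $x^{c'}$ with $c'$ obtained by deleting the $i$th entry of $c$; here $P_i$ is the empty path contributing nothing, while the remaining paths in $\wh{F}$ correspond bijectively to paths in $\wh{F}'$ with exponent $c'$, preserving the right-edge count, and the inductive hypothesis closes the case.

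If $\xletter{}=\tletter{i}$, then $\wh{F}$ is obtained from $\wh{F}'$ by turning the $i$th leaf of $\wh{F}'$ into a new internal node $v$ having as left and right children the new leaves $i$ and $i+1$ of $\wh{F}$. A direct computation of $\tope{i}x^c=x_i^{-1}(\rope{i+1}-\rope{i})x^c$ yields four subcases: $\tope{i}x^c=0$ when $c_i$ and $c_{i+1}$ are both positive or both zero; $\tope{i}x^c=x^{c'}$ with $c'_i=c_i-1$ and the later entries of $c$ shifted down by one when $c_i>0$ and $c_{i+1}=0$; and $\tope{i}x^c=-x^{c'}$ with $c'_i=c_{i+1}-1$ and the later entries shifted down by one when $c_i=0$ and $c_{i+1}>0$. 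The vanishing subcases match the combinatorial side, because $v$'s only leaf-descendants are $i$ and $i+1$, so either $v$ lies in no $P_j$ (both exponents zero) or in both $P_i$ and $P_{i+1}$ (both positive), failing the partition condition. In the two nonvanishing subcases, under the identification ``leaf $i$ of $\wh{F}'$ equals $v$ in $\wh{F}$'', the path in $\wh{F}$ from leaf $i$ (respectively leaf $i+1$) with its bottom edge removed matches the path $P_i$ in $\wh{F}'$ of length $c_i-1$ (respectively $c_{i+1}-1$); the removed edge is a left edge in the $c_i>0$ case (so $m=m'$) and a right edge in the $c_{i+1}>0$ case (so $m=m'+1$, exactly absorbing the extra minus sign from $\tope{i}$). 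The inductive hypothesis then produces the claimed sign.

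The main obstacle is the bookkeeping: carefully tracking how leaf labels transform under each generator, how inserting a new leaf or internal node interacts with the noncrossing-partition structure of $\wh{F}$, and how the paths $P_j$ of $\wh{F}$ restrict to paths of $\wh{F}'$. Once this dictionary is in place, the individual subcases reduce to the monomial computation above and a routine sign-parity match.
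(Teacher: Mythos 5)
Your proof is correct and follows essentially the same approach as the paper's: induct on $|\rt|$, peel off the last letter $\xletter{}$, compute $\operatorname{up}(\xletter{})x^c$ explicitly in the three-to-four subcases, and match the result against the combinatorial description by tracking how paths and right-edge counts in $\wt{F}$ relate to those in $\wt{F}'$. The paper's write-up is terser on the path/sign bookkeeping (stating only that the trivial tree at position $i$, resp.\ the terminal node on $\{i,i+1\}$, is removed/collapsed and that the description then holds by induction), whereas you spell that correspondence out, but the argument is the same.
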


\begin{proof}
We note that $\ct \ul{\emptyset}=\ct$, so the description holds for $\emptyset$. Write $\wt{F}=\mnf{\rt}$ for some $\rt\in \rtseq$, and assume the result holds for all smaller length $\rt\in \rtseq$. We write $\rt=(\rt',X)$, and let $\wt{F'}=\mnf{\rt'}$. There are two cases.

If $X=\rletter{i}$ then $$\ct\rtc=\ct\prodoperator{\rt'}\rope{i}x^c=\begin{cases}\ct\prodoperator{\rt'}x_1^{c_1}\cdots x_{i-1}^{c_{i-1}}x_i^{c_{i+1}}\cdots&c_i= 0\\0&c_i\neq 0.\end{cases}$$
The description holds by induction, since $\wt{F}$ has a trivial tree with support $\{i\}$ in this case and $\wt{F'}$ is obtained by removing this trivial tree.

If $X=\tletter{i}$ then $$\ct\rtc=\ct\prodoperator{\rt'}\tope{i}x^c=\begin{cases}\ct\prodoperator{\rt'}x_1^{c_1}\cdots x_{i-1}^{c_{i-1}} x_i^{c_{i}-1}x_{i+1}^{c_{i+2}}\cdots&c_i\ne 0\text{ and }c_{i+1}=0\\
-\ct\prodoperator{\rt'}x_1^{c_1}\cdots x_{i-1}^{c_{i-1}} x_i^{c_{i+1}-1}x_{i+1}^{c_{i+2}}\cdots&c_i=0\text{ and }c_{i+1}\ne 0\\
0&\text{otherwise.}\end{cases}$$
Here $\wt{F}$ has a terminal node with support $\{i,i+1\}$ and $\wt{F'}$ is obtained by transforming it into a leaf. The description holds by induction.
\end{proof}

\begin{proof}[Proof of the reverse implication of \Cref{thm:faithfulindexing}]
We first show how to reconstruct $\wh{F}$ from $\ct\prodoperator{\wt{F}}$. For each $i$, let $b_i$ be maximal such that $f\mapsto \phi( x_i^{b_i}f)$ is not identically zero; then $b_i$ is the distance in $\nf{\rt}$ from leaf $i$ to the root of the tree it lies in. Now for  $i\neq j$, $f\mapsto  \phi(x_i^{b_i}x_j^{b_j}f)$ is identically zero if and only if $i,j$ lie in the same tree of $\nf{\rt}$. Finally, if $i,j$ lie in the same tree in $\nf{\rt}$ then there is a smallest integer $c_{i,j}$ such that $\ct\rtc x_i^{b_i}x_j^{c_{i,j}}f$ is identically zero, namely  the distance from leaf $j$ to the nearest common parent of $i$ and $j$. This information is enough to reconstruct the nested forest.

    Now, we show how to recover $\widetilde{F}$ from $\prodoperator{\widetilde{F}}$. We already know that $\wh{F}=\wh{F'}$ by what was said above, so we need to show that the markings coincide. Now for any $i$ we have $\prodoperator{i\cdot\wt{F}}=\tope{i}\prodoperator{\wt{F}}= \tope{i}\prodoperator{\wt{F'}}=\prodoperator{i\cdot \wt{F'}}$ so the unmarked forests associated to $i\cdot \wt{F}$ and $i\cdot \wt{F'}$ have to coincide. Because the unmarked forests for $\wt{F}$ and $\wt{F'}$ coincide this implies that $i$'th unmarked outer roots of $\mnf{\rt}$ and $\mnf{\rt'}$ are the same, so $\wt{F}=\wt{F'}$ as desired.
\end{proof}

\begin{proof}[Proof of \Cref{prop:fullsupp}]
The proposition holds for $n=1$ as $\rt=\rletter{1}$ in this case and $\mnf{\rt}=\otimes \times \times \cdots$. For $n>1$, the fact that $\mnf{\rt}$ satisfies the properties is then immediate by induction. For the converse statement, by the proof of \Cref{lem:mnmonoidgeneration} we have that $\wt{F}=\wt{F'}\cdot i$ for some $i<n$ or $\wt{F}=\wt{F'}\cdot i_{\circ}$ for some $i\leq n$. Then $\wt{F'}$ satisfies the properties for $n-1$, so we have $\wt{F'}=\mnf{\rt'}$ for some $\rt'\in \rtseq_{n-1}$ by immediate induction, and thus $\wt{F}=\mnf{\rt'\xletter{}}$ with $\xletter{}=\tletter{i}$ with $i<n$ or $\xletter{}=\rletter{i}$ for $i\geq n$. We have thus $\rt'\xletter{}\in\rtseq_n$ which concludes the proof.
\end{proof}

\section{Combinatorics associated to the permutations \texorpdfstring{$u(\rt),v(\rt)$}{u(Omega),v(Omega}}
\label{sec:uvPerms}
Given a permutation $u\in S_\infty$ and $a\geq 1$, let $\ell_a(u)$ be the number of $b\geq a$ such that $u^{-1}(b)\leq u^{-1}(a)$. 
In words, $\ell_a(u)$ counts numbers larger than $a$ that occur before $a$ in the one-line notation of $u$. 
For instance if $w=426153789\cdots$ then $\ell_a(u)$ is given by $4,2,4,1,2,1,1,1,1,\ldots$ for $a=1,2,\ldots$. 

This gives a bijection between $S_\infty$ and sequences of positive integers eventually constant equal to $1$. 
It is clear that the permutations in $S_n$ are characterized as those with $\ell_i(u)\le n+1-i$ for $1\le i \le n$ and $\ell_i(u)=1$ for $i\ge n+1$.
For any sequence $j_1,\ldots,j_{k}$, the permutation $$w=\ins_{j_{1}}\cdots \ins_{j_k}(1)$$
is then characterized by $\ell_a(w)=j_a$ for $a=1,\ldots,k$ and $\ell_a(w)=1$ for $a\ge k+1$.
\begin{cor}
\label{cor:R1uv}
If $|\rt|\le |\rt'|$, then $(u(\rt),v(\rt))=(u(\rt'),v(\rt'))$ if and only if $\rt'=\rletter{1}^{|\rt'|-|\rt|}\rt$.
\end{cor}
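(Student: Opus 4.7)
The strategy is to read the letters of $\rt$ directly off of the pair $(u(\rt),v(\rt))$ via the $\ell_a$ statistics introduced just before the statement, and to observe that padding on the left by $\rletter{1}$ is the only ambiguity.

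The first step is to verify the key shift identity
\[
\ell_1(\ins_j w) = j, \qquad \ell_{a+1}(\ins_j w) = \ell_a(w) \text{ for }a\geq 1.
\]
The first equality is immediate since $\ins_j(w)$ puts the value $1$ in position $j$, so $u^{-1}(1)=j$ and all $j$ positions $\le j$ contribute to $\ell_1$.  For the second, a short case check (splitting on whether $w^{-1}(a-1)\geq j$ or $<j$) shows that for $b\geq a$ one has $\ins_j(w)^{-1}(b)\leq \ins_j(w)^{-1}(a)$ iff $w^{-1}(b-1)\leq w^{-1}(a-1)$; substituting $c=b-1$ recovers $\ell_{a-1}(w)$.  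In particular $\ins_1$ fixes the identity permutation.

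Iterating this, if $\rt=\xletter{1}\cdots \xletter{n}$ and we write $j^{(u)}_i$, $j^{(v)}_i$ for the indices that $\xletter{i}$ contributes to $u$ and $v$ respectively (so $(j^{(u)}_i,j^{(v)}_i)=(j,j)$ if $\xletter{i}=\rletter{j}$ and $(j,j+1)$ if $\xletter{i}=\tletter{j}$), then an easy induction on $n$ using the recursion of \Cref{defn:uvfromrt} gives
\[
\ell_a(u(\rt))=\begin{cases}j^{(u)}_{n-a+1}& 1\leq a\leq n\\ 1& a>n\end{cases},\qquad \ell_a(v(\rt))=\begin{cases}j^{(v)}_{n-a+1}& 1\leq a\leq n\\ 1& a>n.\end{cases}
\]
Crucially, the pair $(j^{(u)}_i,j^{(v)}_i)$ determines $\xletter{i}$: equal indices force $\xletter{i}=\rletter{j}$, while $j^{(v)}_i=j^{(u)}_i+1$ forces $\xletter{i}=\tletter{j^{(u)}_i}$.

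For the ``if'' direction, $\ins_1$ fixes $\idem$, so if $\rt'=\rletter{1}^{|\rt'|-|\rt|}\rt$ the initial $\rletter{1}$ letters contribute identity operations and $(u(\rt'),v(\rt'))=(u(\rt),v(\rt))$.  For the ``only if'' direction, set $n=|\rt|\leq n'=|\rt'|$ and compare the $\ell_a$ sequences.  For indices $a$ with $n<a\leq n'$ we need $(j')^{(u)}_{n'-a+1}=(j')^{(v)}_{n'-a+1}=1$; the second coordinate cannot be $1$ when $\xletter{k}'=\tletter{j}$ (it would force $j+1=1$), so $\xletter{k}'=\rletter{1}$ for $k=1,\ldots,n'-n$.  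For $a\leq n$ the pair $(j^{(u)}_{n-a+1},j^{(v)}_{n-a+1})=((j')^{(u)}_{n'-a+1},(j')^{(v)}_{n'-a+1})$ determines each remaining letter, giving $\xletter{i+(n'-n)}'=\xletter{i}$ and hence $\rt'=\rletter{1}^{n'-n}\rt$.  The only piece requiring any real calculation is the shift identity for $\ins_j$; everything else is bookkeeping on the $\ell$-sequences.
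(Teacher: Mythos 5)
Your argument is correct and follows essentially the same route as the paper: reading $\rt$ off of $(u(\rt),v(\rt))$ via the $\ell_a$ statistics, using the shift identity $\ell_1(\ins_j w)=j$, $\ell_{a+1}(\ins_j w)=\ell_a(w)$, which the paper states without proof just before the corollary. You merely fill in the verification of that shift identity, which the paper treats as evident.
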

\begin{prop}

For fixed $m$, the map $\rt\to (u(\rt),v(\rt))$ is a bijection between length $m$ sequences in $\rtseq$ and pairs of permutations $(u,v)\in S_\infty\times S_\infty$ such that $\ell_a(v)-\ell_a(u)\in\{0,1\}$ for $1\le a \le m$ and $\ell_a(v)=\ell_a(u)=1$ for $a\ge m+1$.    
\end{prop}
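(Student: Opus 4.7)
My plan is to derive an explicit formula for the sequences $(\ell_a(u(\rt)))_{a \ge 1}$ and $(\ell_a(v(\rt)))_{a \ge 1}$ in terms of the letters of $\rt$, and then invoke the bijection between $S_\infty$ and eventually-$1$ sequences of positive integers (stated immediately before the proposition) to recover the pair.

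The key lemma I would establish first is that for any $w \in S_\infty$ and $j \ge 1$,
\[
\ell_1(\ins_j w) = j \quad\text{and}\quad \ell_a(\ins_j w) = \ell_{a-1}(w) \text{ for } a \ge 2.
\]
The $a = 1$ identity is immediate: $\ins_j(w)$ has $1$ in position $j$, and the $j - 1$ values to its left are all $\ge 2$. For $a \ge 2$, the position of $a$ in $\ins_j(w)$ equals $w^{-1}(a-1)$ or $w^{-1}(a-1)+1$ depending on whether $w^{-1}(a-1) < j$. The inserted $1$ at position $j$ contributes nothing since $1 < a$, and the bijection $b \leftrightarrow b - 1$ between values $\ge a$ in $\ins_j(w)$ and values $\ge a - 1$ in $w$ preserves the relation ``at position $\le$ the position of $a$ (resp.\ $a-1$)'', giving $\ell_a(\ins_j w) = \ell_{a-1}(w)$.

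With this lemma, iterating the recursion in \Cref{defn:uvfromrt} for $\rt = \xletter{1} \cdots \xletter{m}$ yields $u(\rt) = \ins_{j_m} \cdots \ins_{j_1}(\idem)$ where $j_i$ is the index of $\xletter{i}$. Peeling off outer insertions $a$ times gives $\ell_a(u(\rt)) = j_{m-a+1}$ for $1 \le a \le m$ and $\ell_a(u(\rt)) = 1$ for $a > m$ (using $\ell_a(\idem) = 1$). The analogous computation yields $\ell_a(v(\rt)) = k_{m-a+1}$ where $k_i = j_i$ if $\xletter{i} = \rletter{j_i}$ and $k_i = j_i + 1$ if $\xletter{i} = \tletter{j_i}$. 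Hence $\ell_a(v(\rt)) - \ell_a(u(\rt)) \in \{0, 1\}$ for $1 \le a \le m$ and both are $1$ for $a > m$, so the map lands in the claimed target set.

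The inverse map now reads off $\rt$ from $(u, v)$: for $1 \le i \le m$, take $\xletter{i} = \rletter{\ell_{m-i+1}(u)}$ if $\ell_{m-i+1}(u) = \ell_{m-i+1}(v)$, and $\xletter{i} = \tletter{\ell_{m-i+1}(u)}$ otherwise. Since $\ell_a(u) \ge 1$ always, the indices are positive integers so this is a well-formed element of $\rtseq$ of length $m$, and the explicit formulas above show it is a two-sided inverse. The main technical obstacle is the case analysis in the lemma on $\ell_a \circ \ins_j$, which requires careful bookkeeping of positions relative to the insertion site $j$ but is otherwise elementary; everything else is an unwinding of the recursion.
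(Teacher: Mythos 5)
Your proof is correct and follows the same approach as the paper: unwind the recursion to express $u(\rt)$ and $v(\rt)$ as iterated insertions, then read off the $\ell_a$-sequences via the observation that $\ell_1(\ins_j w) = j$ and $\ell_a(\ins_j w) = \ell_{a-1}(w)$. The paper states the characterization of $\ins_{j_1}\cdots\ins_{j_k}(\idem)$ by its $\ell$-sequence without proof right before the proposition; your lemma supplies the missing argument and is verified correctly (the case analysis on positions relative to the insertion site $j$ reduces to $w^{-1}(b-1)\le w^{-1}(a-1)$ in all cases).
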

\begin{proof}
    For a length $m$ sequence $\rt=\xletter{1}\cdots \xletter{m}\in \rtseq$, we have $\ell_i(u)=\ell_i(v)=1$ for $i\ge m+1$ and for $1\le i \le m$ we have $$(\ell_i(u),\ell_i(v))=\begin{cases}(k,k)&\xletter{m+1-i}=\rletter{k}\\(k,k+1)&\xletter{m+1-i}=\tletter{k},\end{cases}$$
    which clearly establishes the bijection.
\end{proof}
We now describe what pairs $(u(\rt),v(\rt))$ are produced for $\rt\in \rtseq_n$. This is not particularly a restriction by \Cref{cor:R1uv}, since $\rletter{1}^{N-1-|\rt|}\rt\in \rtseq_{N}$ for $N$ sufficiently large. 

    


\begin{prop}
\label{prop:rtseq_and_permutations}
The map $\rtseq_n\to S_n\times S_n$ given by $\rt\mapsto (u(\rt),v(\rt))$ is an injective map with image those pairs $(u,v)\in S_n\times S_n$ such that $\ell_a(v)-\ell_a(u)\in \{0,1\}$ for $1\le a \le n$.
\end{prop}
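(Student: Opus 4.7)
The plan is to apply the previous proposition (which covers sequences in $\rtseq$ of any fixed length $m$) with $m = n$, and then simply translate the defining condition $\rt \in \rtseq_n$ into a constraint on the pair $(u(\rt), v(\rt))$. Injectivity will come for free from the proposition, so the only substantive task is to identify the image.

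First I would invoke the previous proposition: the length-$n$ sequences in $\rtseq$ are in bijection with pairs $(u, v) \in S_\infty \times S_\infty$ satisfying $\ell_a(v) - \ell_a(u) \in \{0, 1\}$ for $1 \le a \le n$ and $\ell_a(u) = \ell_a(v) = 1$ for $a \ge n+1$. Next, I would recall from the formula displayed in the proof of that proposition that the letter $\xletter{n+1-i}$ in $\rt = \xletter{1}\cdots \xletter{n}$ encodes $(\ell_i(u(\rt)), \ell_i(v(\rt)))$: it equals $(k,k)$ when $\xletter{n+1-i} = \rletter{k}$ and $(k, k+1)$ when $\xletter{n+1-i} = \tletter{k}$. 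Thus the maximum value appearing in the pair $(\ell_i(u), \ell_i(v))$ is $k$ in the first case and $k+1$ in the second.

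Setting $j = n+1-i$, the membership condition $\xletter{j} \in \{\rletter{1}, \tletter{1}, \ldots, \rletter{j-1}, \tletter{j-1}, \rletter{j}\}$ that defines $\rtseq_n$ is precisely the assertion that this maximum value $\ell_i(v(\rt))$ is at most $j = n+1-i$. Since $\ell_i(u(\rt)) \le \ell_i(v(\rt))$ automatically, requiring this for every $i$ is equivalent to $\ell_a(v(\rt)), \ell_a(u(\rt)) \le n+1-a$ for $1 \le a \le n$. Combined with the tail condition $\ell_a(u) = \ell_a(v) = 1$ for $a \ge n+1$ inherited from the previous proposition, this is exactly the characterization of $u(\rt), v(\rt) \in S_n$ recalled at the start of the appendix. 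Conversely, any $(u,v) \in S_n \times S_n$ with $\ell_a(v)-\ell_a(u) \in \{0,1\}$ for $1 \le a \le n$ automatically satisfies the tail condition and, by reversing the translation above, pulls back to an $\rt \in \rtseq_n$. Since the whole argument is a straightforward unwinding of the $\ell$-code correspondence, I do not expect any serious obstacle — the one point to be careful about is keeping track of the index reversal between position in $\rt$ and the subscript on $\ell$.
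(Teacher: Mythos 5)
Your proposal is correct and follows essentially the same route as the paper's proof: both rely on the explicit translation between the letter $\xletter{j}$ at position $j = n+1-i$ and the pair $(\ell_i(u),\ell_i(v))$, together with the $\ell$-code characterization of $S_n$ stated at the start of the appendix. The only cosmetic difference is that you explicitly route through the preceding proposition for injectivity and the $S_\infty$ bijection, whereas the paper proves both directions directly; the substance is identical.
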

\begin{proof}
    For $\rt\in \rtseq_n$, the way that $u(\rt),v(\rt)$ are both of the form $\ins_{j_1}\cdots \ins_{j_{n-1}}(\idem)$ with $j_i\le n+1-i$  which shows that $(u(\rt),v(\rt))\in S_n\times S_n$. Conversely, if $(u,v)\in S_n\times S_n$ with $\ell_a(v)-\ell_a(v)\in \{0,1\}$ for $1\le a \le n$ then setting $\rt=\xletter{1}\cdots \xletter{n}$ where
    $$\xletter{i}=\begin{cases}\rletter{k}&(\ell_{n+1-i}(u),\ell_{n+1-i}(v))=(k,k)\\
    \tletter{k}&(\ell_{n+1-i}(u),\ell_{n+1-i}(v))=(k,k+1)\end{cases}$$
    achieves $(u(\rt),v(\rt))=(u,v)$.
\end{proof}
We have the following special cases with $|\rt|_{\tletter{}}=0$ minimal and $|\rt|_{\tletter{}}=n-1$ maximal.
\begin{prop}
    \label{cor:minmaxrt}\leavevmode
\begin{enumerate}
    \item The pairs of permutations  $(u(\rt),v(\rt))\in S_n\times S_n$ for $\rt\in \rtseq_n$ and $|\rt|_{\tletter{}}=0$ are precisely those with $u=v$.
    \item Let $c_n=n12\cdots (n-1)=s_{n-1}s_{n-2}\cdots s_1$ be the reverse long cycle in $S_n$. The pairs of permutations $(u(\rt),v(\rt))\in S_n\times S_n$ with $\rt\in \rtseq_n$ and $|\rt|_{\tletter{}}=n-1$ are exactly the pairs $(u,uc_n)$ with $u\in S_n$ and $u(n)=n$. 
\end{enumerate}
\end{prop}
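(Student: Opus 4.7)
The plan is to derive both parts directly from \Cref{prop:rtseq_and_permutations}, which parametrizes $(u(\rt), v(\rt))$ via the $(\ell_a)$-statistics: the letter $\xletter{n+1-a} = \rletter{k}$ corresponds to $(\ell_a(u), \ell_a(v)) = (k,k)$, while $\xletter{n+1-a} = \tletter{k}$ corresponds to $(\ell_a(u), \ell_a(v)) = (k, k+1)$.

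For part (1), the condition $|\rt|_{\tletter{}} = 0$ means every letter of $\rt$ is some $\rletter{k}$, which translates to $\ell_a(u) = \ell_a(v)$ for all $a$. Since the tuple $(\ell_1(w), \ldots, \ell_n(w))$ uniquely determines $w \in S_n$, this is equivalent to $u = v$, as desired.

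For part (2), I first note that $\xletter{1} = \rletter{1}$ is forced for any $\rt \in \rtseq_n$, so the condition $|\rt|_{\tletter{}} = n-1$ is equivalent to $\xletter{i}$ being a $\tletter{}$-letter for every $2 \le i \le n$; equivalently, $\ell_a(v) = \ell_a(u) + 1$ for $1 \le a \le n-1$ (while $\ell_n(u) = \ell_n(v) = 1$ holds trivially). It remains to show this condition characterizes pairs $(u, uc_n)$ with $u(n) = n$. For the forward direction, set $a^{*} \coloneqq u(n)$; the identity $\ell_{a^{*}}(u) = n + 1 - a^{*}$ combined with the hypothesis would force $\ell_{a^{*}}(v) = n + 2 - a^{*}$, violating the trivial bound $\ell_b(v) \le n + 1 - b$ unless $a^{*} = n$. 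Hence $u(n) = n$, and writing $u = u_1 \cdots u_{n-1} n$ gives $uc_n = n\, u_1 \cdots u_{n-1}$ by direct computation; comparing the positions of values $\ge a$ preceding the occurrence of $a$ in each one-line notation then shows $\ell_a(uc_n) = \ell_a(u) + 1$ for $1 \le a \le n-1$, with the extra $+1$ coming from the $n$ placed at position $1$ of $uc_n$. Uniqueness of the $\ell$-sequence yields $v = uc_n$, and the reverse direction follows from the same direct computation.

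The main obstacle is the bound argument pinning down $u(n) = n$; everything else is bookkeeping with the $\ell$-statistic. I would deliberately avoid an inductive peel-off of the last letter of $\rt$, which would work but is notationally heavier.
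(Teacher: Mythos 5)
The paper states this proposition without proof, as an immediate consequence of \Cref{prop:rtseq_and_permutations}; your proof supplies exactly the expected derivation via the $\ell_a$-statistics and is correct.

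A couple of small remarks on the bookkeeping, all of which you handled correctly but which are worth making explicit. In part (2), the contradiction $\ell_{a^*}(v) = n+2-a^* > n+1-a^*$ only applies when $a^* \le n-1$; for $a^* = n$ the index $a^*$ falls outside the range where $\ell_a(v) = \ell_a(u)+1$ is required, so no contradiction arises and the conclusion $u(n)=n$ is exactly what one wants. For the computation $\ell_a(uc_n) = \ell_a(u)+1$ you implicitly use that $n$ does \emph{not} contribute to $\ell_a(u)$ (because $u^{-1}(n) = n > u^{-1}(a)$), so the leading $n$ in $uc_n = n\,u_1\cdots u_{n-1}$ is genuinely a new contribution and not a double-count — this is the one spot where the hypothesis $u(n)=n$ is essential for the converse as well. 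With those points made explicit the argument is complete, and your choice to work entirely at the level of $\ell$-sequences rather than peeling off letters of $\rt$ inductively is clean and matches the spirit in which the paper presents the result as a direct corollary.
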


We now describe the Bruhat intervals associated to $\rt$.
\begin{prop}For $x,y$ permutations, if $j,k\in \{i,i+1\}$ then $\ins_j(x)\le \ins_k(y)$ if and only if $j\le k$ and $x\le y$.
\end{prop}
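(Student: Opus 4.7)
The plan is to combine the rank-function criterion for Bruhat order with the Lifting Property. The key preliminary observation: writing the one-line notation of $\ins_j(w)$ as $(w(1)+1,\dots,w(j-1)+1,1,w(j)+1,\dots)$, we see that $s_i$ is an ascent of $\ins_i(w)$ (the values at positions $i,i+1$ are $1$ and $w(i)+1$) and $\ins_{i+1}(w) = \ins_i(w) s_i$. Consequently $\ins_i(w) < \ins_{i+1}(w)$ in Bruhat order with length difference exactly $1$, and $s_i \in D_R(\ins_{i+1}(w))$.

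For the easy case $j = k$, the one-line notations of $\ins_j(x)$ and $\ins_j(y)$ have a common $1$ in position $j$ and otherwise differ by the same uniform shift of values by $1$. Writing $r_{p,q}(w) = |\{a \le p : w(a) \ge q\}|$, one checks directly (splitting on $p < j$ and $p \ge j$) that the rank inequality $r_{p,q}(\ins_j(x)) \le r_{p,q}(\ins_j(y))$ reduces, for every $(p,q)$, to an equivalent rank inequality on $x$ and $y$. Hence by the rank criterion $\ins_j(x) \le \ins_j(y) \iff x \le y$.

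For $(j,k) = (i, i+1)$, I would invoke the Lifting Property (Bj\"orner--Brenti, Prop.~2.2.7) with $s = s_i \in D_R(\ins_{i+1}(y))$: if $\ins_i(x) \le \ins_{i+1}(y)$, then $\ins_i(x) \le \ins_{i+1}(y) s_i = \ins_i(y)$, which by the previous paragraph gives $x \le y$. Conversely, $x \le y$ yields $\ins_i(x) \le \ins_i(y) < \ins_{i+1}(y)$. For $(j,k) = (i+1, i)$ the claim is that $\ins_{i+1}(x) \not\le \ins_i(y)$ for any $x,y$; this I would show by exhibiting a rank violation: the first $i$ entries of $\ins_{i+1}(x)$ are $x(1)+1,\dots,x(i)+1$, all $\ge 2$, so $r_{i,2}(\ins_{i+1}(x)) = i$, while $\ins_i(y)$ has a $1$ at position $i$ forcing $r_{i,2}(\ins_i(y)) = i - 1$.

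The main potential obstacle is just the clean bookkeeping in the $j=k$ reduction of rank inequalities, but this is routine and avoidable by the Lifting Property once the relevant ascent/descent on $\ins_i(w)$ is identified.
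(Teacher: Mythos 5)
Your proposal is correct, but it diverges from the paper in a structurally meaningful way. The paper works entirely within a single combinatorial criterion for Bruhat order — the tableau criterion (Bj\"orner--Brenti, Theorem 2.6.3) — and handles all of $j=k$, $j<k$, and $j>k$ by tracking how the sorted initial segments of $\ins_j(x)$ and $\ins_k(y)$ are obtained from those of $x$ and $y$. You use the equivalent rank-function criterion (Theorem 2.1.5) for the diagonal case $j=k$ and the impossibility $j>k$, but you handle $j<k$ differently: you observe that $\ins_{i+1}(w)=\ins_i(w)s_i$ with $\ell(\ins_{i+1}(w))=\ell(\ins_i(w))+1$, so $s_i\in D_R(\ins_{i+1}(y))$ and $s_i\notin D_R(\ins_i(x))$, and you then invoke the Lifting Property (Bj\"orner--Brenti, Prop.~2.2.7) to reduce $\ins_i(x)\le\ins_{i+1}(y)$ to the already-settled $\ins_i(x)\le\ins_i(y)$. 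This factors the off-diagonal case through a general Coxeter-theoretic tool rather than re-running the bookkeeping, which trades a bit of imported machinery for a cleaner separation of cases; the paper's route is more elementary and self-contained but requires slightly more care in the index-shifting argument when $j<k$. Both are sound — the Lifting Property application is correct since $us>u$ and $vs<v$ together with $u\le v$ do force $u\le vs$ — so this is a valid alternative proof rather than the paper's.
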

\begin{proof}
   Recall the \emph{tableau criterion} \cite[Theorem 2.6.3]{BjBr05}, which says that $a\le b$ in the Bruhat order if for all $t$, the first $t$ entries in the one-line permutation of $a$ when sorted are elementwise less than the first $t$ entries in the one-line permutation of $b$ when sorted.

    In particular, if we have $\ins_r(a)\le \ins_s(b)$, then the tableau criterion applied to the first $s$ entries implies that $1$ appears in the first $r$ entries so $r\le s$. 
    Therefore it remains to show that if $i\le j \le k \le i+1$ then $\ins_j(x)\le \ins_k(y)$ if and only $x\le y$.

    If $i\le j\le k\le i+1$ and $x\le y$ then the tableau criterion shows directly that $\ins_j(x)\le \ins_j(y)\le \ins_k(y)$. Conversely, suppose $\ins_j(x)\le \ins_k(y)$. 
    Then for $1\le r \le j-1$, the first $r$ entries of $x$ and $y$ agree respectively with the first $i$ entries of $\ins_j(x)$ and $\ins_k(y)$ decremented by $1$, while for $r\ge j$ they agree respectively with the first $r+1$ entries of $\ins_j(x)$ and $\ins_k(y)$ after removing $1$ and decrementing the remaining entries by $1$. 
    Hence the tableau criterion for $\ins_j(x)\le \ins_k(y)$ implies the tableau criterion for $x\le y$.
\end{proof}
\begin{cor}
\label{prop:insBruhat}
    If $u\le v$ in the Bruhat order, then the following are true.
    \begin{enumerate}
        \item The map $w\mapsto \ins_i(w)$ is a poset isomorphism between $[u,v]$ and $[\ins_i(u),\ins_i(v)]$.
        \item $[\ins_i(u),\ins_{i+1}(v)]=[\ins_i(u),\ins_i(v)]\sqcup [\ins_{i+1}(u),\ins_{i+1}(v)]$. Furthermore for $w,w'\in [u,v]$ and $j,k\in \{i,i+1\}$ we have
        $\ins_j(w)\le \ins_k(w')$ if and only if $w\le w'$ and $j\le k$.
    \end{enumerate}
\end{cor}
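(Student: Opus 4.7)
The plan is to deduce both parts from the preceding proposition together with the tableau criterion for Bruhat order (\cite[Theorem 2.6.3]{BjBr05}), which asserts that $a\le b$ iff for every $t$ the sorted first $t$ entries of $a$ are componentwise $\le$ those of $b$. The essential point will be locating the position of the entry $1$ in any permutation lying in the intervals under consideration; once that is pinned down, every remaining assertion is a routine appeal to the previous proposition.

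For part (1), the previous proposition already shows that $\ins_i$ is both order-preserving and order-reflecting, so it defines an order embedding $[u,v]\hookrightarrow [\ins_i(u),\ins_i(v)]$. To see the image is all of $[\ins_i(u),\ins_i(v)]$, I would argue that every $z$ in this interval satisfies $z(i)=1$. Applying the tableau criterion at $t=i-1$ to $\ins_i(u)\le z$ forces each of the first $i-1$ entries of $z$ to be $\ge 2$ (since the first $i-1$ entries of $\ins_i(u)$ are $u(1)+1,\ldots,u(i-1)+1$, all $\ge 2$); applying it at $t=i$ to $z\le \ins_i(v)$ forces the minimum of the first $i$ entries of $z$ to be $\le 1$, hence equal to $1$. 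Together these pin $1$ to position $i$, so $z=\ins_i(x)$ for a unique $x\in S_\infty$, and the previous proposition then yields $x\in [u,v]$.

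For part (2), the containments $[\ins_i(u),\ins_i(v)]\cup [\ins_{i+1}(u),\ins_{i+1}(v)]\subseteq [\ins_i(u),\ins_{i+1}(v)]$ are immediate from the previous proposition, which supplies $\ins_i(u)\le \ins_{i+1}(u)$ and $\ins_i(v)\le \ins_{i+1}(v)$. For the reverse inclusion, I would run the same tableau-criterion bookkeeping as in (1), now with the looser upper bound $z\le \ins_{i+1}(v)$: at $t=i-1$ the lower bound $z\ge \ins_i(u)$ still rules out $1$ from the first $i-1$ positions of $z$, while at $t=i+1$ the upper bound forces $1$ to occur somewhere in the first $i+1$ positions. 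Hence $1$ sits at position $i$ or position $i+1$, placing $z$ in the image of $\ins_i$ or $\ins_{i+1}$; the previous proposition then identifies its preimage as an element of $[u,v]$. Disjointness of the two pieces is automatic since they are distinguished by the position of $1$, and the final equivalence $\ins_j(w)\le \ins_k(w')$ (for $w,w'\in [u,v]$, $j,k\in\{i,i+1\}$) is just the restriction of the previous proposition.

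The main (indeed only) obstacle is the tableau-criterion argument that locates the entry $1$ in $z$; the rest is packaging. A subtle but pleasant point is that once the location of $1$ is fixed, whether the preimage $x$ lies in $[u,v]$ follows for free from the order-reflection half of the previous proposition, without having to verify the Bruhat inequalities for $x$ directly.
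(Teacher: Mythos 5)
Your proof is correct; it fills in the details the paper leaves implicit, since the paper states this corollary without giving an explicit proof. The key step you correctly supply — using the tableau criterion at levels $t=i-1$ and $t=i$ (resp.\ $t=i+1$) to locate the entry $1$ in any $z$ lying in the relevant interval, so that $z$ is in the image of $\ins_i$ (resp.\ $\ins_i$ or $\ins_{i+1}$), and then invoking the preceding proposition to pull the Bruhat comparisons back — is exactly what the paper's own proof of the proposition does when it argues ``if $\ins_r(a)\le \ins_s(b)$ then $1$ appears in the first $s$ entries so $r\le s$,'' so your argument is the natural completion of the authors' reasoning.
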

\begin{cor}
\label{prop:Bruhat}
 The Bruhat intervals contained in $[u(\rt),v(\rt)]$ are those of the form $[u(\rt'),v(\rt')]$ with $\rt'$ obtained by replacing some $\tletter{i}$ in $\rt$ with either $\rletter{i}$ or $\rletter{i+1}$.
\end{cor}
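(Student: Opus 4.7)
The plan is to induct on $|\rt|$, using Corollary~\ref{prop:insBruhat} as the main tool. The base case $|\rt|=0$ is immediate: $u(\rt)=v(\rt)=\idem$, and the only sub-interval $[\idem,\idem]$ corresponds to the (empty) $\rt'=\rt$.

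For the inductive step, write $\rt=\rt_0\xletter{}$ and split on $\xletter{}$. If $\xletter{}=\rletter{j}$, then $[u(\rt),v(\rt)]=[\ins_j u(\rt_0),\ins_j v(\rt_0)]$. By part (1) of Corollary~\ref{prop:insBruhat}, the map $w\mapsto \ins_j(w)$ is a poset isomorphism $[u(\rt_0),v(\rt_0)]\cong[u(\rt),v(\rt)]$, so Bruhat sub-intervals of the target are exactly images of Bruhat sub-intervals of the source. By induction these are $[u(\rt_0'),v(\rt_0')]$ for $\rt_0'$ obtained from $\rt_0$ by allowed replacements, and appending $\rletter{j}$ gives $[u(\rt_0'\rletter{j}),v(\rt_0'\rletter{j})]$, where $\rt_0'\rletter{j}$ is obtained from $\rt$ by the same allowed replacements.

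If $\xletter{}=\tletter{j}$, then $[u(\rt),v(\rt)]=[\ins_j u(\rt_0),\ins_{j+1} v(\rt_0)]$. By part (2) of Corollary~\ref{prop:insBruhat}, every element of this interval has the form $\ins_p(x)$ for a unique $p\in\{j,j+1\}$ and $x\in[u(\rt_0),v(\rt_0)]$, and $\ins_p(x)\le\ins_q(y)$ iff $x\le y$ and $p\le q$. Hence any Bruhat sub-interval is of the shape $[\ins_p w,\ins_q w']$ for some $p,q\in\{j,j+1\}$ with $p\le q$ and some sub-interval $[w,w']\subseteq[u(\rt_0),v(\rt_0)]$. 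By induction the inner sub-interval equals $[u(\rt_0'),v(\rt_0')]$ for some $\rt_0'$ obtained from $\rt_0$ by allowed replacements, and the three possible pairs $(p,q)\in\{(j,j),(j,j+1),(j+1,j+1)\}$ correspond exactly to appending $\rletter{j}$, $\tletter{j}$, or $\rletter{j+1}$, which are precisely the three choices allowed for the terminal letter $\tletter{j}$ of $\rt$. Conversely each such choice yields a valid sub-interval, by the same computation in reverse.

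The main obstacle is the $\xletter{}=\tletter{j}$ case: one must argue carefully that part (2) of Corollary~\ref{prop:insBruhat} not only decomposes the interval as a set but correctly classifies all sub-Bruhat intervals into exactly three types, matched bijectively with the three allowed substitutions of $\tletter{j}$. Once this correspondence is set up, the induction closes in a single step.
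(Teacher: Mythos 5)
The paper states this result as a corollary of Corollary~\ref{prop:insBruhat} without giving any explicit proof, so your induction on $|\rt|$ is exactly the argument the authors implicitly intend, and it is correct: the base case is trivial, the $\rletter{j}$ case is handled by part (1) of Corollary~\ref{prop:insBruhat}, and the $\tletter{j}$ case is handled by part (2), with the three pairs $(p,q)\in\{(j,j),(j,j+1),(j+1,j+1)\}$ matching the three admissible fates $\rletter{j},\tletter{j},\rletter{j+1}$ of the last letter. One small point worth making explicit in a polished version: once the endpoints $a=\ins_p(w)$, $b=\ins_q(w')$ are identified, you should note that the entire Bruhat interval $[a,b]$ as a subset equals $\ins_p([w,w'])$ when $p=q$ (part (1) again) and $\ins_j([w,w'])\sqcup\ins_{j+1}([w,w'])$ when $(p,q)=(j,j+1)$ (part (2) applied to the pair $w\le w'$), which is what forces $[a,b]$ to coincide with $[u(\rt'),v(\rt')]$ on the nose and not merely at the endpoints; your phrase ``by the same computation in reverse'' covers this, but making the interior computation visible would close the one soft spot you flag.
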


\section{Moment polytopes of \texorpdfstring{$\rt$}{Omega}-Richardson varieties}
\label{sec:Moment}
For $\lambda=(\lambda_1>\cdots>\lambda_n)$ a decreasing sequence of integers, let $P^{v(\rt)}_{u(\rt)}(\lambda)$ be the moment polytope of $X(\rt)$ under the generalized Pl\"ucker embedding $\operatorname{Pl}_\lambda:\fl{n}\to \mathbb{P}^{n!-1}$. We now show that
\begin{enumerate}
    \item $P^{v(\rt)}_{u(\rt)}(\lambda)\cong \gz(\lambda;\rt) $ where $\gz(\lambda;\rt)$ is a face associated to $\rt$  of the Gelfand--Zetlin polytope determined by $\lambda$.
    \item The moment polytopes for $\rt\in \rtseq_n$ are exactly the faces of a subdivision of the permutahedron
    $$\Perm(\lambda)\coloneqq \{w\cdot (\lambda_1,\ldots,\lambda_n)\suchthat w\in S_n\}$$
    into combinatorial cubes.
    \item $P^{v(\rt)}_{u(\rt)}(\lambda)\cong \cube(\wh{F};\lambda)$ where $\cube(\wh{F};\lambda)$ is a polytope intrinsically associated to the nested forest $\wh{F}=\nf{\rt}$.
\end{enumerate}
 The first two points slightly generalize \cite[Lemma 6.3]{HHMP} which in our terminology shows the first point for those $\rt\in \rtseq_n$ with $|\rt|_t=n-1$ maximal, and shows that these are the top dimensional faces of a dissection of $\Perm(\lambda)$ into combinatorial cubes. In \Cref{rem:combcubeP1} we explain how we can see the different Bott manifold structures on the toric variety associated to $\cube(\wh{F};\lambda)$ (which is naturally isomorphic to every $X(\rt)$ with $\wh{F}(\rt)=\wh{F}$) from the recursive structure of the combinatorial cube $\cube(\widehat{F};\lambda)$, giving a different perspective to the identifications from \Cref{sec:Bott}.

\subsection{The generalized Pl\"ucker embedding and moment polytopes of Richardson varieties}
For $\lambda_1>\cdots>\lambda_n$ a strictly decreasing sequence, the generalized Pl\"ucker embedding $\operatorname{Pl}_\lambda:\fl{n}\to \mathbb{P}^{n!-1}$ associated to $\lambda$ realizes $\fl{n}$ as a projective variety, and takes $$M\mapsto [\prod_{i=1}^{N} (\det M_{\{\sigma(1),\ldots,\sigma(i)\},\{1,\ldots,i\}})^{\lambda_i-\lambda_{i+1}}]_{\sigma\in S_n}$$
where $M_{A,B}$ is the submatrix of $M$ determined by the rows indexed by $A$ and the columns indexed by $B$, and we set $\lambda_{n+1}=0$ by convention.

The $T$-fixed points of $\fl{n}$ are the permutation matrices $\{P_{\sigma}\suchthat \sigma\in S_n\}$, and we identify $P_{\sigma}$ with the permutation $\sigma$ itself. Recall that $S_n$ acts on $\mathbb{R}^n$ by $\sigma\cdot e_i=e_{\sigma(i)}$, or equivalently
\begin{align}
    \sigma\cdot (\lambda_1,\ldots,\lambda_n)=(\lambda_{\sigma^{-1}(1)},\ldots,\lambda_{\sigma^{-1}(n)}).
\end{align}
Let the $i$'th standard character of $T$ be denoted by $t_i$.
If we take the action of $T$ on $\mathbb{P}^{n!-1}$ to be obtained by scaling the coordinate indexed by $\sigma\in S_n$ by $t_{\sigma(1)}^{\lambda_{1}}t_{\sigma(2)}^{\lambda_{2}}\cdots t_{\sigma(n)}^{\lambda_n}=t^{\sigma\cdot (\lambda_1,\ldots,\lambda_n)}$, then the generalized Pl\"ucker embedding is a $T$-equivariant embedding  $\fl{n}\hookrightarrow \mathbb{P}^{n!-1}$, and $\sigma\in \fl{n}$ is mapped to the standard basis vector $e_\sigma$.

For a torus $T'$ acting on $\mathbb{P}^N$ by scaling the $i$'th coordinate by $t_i^{\mu_i}$,
with $\mu_i\in \ZZ^n$, the moment map is the map
$\mathbb{P}^{N}\to \RR^n$
given by $\mu([\{x_i\}])\coloneqq \sum_{i=1}^n \frac{|x_i|^2}{\sum_{j=0}^N |x_j|^2}\mu_i.$
 By the convexity theorem for moment maps \cite{GS82}, if the characters $\mu_i$ are distinct (so the only $T'$-fixed points are the $e_i$), then for a $T'$-invariant subvariety $Y\subset \mathbb{P}^{N}$ we have
$\mu(Y)=\operatorname{conv}\{\mu(e_i):e_i\in Y\}.$

For $X\subset \fl{n}$ the image of a $T$-invariant subvariety of $\fl{n}$, the moment map $\mu_\lambda:X\to \RR^n$ is obtained by composing the embedding $X\to \mathbb{P}^{n!-1}$ with the moment map on $\mathbb{P}^{n!-1}$ under the torus action mentioned earlier. In particular, this implies
$$\mu_\lambda(X)=\operatorname{conv}\{(\sigma\cdot (\lambda_1,\ldots,\lambda_n)\suchthat \sigma \in X\}.$$

Hence for example the moment polytope of $\fl{n}$ is the permutahedron $$\Perm(\lambda)\coloneqq \operatorname{conv}\{\sigma\cdot (\lambda_1,\ldots,\lambda_n)\suchthat \sigma\in S_n\}\subset \RR^n.$$
For the Richardson variety $X^v_u$, we have $\sigma\in X^v_u$ if and only if $u\le \sigma\le v$ in the Bruhat order.
\begin{defn}
    If $u\le v$ in Bruhat order on $S_n$, then we define the \emph{twisted inverse Bruhat interval polytope} by
    $$P^v_u(\lambda)=\operatorname{conv}(\{w\cdot (\lambda_1,\ldots,\lambda_n)\suchthat w\in [u,v]\}).$$
\end{defn}

We therefore have the following theorem of Tsukerman--Williams.
\begin{thm}[{\cite[Proposition 2.9]{TW15}}]
    The moment polytope of $X^u_v$ under the generalized Pl\"ucker embedding $\operatorname{Pl}_\lambda$ is the twisted Bruhat interval polytope
   $P^{u}_v(\lambda)$.
\end{thm}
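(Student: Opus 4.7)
The plan is to apply the convexity theorem for moment maps recalled in the paragraph preceding the statement. All the ingredients are already in place; the task is to assemble them in the correct order and verify the hypotheses.

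First, I would note that under the generalized Plücker embedding $\operatorname{Pl}_\lambda\colon \fl{n}\hookrightarrow \mathbb{P}^{n!-1}$, the action of $T$ on $\mathbb{P}^{n!-1}$ is by the characters $\{\sigma\cdot(\lambda_1,\ldots,\lambda_n)\}_{\sigma\in S_n}$, one per homogeneous coordinate. The strict decrease hypothesis $\lambda_1>\cdots>\lambda_n$ guarantees that these $n!$ characters are pairwise distinct (since the $S_n$-action on a strictly decreasing tuple has trivial stabilizer), so the hypothesis of the Atiyah--Guillemin--Sternberg convexity theorem applies: for any $T$-invariant closed subvariety $Y\subset\mathbb{P}^{n!-1}$, the image under the moment map $\mu$ is the convex hull of $\mu(e_\sigma)$ over those $\sigma\in S_n$ with $e_\sigma\in Y$, and $\mu(e_\sigma)=\sigma\cdot(\lambda_1,\ldots,\lambda_n)$.

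Second, I would identify the $T$-fixed points of $\operatorname{Pl}_\lambda(X^v_u)$. The $T$-fixed points of $\fl{n}$ are precisely the permutation matrices $\{P_\sigma : \sigma\in S_n\}$, which the embedding sends to the standard basis vectors $\{e_\sigma\}$. Since $X^v_u=X^v\cap X_u$ is the intersection of a Schubert and an opposite Schubert cycle, and since $P_\sigma\in X^v$ iff $\sigma\le v$ and $P_\sigma\in X_u$ iff $u\le\sigma$ in Bruhat order (a classical fact about Schubert cells, which I would cite rather than reprove), the $T$-fixed points of $X^v_u$ are exactly the $P_\sigma$ with $u\le\sigma\le v$.

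Combining these two steps, the convexity theorem yields
\[
\mu_\lambda(X^v_u)=\operatorname{conv}\{\sigma\cdot(\lambda_1,\ldots,\lambda_n)\suchthat u\le\sigma\le v\}=P^v_u(\lambda),
\]
which is the desired identification. The main thing to be careful about is the distinction between the moment polytope being the convex hull of the $T$-fixed point images (a general feature of projective $T$-varieties when the characters separate the fixed points) versus only being contained in it; this is exactly what the convexity theorem guarantees in our setting. No additional obstacle arises, since $X^v_u$ is irreducible and $T$-invariant, so the theorem applies directly.
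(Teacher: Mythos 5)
Your proof is correct and follows exactly the argument that the paper lays out in the paragraphs immediately preceding the statement: the paper recalls the convexity theorem for moment maps, observes that the image of a $T$-invariant subvariety is the convex hull of the $T$-fixed point images $\sigma\cdot(\lambda_1,\ldots,\lambda_n)$, and notes that the $T$-fixed points of $X^v_u$ are the permutations in $[u,v]$. The paper attributes the final statement to Tsukerman--Williams rather than writing out a proof, but your assembly of the preceding ingredients (including the necessary check that strict decrease of $\lambda$ makes the characters distinct so the convexity theorem applies) is precisely the intended argument.
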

\begin{rem}
We note the relation to the \emph{Bruhat interval polytope} \cite{TW15} $Q^v_u=\operatorname{conv}(\{(w(1),\ldots,w(n)): w\in [u,v]\})$ is that $$P^{v}_{u}(n,\ldots,1)=Q^{w_{0,n}u^{-1}}_{w_{0,n}v^{-1}}.$$
\end{rem}

\subsection{Recalling GZ polytopes}

We now introduce the Gelfand--Zetlin polytopes. 
To keep exposition brief we quickly recall material that has appeared elsewhere (cf. \cite[Section 7.1]{NTremixed} for instance, which builds upon \cite{HHMP, KST12}).
\begin{defn}
For $\lambda=(\lambda_1> \cdots > \lambda_n)\in \mathbb{R}^n$
the \emph{Gelfand--Zetlin polytope} $\gz(\lambda)$ is the polytope in $\mathbb{R}^{n(n-1)/2}$ containing points $(p_{i,j})_{1\leq i\leq j\leq n}$ such that $p_{i,1}=\lambda_i$ for $1\leq i\leq n$ and
\begin{equation*}
    p_{i,j}\geq p_{i+1,j+1}\geq p_{i+1,j}.
\end{equation*}
\end{defn}
We shall think of points in $\gz(\lambda)$ as fillings of triangular/staircase shape as shown in Figure~\ref{fig:gz_pattern} where the bottom row reads $\lambda_1$ through $\lambda_n$ left to right.
Such a filling is often called a \emph{GZ pattern}.

\begin{figure}[!ht]
    \centering
    \includegraphics[scale=1.1]{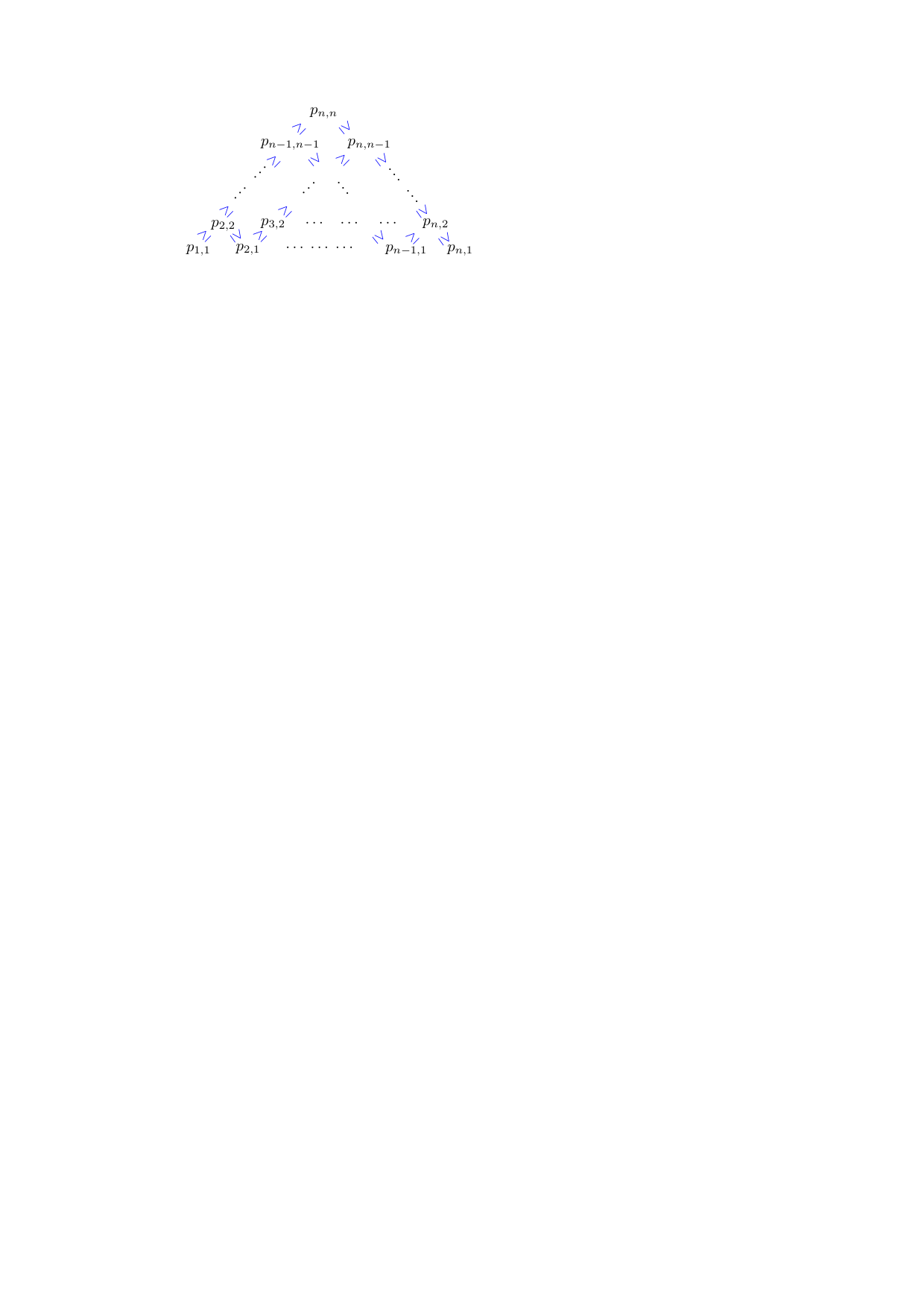}
    \caption{A GZ pattern determining a point in $\gz(\lambda)$}
    \label{fig:gz_pattern}
\end{figure}
We will specify a face of $\gz(\lambda)$ by a ``face diagram'', consisting of a graph whose underlying vertex set is the positions of entries in a GZ pattern, and whose edges record defining hyperplanes whose intersection is the face.
We represent the facet $p_{i,j}=p_{i+1,j+1}$ by a \emph{left edge} connecting vertices $(i,j)$ and $(i+1,j+1)$, and the facet $p_{i,j}=p_{i,j-1}$ by a \emph{right edge} between vertices $(i,j)$ and $(i,j-1)$. 
The face diagram associated to a face is not necessarily unique. 
We remark that it is typical in the literature, after \cite{Kog00},  to call a \emph{Kogan} (respectively \emph{dual Kogan}) face of $\gz(\lambda)$ to be one determined by a collection of left (respectively right) edges, so that every face is the intersection of a Kogan and a dual Kogan face.

The following linear projection (cf. \cite[\S 5]{HHMP}) will be used to relate certain faces of $\gz(\lambda)$ and Bruhat interval polytopes.

\begin{defn}
The linear projection
   $\mu:\gz(\lambda)\to \Perm(\lambda)$ is defined by
\begin{equation}
    \mu(p)=(y_1-y_2,y_2-y_3,\dots,y_{n}-y_{n+1}),
\end{equation}
where $y_i\coloneqq \sum_{i\leq k\leq n} p_{k,i}$ for $1\leq j\leq n+1$ (so $y_{n+1}=0$).
\end{defn}
Referring to Figure~\ref{fig:gz_pattern} the $y_i$s  are produced by summing the entries in each row, going bottom to top, and then $\mu(p)$ records the differences in successive rows.

\subsection{$\rt$-Richardson varieties and GZ-polytopes}

Recall the notion of trimming diagram given $\rt \in \rtseq_n$ that was introduced at the end of \Cref{sec:compositesTR}.
We now derive face diagrams from these trimming diagrams.
\begin{defn}
    For $\rt\in \rtseq_n$ we define a face $\gz(\lambda;\rt)$ of $\gz(\lambda)$ by removing the blue edges from the trimming diagram associated to $\rt$.
\end{defn}
See Figure~\ref{fig:g_omega_and_face_diagram} for the face diagram obtained from the trimming diagram in Figure~\ref{fig:Trimming_diagram_and_forest}.

\begin{figure}[!ht]
    \centering
    \includegraphics[scale=1]{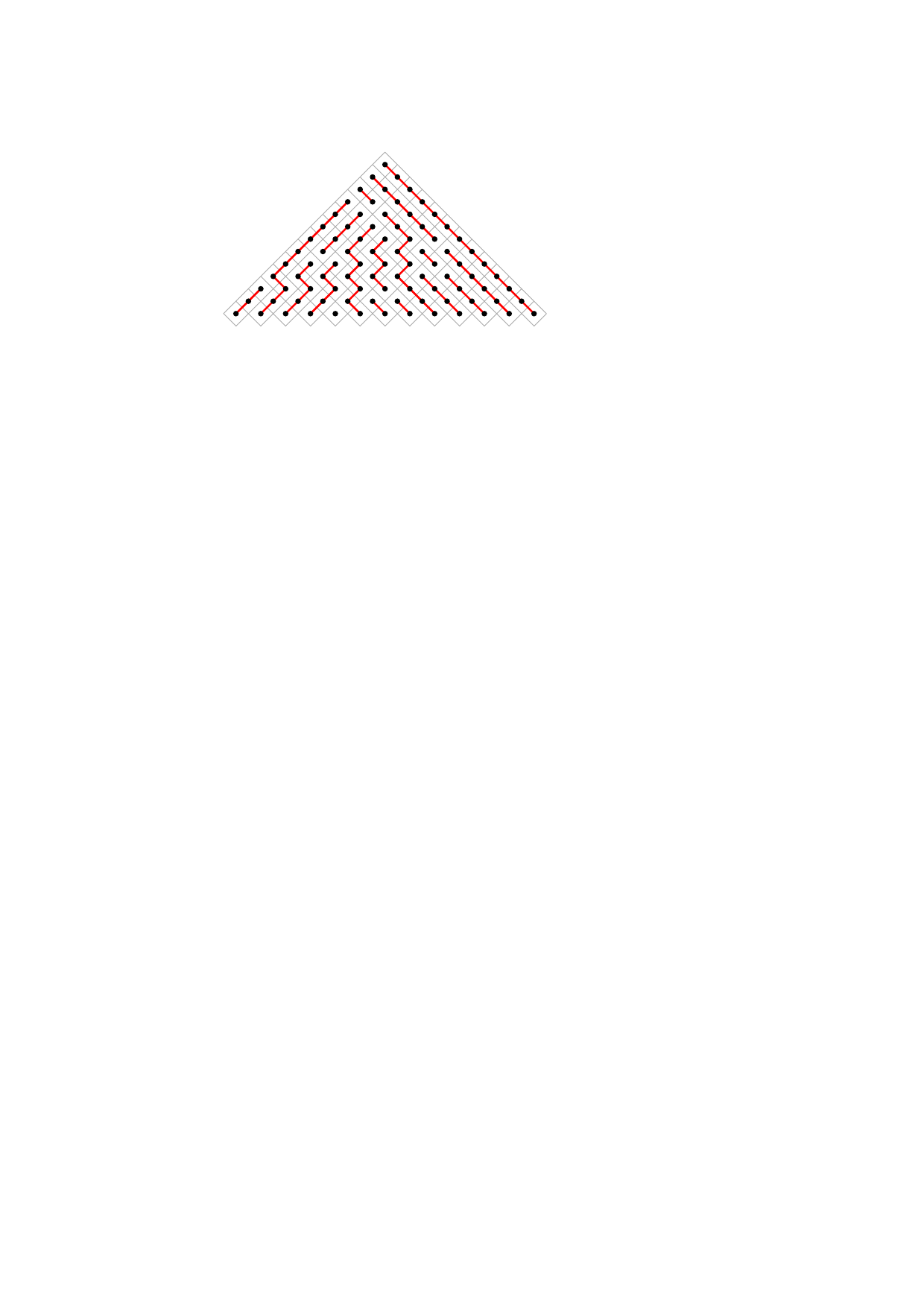}
    \caption{The sequence $\rletter{1}^4\tletter{2}\rletter{3}\rletter{4}\tletter{6}\tletter{2}\tletter{7}\rletter{1}\tletter{6}\rletter{5}\in \rtseq_{13}$}
    \label{fig:g_omega_and_face_diagram}
\end{figure}
\begin{prop}
\label{prop:annoyingGZ}
    The polytope $\gz(\lambda;\rt)$ is cut out by the $p_{i,1}=\lambda_i$, and $\gz$ equalities along the red edges and the inequalities corresponding to the blue edges. Furthermore, in $\gz(\lambda;\rt)$ we have $p_{i,i}>\cdots >p_{n,i}$ for all $i$.
\end{prop}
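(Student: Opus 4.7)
The plan is to prove both assertions simultaneously by induction on $n$. The base case $n=1$ is immediate: for $\rt = \rletter{1}$ the trimming diagram has no edges, $\gz(\lambda;\rt)$ reduces to the single point $\{\lambda_1\}$, and both claims hold vacuously.

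For the inductive step, I would write $\rt = \rt'\xletter{n}$ with $\rt' \in \rtseq_{n-1}$ and analyze how appending $\xletter{n}$ extends the trimming diagram of $\rt'$ by a new bottom row corresponding to a single elementary diagram. If $\xletter{n} = \rletter{i}$, the new elementary diagram contributes only red edges, which impose equalities linking a new strip of the GZ pattern to the existing one, so $\gz(\lambda;\rt)$ is obtained from $\gz(\lambda;\rt')$ by identifications alone. If $\xletter{n} = \tletter{i}$, the new diagram adds one blue edge together with red edges, and this blue edge represents a genuine additional inequality in $\gz(\lambda;\rt)$. In both cases, combining the inductive description of $\gz(\lambda;\rt')$ with the constraints from the new row yields the stated description for $\gz(\lambda;\rt)$.

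For the strict inequality claim $p_{i,i} > \cdots > p_{n,i}$, the key observation is that the red edges coming from the elementary diagrams never connect two entries of the GZ pattern lying on the same relevant diagonal. This is forced by the restriction $\xletter{j} \in \{\rletter{1}, \ldots, \rletter{j}, \tletter{1}, \ldots, \tletter{j-1}\}$, which controls where in the $j$-th row of the pattern a red edge can be placed relative to the boundary of the staircase. Hence no chain of red equalities can collapse two distinct entries on such a diagonal, and these interlacing inequalities remain strict throughout $\gz(\lambda;\rt)$.

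The main obstacle is the careful bookkeeping of the correspondence between the edges of the trimming diagram and the interlacing inequalities of the GZ pattern, especially at the boundary between successive elementary diagrams. Concretely, one needs to verify that the connected components of red edges in the full trimming diagram correspond to equivalence classes of GZ entries forced to be equal in the face, and that these classes are in bijection with the blocks of the nested forest $\wh{F}(\rt)$ from \Cref{sec:compositesTR}. Once this combinatorial identification is in place, both assertions follow by a direct comparison with the GZ defining inequalities.
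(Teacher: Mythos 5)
Your proposal takes a genuinely different route from the paper's. The paper gives a direct argument: it observes that every GZ inequality follows from the red equalities, the blue inequalities, and the within-row inequalities $p_{i,j}\ge p_{i+1,j}$, and then proves the within-row inequalities are strict by propagating them from the bottom row (where they are forced by $\lambda_1>\cdots>\lambda_n$) up to the top, one GZ row at a time, using the red/blue constraints between consecutive rows. Your plan is instead an induction on $|\rt|=n$ combined with a separate combinatorial claim about red edges and diagonals.

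There are two gaps. First, the inductive step as written does not parse: if $\rt=\rt'\xletter{n}$ with $\rt'\in\rtseq_{n-1}$, then $\gz(\lambda;\rt')$ requires a $\lambda'\in\RR^{n-1}$, not the given $\lambda\in\RR^n$, because the trimming diagram of $\rt'$ has one fewer row. More seriously, after peeling off the bottom GZ row, the new "bottom row" consists of the variable entries $p_{2,2},\ldots,p_{n,2}$, which are not fixed and not even a priori strictly decreasing, so the inductive hypothesis cannot be invoked without first establishing precisely the strictness you are trying to prove. The paper's row-by-row propagation handles this correctly because it carries the strict inequality $p_{i,i}>\cdots>p_{n,i}$ on row $i$ as the inductive invariant rather than an induction on $\rt$.

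Second, the crux of the second claim — that no chain of red equalities collapses two entries in the same GZ row — is asserted ("Hence no chain of red equalities can collapse two distinct entries on such a diagonal") but not proved, and you explicitly flag it as "the main obstacle" left to "careful bookkeeping." This is exactly what needs a proof. The right statement is not really about diagonals: a single red edge always connects adjacent rows, so the danger is a two-edge red "peak" $p_{i,j}=p_{i+1,j+1}=p_{i+1,j}$ forcing a within-row equality; what must be shown is that the elementary diagrams never place both edges at a peak in red (cf.\ the parenthetical in \Cref{cor:annoyingGZ}). Once you know that, propagating strictness bottom-to-top as the paper does is the cleanest way to finish; your induction-on-$n$ framing does not buy anything over that and introduces the $\lambda$-mismatch issue above.
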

\begin{proof}
    First, note that every GZ inequality is clearly cut out by the red equalities, the blue inequalities, and the inequalities $p_{i,j}\ge p_{i+1,j}$ for all $i,j$. For $i=1$ the totality of inequalities $p_{1,1}> \cdots > p_{n,1}$ follow because $\lambda_1>\cdots > \lambda_n$. It is straightforward to see that the totality of inequalities $p_{i,i}> \cdots > p_{n,i}$ together with the red/blue restrictions between $p_{i,j}$ and $p_{i+1,k}$ for various $i$ and $k$ then imply the totality of inequalities $p_{i+1,1}>\cdots > p_{n,i+1}$ and we conclude.
\end{proof}
\begin{cor}
\label{cor:annoyingGZ}
    The faces of $\gz(\lambda;\rt)$ are exactly those of the form $\gz(\lambda;\rt')$ where $\rt'$ is obtained by changing some subset of the $\tletter{i}$ to $\rletter{i}$ or $\rletter{i+1}$.
\end{cor}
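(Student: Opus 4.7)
Plan. My plan is to prove both inclusions of the claimed correspondence, handling the forward direction by a direct comparison of trimming diagrams, and the reverse direction by induction on $|\rt|$ that mirrors the Bott manifold recursion from Theorem~\ref{thm:mainP1degree}.

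For the forward direction, suppose $\rt'$ is obtained from $\rt$ by replacing some subset of $\tletter{i}$'s by either $\rletter{i}$ or $\rletter{i+1}$. Comparing the two trimming diagrams position by position, one sees that at each modified position the $\tletter{i}$ elementary diagram (which contributes two blue edges into its GZ face diagram) is replaced by an $\rletter{i}$ or $\rletter{i+1}$ elementary diagram, in which one of these blue edges becomes a red edge (i.e.\ an equality) and the other blue edge, together with an internal GZ-pattern entry, is eliminated. By Proposition~\ref{prop:annoyingGZ}, the polytope $\gz(\lambda;\rt')$ is then cut out from $\RR^{n(n-1)/2}$ by exactly the same relations as $\gz(\lambda;\rt)$ except that certain blue inequalities have been tightened to equalities. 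This realizes $\gz(\lambda;\rt')$ as a face of $\gz(\lambda;\rt)$.

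For the reverse direction, I would proceed by induction on $|\rt|$. Writing $\rt = \rt''\xletter{}$, when $\xletter{} = \rletter{j}$ the last elementary diagram introduces no new free parameter and no new blue edges, so $\gz(\lambda;\rt)$ is affinely isomorphic to $\gz(\lambda;\rt'')$ and the claim reduces to the inductive hypothesis. When $\xletter{} = \tletter{j}$, the new elementary diagram introduces a single new GZ-entry $p_{k,\ell}$ (for some $(k,\ell)$ determined by the trimming diagram) constrained only by its two blue edges, namely two strict inequalities comparing $p_{k,\ell}$ to two already-existing entries. Using the strict chains $p_{i,i}>\cdots>p_{n,i}$ from Proposition~\ref{prop:annoyingGZ} applied to $\rt''$, one checks that these two entries are strictly ordered at every point of $\gz(\lambda;\rt'')$, so $\gz(\lambda;\rt)$ is combinatorially a prism (interval bundle) over $\gz(\lambda;\rt'')$. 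Every face of $\gz(\lambda;\rt)$ is therefore either (a) the preimage of a face of $\gz(\lambda;\rt'')$ under this prism projection (corresponding to keeping the last letter $\tletter{j}$), or (b) a face of one of the two "endpoint" copies where $p_{k,\ell}$ is pinned to an endpoint of its interval (corresponding to replacing the last $\tletter{j}$ by $\rletter{j}$ or $\rletter{j+1}$). Applying the inductive hypothesis to the face of $\gz(\lambda;\rt'')$ in each case produces the claimed $\rt'$.

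The main obstacle is establishing the prism structure cleanly in the inductive step, i.e.\ showing that the two new blue edges of a freshly appended $\tletter{j}$ bound a nondegenerate interval of values for the new entry $p_{k,\ell}$ over every point of $\gz(\lambda;\rt'')$. This is what the strict inequalities $p_{i,i}>\cdots>p_{n,i}$ in Proposition~\ref{prop:annoyingGZ} are designed to provide, but one has to check carefully that the two entries neighboring $p_{k,\ell}$ in the new elementary diagram are precisely two of these strictly ordered entries (rather than e.g.\ coinciding due to red-edge identifications further up the diagram). Once this independence is in hand, the counting $3^{|\rt|_{\tletter{}}}$ on both sides - three choices (keep, $\rletter{j}$, $\rletter{j+1}$) per $\tletter{}$ versus $3^d$ faces of a combinatorial $d$-cube - confirms that the injection built in the forward direction is a bijection.
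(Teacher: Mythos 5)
Your proposal is correct but takes a genuinely different route from the paper. The paper's proof is a one-shot, non-inductive argument: Proposition~\ref{prop:annoyingGZ} says the only proper inequalities cutting out $\gz(\lambda;\rt)$ are the blue-edge inequalities, two per $\tletter{i}$; a face is obtained by tightening a subset of these, and the only obstruction is tightening both blue edges at one internal vertex, which the strict chain $p_{i,i}>\cdots>p_{n,i}$ rules out (the resulting face is empty). Each $\tletter{i}$ therefore independently offers the three options ``keep, $\rletter{i}$, $\rletter{i+1}$.'' Your version proves the same statement by induction on $|\rt|$, realizing $\gz(\lambda;\rt''\tletter{j})$ as an interval bundle (prism) over $\gz(\lambda;\rt'')$ and classifying faces as prisms over faces of the base or as faces of the two endpoint copies. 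This is a valid alternative that makes the recursive cube structure explicit, nicely mirroring the Bott tower in Theorem~\ref{thm:mainP1degree}, but it is longer and requires tracking the affine identifications $\gz(\lambda;\rt''\rletter{j})\cong\gz(\lambda;\rt'')$ at each step. Note that both arguments lean on the strict inequalities of Proposition~\ref{prop:annoyingGZ} at exactly the same point: you use them to ensure the fiber interval is nondegenerate so the prism really is a prism, the paper uses them to exclude tightening both edges at a vertex. The check you flag as the ``main obstacle'' — that the two blue neighbours of the new entry do not get accidentally identified along red edges — is precisely what the strict chain gives, so it is not a gap, and the paper glosses over the corresponding point just as briefly. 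One minor redundancy: the closing $3^{|\rt|_{\tletter{}}}$ cardinality count is unnecessary, since your inductive argument already establishes that every face arises from some modified $\rt'$, which is all the corollary asserts.
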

\begin{proof}
    The faces of $\gz(\lambda;\rt)$ are obtained by setting some of the defining inequalities to equalities. Since there are only blue edge inequalities, we can either replace two blue edges at an internal vertex with a right red edge or a left red edge, and these correspond to replacing a $\tletter{i}$ with either $\rletter{i}$ or $\rletter{i+1}$ respectively (noting that we cannot add in both edges since then that would force an equality $p_{i,j}=p_{i,j+1}$).
\end{proof}
Figure~\ref{fig:squares_in_gz} depicts the three-dimensional $\gz(\lambda_1,\lambda_2,\lambda_3)$. The two-dimensional faces $\gz(\lambda;\rletter{1}\tletter{1}\tletter{1})$ and $\gz(\lambda;\rletter{1}\tletter{1}\tletter{2})$ are shaded. 
In accordance with the preceding corollary we note, for instance, that the vertices of these faces may be obtained by replacing the $\tletter{i}$ with $\rletter{i}$ or $\rletter{i+1}$.
Observe that the unique vertex of degree $4$ does not appear as a vertex of either $\gz(\lambda;\rletter{1}\tletter{1}\tletter{1})$ or $\gz(\lambda;\rletter{1}\tletter{1}\tletter{2})$.
\begin{figure}[!ht]
    \centering
    \includegraphics[scale=0.9]{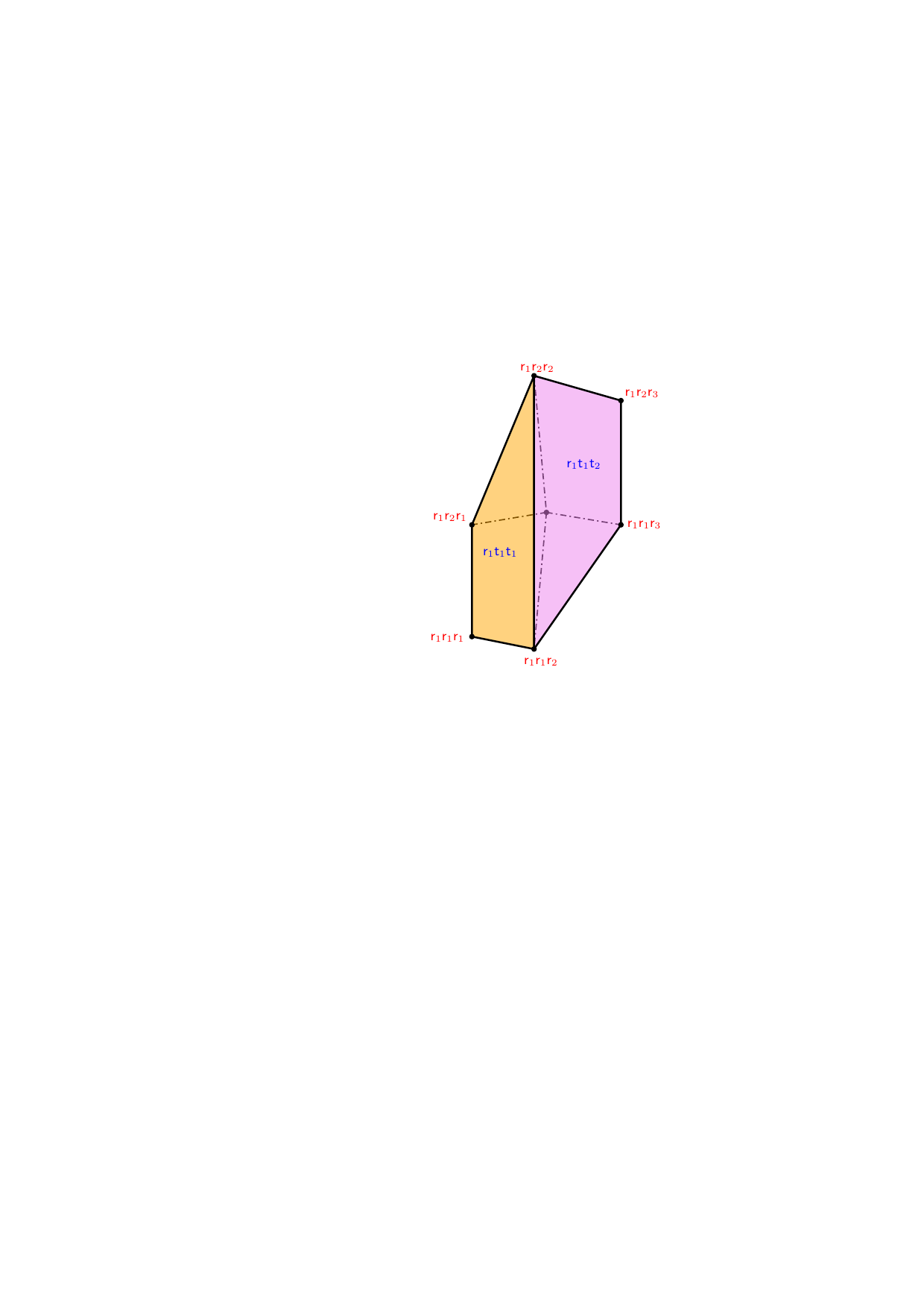}
    \caption{$\gz(\lambda_1,\lambda_2,\lambda_3) \subset \mathbb{R}^3$ with maximal dimension faces $\gz(\lambda;\rt)$ shaded}
    \label{fig:squares_in_gz}
\end{figure}

\begin{defn}[{\cite[Section 2.2.2]{Kog00}}]
    A \emph{nondegenerate (or simple) vertex}  of $\gz(\lambda)$ is a vertex of the form $\gz(\lambda;\rt) $ where $\rt=\rletter{i_1}\cdots \rletter{i_{n}}\in \rtseq_n$. We define the associated permutation $\pi(v)\in S_n$ to be obtained by setting $\pi(v)(i)$ to be the number of vertices in the path for $v$ starting at $(i,1)$.
\end{defn}
\begin{prop}
\label{prop:simplevertex}
If $\rt=\rletter{i_1}\cdots \rletter{i_{n}}\in \rtseq_n$ then for $v=\gz(\lambda;\rt) $ the associated vertex we have $\pi(v)=\ins_{i_{n}}\cdots \ins_{i_1}(\idem)$. 
Furthermore, $\mu_\lambda(v)=\pi(v)\cdot (\lambda_1,\ldots,\lambda_n)$. 
\end{prop}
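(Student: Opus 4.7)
I will prove this by induction on $n$.

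The base case $n=1$ is immediate: $\rt=\rletter{1}$ forces $v=(\lambda_1)$, $\pi(v)=\idem$, and $\mu_\lambda(v)=(\lambda_1)=\idem\cdot(\lambda_1)$.

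For the inductive step, write $\rt=\rt'\rletter{i_n}$ with $\rt'=\rletter{i_1}\cdots\rletter{i_{n-1}}\in\rtseq_{n-1}$. By the convention for the elementary diagram of $\rletter{i_n}$ together with \Cref{prop:annoyingGZ}, the $n-1$ red edges contributed by $\rletter{i_n}$ (which sits at the bottom of the trimming diagram) lie between rows $1$ and $2$ of the GZ pattern, and are determined as follows: for $2\leq i\leq i_n$ the entry $p_{i,2}$ is equated via a left edge to $p_{i-1,1}=\lambda_{i-1}$, and for $i_n<i\leq n$ the entry $p_{i,2}$ is equated via a right edge to $p_{i,1}=\lambda_i$. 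Consequently, $(i_n,1)$ has no upward red edge (so its path has length $1$), while for $a\neq i_n$ the unique upward red edge sends $(a,1)$ to $(a+1,2)$ if $a<i_n$, and to $(a,2)$ if $a>i_n$.

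The top $n-1$ rows of the GZ pattern, under the re-indexing $(i,j)\leftrightarrow(i-1,j-1)$, form a copy of the GZ pattern of size $n-1$. Under this shift, the letter $\xletter{k}$ of $\rt$ (for $k\leq n-1$) governs precisely the same physical row transition in both patterns, so the red-edge structure in the top $n-1$ rows of $\gz(\lambda;\rt)$ coincides with the full red-edge structure of $\gz(\lambda';\rt')$ for any admissible $\lambda'$. Therefore the continuation of the path from $(a,1)$ beyond its initial upward step matches the full path starting at $(a,1)$ (if $a<i_n$) or $(a-1,1)$ (if $a>i_n$) in $\gz(\lambda';\rt')$. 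Applying the inductive hypothesis yields
\[\pi(v)(a)=\begin{cases}1+\pi(v')(a)&\text{if }a<i_n,\\ 1&\text{if }a=i_n,\\ 1+\pi(v')(a-1)&\text{if }a>i_n,\end{cases}\]
which is exactly $\ins_{i_n}(\pi(v'))(a)$, establishing $\pi(v)=\ins_{i_n}\cdots\ins_{i_1}(\idem)$.

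For the second equality, it suffices to prove the identity
\[y_k=\sum_{i\,:\,\pi(v)(i)\geq k}\lambda_i,\]
since then $y_k-y_{k+1}=\lambda_{\pi(v)^{-1}(k)}$ matches the $k$-th coordinate of $\pi(v)\cdot(\lambda_1,\ldots,\lambda_n)$. In the simple vertex $v$, each $p_{i,k}$ equals $\lambda_{f(i,k)}$ for a unique $f(i,k)$ determined by iterating red edges downward, and the path from $(f(i,k),1)$ visits $(i,k)$. This yields a map from the $n-k+1$ entries of row $k$ (at positions $\geq k$) to the bottom entries $(j,1)$ whose path reaches row $k$, i.e. those with $\pi(v)(j)\geq k$. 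Since both sets have size $n-k+1$ (the latter because $\pi(v)$ is a permutation of $\{1,\ldots,n\}$), this map is a bijection, and summing over row $k$ yields the desired identity. The main technical obstacle is extracting the precise red-edge structure contributed by $\rletter{i_n}$ from the definition of the trimming diagram, and verifying that the shift identification preserves it for the top $n-1$ rows; both are essentially bookkeeping.
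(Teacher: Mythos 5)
Your proof is correct and follows essentially the same inductive route as the paper: decompose $\rt=\rt'\rletter{i_n}$, read off the red edges contributed by the bottom letter to get the recursion $\pi(v)=\ins_{i_n}(\pi(v'))$, and compute $\mu_\lambda(v)$ from the path structure. You supply the bookkeeping where the paper says ``check directly'' (including the $+1$ shifts that the paper's displayed recursion omits), and you reach the moment-map identity via the intermediate formula $y_k=\sum_{i:\pi(v)(i)\geq k}\lambda_i$ where the paper reasons directly about $y_k-y_{k+1}$ as the value at the unique path-top in row $k$; these are equivalent.
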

\begin{proof}
    We induct on $n$. 
    If $v'=\gz(\lambda;\rletter{i_1}\cdots \rletter{i_{n-1}})$ then we can check directly that
    $$\pi(v)_j=\begin{cases}\pi(v')_j&j<i_{n}\\1&j=i_{n}\\\pi(v')_{j-1}&j\ge i_{n}+1\end{cases}$$
    which shows that $\pi(v)=\ins_{i_{n}}(\pi(v'))$. Finally, the index $\alpha_i$ of the path which stop at the $i$'th row from the bottom is given by $\alpha_i=\pi(v)^{-1}(i)$ so 
    \begin{equation*}
    \mu(v)=(\lambda_{\pi(v)^{-1}(1)},\ldots,\lambda_{\pi(v)^{-1}(n)})=\pi(v)\cdot (\lambda_1,\ldots,\lambda_n).\qedhere
    \end{equation*}
\end{proof}

The similarity between Figure~\ref{fig:squares_in_gz} and Figures~\ref{fig:cube_subdivision} is explained by our next result.

\begin{thm}
    For $\rt\in \rtseq_n$, the map $\mu$ linearly identifies $$\gz(\lambda;\rt) \cong P^{u(\rt)}_{v(\rt)}(\lambda).$$
\end{thm}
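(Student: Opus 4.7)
The plan is to check the statement at the level of vertices, verify equality of dimensions, and then upgrade the vertex bijection to a linear isomorphism using the combinatorial cube structures on both sides.

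First I would identify the vertices of both polytopes under $\mu$. By Corollary~\ref{cor:annoyingGZ}, the vertices of $\gz(\lambda;\rt)$ are precisely the faces $\gz(\lambda;\rt^\ast)$ where $\rt^\ast\in \rtseq_n$ is obtained from $\rt$ by replacing every $\tletter{i}$ with either $\rletter{i}$ or $\rletter{i+1}$. Each such $\rt^\ast$ has $|\rt^\ast|_{\tletter{}}=0$, so by Corollary~\ref{cor:minmaxrt}(1) we have $u(\rt^\ast)=v(\rt^\ast)$, and Proposition~\ref{prop:simplevertex} gives $\mu(\gz(\lambda;\rt^\ast))=u(\rt^\ast)\cdot(\lambda_1,\ldots,\lambda_n)$. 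On the other side, vertices of $P^{v(\rt)}_{u(\rt)}(\lambda)$ are $w\cdot(\lambda_1,\ldots,\lambda_n)$ for $w\in[u(\rt),v(\rt)]$, and by Corollary~\ref{prop:Bruhat} these $w$ are exactly the permutations of the form $u(\rt^\ast)$ as above, with distinctness coming from the injectivity of $\rt^\ast\mapsto(u(\rt^\ast),v(\rt^\ast))$ in Proposition~\ref{prop:rtseq_and_permutations}. So $\mu$ restricts to a bijection on vertex sets.

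Second I would verify that both polytopes have dimension $|\rt|_{\tletter{}}$: the Richardson side equals $\ell(v(\rt))-\ell(u(\rt))=|\rt|_{\tletter{}}$, which was computed in Section~\ref{sec:rtRichardson}; the GZ side follows from Proposition~\ref{prop:annoyingGZ}, since each $\tletter{}$ in $\rt$ contributes exactly one degree of freedom through its pair of blue edges, while each $\rletter{}$ removes one degree via a red edge. Since $\mu$ is linear and surjective on vertices, $\mu(\gz(\lambda;\rt))$ is the convex hull of the vertex images and thus equals $P^{v(\rt)}_{u(\rt)}(\lambda)$.

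For injectivity of $\mu|_{\gz(\lambda;\rt)}$, I would invoke that both polytopes are combinatorial cubes of dimension $|\rt|_{\tletter{}}$: the Richardson side because $X(\rt)$ is a Bott manifold, and the GZ side by Corollary~\ref{cor:annoyingGZ} whose face parameterization by replacements $\tletter{i}\mapsto \rletter{i}$ or $\rletter{i+1}$ matches that of a cube. I would then check that $\mu$ maps edges of $\gz(\lambda;\rt)$ to edges of $P^{v(\rt)}_{u(\rt)}(\lambda)$: an edge of $\gz(\lambda;\rt)$ corresponds to an $\rt'$ with $|\rt'|_{\tletter{}}=1$, and its two vertices map under $\mu$ to the two elements of the singleton-extended interval $[u(\rt'),v(\rt')]\subset[u(\rt),v(\rt)]$, which has exactly two elements and thus gives an edge of $P^{v(\rt)}_{u(\rt)}(\lambda)$. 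Fixing a vertex $v$ of $\gz(\lambda;\rt)$, the $|\rt|_{\tletter{}}$ edge vectors at $v$ are linearly independent (since $v$ is simple in a cube), and their images under $\mu$ are edge vectors at $\mu(v)$, again linearly independent by simplicity of the image cube. Hence $\mu$ has full rank at every vertex, so the restriction is a linear isomorphism.

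The main obstacle is the last step: establishing rigorously that $\mu$ carries the cube-face structure of $\gz(\lambda;\rt)$ to that of $P^{v(\rt)}_{u(\rt)}(\lambda)$. This reduces to the edge-to-edge claim above, which is combinatorially clean but requires careful bookkeeping across Corollary~\ref{cor:annoyingGZ} and Corollary~\ref{prop:Bruhat} to match up the two parameterizations.
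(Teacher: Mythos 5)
Your proof is correct but takes a genuinely different route from the paper, and it carries some unnecessary weight. The paper argues by direct induction on $|\rt|$: it sets $S(\rt)=\{\pi(v)\mid v\text{ a vertex of }\gz(\lambda;\rt)\}$, uses Proposition~\ref{prop:simplevertex} to compute how $S(\rt\,\xletter{})$ transforms when a letter $\rletter{i}$ or $\tletter{i}$ is appended, matches this against the parallel recursion for $[u(\rt\,\xletter{}),v(\rt\,\xletter{})]$ furnished by Corollary~\ref{prop:insBruhat}, and concludes $S(\rt)=[u(\rt),v(\rt)]$. You instead invoke Corollary~\ref{cor:annoyingGZ} to enumerate the vertices of $\gz(\lambda;\rt)$ by the replacements $\tletter{i}\mapsto\rletter{i}$ or $\rletter{i+1}$, and Corollary~\ref{prop:Bruhat} together with Proposition~\ref{cor:minmaxrt}(1) to enumerate $[u(\rt),v(\rt)]$, then match the two parameterizations via $\rt^\ast\mapsto u(\rt^\ast)$. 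Since those corollaries are established by the same insertion inductions, the mathematical content is equivalent; the paper's inline induction is more self-contained, while your version cleanly separates the face-lattice and Bruhat-interval bookkeeping into citable prior results, which is a fair trade.

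Where your proof overreaches is the third paragraph. Once you know $\mu$ maps $\gz(\lambda;\rt)$ linearly \emph{onto} $P^{v(\rt)}_{u(\rt)}(\lambda)$ (which follows from matching the vertex images and linearity plus convexity), and once you know both polytopes have dimension $|\rt|_{\tletter{}}$ (your second paragraph), injectivity of $\mu|_{\gz(\lambda;\rt)}$ is automatic: a linear map that carries a $d$-dimensional polytope onto a $d$-dimensional polytope restricts to a surjection of $d$-dimensional affine hulls, hence a bijection. The edge-by-edge comparison of combinatorial cube structures, and the full-rank-at-every-vertex argument, are not needed; the paper's terse proof is implicitly relying on exactly this dimension count. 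You can delete the entire last paragraph and keep only the first two, which already complete the proof.
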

\begin{proof}
All vertices of $\gz(\lambda;\rt) $ are nondegenerate, so letting $$S(\rt)=\{\pi(v)\suchthat v\text{ vertex of }\gz(\lambda;\rt)\},$$ it suffices by \Cref{prop:simplevertex} to check that $S(\rt)=[u(\rt),v(\rt)].$ 

By \Cref{prop:simplevertex} again we have
$$S(\rt\, \xletter{})=\begin{cases}\{\ins_i(w)\suchthat w\in S(\rt)\}&\xletter{}=\rletter{i}\\\{\ins_i(w)\suchthat w\in S(\rt)\}\sqcup \{\ins_{i+1}(w)\suchthat w\in S(\rt)\}&\xletter{}=\tletter{i}.\end{cases}$$
On the other hand, by \Cref{prop:insBruhat} we have $$[u(\rt \,\xletter{}),v(\rt\, \xletter{})]=\begin{cases}[\ins_i(u),\ins_i(v)]=\{(\ins_i(w)\suchthat w\in [u,v]\}&\xletter{}=\rletter{i}\\ [\ins_i(u),\ins_{i+1}(v)]=\{\ins_i(w)\suchthat w\in [u,v]\}\sqcup \{\ins_{i+1}(w)\suchthat w\in [u,v]\}&\xletter{}=\tletter{i},\end{cases}$$
so we conclude by induction.
\end{proof}
\subsection{The HHMP subdivision of the permutahedron}
It was shown in \cite{HHMP} that the moment polytopes of the top-dimensional $X(\rt)$ give a subdivision of the permutahedron into $(n-1)!$ combinatorial cubes \cite{HHMP}, which we call the $\hhmp$-subdivision. We reprove this here for the convenience of the reader.
\begin{thm}[{\cite{HHMP}}]
    The moment polytopes $P^{v(\rt)}_{u(\rt)}(\lambda)$ are the faces of a subdivision (the $\hhmp$-subdivision) of $\Perm(\lambda)$ into combinatorial cubes.
\end{thm}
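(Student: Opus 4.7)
The plan is to combine the preceding theorem — which linearly identifies $P^{v(\rt)}_{u(\rt)}(\lambda)$ with $\gz(\lambda;\rt)$ via the moment projection $\mu$ — with the face description of \Cref{cor:annoyingGZ}, together with a volume argument based on the class identity $[\hhmp_n] = [\overline{T\cdot x}]$ recalled in \Cref{sec:DS}. First I would verify that for each top $\rt$ (i.e.\ $|\rt|_{\tletter{}} = n-1$), the polytope $\gz(\lambda;\rt)$ is a combinatorial $(n-1)$-cube: by \Cref{cor:annoyingGZ} its faces are exactly the $\gz(\lambda;\rt')$ obtained by replacing each of some subset of the $\tletter{i}$ letters with either $\rletter{i}$ or $\rletter{i+1}$, giving the face poset of a cube with the correct $\binom{n-1}{k}2^{n-1-k}$ count of $k$-faces. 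Since $\mu$ restricts to a linear isomorphism on this polytope, the image $P^{v(\rt)}_{u(\rt)}(\lambda)$ is likewise a combinatorial $(n-1)$-cube.

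Next I would establish coverage and disjointness of interiors for the top-dimensional pieces. For coverage, the subvariety $\hhmp_n = \bigcup_{\rt\text{ top}} X(\rt)$ is $T$-stable, so its moment image is $\bigcup_{\rt\text{ top}} P^{v(\rt)}_{u(\rt)}(\lambda)$; the identity $[\hhmp_n]=[\overline{T\cdot x}]$ for generic $x$ together with the fact that $\overline{T\cdot x}$ is the permutahedral variety with moment polytope $\Perm(\lambda)$ then force this union to be all of $\Perm(\lambda)$. For disjointness of interiors I would compare volumes: each smooth projective toric $X(\rt)$ satisfies $\deg_{\operatorname{Pl}_\lambda} X(\rt) = (n-1)!\,\operatorname{vol}(P^{v(\rt)}_{u(\rt)}(\lambda))$, and the same class identity yields
\[
\sum_{\rt\text{ top}}\operatorname{vol}\!\left(P^{v(\rt)}_{u(\rt)}(\lambda)\right) = \operatorname{vol}(\Perm(\lambda)),
\]
which combined with coverage rules out overlap on any set of positive measure, forcing interiors to be pairwise disjoint.

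Finally, to establish that \emph{every} $P^{v(\rt)}_{u(\rt)}(\lambda)$ with $\rt \in \rtseq_n$ appears as a face of the subdivision, I would again invoke \Cref{cor:annoyingGZ}: any $\rt \in \rtseq_n$ is obtained from some top $\rt'$ by replacing some $\tletter{i}$'s by $\rletter{i}$ or $\rletter{i+1}$, so $P^{v(\rt)}_{u(\rt)}(\lambda) = \mu(\gz(\lambda;\rt))$ is a face of $P^{v(\rt')}_{u(\rt')}(\lambda)$. The face-preserving bijection with the unit cube subdivision of $[1,2]\times\cdots\times[1,n]$ established earlier in the paper guarantees that pairwise intersections of top pieces are common faces of both, yielding a genuine polyhedral subdivision. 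The main technical step is the volume identity; the rest is an assembly of already-established ingredients (the Bott manifold structure on $X(\rt)$, the face description in \Cref{cor:annoyingGZ}, and the class equality of \Cref{eqn:Txissum}).
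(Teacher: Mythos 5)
Your route is genuinely different from the paper's, but it has a real gap in the coverage step. You write that ``the identity $[\hhmp_n]=[\overline{T\cdot x}]$ for generic $x$ together with the fact that $\overline{T\cdot x}$ is the permutahedral variety with moment polytope $\Perm(\lambda)$ then force this union to be all of $\Perm(\lambda)$.'' Two subvarieties of $\fl{n}$ having the same fundamental class in $H^\bullet(\fl{n})$ does \emph{not} force them to have the same moment image; the class only determines intersection-theoretic data such as degree, not the set-theoretic moment image. So the class identity gives you
\[
\sum_{\rt\ \text{top}}\operatorname{vol}\bigl(P^{v(\rt)}_{u(\rt)}(\lambda)\bigr)=\operatorname{vol}\bigl(\Perm(\lambda)\bigr),
\]
but this volume identity alone cannot simultaneously yield coverage and disjointness of interiors: a family of polytopes contained in $\Perm(\lambda)$ with total volume $\operatorname{vol}(\Perm(\lambda))$ can, in principle, both overlap and leave gaps. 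You need an \emph{independent} proof of either coverage or pairwise common-face intersection before the volume count closes the argument, and your appeal to the ``face-preserving bijection with the unit cube subdivision'' does not supply that: that bijection is a statement about the abstract face poset, and its proof in the paper tacitly presupposes that the $P^{v(\rt)}_{u(\rt)}(\lambda)$ already assemble into a polyhedral complex in $\RR^n$ — which is precisely what is being proved in the appendix. There is also a soft circularity concern: the class identity $[\hhmp_n]=[\overline{T\cdot x}]$ is itself taken from \cite{HHMP,lian2023hhmp}, and in those sources the subdivision is established first and used to deduce the class identity (or it comes from a flat toric degeneration, which is a much stronger input than the class equality you quote).

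The paper's own proof sidesteps all of this with a direct, self-contained argument: it notes first that faces of each $P^{v(\rt)}_{u(\rt)}(\lambda)$ are again of the form $P^{v(\rt')}_{u(\rt')}(\lambda)$, and then shows the relative interiors partition $\Perm(\lambda)$ by explicitly and recursively assigning to each point $(z_1,\ldots,z_n)\in\Perm(\lambda)$ a unique $\rt\in\rtseq_n$, using the observation that the slice $\Perm(\lambda)\cap\{x_1=z\}$ is again a permutahedron $\{z\}\times\Perm(\lambda(z))$. This peels off one coordinate at a time exactly the way the letters of $\rt$ are peeled off, giving both coverage and uniqueness of the containing interior in one stroke, with no appeal to degrees, classes, or degenerations. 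If you want to retain your approach, you should replace the class-identity coverage step with a direct argument — for instance by proving pairwise common-face intersection of the top-dimensional $P^{v(\rt)}_{u(\rt)}(\lambda)$ combinatorially (which gives disjoint interiors), and then the volume identity does force coverage.
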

\begin{proof}
The faces of $P^{v(\rt)}_{u(\rt)}(\lambda)$ are of the form $P^{v(\rt')}_{u(\rt')}(\lambda)$, so it remains to show that the relative interiors $P^{v(\rt')}_{u(\rt')}(\lambda)^{\circ}$ partition $\Perm(\lambda)$.
    For $z\in[\lambda_i,\lambda_{i+1}]$ we have (see \cite[Proposition 3.7]{Liu16}) $$\Perm(\lambda_1,\ldots,\lambda_n)\cap \{x_1=z\}=\{z\}\times \Perm(\lambda_1,\ldots,\lambda_{i-1},\lambda_i+\lambda_{i+1}-z,\lambda_{i+2},\ldots,\lambda_n).$$
    For such a $z$, we let $\lambda(z)\coloneqq (\lambda_1,\ldots,\lambda_{i-1},\lambda_i+\lambda_{i+1}-z,\lambda_{i+2},\ldots,\lambda_n)$.
    We recursively record a sequence in $\rtseq_n$ by
    $$S(z_1,\ldots,z_n;\lambda)=\begin{cases}S(z_2,\ldots,z_n;\lambda(z_1))\rletter{i}&z_1=\lambda_i\\
    S(z_2,\ldots,z_n;\lambda(z_1))\tletter{i}&z_1\in (\lambda_i,\lambda_{i+1}).\end{cases}$$
    Then it is straightforward to check inductively that $S(z_1,\ldots,z_n;\lambda)$ records the unique relative interior $\mu(\gz(\lambda;\rt) ^{\circ})=P^{v(\rt)}_{u(\rt)}(\lambda)^{\circ}$ that $z$ belongs to.
\end{proof}
\subsection{$\rt$-Richardson varieties and Nested Forest polytopes}
\begin{defn}
    For $\wh{F}\in \nfor$ we define the nested forest polytope $\cube({\wh F;\lambda})$ as the polytope of functions $\phi\in \RR^{\internal{\widehat{F}}}$ such that for the extension $\phi_\lambda:\internal{\widehat{F}}\sqcup \NN\to \RR$ taking $i\mapsto \lambda_i$, we have for each $v\in \internal{\widehat{F}}$ the inequalities $$\phi_\lambda(v_L)\ge \phi_\lambda(v)\ge \phi_\lambda(v_R).$$
\end{defn}
Figure~\ref{fig:nested_forest_cube} demonstrates a nested forest $\wh{F}$ as well as the inequalities cutting out the associated polytope (in $\mathbb{R}^3$).

\begin{figure}[!ht]
    \centering
    \includegraphics[width=0.75\linewidth]{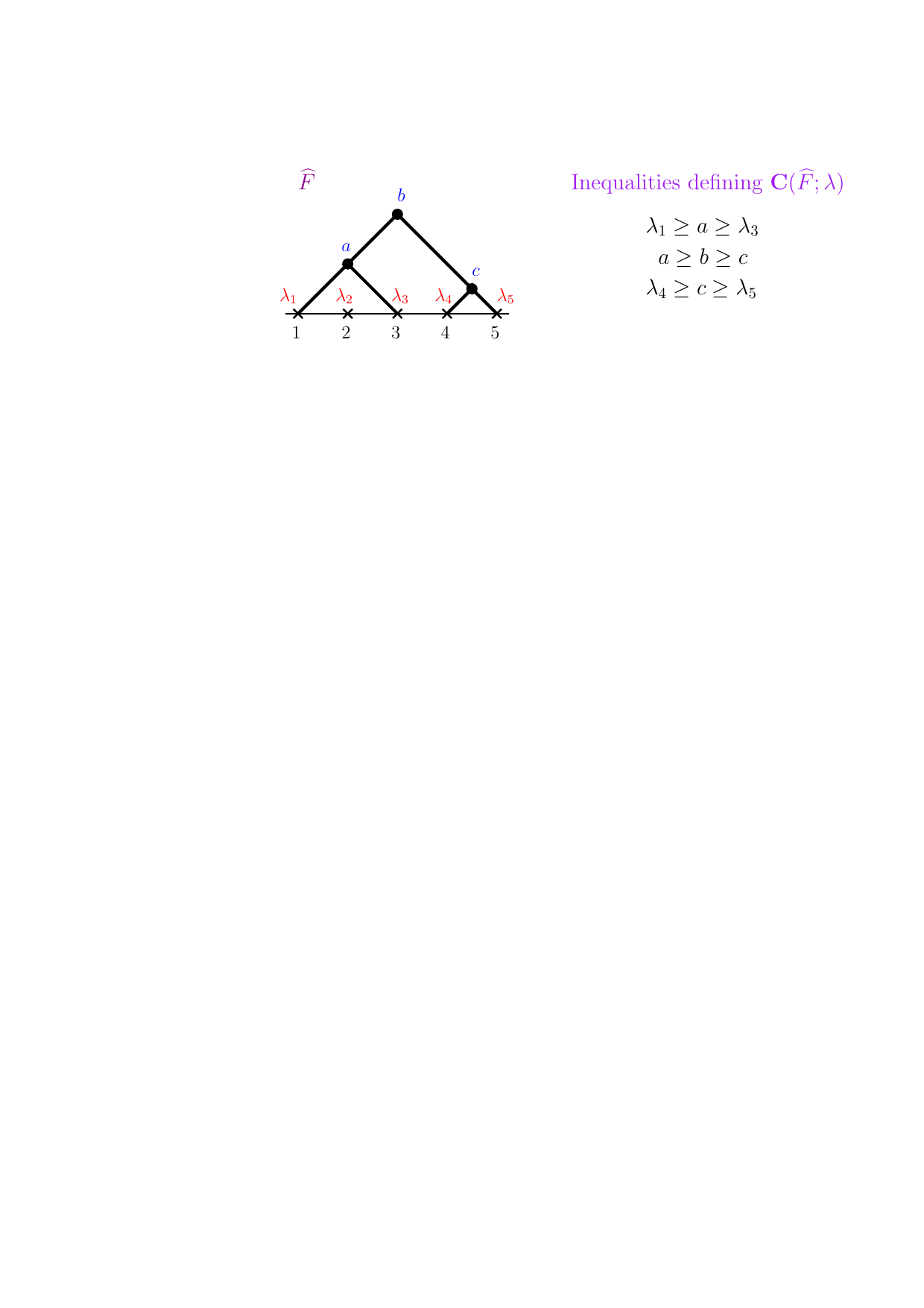}
    \caption{A nested forest $\wh{F}$ and the corresponding $\mathbf{C}(\wh{F};\lambda)$}
    \label{fig:nested_forest_cube}
\end{figure}

\begin{thm}
There is a linear isomorphism
    $\gz(\lambda;\rt) \cong \cube(\nf{\rt};\lambda)$ given by assigning a GZ pattern to the function which takes an internal node $v$ to the corresponding $p_{i,j}$ in the trimming diagram of $\wh{F}$.
\end{thm}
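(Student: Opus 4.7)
The plan is to exhibit the map $\Theta : \gz(\lambda;\rt) \to \cube(\nf{\rt};\lambda)$ of the statement and a linear inverse, reading everything off from the red/blue edge structure of the trimming diagram of $\rt$.

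By \Cref{prop:annoyingGZ}, the polytope $\gz(\lambda;\rt)\subset \RR^{n(n-1)/2}$ is cut out by: (i) the boundary equations $p_{i,1}=\lambda_i$; (ii) the equalities $p_{i,j}=p_{i',j'}$ imposed by each red edge of the trimming diagram; and (iii) the GZ inequalities imposed by each blue edge. On the other hand, by construction of $\wh{F}=\nf{\rt}$ (see \Cref{sec:compositesTR}), contracting all red edges in the trimming diagram yields $\wh{F}$ itself. Hence the positions $(i,j)$ partition into red-connected components that are in canonical bijection with the nodes of $\wh{F}$: a leaf $i$ of $\wh{F}$ corresponds to the component containing the bottom-row position $(i,1)$, and each internal node $v$ of $\wh{F}$ corresponds to a component of positions lying strictly above the bottom row. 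Any $(p_{i,j})$ satisfying (i) and (ii) is therefore the same data as a function $\phi_\lambda:\internal{\wh{F}}\sqcup\NN\to\RR$ with $\phi_\lambda(i)=\lambda_i$, and the map $\Theta$ of the statement is precisely the restriction $\phi=\phi_\lambda|_{\internal{\wh{F}}}$. This is manifestly linear with a linear inverse given by assigning to position $(i,j)$ the value $\phi_\lambda(v)$ of the node $v$ labelling its red-connected component.

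It then remains to match the blue-edge inequalities (iii) with the defining inequalities $\phi_\lambda(v_L)\geq \phi_\lambda(v)\geq \phi_\lambda(v_R)$ of $\cube(\wh{F};\lambda)$. After red-edge contraction, the blue edges of the trimming diagram become precisely the edges of the forest $\wh{F}$, so at an internal node $v$ there are exactly two surviving blue edges, joining $v$ to $v_L$ and to $v_R$. I would verify by direct inspection of the elementary diagrams for $\tletter{i}$ and $\rletter{i}$ (and of how they glue in a concatenation) that the blue edge descending on the left of $v$ is a GZ \emph{left} edge, encoding $\phi_\lambda(v_L)\geq \phi_\lambda(v)$, while the blue edge descending on the right is a GZ \emph{right} edge, encoding $\phi_\lambda(v)\geq \phi_\lambda(v_R)$. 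This matches the $\cube$-defining inequalities exactly, and combined with the previous paragraph finishes the proof.

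The main obstacle is the last step: the case-by-case inspection of how the elementary diagrams for $\tletter{i}$, $\rletter{i}$, $\rletter{i+1}$ contribute blue versus red edges, how they concatenate across consecutive rows, and how the red-edge contraction consistently identifies the upper vertex of each Y-shape with a parent node $v$ in $\wh{F}$ and the two lower vertices with the ordered children $v_L, v_R$. This is entirely combinatorial but must be checked carefully; once done, the linear isomorphism $\Theta$ follows immediately.
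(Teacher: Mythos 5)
Your proposal is correct and follows essentially the same route as the paper: invoke \Cref{prop:annoyingGZ} to describe $\gz(\lambda;\rt)$ via red-edge equalities and blue-edge inequalities, identify the red-connected components of the trimming diagram with the nodes of $\wh{F}$ (which is exactly the paper's ``after identifying equal $p_{i,j}$''), and observe that the blue edges then encode the defining inequalities of $\cube(\nf{\rt};\lambda)$. The paper compresses all of this into one sentence; your version is the same argument spelled out, with the one residual verification (matching left/right blue edges to the $v_L,v_R$ ordering) flagged but left to inspection of the elementary diagrams, just as the paper leaves it implicit.
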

\begin{proof}
    This immediately follows from \Cref{prop:annoyingGZ} after identifying equal $p_{i,j}$.
\end{proof}
\begin{cor}
For a strictly decreasing $\lambda$, 
    $\cube(\widehat{F};\lambda)$ is a combinatorial cube.
\end{cor}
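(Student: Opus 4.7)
The plan is to chain together the equivalences already established in this appendix with the Bott manifold result of Section~\ref{sec:rtRichardson}.

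First, given $\wh{F}\in\nfor$, since $\wh{F}$ has only finitely many nontrivial trees and all supports are finite, there is some $n$ such that $\wh{F}\in\nsuppfor{n}$. By Proposition~\ref{prop:fullsupp} (together with the remark after it about forgetting marks when supports lie in $\{1,\ldots,n\}$), we may pick some $\rt\in\rtseq_n$ with $\nf{\rt}=\wh{F}$. The preceding theorem of this section then gives a linear isomorphism $\cube(\wh{F};\lambda)\cong \gz(\lambda;\rt)$ (this uses only the trimming-diagram interpretation of $\gz(\lambda;\rt)$ via Proposition~\ref{prop:annoyingGZ}, together with the matching identification of internal nodes of $\wh{F}$ with the non-red positions in the trimming diagram).

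Next, the earlier theorem in this appendix shows that the projection $\mu$ restricts to a linear isomorphism $\gz(\lambda;\rt)\cong P^{v(\rt)}_{u(\rt)}(\lambda)$, and the latter is, by construction (and by Tsukerman--Williams), the moment polytope of $X(\rt)\subset\fl{n}$ under the generalized Pl\"ucker embedding $\operatorname{Pl}_\lambda$. Concatenating, we obtain a linear isomorphism
\[
\cube(\wh{F};\lambda)\;\cong\;P^{v(\rt)}_{u(\rt)}(\lambda).
\]

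Finally, we invoke the main theorem of Section~\ref{sec:rtRichardson}: $X(\rt)$ is a Bott manifold. By \cite{MP08}, the moment polytope of any Bott manifold is a combinatorial cube, so $P^{v(\rt)}_{u(\rt)}(\lambda)$ is a combinatorial cube. Linear isomorphism of polytopes preserves the face poset, so $\cube(\wh{F};\lambda)$ is also a combinatorial cube. The only mildly subtle point is that $\rt$ is not unique (any element of $\Trim{\wh{F}}$ will do), but since all such choices produce polytopes linearly isomorphic to the same $\cube(\wh{F};\lambda)$, this ambiguity is harmless. The hypothesis that $\lambda$ is strictly decreasing is precisely what is needed in Proposition~\ref{prop:annoyingGZ} to ensure the blue inequalities are strict at a generic point of $\gz(\lambda;\rt)$, hence that the combinatorial type of the polytope is genuinely an $|\rt|_{\tletter{}}$-dimensional cube rather than a degeneration.
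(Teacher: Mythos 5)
Your proof is correct. You reduce via the linear isomorphisms $\cube(\wh{F};\lambda)\cong\gz(\lambda;\rt)\cong P^{v(\rt)}_{u(\rt)}(\lambda)$, already established in this appendix, to the statement that the moment polytope of $X(\rt)$ is a combinatorial cube, and you get that from the Bott manifold structure on $X(\rt)$ (Section~\ref{sec:rtRichardson}) together with the fact from \cite{MP08} that moment polytopes of Bott manifolds are combinatorial cubes. The paper leaves this corollary without proof, but the argument most locally available is shorter and avoids any appeal to the geometry: Corollary~\ref{cor:annoyingGZ} already identifies the face poset of $\gz(\lambda;\rt)$ as consisting exactly of the $\gz(\lambda;\rt')$ where $\rt'$ is obtained by freezing some subset of the $\tletter{i}$'s in $\rt$ to $\rletter{i}$ or $\rletter{i+1}$, and this is precisely the face poset of the $|\rt|_{\tletter{}}$-dimensional cube (each unfrozen $\tletter{i}$ is a free coordinate; freezing it to $\rletter{i}$ or $\rletter{i+1}$ picks one of the two endpoints, and the strict decrease of $\lambda$ is what keeps those two choices from collapsing to the same face). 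Both routes are legitimate; yours outsources the combinatorics to the Bott-manifold and \cite{MP08} black boxes, whereas the face-poset route is elementary and self-contained within the appendix, which is presumably the argument the authors had in mind given the corollary's placement.
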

\begin{cor}
    $X(\wh{F})$ is the toric variety associated to the polytope $\cube{(\wh{F};\lambda)}$.
\end{cor}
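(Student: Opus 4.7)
The plan is to chain together the polytope isomorphisms established in the two preceding theorems with the Tsukerman--Williams identification of the moment polytope of a toric Richardson variety. Pick any $\rt\in\Trim{\wh{F}}$. The immediately preceding theorem provides a linear isomorphism $\cube(\wh{F};\lambda)\cong\gz(\lambda;\rt)$, and the theorem before that (via the projection $\mu$) gives a linear isomorphism $\gz(\lambda;\rt)\cong P^{v(\rt)}_{u(\rt)}(\lambda)$. Composing these identifies $\cube(\wh{F};\lambda)$ with the moment polytope of $X(\rt)$ under the generalized Plücker embedding $\operatorname{Pl}_\lambda$. Since $X(\rt)$ is a smooth toric Richardson variety (as shown in \Cref{sec:rtRichardson}), it follows that $X(\rt)$ is the toric variety associated to $\cube(\wh{F};\lambda)$.

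To pass from $X(\rt)$ to $X(\wh{F})$, recall from \Cref{sec:Bott} that $X(\wh{F}) = u(\rt)^{-1}\cdot X(\rt)$, where $u(\rt)^{-1}\in S_n\subset GL_n$ acts on $\fl{n}$ by left multiplication as a permutation matrix. Because permutation matrices normalize the diagonal torus $T$, conjugation by $u(\rt)^{-1}$ is an automorphism of $T$, and left multiplication by $u(\rt)^{-1}$ sends $T$-orbit closures to $T$-orbit closures. In particular, it realizes $X(\wh{F})$ as a smooth toric variety whose moment polytope is the image of $P^{v(\rt)}_{u(\rt)}(\lambda)$ under the linear action of the Weyl-group element $u(\rt)^{-1}$ on $\mathbb{R}^n$. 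This Weyl action is a unimodular linear transformation, so the associated abstract toric variety is unchanged, completing the identification of $X(\wh{F})$ with the toric variety of $\cube(\wh{F};\lambda)$.

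There is no serious obstacle here: the corollary is essentially bookkeeping, combining the polytope identifications from the previous two theorems with the construction of $X(\wh{F})$ as a Weyl translate of $X(\rt)$. The only point requiring care is checking that passing from $X(\rt)$ to $X(\wh{F})$ does not alter the abstract toric variety structure, which is immediate from the fact that permutation matrices normalize $T$. Finally, independence of the choice of $\rt\in\Trim{\wh{F}}$ is automatic: both $X(\wh{F})$ and $\cube(\wh{F};\lambda)$ depend only on $\wh{F}$, so any $\rt\in\Trim{\wh{F}}$ yields the same identification.
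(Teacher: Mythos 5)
Your argument is correct and follows the same route as the paper: pick $\rt\in\Trim{\wh F}$, chain $\cube(\wh F;\lambda)\cong\gz(\lambda;\rt)\cong P^{v(\rt)}_{u(\rt)}(\lambda)$ with the Tsukerman--Williams identification of the latter as the moment polytope of $X(\rt)$, then pass to $X(\wh F)=u(\rt)^{-1}X(\rt)$. The paper records this in one sentence; you additionally spell out why the Weyl translate $u(\rt)^{-1}\cdot(\,\cdot\,)$ preserves the toric structure (permutation matrices normalize $T$, and the induced action on the character lattice is unimodular), which is implicit in the paper's ``$X(\wh F)$ is isomorphic to $X(\rt)$.''
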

\begin{proof}
By definition,
    $X(\wh{F})$ is isomorphic to $X(\rt)$ for any $\rt\in \Trim{\wh{F}}$, and we have just shown that $X(\rt)$ is the toric variety associated to $\gz(\lambda;\rt)\cong \cube{(\wh{F}(\rt);\lambda)}=\cube{(\wh{F};\lambda)}$.
\end{proof}
\begin{rem}
    A tedious verification which we omit shows that the isomorphism $\pi_{\rt'}^{-1}\pi_{\rt}:X(\rt)\to X(\rt')$ is induced by the composite isomorphism on moment polytopes
    $\gz(\lambda;\rt)\cong \cube{(\wh{F};\lambda)}\cong \gz(\lambda;\rt')$, which shows that the identification of $X(\wh{F})$ with the toric variety associated to $\cube{(\wh{F};\lambda)}$ can be done in such a way that the isomorphism $\pi_{\rt'}:X(\rt')\to X(\wh{F})$ for any $\rt'\in \Trim{\wh{F}}$ is induced by the linear isomorphism $\gz(\lambda;\rt')\cong \cube{(\wh{F};\lambda)}$.
\end{rem}

One way of creating a combinatorial cube is to take a linear family of combinatorial cubes $C(\lambda)\subset \RR^n$ (i.e. for  two strongly equivalent combinatorial cubes $C(a)$ and $C(b)$ we define $C(\lambda)$ for $\lambda\in [a,b]$ by $C(ta+(1-t)b)=tC(a)+(1-t)C(b)$) and take $$C'=\{(x,z)\in \RR^n\times [a,b]\suchthat x\in C(z)\}.$$
It turns out that $\cube(\widehat{F};\lambda)$ has this recursive structure that makes it into a combinatorial cube. Furthermore different ways of expressing $\widehat{F}$ as a product of $\underline{i}$ and $\underline{i_{\circ}}$ give different realizations of $\cube(\widehat{F};\lambda)$ as a combinatorial cube.
\begin{thm}
Let $\lambda$ be a decreasing sequence, and let $\lambda'=(\lambda_1,\ldots,\lambda_{i-1},\lambda_{i+1},\ldots)$. Then we have
$$\cube(\wh{F}\cdot i_{\circ};\lambda)=\cube(\wh{F};\lambda').$$
    For $z\in [\lambda_{i-1},\lambda_i]$, let $\lambda(z)=(\lambda_1,\ldots,\lambda_{i-1},z,\lambda_{i+2},\ldots)$. Then
    $$\cube(\wh{F}\cdot i;\lambda)=\{(y,z)\suchthat z\in [\lambda_{i},\lambda_{i+1}],\text{ and }y\in \cube{(\wh{F};\lambda(z))}\}\subset  \RR^{\internal{\widehat{F}}}\times \RR\cong \RR^{\internal{\widehat{F}\cdot i}}.$$
\end{thm}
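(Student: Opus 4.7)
The plan is to prove both identities by directly comparing the defining inequalities of each nested forest polytope, using the combinatorial descriptions of the monoid operations $\wh{F}\cdot i_{\circ}$ and $\wh{F}\cdot i$ recorded earlier in Section~3. Recall that $\wh{F}\cdot i_{\circ}$ inserts a marked trivial tree $\otimes$ between the $(i-1)$'st and $i$'th leaves of $\wh{F}$, while $\wh{F}\cdot i$ replaces the $i$'th leaf of $\wh{F}$ by a single wedge $\wedge$ with two new leaves, introducing exactly one new internal node that I will call $v_0$. In each case the strategy is to identify $\internal{\wh{F}\cdot X}$ with the prescribed ambient space and then check termwise that the two sets of defining inequalities match.

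\textbf{First identity.} Since a trivial marked tree $\otimes$ carries no internal node, I have $\internal{\wh{F}\cdot i_{\circ}}=\internal{\wh{F}}$, so both polytopes live in the same ambient space $\RR^{\internal{\wh{F}}}$. Each internal node $v\in\internal{\wh{F}}$ contributes one pair of inequalities in each polytope, and the only possible discrepancy is the labelling of a leaf-child of $v$. A leaf at position $j$ of $\wh{F}$ sits at position $j$ in $\wh{F}\cdot i_{\circ}$ if $j<i$ and at position $j+1$ if $j\geq i$, and by the definition of $\lambda'$ this is precisely the shift that makes $\lambda'_j$ coincide with the new leaf label in $\wh{F}\cdot i_{\circ}$. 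The new marked node at position $i$ of $\wh{F}\cdot i_{\circ}$ is itself the root of its trivial tree, hence has no parent, so it contributes no inequality, which is exactly why the coordinate $\lambda_i$ is suppressed on the right-hand side.

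\textbf{Second identity.} For $\wh{F}\cdot i$, the disjoint union $\internal{\wh{F}\cdot i}=\internal{\wh{F}}\sqcup\{v_0\}$ induces the canonical splitting $\RR^{\internal{\wh{F}\cdot i}}\cong \RR^{\internal{\wh{F}}}\times \RR$, with the last coordinate recording $z=\phi(v_0)$. The new node $v_0$ has left child at position $i$ and right child at position $i+1$ of $\wh{F}\cdot i$, so its inequality is $\lambda_i\geq z\geq \lambda_{i+1}$, which is precisely the claimed constraint on $z$. For every other internal node $v\in\internal{\wh{F}}$, I claim that the inequality at $v$ in $\cube(\wh{F}\cdot i;\lambda)$ matches the corresponding inequality in $\cube(\wh{F};\lambda(z))$ under the following substitutions: a leaf at position $j<i$ of $\wh{F}$ keeps its label $\lambda_j=\lambda(z)_j$; the leaf at position $i$ of $\wh{F}$ is replaced by $v_0$, whose value $z$ agrees with $\lambda(z)_i$; and a leaf at position $j>i$ of $\wh{F}$ becomes a leaf at position $j+1$ of $\wh{F}\cdot i$ carrying label $\lambda_{j+1}=\lambda(z)_j$. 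Taken together, these verifications yield the desired equality of polytopes.

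\textbf{Expected obstacle.} There is essentially no substantive obstacle here beyond careful bookkeeping of leaf positions. The one mild subtlety is whether the old $i$'th leaf has a parent in $\wh{F}$: if it does not (i.e.\ it is the root of a trivial tree $\times$), then $z$ is only constrained by the inequality at $v_0$; if it does, then its parent's inequality in $\cube(\wh{F};\lambda(z))$ reads exactly as the corresponding inequality in $\cube(\wh{F}\cdot i;\lambda)$ after the replacement of leaf $i$ by $v_0$. Both cases fit uniformly into the dictionary above, so the verification is purely a matter of tracking indices.
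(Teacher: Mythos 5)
Your proof is correct and complete. The paper states this theorem without any proof (treating the termwise comparison of defining inequalities as immediate), so your direct verification — tracking the leaf-index shift, noting that the inserted $\otimes$ is a parentless root of a trivial tree contributing no inequality and hence $\internal{\wh{F}\cdot i_\circ}=\internal{\wh{F}}$ with the omission of $\lambda_i$ in $\lambda'$ exactly compensating the shift, and observing that the new internal node $v_0$ in $\wh{F}\cdot i$ supplies the constraint $\lambda_i\ge z\ge\lambda_{i+1}$ while playing the role of old leaf $i$ with value $\phi(v_0)=z=\lambda(z)_i$ in every remaining inequality — fills in exactly the calculation the authors left implicit, and your closing remark that the parent/no-parent cases for old leaf $i$ are handled uniformly by the same substitution is likewise correct.
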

\begin{rem}
\label{rem:combcubeP1}
    The toric variety associated to a $P'$ arising as the total family of a linear family of polytopes $P(z)$ strongly equivalent to a fixed polytope $P$ realizes the toric variety of $P'$ as $\PP(\ul \CC \oplus \mathcal{L})$ for a toric line bundle $\mathcal{L}$ on the toric variety associated to $P$. The different ways of realizing the nested forest polytope therefore correspond to different Bott manifold structures on $X(\rt)$, giving an alternate perspective on the computations in \Cref{sec:Bott}.

\end{rem}
\bibliographystyle{hplain}
\bibliography{main_2nd_part.bib}

\begin{thebibliography}{10}

\bibitem{And10}
D.~Anderson and J.~Tymoczko.
\newblock Schubert polynomials and classes of {H}essenberg varieties.
\newblock {\em J. Algebra}, 323(10):2605--2623, 2010.

\bibitem{AS17}
S.~Assaf and D.~Searles.
\newblock Schubert polynomials, slide polynomials, {S}tanley symmetric
  functions and quasi-{Y}amanouchi pipe dreams.
\newblock {\em Adv. Math.}, 306:89--122, 2017.

\bibitem{ABB04}
J.-C. Aval, F.~Bergeron, and N.~Bergeron.
\newblock Ideals of quasi-symmetric functions and super-covariant polynomials
  for {$S_n$}.
\newblock {\em Adv. Math.}, 181(2):353--367, 2004.

\bibitem{BelkBrown05}
J.~M. Belk and K.~S. Brown.
\newblock Forest diagrams for elements of {T}hompson's group {$F$}.
\newblock {\em Internat. J. Algebra Comput.}, 15(5-6):815--850, 2005.

\bibitem{BeGa23}
N.~Bergeron and L.~Gagnon.
\newblock The excedance quotient of the {B}ruhat order, {Q}uasisymmetric
  {V}arieties and {T}emperley-{L}ieb algebras, 2023, 2302.10814.

\bibitem{BS98}
N.~Bergeron and F.~Sottile.
\newblock Schubert polynomials, the {B}ruhat order, and the geometry of flag
  manifolds.
\newblock {\em Duke Math. J.}, 95(2):373--423, 1998.

\bibitem{BS02}
N.~Bergeron and F.~Sottile.
\newblock Skew {S}chubert functions and the {P}ieri formula for flag manifolds.
\newblock {\em Trans. Amer. Math. Soc.}, 354(2):651--673, 2002.

\bibitem{BGG73}
I.~N. Bern\v{s}te\u{\i}n, I.~M. Gelfand, and S.~I. Gelfand.
\newblock Schubert cells, and the cohomology of the spaces {$G/P$}.
\newblock {\em Uspehi Mat. Nauk}, 28(3(171)):3--26, 1973.

\bibitem{BC12}
S.~Billey and I.~Coskun.
\newblock Singularities of generalized {R}ichardson varieties.
\newblock {\em Comm. Algebra}, 40(4):1466--1495, 2012.

\bibitem{BJS93}
S.~C. Billey, W.~Jockusch, and R.~P. Stanley.
\newblock Some combinatorial properties of {S}chubert polynomials.
\newblock {\em J. Algebraic Combin.}, 2(4):345--374, 1993.

\bibitem{BjBr05}
A.~Bj\"{o}rner and F.~Brenti.
\newblock {\em Combinatorics of {C}oxeter groups}, volume 231 of {\em Graduate
  Texts in Mathematics}.
\newblock Springer, New York, 2005.

\bibitem{Bor53}
A.~Borel.
\newblock Sur la cohomologie des espaces fibr\'{e}s principaux et des espaces
  homog\`enes de groupes de {L}ie compacts.
\newblock {\em Ann. of Math. (2)}, 57:115--207, 1953.

\bibitem{BS58}
R.~Bott and H.~Samelson.
\newblock Applications of the theory of {M}orse to symmetric spaces.
\newblock {\em Amer. J. Math.}, 80:964--1029, 1958.

\bibitem{Bri88}
M.~Brion.
\newblock Points entiers dans les poly\`edres convexes.
\newblock {\em Ann. Sci. \'{E}cole Norm. Sup. (4)}, 21(4):653--663, 1988.

\bibitem{CFPnotes96}
J.~W. Cannon, W.~J. Floyd, and W.~R. Parry.
\newblock Introductory notes on {R}ichard {T}hompson's groups.
\newblock {\em Enseign. Math. (2)}, 42(3-4):215--256, 1996.

\bibitem{DehTes19}
P.~Dehornoy and E.~Tesson.
\newblock Garside combinatorics for {T}hompson's monoid {$F^+$} and a hybrid
  with the braid monoid {$B^+_\infty$}.
\newblock {\em Algebr. Comb.}, 2(4):683--709, 2019.

\bibitem{Dem74}
M.~Demazure.
\newblock D\'{e}singularisation des vari\'{e}t\'{e}s de {S}chubert
  g\'{e}n\'{e}ralis\'{e}es.
\newblock {\em Ann. Sci. \'{E}cole Norm. Sup. (4)}, 7:53--88, 1974.

\bibitem{FuSt97}
W.~Fulton and B.~Sturmfels.
\newblock Intersection theory on toric varieties.
\newblock {\em Topology}, 36(2):335--353, 1997.

\bibitem{GaoZhu24}
Y.~Gao and H.~Zhu.
\newblock Boolean structure constants, 2024, 2405.05527.

\bibitem{Ges84}
I.~M. Gessel.
\newblock Multipartite {$P$}-partitions and inner products of skew {S}chur
  functions.
\newblock In {\em Combinatorics and algebra ({B}oulder, {C}olo., 1983)},
  volume~34 of {\em Contemp. Math.}, pages 289--317. Amer. Math. Soc.,
  Providence, RI, 1984.

\bibitem{GK94}
M.~Grossberg and Y.~Karshon.
\newblock Bott towers, complete integrability, and the extended character of
  representations.
\newblock {\em Duke Math. J.}, 76(1):23--58, 1994.

\bibitem{GS82}
V.~Guillemin and S.~Sternberg.
\newblock Convexity properties of the moment mapping.
\newblock {\em Invent. Math.}, 67(3):491--513, 1982.

\bibitem{Han73}
H.~C. Hansen.
\newblock On cycles in flag manifolds.
\newblock {\em Math. Scand.}, 33:269--274 (1974), 1973.

\bibitem{HHMP}
M.~Harada, T.~Horiguchi, M.~Masuda, and S.~Park.
\newblock The volume polynomial of regular semisimple {H}essenberg varieties
  and the {G}elfand-{Z}etlin polytope.
\newblock {\em Proc. Steklov Inst. Math.}, 305:318--344, 2019.

\bibitem{Hu23}
D.~Huang.
\newblock Schubert products for permutations with separated descents.
\newblock {\em Int. Math. Res. Not. IMRN}, (20):17461--17493, 2023.

\bibitem{huang2022bumpless}
D.~Huang and P.~Pylyavskyy.
\newblock Bumpless pipe dream {R}{S}{K}, growth diagrams, and {S}chubert
  structure constants, 2022, 2206.14351.

\bibitem{KST12}
V.~A. Kirichenko, E.~Yu. Smirnov, and V.~A. Timorin.
\newblock Schubert calculus and {G}elfand-{T}setlin polytopes.
\newblock {\em Uspekhi Mat. Nauk}, 67(4(406)):89--128, 2012.

\bibitem{KLS14}
A.~Knutson, T.~Lam, and D.~E. Speyer.
\newblock Projections of {R}ichardson varieties.
\newblock {\em J. Reine Angew. Math.}, 687:133--157, 2014.

\bibitem{KZJ1}
A.~Knutson and P.~Zinn-Justin.
\newblock Schubert puzzles and integrability i: invariant trilinear forms,
  2020, 1706.10019.

\bibitem{KZJ3}
A.~Knutson and P.~Zinn-Justin.
\newblock Schubert puzzles and integrability iii: separated descents, 2023,
  2306.13855.

\bibitem{Kog00}
M.~Kogan.
\newblock {\em Schubert geometry of flag varieties and {G}elfand-{C}etlin
  theory}.
\newblock ProQuest LLC, Ann Arbor, MI, 2000.
\newblock Thesis (Ph.D.)--Massachusetts Institute of Technology.

\bibitem{Ko01}
M.~Kogan.
\newblock R{C}-graphs and a generalized {L}ittlewood-{R}ichardson rule.
\newblock {\em Internat. Math. Res. Notices}, (15):765--782, 2001.

\bibitem{LS82}
A.~Lascoux and M.-P. Sch\"{u}tzenberger.
\newblock Polyn\^{o}mes de {S}chubert.
\newblock {\em C. R. Acad. Sci. Paris S\'{e}r. I Math.}, 294(13):447--450,
  1982.

\bibitem{Lee21}
E.~Lee, M.~Masuda, and S.~Park.
\newblock Toric {B}ruhat interval polytopes.
\newblock {\em Journal of Combinatorial Theory, Series A}, 179:105387, 2021.

\bibitem{LSS06}
C.~Lenart, S.~Robinson, and F.~Sottile.
\newblock Grothendieck polynomials via permutation patterns and chains in the
  {B}ruhat order.
\newblock {\em Amer. J. Math.}, 128(4):805--848, 2006.

\bibitem{lian2023hhmp}
C.~Lian.
\newblock The {H}{H}{M}{P} decomposition of the permutohedron and degenerations
  of torus orbits in flag varieties, 2023, 2309.01747.

\bibitem{Liu16}
G.~Liu.
\newblock Mixed volumes of hypersimplices.
\newblock {\em Electron. J. Combin.}, 23(3):Paper 3.19, 19pp, 2016.

\bibitem{Mac95}
I.~G. Macdonald.
\newblock {\em Symmetric functions and {H}all polynomials}.
\newblock Oxford Mathematical Monographs. The Clarendon Press, Oxford
  University Press, New York, second edition, 1995.
\newblock With contributions by A. Zelevinsky, Oxford Science Publications.

\bibitem{MacM}
P.~A. MacMahon.
\newblock {\em Combinatory analysis}.
\newblock Chelsea Publishing Co., New York, 1960.
\newblock Two volumes (bound as one).

\bibitem{MP08}
M.~Masuda and T.~E. Panov.
\newblock Semi-free circle actions, {B}ott towers, and quasitoric manifolds.
\newblock {\em Mat. Sb.}, 199(8):95--122, 2008.

\bibitem{NST_a}
P.~Nadeau, H.~Spink, and V.~Tewari.
\newblock Quasisymmetric divided differences, 2024, 2406.01510.

\bibitem{NST_2}
P.~Nadeau, H.~Spink, and V.~Tewari.
\newblock Schubert polynomial expansions revisited, 2024, 2407.02375.

\bibitem{DS}
P.~Nadeau and V.~Tewari.
\newblock Divided symmetrization and quasisymmetric functions.
\newblock {\em Selecta Math. (N.S.)}, 27(4):Paper No. 76, 24, 2021.

\bibitem{NT21}
P.~Nadeau and V.~Tewari.
\newblock A $q$-analogue of an algebra of {K}lyachko and {M}acdonald's reduced
  word identity, 2021, arXiv:2106.03828.

\bibitem{NT23}
P.~Nadeau and V.~Tewari.
\newblock ${P}$-partitions with flags and back stable quasisymmetric functions,
  2023, 2303.09019.

\bibitem{NT20}
P.~Nadeau and V.~Tewari.
\newblock The permutahedral variety, mixed {E}ulerian numbers, and principal
  specializations of {S}chubert polynomials.
\newblock {\em Int. Math. Res. Not. IMRN}, (5):3615--3670, 2023.

\bibitem{NTremixed}
P.~Nadeau and V.~Tewari.
\newblock Remixed {E}ulerian numbers.
\newblock {\em Forum Math. Sigma}, 11:Paper No. e65, 26pp, 2023.

\bibitem{NT_forest}
P.~Nadeau and V.~Tewari.
\newblock Forest polynomials and the class of the permutahedral variety.
\newblock {\em Adv. Math.}, 453:Paper No. 109834, 33pp, 2024.

\bibitem{PeSa22}
O.~Pechenik and M.~Satriano.
\newblock Quasisymmetric {S}chubert calculus, 2023, 2205.12415.

\bibitem{PeSa23}
O.~Pechenik and M.~Satriano.
\newblock James reduced product schemes and double quasisymmetric functions.
\newblock {\em Adv. Math.}, 449:Paper No. 109737, 28pp, 2024.

\bibitem{PW22}
O.~Pechenik and A.~Weigandt.
\newblock A dual {L}ittlewood-{R}ichardson rule and extensions, 2022,
  2202.11185.

\bibitem{Pos09}
A.~Postnikov.
\newblock Permutohedra, associahedra, and beyond.
\newblock {\em Int. Math. Res. Not. IMRN}, (6):1026--1106, 2009.

\bibitem{Sot96}
F.~Sottile.
\newblock Pieri's formula for flag manifolds and {S}chubert polynomials.
\newblock {\em Ann. Inst. Fourier (Grenoble)}, 46(1):89--110, 1996.

\bibitem{speyer2024richardson}
D.~E. Speyer.
\newblock Richardson varieties, projected {R}ichardson varieties and positroid
  varieties, 2024, 2303.04831.

\bibitem{StThesis}
R.~P. Stanley.
\newblock {\em Ordered structures and partitions}.
\newblock Memoirs of the American Mathematical Society, No. 119. American
  Mathematical Society, Providence, R.I., 1972.

\bibitem{TW15}
E.~Tsukerman and L.~Williams.
\newblock Bruhat interval polytopes.
\newblock {\em Adv. Math.}, 285:766--810, 2015.

\bibitem{Sunic07}
Z.~\v{S}uni\'{c}.
\newblock Tamari lattices, forests and {T}hompson monoids.
\newblock {\em European J. Combin.}, 28(4):1216--1238, 2007.

\end{thebibliography}
\end{document}